\tikzset{curve/.style={settings={#1},to path={(\tikztostart)
    .. controls ($(\tikztostart)!\pv{pos}!(\tikztotarget)!\pv{height}!270:(\tikztotarget)$)
    and ($(\tikztostart)!1-\pv{pos}!(\tikztotarget)!\pv{height}!270:(\tikztotarget)$)
    .. (\tikztotarget)\tikztonodes}},
    settings/.code={\tikzset{quiver/.cd,#1}
        \def\pv##1{\pgfkeysvalueof{/tikz/quiver/##1}}},
    quiver/.cd,pos/.initial=0.35,height/.initial=0}
\tikzset{tail reversed/.code={\pgfsetarrowsstart{tikzcd to}}}
\tikzset{2tail/.code={\pgfsetarrowsstart{Implies[reversed]}}}
\tikzset{2tail reversed/.code={\pgfsetarrowsstart{Implies}}}
\newtheorem{theorem}{Theorem}[section]
\newtheorem{lemma}[theorem]{Lemma}
\newtheorem{corollary}[theorem]{Corollary}
\newtheorem{proposition}[theorem]{Proposition}
\newtheorem{definition}[theorem]{Definition}
\newtheorem{remark}[theorem]{Remark}
\newtheorem{example}[theorem]{Example}
\newtheorem{conjecture}[theorem]{Conjecture}
\begin{document}
\newcommand{\abs}[1]{\left\lvert#1\right\rvert}
\newcommand{\news}[1]{{\color{green} #1}}
\newcommand{\new}[1]{{\color{blue} #1}}
\newcommand{\newp}[1]{{\color{purple} #1}}
\newcommand{\old}[1]{{\color{red} #1}}
\definecolor{darkgreen}{rgb}{0.33, 0.42, 0.18}
\newcommand{\rjm}[1]{{\color{darkgreen} #1}}

\newcommand{\proj}{\mathop{\rm proj}\nolimits}%
\newcommand{\inj}{\mathop{\rm inj}\nolimits}%
\providecommand{\Gen}{\mathop{\rm Gen}\nolimits}%
\providecommand{\Cogen}{\mathop{\rm Cogen}\nolimits}%
\providecommand{\Filt}{\mathop{\rm Filt}\nolimits}%
\providecommand{\cone}{\mathop{\rm cone}\nolimits}%
\providecommand{\Sub}{\mathop{\rm Sub}\nolimits}%
\providecommand{\rad}{\mathop{\rm rad}\nolimits}%
\providecommand{\coker}{\mathop{\rm coker}\nolimits}%
\providecommand{\im}{\mathop{\rm im}\nolimits}%
\providecommand{\btop}{\mathop{\rm top}\nolimits}%

\def\A{\mathcal{A}}
\def\C{\mathcal{C}}
\def\D{\mathcal{D}}
\def\P{\mathcal{P}}
\def\Y{\mathbb{Y}}
\def\Z{\mathcal{Z}}
\def\K{\mathcal{K}}
\def\F{\mathcal{F}}
\def\Eps{\mathcal{E}}
\def\T{\mathcal{T}}
\def\U{\mathcal{U}}
\def\V{\mathcal{V}}
\def\W{\mathcal{W}}
\def\E{\mathsf{E}}
\def\M{\mathsf{M}}
\def\N{\mathsf{N}}
\def\X{\mathsf{X}}
\def\L{\mathsf{L}}
\def\Mcluster{\mathfrak{M}}

\def\PP{{\mathbb P}}
\providecommand{\add}{\mathop{\rm add}\nolimits}%
\providecommand{\End}{\mathop{\rm End}\nolimits}%
\providecommand{\Ext}{\mathop{\rm Ext}\nolimits}%
\providecommand{\Yext}{\mathop{\rm Yext}\nolimits}%
\providecommand{\Hom}{\mathop{\rm Hom}\nolimits}%
\providecommand{\ind}{\mathop{\rm ind}\nolimits}%
\providecommand{\pd}{\mathop{\rm pd}\nolimits}%
\providecommand{\id}{\mathop{\rm id}\nolimits}%
\providecommand{\gldim}{\mathop{\rm gl.dim}\nolimits}%
\newcommand{\modd}{\mathop{\rm mod}\nolimits}%
\newcommand{\res}{\mathop{\mathsf{res}}\nolimits}%
\newcommand{\stautilt}{\textnormal{s$\tau$-tilt }}
\newcommand{\twosilt}{\textnormal{2-silt }}
\newcommand{\taugenmin}{\textnormal{$\tau$-gen-min }}
\newcommand{\ftors}{\textnormal{f-tors }}

\title{$\tau$-exceptional sequences for representations of quivers over local algebras}
\author[I. Nonis]{Iacopo Nonis}
\address{School of Mathematics \\ 
University of Leeds \\ 
Leeds, LS2 9JT \\ 
United Kingdom \\
}
\email{mmin@leeds.ac.uk}

\begin{abstract}
    Let $k$ be an algebraically closed field. Let $R$ be a finite dimensional commutative local $k$-algebra and let $Q$ be a quiver with no oriented cycles. In this paper, we study (signed) $\tau$-exceptional sequences over the algebra $\Lambda = R\otimes kQ$, which is isomorphic to  $RQ$. We show there is a bijection between the set of complete (signed) $\tau$-exceptional sequences in $\modd kQ$ and the set of complete (signed) $\tau$-exceptional sequences in $\modd\Lambda$. Moreover, we prove that every $\tau$-perpendicular subcategory of $\modd \Lambda$ is equivalent to the module category of $R\otimes kQ'$, for some quiver $Q'$. As a consequence, we prove that the $\tau$-cluster morphism categories of $kQ$ and $\Lambda$ are equivalent. 
\end{abstract}

\keywords{Finite-dimensional algebra, exceptional sequence, $\tau$-exceptional sequence, $\tau$-rigid module, $\tau$-tilting theory, Bongartz complement, $\tau$-perpendicular category, $\tau$-cluster morphism category}

\maketitle
\tableofcontents  

\section*{Introduction}

An object $M$ in an abelian or triangulated category is called \textit{exceptional} if its endomorphism algebra is a division ring and $\Ext^{\geq1}(M,M) = 0$. A sequence of indecomposable objects $\M = (M_1\cdots, M_t)$ is an \textit{exceptional sequence} if every object is exceptional and $\Hom(M_i, M_j) = 0 = \Ext^{\geq 1}(M_i,M_j)$ for $1\leq j < i \leq t$. Exceptional sequences were first introduced in algebraic geometry \cite{Bondal, Gorodentsev, GR}, and later considered in the setting of the representation theory of finite dimensional hereditary algebras by Crawley-Boevey in \cite{Boevey-ExcSeq} and Ringel in \cite{braid_group_action_on_exc_seq_Ringel}. 

If the length of an exceptional sequence of modules over a finite dimensional algebra equals the number of simple modules, the sequence is said to be \textit{complete}.

It was shown in \cite{braid_group_action_on_exc_seq_Ringel, Boevey-ExcSeq} that there is a transitive braid action on the set of complete exceptional sequences over a hereditary algebra. In this way, one can find all exceptional sequences and exceptional modules over a hereditary algebra. 

Complete sequences always exist over a hereditary algebra \cite{Boevey-ExcSeq}. However, this is not the case over an arbitrary finite dimensional algebra in general. 

Motivated by $\tau$-tilting theory introduced by Adachi, Iyama, and Reiten in \cite{tau-tiling-theory} and the notion of \textit{signed exceptional sequences} over a hereditary algebra $H$ introduced by Igusa and Todorov in \cite{signed_exc_seq}; Buan and Marsh recently defined the concept of (signed) $\tau$-\textit{exceptional sequnces} \cite{tauExcSeq_BM}. 

They proved that complete $\tau$-exceptional sequences always exist over an arbitrary finite dimensional algebra $\Lambda$ and that exceptional and $\tau$-exceptional sequences coincide when $\Lambda$ is hereditary. Moreover, they established a bijection between complete signed $\tau$-exceptional sequences and ordered support $\tau$-tilting objects, which generalizes the bijective correspondence between complete signed exceptional sequences and the ordered cluster-tilting objects in the cluster category (see \cite{Tilting_Theory_and_Cluster_Combinatorics}) corresponding to $H$ given by Igusa and Todorov. Signed exceptional sequences were originally defined to describe morphisms in the \textit{cluster morphism category} of $H$, whose objects are wide subcategories of $\modd H$. 

A key ingredient in the definition of a $\tau$-exceptional sequence is the $\tau$-\textit{perpendicular subcategory}, introduced by Jasso in \cite{Jasso_Reduction} as a generalization of the notion of the Geigle and Lenzing \textit{perpendicular subcategory} \cite{Geigle_Lenzing_Perp_Subcat}. 

Buan and Marsh extended the definition of the cluster morphism category to a $\tau$-tilting finite algebra in \cite{a_category_of_wide_subcategories}, which is an algebra with finitely many isomorphism classes of $\tau$-tilting modules, see \cite{tauTiltingFiniteAlgebrasAnd_g-vectors}. Later, Hanson and Igusa \cite{Hanson-Igusa_tau-cluster_morphism_categories_and_picture_groups} gave it the name of $\tau$-\textit{cluster morphism category}. They proved that the classifying space of this category is a cube complex, whose fundamental group is the picture group of the algebra, see also \cite{igusa2016picture}. 

Recently, Buan and Hanson \cite{tau-perpendicular_wide_subategories} generalized the definition of the $\tau$-cluster morphism category to an arbitrary finite dimensional algebra. Moreover, the $\tau$-cluster morphism category has been also studied from a geometric perspective \cite{a_geometric_perspective_on_the_tau_cluster_morphism_category, Kaipel-the_cagtegory_of_partition_fan}.

Representations of a quiver $Q$ over the algebra of the dual numbers have been considered for the first time by Ringel and Zhang in \cite{RepOfQuiversOverTheDualNumbers}. An $RQ$-module $M$, where $R$ denotes the algebra of the dual numbers, is a $kQ$-module equipped with an endomorphism $\epsilon: M\to M$ such that $\epsilon^2 = 0$. These modules are known as \textit{differential graded} modules. $RQ$ can be viewed as the path algebra of the quiver $Q$ over $R=k[x]/(x^2)$. This is equivalent to viewing $RQ$ as the tensor product of $R$ and $kQ$ over the field $k$. Subsequently, Geiss, Leclerc, and Schr\"{o}er \cite{Quivers_with_relations_for_symmetrizable_Cartan_matrices_I} studied the algebras $R\otimes_k kQ$, where $R= k[x]/(x^t)$, for $t\geq 2$.  

We propose a further generalization given by the algebras $\Lambda = R\otimes_k kQ\cong RQ$, where $R$ is a finite dimensional local commutative algebra. In particular, this paper aims to study (signed) $\tau$-exceptional sequences over this class of algebras and their associated $\tau$-cluster morphism categories.

\subsection*{Notation and main results} Let $k$ be an algebraically closed field and let $\Lambda$ be a basic finite dimensional algebra over $k$. Denote by $\modd\Lambda$ the category of finitely generated left $\Lambda$-modules. Let $\proj\Lambda$ be the full subcategory of projective $\Lambda$-modules. 

Given a subcategory $\mathcal{X}\subseteq\modd\Lambda$, we denote by $\ind(\mathcal{X})$ the set of isomorphism classes of indecomposable objects in $\mathcal{X}$, and by $\P(\mathcal{X})$ the full subcategory of $\mathcal{X}$ consisting of all $\Ext$-projective modules in $\mathcal{X}$, i.e. the modules $P$ in $\mathcal{X}$ such that $\Ext_\Lambda^1(P,\mathcal{X}) = 0$. 

We define $\mathcal{X}^{\perp} = \{Y\in\modd\Lambda \mid \Hom(\mathcal{X},Y) = 0\}$, and define $^{\perp}\mathcal{X}$ dually. 

For $M\in\modd\Lambda$, we denote by $\add M$ (resp. $\Gen M$) the full subcategory of $\modd{\Lambda}$ whose objects are direct summands (resp. factors) of finite direct sums of copies of $M$.

For any basic $\Lambda$-module $X$, let $\abs{X}$ denote the number of indecomposable direct summands of $X$ and let $\abs{\Lambda} = n$. All modules are taken to be basic where possible and considered up to isomorphism. We denote by $D^b(\modd\Lambda)$ the bounded derived category of $\modd\Lambda$, and by $(-)[1]$ the shift functor. For an arbitrary module category $\W$, let $\tau_\W$ (or simply $\tau$ if no confusion appears) denote the Auslander-Reiten translation in $\W$. 

The \textit{Bongartz completion} of $M$, denoted by $B_M$, is given by the direct sum of all indecomposable modules in $\P({^\perp}\tau M)$. 

The tensor product $\otimes_k$ will be denoted by $\otimes$. Throughout this paper, we always denote by $R$ a finite dimensional commutative local $k$-algebra.\\

The paper is organized as follows. Let $Q$ be a quiver with no oriented cycles and let $\Lambda = R\otimes kQ $. In Section \ref{Section1}, we first recall definitions and results from $\tau$-tilting theory and we establish a bijection between indecomposable $\tau$-rigid $kQ$-modules and indecomposable $\tau$-rigid $\Lambda$-modules. More precisely, every indecomposable $\tau$-rigid $\Lambda$-module is of the form $\Lambda\otimes_{kQ}M$, for an indecomposable $\tau$-rigid $kQ$-module $M$.  As a consequence, we establish a bijection between support $\tau$-tilting $kQ$-modules and support $\tau$-tilting $\Lambda$-modules, and between functorially finite torsion classes in $\modd kQ$ and functorially finite torsion classes in $\modd \Lambda$. Furthermore, we characterize when $\Lambda$ is $\tau$-tilting finite. 

In section \ref{induction-2-term-approx}, we discuss the connection between induction functor, 2-term rigid objects, and approximations.

In section \ref{pdSection}, we study the relationship between the projective dimension of $M$ as a $\Lambda$-module and the projective dimension of $M$ as an object in a functorially finite wide subcategory $\W\subseteq \modd{\Lambda}$, where $\Lambda$ is an arbitrary finite dimensional algebra. We denote by $\pd_\W$ the projective dimension in $\W$. We prove the following result. 

\begin{proposition}[Proposition \ref{pdIsPreservedInW}]
    Let $\Lambda$ be a finite dimensional algebra. Let $m\geq 0$ and let $M\in\modd\Lambda$ with $\pd M = m$. Suppose $M$ lies in a functorially finite wide subcategory $\mathcal{W}$. Then,  $\pd_\mathcal{W} M \leq m$. 
\end{proposition}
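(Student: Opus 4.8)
The plan is to reduce the statement to a single $\Ext^1$-vanishing and then attack it by induction on $m$. Since $\mathcal{W}$ is a functorially finite wide subcategory, it is an abelian length category with enough projectives, hence equivalent to $\modd\Gamma$ for a finite dimensional algebra $\Gamma$; write $\Omega^{j}_{\mathcal{W}}M$ for the $j$-th syzygy in a minimal $\mathcal{W}$-projective resolution of $M$. Because $\mathcal{W}$ is closed under extensions, for $X,Y\in\mathcal{W}$ the inclusion induces an isomorphism $\Ext^1_{\mathcal{W}}(X,Y)\cong\Ext^1_\Lambda(X,Y)$; consequently an object of $\mathcal{W}$ is $\mathcal{W}$-projective precisely when $\Ext^1_\Lambda(-,\mathcal{W})$ vanishes on it. Thus $\pd_{\mathcal{W}}M\le m$ is equivalent to the assertion that $\Omega^{m}_{\mathcal{W}}M$ is $\mathcal{W}$-projective, i.e.\ that $\Ext^1_\Lambda(\Omega^{m}_{\mathcal{W}}M,Y)=0$ for every $Y\in\mathcal{W}$. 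This is what I would prove.

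For the base case $m=0$ the module $M$ is $\Lambda$-projective, so $\Ext^1_\Lambda(M,-)=0$ and in particular $\Ext^1_\Lambda(M,\mathcal{W})=0$, whence $M$ is $\mathcal{W}$-projective and $\pd_{\mathcal{W}}M=0$. For the inductive step I would compare the $\mathcal{W}$-projective cover with the ordinary projective cover. Let $0\to\Omega_{\mathcal{W}}M\to P\to M\to 0$ be a $\mathcal{W}$-projective cover (exact in $\modd\Lambda$ since $\mathcal{W}$ is wide) and let $0\to\Omega_\Lambda M\to Q\to M\to 0$ be a $\Lambda$-projective cover. Lifting the identity on $M$ along $P\to M$ gives a map $Q\to P$ which is surjective because $P\to M$ is a projective cover in $\mathcal{W}$; the snake lemma then yields a short exact sequence $0\to\Omega_\Lambda P\to\Omega_\Lambda M\to\Omega_{\mathcal{W}}M\to 0$ in $\modd\Lambda$. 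Here $\pd_\Lambda\Omega_\Lambda M=m-1$, and the defining property of the $\mathcal{W}$-projective $P$ gives $\Ext^1_\Lambda(P,Y)=0$ for all $Y\in\mathcal{W}$. The aim is to feed this sequence, together with the long exact sequences in $\Ext_\Lambda(-,Y)$, into the inductive hypothesis so as to conclude $\Ext^1_\Lambda(\Omega^{m}_{\mathcal{W}}M,Y)=0$.

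The main obstacle is that the $\mathcal{W}$-projective cover $P$ can have arbitrarily large projective dimension over $\Lambda$: already when $\mathcal{W}=\add S$ for a brick $S$ with $\Ext^1_\Lambda(S,S)=0$ but $\Ext^{\ge 2}_\Lambda(S,S)\ne 0$, the $\mathcal{W}$-projective $S$ has $\pd_\Lambda S\ge 2$. Consequently $\Omega_{\mathcal{W}}M$, being only a quotient of $\Omega_\Lambda M$, need not have strictly smaller $\Lambda$-projective dimension, so a naive induction on the $\Lambda$-projective dimension of the syzygy collapses, and the higher groups $\Ext^{\ge 2}_\Lambda(P,\mathcal{W})$ genuinely fail to vanish. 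This is exactly where functorial finiteness must enter in an essential way. I would exploit the fact that a functorially finite wide subcategory is $\tau$-perpendicular, hence contained in a functorially finite torsion class $\mathcal{T}={}^{\perp}(\tau U)$ whose $\Ext$-projectives are the summands of the Bongartz completion $B_U$, with the $\mathcal{W}$-projectives among them. This provides the stronger vanishing $\Ext^1_\Lambda(P,\mathcal{T})=0$ on the whole torsion class, not merely on $\mathcal{W}$, which is what is needed to thread the connecting maps through the long exact sequences without picking up uncontrolled contributions from the $\mathcal{W}$-projectives; equivalently, it is what forces the comparison map $\Ext^{i}_{\mathcal{W}}(M,Y)\to\Ext^{i}_\Lambda(M,Y)$ to be injective, and hence to vanish in degrees $i>m$. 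Establishing this injectivity is, I expect, the technical heart of the argument.
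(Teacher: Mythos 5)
Your reduction and base case are fine, but the proof has a genuine gap exactly where you flag it: the inductive step is abandoned (you yourself show the naive syzygy comparison collapses because the $\mathcal{W}$-projective cover $P$ can have $\pd_\Lambda P \geq 2$), and the proposed repair is never carried out. Worse, the repair rests on the claim that every functorially finite wide subcategory is $\tau$-perpendicular, hence sits inside a torsion class ${}^{\perp}(\tau U)$ with Ext-projectives controlled by a Bongartz completion. You cannot cite this in the generality of the proposition, which is stated for an \emph{arbitrary} finite dimensional algebra: the only implication available there is that $\tau$-perpendicular subcategories are functorially finite wide, not the converse (the relation between these classes is precisely the delicate territory of Buan--Hanson's Conjecture 6.8, which this paper verifies only for $\Lambda = R\otimes kQ$). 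Finally, the ``technical heart'' you point to --- injectivity of the comparison maps $\Ext^i_{\mathcal{W}}(M,Y)\to\Ext^i_\Lambda(M,Y)$ --- is both unproven in your sketch and stronger than what is needed.

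The paper's actual proof avoids all of this machinery and uses functorial finiteness only to know that $\mathcal{W}\simeq\modd\Gamma$ has enough projectives, so that $\pd_{\mathcal{W}}$ makes sense. The key tool is the Yoneda description of extension groups: $\Ext^n(M,N)=0$ if and only if \emph{every} $n$-extension of $M$ by $N$ is split, where split means the leftmost map is a split monomorphism (Proposition \ref{Ext^n=0_and_n-extensions}, proved via Theorem \ref{Yext^n=Ext^n} and the lemma that splitness is an invariant of the equivalence class). If $\pd_{\mathcal{W}}M>m$, pick $N_{\mathcal{W}}\in\mathcal{W}$ with $\Ext^{m+1}_{\mathcal{W}}(M,N_{\mathcal{W}})\neq 0$ and hence a non-split $(m+1)$-extension of $M$ by $N_{\mathcal{W}}$ inside $\mathcal{W}$. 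Since the embedding $\mathcal{W}\hookrightarrow\modd\Lambda$ is exact and full, this sequence is still exact in $\modd\Lambda$, and it remains non-split there: a retraction of the leftmost monomorphism in $\modd\Lambda$ would be a morphism between objects of $\mathcal{W}$, hence already a splitting in $\mathcal{W}$. So $\Ext^{m+1}_\Lambda(M,N_{\mathcal{W}})\neq 0$, contradicting $\pd_\Lambda M=m$. Note this only transfers \emph{non-vanishing of the group} from $\mathcal{W}$ to $\modd\Lambda$, which is all the proposition requires --- no injectivity of comparison maps in degrees $\geq 2$, no torsion classes, and no Bongartz completions are needed. If you want to salvage your outline, replace the entire inductive step by this split-extension argument.
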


Let $\Lambda$ be a finite dimensional algebra and let $M$ be a basic $\tau$-rigid $\Lambda$-module. Jasso \cite{Jasso_Reduction} defined the \textit{$\tau$-perpendicular category} associated to $M$ as $J(M) = {^{\perp}(\tau M)} \cap M^\perp$, and proved that $J(M)$ is equivalent to the module category of a finite dimensional algebra $\Gamma_M$, where $\Gamma_M = \End_\Lambda(B_M)^\mathrm{op}/I$ and $I$ is the ideal generated by all maps factoring through $M$; see \cite[Thm. 3.8]{Jasso_Reduction}. In particular, if $\Lambda$ is hereditary, so is $\Gamma_M$  \cite[Cor. 3.19(a)]{Jasso_Reduction}.  In Section \ref{section2}, we give a precise description of the $\tau$-perpendicular subcategories of $\modd{\Lambda}$, where $\Lambda = R\otimes kQ$. The following is our first main result.

\begin{theorem}[Theorem \ref{J(Lambda_otimes_kQ)=mod(R_otimes_kQ')}]
    Let $\Lambda = R\otimes kQ$. Let $M$ be a basic $\tau$-rigid $kQ$-module and let $\Lambda\otimes_{kQ}M$ be the corresponding basic $\tau$-rigid $\Lambda$-module. Then, $J(\Lambda\otimes_{kQ}M)\simeq \modd\Gamma_{\Lambda\otimes_{kQ}M}$, with $$ \Gamma_{\Lambda\otimes_{kQ}M} \cong R\otimes \Gamma_M \quad \text{and} \quad \Gamma_M = \End_{kQ}(B_M)^{\mathrm{op}}/\langle e_M\rangle$$
    where $e_M$ is the idempotent corresponding to the projective $\End_{kQ}(B_M)^\mathrm{op}$-module $\Hom_{kQ}(B_M,M)$. In particular, $\Gamma_M$ is hereditary. 
\end{theorem}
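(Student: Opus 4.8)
The plan is to invoke Jasso's description of the $\tau$-perpendicular category for the $\tau$-rigid $\Lambda$-module $\Lambda\otimes_{kQ}M$, which yields the equivalence $J(\Lambda\otimes_{kQ}M)\simeq\modd\Gamma_{\Lambda\otimes_{kQ}M}$ immediately; the entire content of the statement then lies in identifying the algebra
\[
\Gamma_{\Lambda\otimes_{kQ}M}=\End_\Lambda(B_{\Lambda\otimes_{kQ}M})^{\mathrm{op}}/I',
\]
where $I'$ is the ideal of endomorphisms of $B_{\Lambda\otimes_{kQ}M}$ factoring through $\add(\Lambda\otimes_{kQ}M)$. The first thing I would record is that the induction functor $\Lambda\otimes_{kQ}-$ is naturally isomorphic to $R\otimes-$, since $(R\otimes kQ)\otimes_{kQ}X\cong R\otimes X$ as $\Lambda$-modules; in particular it is exact, because $R$ is flat over the field $k$.

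The main technical step is to show that the Bongartz completion commutes with induction, that is, $B_{\Lambda\otimes_{kQ}M}\cong\Lambda\otimes_{kQ}B_M$. By definition $B_M=\P({^\perp}(\tau M))$ is the $\tau$-tilting module associated, under the Adachi--Iyama--Reiten bijection, to the functorially finite torsion class ${^\perp}(\tau M)$. I would deduce the claim from the Section~\ref{Section1} correspondence: induction gives a bijection between support $\tau$-tilting $kQ$-modules and support $\tau$-tilting $\Lambda$-modules and between the associated functorially finite torsion classes, sending $M$ to $\Lambda\otimes_{kQ}M$. Granting that this correspondence sends ${^\perp}(\tau M)$ to ${^\perp}(\tau_\Lambda(\Lambda\otimes_{kQ}M))$, and hence Bongartz completions to Bongartz completions, the claim follows. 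Verifying that the torsion-class bijection is compatible with the \emph{Ext}-projective structure in this way is the point I expect to require the most care, and it likely deserves a separate lemma.

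Next I would compute the endomorphism algebra. Concretely, a $\Lambda$-endomorphism of $\Lambda\otimes_{kQ}X\cong R\otimes X$ is exactly an $R$-linear and $kQ$-linear map, and such maps are precisely $R\otimes\End_{kQ}(X)$; more formally, the base-change adjunction together with the isomorphism $\Hom_{kQ}(X,R\otimes Y)\cong R\otimes\Hom_{kQ}(X,Y)$ (valid since $R$ is finite dimensional) gives a natural isomorphism $\End_\Lambda(\Lambda\otimes_{kQ}X)\cong R\otimes\End_{kQ}(X)$ of $k$-algebras. Applying this to $X=B_M$, passing to opposite algebras, and using that $R$ is commutative so $R^{\mathrm{op}}=R$, yields
\[
\End_\Lambda(B_{\Lambda\otimes_{kQ}M})^{\mathrm{op}}\cong R\otimes\End_{kQ}(B_M)^{\mathrm{op}}.
\]

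Finally I would match the ideals. Writing $E=\End_{kQ}(B_M)$, under the above isomorphism the idempotent of $\End_\Lambda(B_{\Lambda\otimes_{kQ}M})^{\mathrm{op}}$ projecting $B_{\Lambda\otimes_{kQ}M}\cong R\otimes B_M$ onto $\Lambda\otimes_{kQ}M\cong R\otimes M$ corresponds to $1_R\otimes e_M$. As $M$ is a direct summand of $B_M$, the endomorphisms factoring through $\add M$ form the two-sided ideal $\langle e_M\rangle$, and likewise $I'=\langle 1_R\otimes e_M\rangle$. A direct computation in the tensor product gives $\langle 1_R\otimes e_M\rangle=R\otimes\langle e_M\rangle$, and $k$-flatness of $R$ lets the quotient commute with the tensor product, so
\[
\Gamma_{\Lambda\otimes_{kQ}M}\cong(R\otimes E^{\mathrm{op}})/(R\otimes\langle e_M\rangle)\cong R\otimes(E^{\mathrm{op}}/\langle e_M\rangle)=R\otimes\Gamma_M.
\]
That $\Gamma_M$ is hereditary then follows from \cite[Cor.~3.19(a)]{Jasso_Reduction}, since $kQ$ is hereditary, completing the argument.
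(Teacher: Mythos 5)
Your overall architecture coincides with the paper's: invoke Jasso's reduction, reduce everything to the single identity $B_{\Lambda\otimes_{kQ}M}\cong\Lambda\otimes_{kQ}B_M$, then compute $\End_\Lambda(\Lambda\otimes_{kQ}B_M)^{\mathrm{op}}\cong R\otimes\End_{kQ}(B_M)^{\mathrm{op}}$ and match the idempotent $e_{\Lambda\otimes_{kQ}M}$ with $1\otimes e_M$. Your endomorphism-algebra and ideal computations ($\langle 1\otimes e_M\rangle=R\otimes\langle e_M\rangle$, exactness of $R\otimes-$ letting the quotient pass through the tensor product) are correct and are essentially what the paper does. The genuine gap is the step you ``grant''. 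Asserting that the torsion-class bijection of Section \ref{Section1} sends ${^\perp}(\tau M)$ to ${^\perp}\tau_\Lambda(\Lambda\otimes_{kQ}M)$ is not a reduction of the Bongartz claim: it \emph{is} the Bongartz claim. The bijection $F'$ of Corollary \ref{stautitl(kQ)1:1stautilt(Lambda)} sends ${^\perp}(\tau M)=\Gen B_M$ to $\Gen(\Lambda\otimes_{kQ}B_M)$ by its very definition, while ${^\perp}\tau_\Lambda(\Lambda\otimes_{kQ}M)=\Gen B_{\Lambda\otimes_{kQ}M}$; since two basic $\tau$-tilting modules generating the same torsion class are isomorphic, your granted statement is equivalent to $B_{\Lambda\otimes_{kQ}M}\cong\Lambda\otimes_{kQ}B_M$. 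Nothing in Corollary \ref{stautitl(kQ)1:1stautilt(Lambda)} supplies this: that result is an abstract bijection of sets and carries no compatibility with $\Ext$-projectivity or with the operation $U\mapsto{^\perp}\tau U$. So the crux of the theorem is assumed rather than proven.

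The gap is fillable, and it is exactly what the paper spends its technical effort on: Corollary \ref{Bongartz_commutes_with_induction} is deduced from Proposition \ref{indprojJ(M)1-1indprojJ(Lambda_otimes_M)}, whose ingredients are (a) Lemma \ref{B_M-tau-rigid}, that $B_{\Lambda\otimes_{kQ}M}$, being $\tau$-rigid, is itself an induced module (via Proposition \ref{tau-rigid-Lambda1:1tau-rigid-kQ}); (b) Lemma \ref{tau-rigid_in_J(X)_iff_taurigid_in_J(Lambda_otimes_X)}(iv), that $\Ext$-projectivity in ${^\perp}\tau(\Lambda\otimes_{kQ}Y)$ descends along induction; and (c) a count of indecomposable summands. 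If you prefer to complete your own route, you can: the inclusion $\Gen(\Lambda\otimes_{kQ}B_M)\subseteq{^\perp}\tau_\Lambda(\Lambda\otimes_{kQ}M)$ is immediate because $\Lambda\otimes_{kQ}B_M$ is $\tau$-rigid (Corollary \ref{inductionPreservesTauRigidModules}) and has $\Lambda\otimes_{kQ}M$ as a direct summand; for the reverse inclusion, write $B_{\Lambda\otimes_{kQ}M}\cong\Lambda\otimes_{kQ}N$ using (a), deduce from Krull--Schmidt and Proposition \ref{tau-rigid-Lambda1:1tau-rigid-kQ} that $N$ is a $\tau$-tilting $kQ$-module with $M\in\add N$, hence $\Gen N\subseteq{^\perp}(\tau M)=\Gen B_M$, and apply the exact induction functor to an epimorphism $B_M^r\twoheadrightarrow N$ to conclude $B_{\Lambda\otimes_{kQ}M}\in\Gen(\Lambda\otimes_{kQ}B_M)$. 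Either way, a substantive argument is required precisely where you wrote ``granting''.
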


In Section \ref{section3} we recall the definition of (signed) $\tau$-exceptional sequences and prove our second main result. 

\begin{theorem}[Theorem \ref{bijection_of_tau_exc_seq}, Corollary \ref{bijection_signed_tau_exceptional_sequences}]\label{Main_Thm_Sec2}
        Let $\Lambda = R\otimes kQ$ and let $t\in\{1,\cdots,n\}$. Then, the induction functor $\Lambda\otimes_{kQ}-: \modd kQ\to \modd\Lambda$ induces a bijection between the set of (signed) ($\tau$-)exceptional sequences of length $t$ in $\modd{kQ}$ and the set of (signed) $\tau$-exeptional sequences of length $t$ in $\modd{\Lambda}$. 
\end{theorem}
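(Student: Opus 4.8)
The plan is to argue by induction on the length $t$, equivalently on the number of simple modules, exploiting the recursive nature of (signed) $\tau$-exceptional sequences together with the self-similarity of the class of algebras $R\otimes kQ$ established in Theorem \ref{J(Lambda_otimes_kQ)=mod(R_otimes_kQ')}. Write $F := \Lambda\otimes_{kQ}-$ for the induction functor. Since $kQ$ is hereditary, Buan--Marsh's result guarantees that on $\modd kQ$ the notions of exceptional and $\tau$-exceptional sequence coincide, so on the source side the statement is unambiguous. The base case $t=1$ is exactly the bijection between indecomposable $\tau$-rigid $kQ$-modules and indecomposable $\tau$-rigid $\Lambda$-modules $M\mapsto \Lambda\otimes_{kQ}M$ from Section \ref{Section1}; for the signed version one additionally uses that $F$ sends indecomposable projective $kQ$-modules to indecomposable projective $\Lambda$-modules and commutes with the shift, so that $F(P[1])=(FP)[1]$ realises the bijection on the negative part as well.

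The inductive step rests on a single compatibility statement, which I regard as the heart of the argument: for a basic $\tau$-rigid $kQ$-module $M$, the functor $F$ restricts to a functor $J_{kQ}(M)\to J_\Lambda(FM)$, and under the equivalences $J_{kQ}(M)\simeq\modd\Gamma_M$ and $J_\Lambda(FM)\simeq\modd(R\otimes\Gamma_M)$ of Theorem \ref{J(Lambda_otimes_kQ)=mod(R_otimes_kQ')} this restriction is identified with the induction functor $(R\otimes\Gamma_M)\otimes_{\Gamma_M}-$. To prove that $F$ lands in $J_\Lambda(FM)$, I would write $J(M)={}^{\perp}(\tau M)\cap M^{\perp}=\Gen(B_M)\cap M^{\perp}$ and use two ingredients from the earlier sections: the compatibility of Bongartz complements $B_{FM}=F(B_M)$, coming from the bijection on support $\tau$-tilting modules, together with the right-exactness of $F$, gives $F(\Gen B_M)\subseteq\Gen(F B_M)=\Gen(B_{FM})={}^{\perp}(\tau FM)$; and the tensor formula $\Hom_\Lambda(FX,FY)\cong R\otimes\Hom_{kQ}(X,Y)$ shows that $\Hom_{kQ}(M,N)=0$ forces $\Hom_\Lambda(FM,FN)=0$, so the condition $N\in M^{\perp}$ passes to $FN\in (FM)^{\perp}$. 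The identification of the restricted functor with $(R\otimes\Gamma_M)\otimes_{\Gamma_M}-$ then follows because the two equivalences are realised by $\Hom_{kQ}(B_M,-)$ and $\Hom_\Lambda(B_{FM},-)$ respectively, and the same $\Hom$/$\Ext$ tensor formulas make $\Hom_\Lambda(B_{FM},F-)\cong R\otimes\Hom_{kQ}(B_M,-)$ as functors valued in $R\otimes\Gamma_M$-modules.

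Granting the compatibility lemma, the induction closes cleanly. Given a (signed) $\tau$-exceptional sequence $(M_1,\dots,M_t)$ in $\modd kQ$, its last term $M_t$ is handled by the base case, while $(M_1,\dots,M_{t-1})$ is a (signed) $\tau$-exceptional sequence in $J_{kQ}(M_t)\simeq\modd\Gamma_{M_t}$. Since $\Gamma_{M_t}$ is hereditary it is of the form $kQ'$ for an acyclic quiver $Q'$, so $R\otimes\Gamma_{M_t}=R\otimes kQ'$ again belongs to our class of algebras and has strictly fewer simples; the inductive hypothesis applied to this pair, transported through the equivalences by the compatibility lemma, identifies $(FM_1,\dots,FM_{t-1})$ with a (signed) $\tau$-exceptional sequence in $J_\Lambda(FM_t)$. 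By the recursive definition this means exactly that $(FM_1,\dots,FM_t)$ is a (signed) $\tau$-exceptional sequence in $\modd\Lambda$; conversely every such sequence arises this way, since its last term is $FM_t$ for some $M_t$ by Section \ref{Section1}, and the remaining terms lie in $J_\Lambda(FM_t)\simeq\modd(R\otimes kQ')$, hence are in the image by the inductive hypothesis. Injectivity is immediate from the bijectivity of $M\mapsto FM$ on indecomposable $\tau$-rigid modules and the faithfulness coming from the $\Hom$-formula.

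I expect the main obstacle to be the compatibility lemma, and within it the control of the $\tau$-perpendicular condition ${}^{\perp}(\tau M)$ under $F$: one must know that $F$ is exact, that it preserves Bongartz complements, and that the $\Hom$ and $\Ext$ tensor formulas hold, all of which I would draw from Sections \ref{Section1}--\ref{section2}. Matching the restricted functor with the smaller induction functor is then bookkeeping with these formulas, but it is essential that the two occurrences of $R$ --- in $\Lambda=R\otimes kQ$ and in $\Gamma_{FM}\cong R\otimes\Gamma_M$ --- be the same, which is precisely what Theorem \ref{J(Lambda_otimes_kQ)=mod(R_otimes_kQ')} secures.
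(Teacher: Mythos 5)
Your proposal is correct, and it reuses the paper's supporting results but organizes the induction differently. The forward direction is essentially the paper's Proposition \ref{PTensorTauExcIs_TauExc}, which likewise inducts on length via the self-similarity $J(\Lambda\otimes_{kQ}N_{i+1},\cdots,\Lambda\otimes_{kQ}N_t)\simeq \modd(R\otimes\Gamma_{N_{i+1}})$ of Theorem \ref{J(Lambda_otimes_kQ)=mod(R_otimes_kQ')}, and your ``compatibility lemma'' is exactly Lemma \ref{tau-rigid_in_J(X)_iff_taurigid_in_J(Lambda_otimes_X)}(i),(iii) together with Corollary \ref{transport_of_structure}, which identifies the restriction of $\Lambda\otimes_{kQ}-$ to $J(M)$ with $(R\otimes\Gamma_M)\otimes_{\Gamma_M}-$ under the two equivalences. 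Where you genuinely diverge is surjectivity and the signed case: the paper unwinds a $\tau$-exceptional sequence in $\modd\Lambda$ top-down, using the commuting square of torsion-free functors (Proposition \ref{tau_rigid_in_J(M)_1:1_tau_rigid_in_J(Lambda_otimes_kQM)}) and Buan--Hanson's reduction $J(M\oplus N)=J_{J(M)}(f_MN)$ (Theorem \ref{Buan-Hanson-Theorem}) to rewrite each iterated perpendicular category as $J$ of a single $\tau$-rigid module, and it then derives the signed statement separately from Proposition \ref{indprojJ(M)1-1indprojJ(Lambda_otimes_M)}; you instead run one simultaneous induction over the whole class $\{R\otimes kQ'\}$ --- legitimate since $\Gamma_{M_t}$ is basic hereditary over the algebraically closed field $k$, hence $kQ'$ for an acyclic $Q'$ --- which avoids Theorem \ref{Buan-Hanson-Theorem} at the top level and absorbs the signed version (including a last term $P[1]$, via $J(P[1])=J(P)$) into the same recursion. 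Two caveats temper the apparent economy. First, your claim that $B_{\Lambda\otimes_{kQ}M}\cong\Lambda\otimes_{kQ}B_M$ follows ``from the bijection on support $\tau$-tilting modules'' needs a supplementary maximality argument (e.g.\ restriction of scalars comparing $\Gen(\Lambda\otimes_{kQ}B_M)$ with ${}^{\perp}\tau(\Lambda\otimes_{kQ}M)$); in the paper this is Corollary \ref{Bongartz_commutes_with_induction}, whose proof passes through Proposition \ref{indprojJ(M)1-1indprojJ(Lambda_otimes_M)} and hence through the very torsion-free-functor commutation your surjectivity step bypasses, so your route is structurally cleaner but not logically independent of that machinery. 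Second, the entire weight of your argument rests on Corollary \ref{transport_of_structure} matching the restricted functor with the smaller induction functor, including the $R$-module structures, as you correctly anticipate; once that is in place, your recursion closes and the injectivity claim via restriction of scalars is sound. Net effect: you trade the paper's explicit top-down construction of the inverse for a more uniform recursion over the algebra class.
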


Let $\Lambda = R\otimes kQ$. Let $\Mcluster(kQ)$ and $\Mcluster(\Lambda)$ denote the $\tau$-cluster morphism categories of $kQ$ and $\Lambda$, respectively. Theorem \ref{Main_Thm_Sec2} motivates us to study the relation between $\Mcluster(kQ)$ and $\Mcluster(\Lambda)$ in Section \ref{section4}. In particular, we prove our third main theorem. 

\begin{theorem}[Theorem \ref{Mcluster(kQ)_equvalent_to_Mcluster(Lambda)}]
    There is an equivalence of categories between $\Mcluster(kQ)$ and $\Mcluster(\Lambda)$. 
\end{theorem}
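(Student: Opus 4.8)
The plan is to construct an equivalence of categories between $\Mcluster(kQ)$ and $\Mcluster(\Lambda)$ by using the bijections and equivalences established in the earlier sections. Let me recall the structure of a $\tau$-cluster morphism category: its objects are the $\tau$-perpendicular subcategories (equivalently, the wide subcategories arising as $J(M)$ for $\tau$-rigid $M$), and its morphisms are given by (signed) $\tau$-exceptional sequences, or equivalently by pairs consisting of a $\tau$-rigid object together with a $\tau$-perpendicular subcategory of its associated $\tau$-perpendicular category.

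Define a functor $F\colon \Mcluster(kQ) \to \Mcluster(\Lambda)$ on objects by sending a $\tau$-perpendicular subcategory of $\modd kQ$ to its image under the induction functor $\Lambda \otimes_{kQ} -$. The first key step is to verify that $F$ is well-defined on objects: by Theorem \ref{J(Lambda_otimes_kQ)=mod(R_otimes_kQ')}, a $\tau$-perpendicular subcategory $J(M) \simeq \modd \Gamma_M$ of $\modd kQ$ is sent to $J(\Lambda \otimes_{kQ} M) \simeq \modd(R \otimes \Gamma_M)$, which is again of the required form $R \otimes kQ'$ (since $\Gamma_M$ is hereditary, hence a path algebra $kQ'$), and is thus a legitimate object of $\Mcluster(\Lambda)$. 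The second step is to define $F$ on morphisms and check functoriality; here I would invoke Theorem \ref{Main_Thm_Sec2}, which gives a bijection between (signed) $\tau$-exceptional sequences in $\modd kQ$ and those in $\modd \Lambda$ via induction, and this is precisely what parametrizes the morphisms. I would then verify that this assignment respects composition and identities.

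To show $F$ is an equivalence, I would establish that it is full, faithful, and essentially surjective. Essential surjectivity on objects should follow from the classification of $\tau$-perpendicular subcategories of $\modd \Lambda$ given by Theorem \ref{J(Lambda_otimes_kQ)=mod(R_otimes_kQ')}: every such subcategory is $J(\Lambda \otimes_{kQ} M)$ for some $\tau$-rigid $kQ$-module $M$, because every indecomposable $\tau$-rigid $\Lambda$-module is of the form $\Lambda \otimes_{kQ} N$ (from Section \ref{Section1}), so every $\tau$-perpendicular subcategory is the image of one from $\modd kQ$. Fullness and faithfulness on morphisms should follow directly from the bijectivity in Theorem \ref{Main_Thm_Sec2}.

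The main obstacle I anticipate is the compatibility of composition with the functor $F$. In a $\tau$-cluster morphism category, composition of morphisms is defined via $\tau$-tilting reduction: one must reinterpret a $\tau$-rigid object and its perpendicular data inside a $\tau$-perpendicular subcategory $J(M) \simeq \modd \Gamma_M$. To prove $F$ preserves composition, I would need the induction functor to commute appropriately with this reduction process—that is, the equivalence $J(\Lambda \otimes_{kQ} M) \simeq \modd(R \otimes \Gamma_M)$ from Theorem \ref{J(Lambda_otimes_kQ)=mod(R_otimes_kQ')} must be compatible, via $F$, with the equivalence $J(M) \simeq \modd \Gamma_M$. Concretely, I would need to check that the diagram relating induction on $\modd kQ$ and induction on each $\tau$-perpendicular subcategory commutes, so that composing inside $\Mcluster(kQ)$ and then applying $F$ agrees with applying $F$ and then composing inside $\Mcluster(\Lambda)$. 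This naturality/compatibility of induction with $\tau$-tilting reduction is the technical heart of the argument, and verifying it carefully—likely by tracking $\tau$-rigid objects and their Bongartz complements through both reductions—is where the real work lies.
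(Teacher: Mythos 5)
Your proposal follows essentially the same route as the paper: the functor you describe is exactly the one constructed in Proposition \ref{functor_between_tau_cluster_morphism_categories} ($J(U)\mapsto J(\Lambda\otimes_{kQ}U)$, $g_V^{J(U)}\mapsto g_{\Lambda\otimes_{kQ}V}^{J(\Lambda\otimes_{kQ}U)}$), with density coming from the classification of support $\tau$-rigid objects and full faithfulness from the signed $\tau$-exceptional sequence bijection combined with unique factorization into irreducible morphisms (Propositions \ref{irr_morph_in_Mcluster} and \ref{induction_commutes_with_varphi}). The ``technical heart'' you correctly single out---compatibility of induction with $\tau$-tilting reduction---is precisely the paper's Proposition \ref{E-version3.7}, namely $\Eps_{\Lambda\otimes_{kQ}U}(\Lambda\otimes_{kQ}V)\cong \Lambda\otimes_{kQ}\Eps_U(V)$, and the paper proves it exactly as you anticipate: by tracking Bongartz and co-Bongartz complements through both reductions (Proposition \ref{co-Bongartz_complement} and Lemma \ref{direct_summand_of_co-Bongartz_commutes_with_induction}).
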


In addition, we prove that a conjecture by Buan and Hanson \cite[Conjecture 6.8]{tau-perpendicular_wide_subategories} holds for $\Lambda$; that is, if $\W\subseteq\modd{\Lambda}$ is a $\tau$-perpendicular subcategory of $\modd\Lambda$ and $\V\subseteq \W$ is a wide subcategory of $\W$, then $\V$ is a $\tau$-perpendicular subcategory of $\modd\Lambda$ if and only if $\V$ is a $\tau$-perpendicular subcategory of $\W$.  

In the final section, we conclude by presenting a concrete example that illustrates the theory developed in this work.

\section{$\tau$-rigid modules}\label{Section1}

In this section, we recall some facts about the tensor product of algebras and $\tau$-tilting theory, and we establish a bijection between indecomposable $\tau$-rigid  $\Lambda$-modules and indecomposable $\tau$-rigid  $kQ$-modules, where $\Lambda = R\otimes kQ$. \\

We start with summarizing some properties of the tensor product of algebras. The next two results will be used multiple times in our poofs.

\begin{proposition}\label{HomExt_tensor_product}
        Let $\Lambda$ and $\Gamma$ be finite dimensional algebras. Let $M_i\in \modd\Lambda$ and $N_i \in \modd\Gamma$, for $i = 1,2$. Then we have the following isomorphisms: 
    \begin{equation}\label{Hom_Tensor}
        \Hom_{\Lambda\otimes \Gamma}(M_1 \otimes N_1, M_2\otimes N_2) \cong \Hom_{\Lambda}(M_1, M_2)\otimes \Hom_{\Gamma}(N_1,  N_2).
    \end{equation}
    \begin{equation}\label{Ext_Tensor}
       \Ext_{\Lambda\otimes\Gamma}^m (M_1 \otimes N_1, M_2\otimes N_2)\cong \bigoplus_{i+j = m} \Ext_{\Lambda}^i(M_1,M_2) \otimes \Ext_{\Gamma}^j(N_1, N_2),  
    \end{equation}
    for $m\geq 1$. Moreover, for $M\in\modd\Lambda$ and $N\in\modd\Gamma$, we have that 
    \begin{equation}\label{pd_tensor_product}
        \pd_{\Lambda\otimes \Gamma}(M\otimes N) = \pd_\Lambda M + \pd_{\Gamma} N.
    \end{equation}
    Furthermore, if $M$ and $N$ are indecomposable, so is $M\otimes N$. 
\end{proposition}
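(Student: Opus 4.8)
The four statements are classical Künneth-type facts for tensor products of algebras over a field, so the plan is to prove the $\Hom$ and $\Ext$ isomorphisms first and then deduce the projective dimension formula and indecomposability as corollaries. For (\ref{Hom_Tensor}) I would start from the obvious natural map $\Phi\colon \Hom_\Lambda(M_1,M_2)\otimes\Hom_\Gamma(N_1,N_2)\to\Hom_{\Lambda\otimes\Gamma}(M_1\otimes N_1,M_2\otimes N_2)$ sending $f\otimes g$ to the morphism $f\otimes g$. This is visibly an isomorphism when $M_1=\Lambda$ and $N_1=\Gamma$, since both sides are identified with $M_2\otimes N_2$; by additivity in $M_1$ and $N_1$ it stays an isomorphism whenever $M_1,N_1$ are finitely generated projective (note $P\otimes Q$ is a direct summand of a free $\Lambda\otimes\Gamma$-module whenever $P,Q$ are projective, hence projective). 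For arbitrary $M_1,N_1$ I would take projective presentations over $\Lambda$ and $\Gamma$; since $-\otimes_k-$ is exact over a field, these tensor together to a projective presentation of $M_1\otimes N_1$ over $\Lambda\otimes\Gamma$, and naturality of $\Phi$ together with left-exactness of $\Hom$ and the five lemma upgrades the projective case to the general case.

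Next, for (\ref{Ext_Tensor}) I would run the same idea homologically. Choosing projective resolutions $P_\bullet\to M_1$ and $Q_\bullet\to N_1$, and using again that tensoring over $k$ is exact and preserves projectivity, the total complex $\mathrm{Tot}(P_\bullet\otimes Q_\bullet)$ is a projective resolution of $M_1\otimes N_1$ over $\Lambda\otimes\Gamma$. Applying $\Hom_{\Lambda\otimes\Gamma}(-,M_2\otimes N_2)$ and identifying each term via the projective case of (\ref{Hom_Tensor}) exhibits the resulting complex as the total complex of the tensor product of $\Hom_\Lambda(P_\bullet,M_2)$ and $\Hom_\Gamma(Q_\bullet,N_2)$. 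Since all terms are $k$-vector spaces, every $\mathrm{Tor}$-correction term in the algebraic Künneth theorem vanishes, and taking cohomology yields exactly the claimed direct sum decomposition (the degree-zero case recovering (\ref{Hom_Tensor})).

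The projective dimension formula (\ref{pd_tensor_product}) then follows from (\ref{Ext_Tensor}): the total-complex resolution has length $\pd_\Lambda M+\pd_\Gamma N$, giving the inequality $\le$, while choosing $M_2,N_2$ with $\Ext^{\pd_\Lambda M}_\Lambda(M,M_2)\neq 0$ and $\Ext^{\pd_\Gamma N}_\Gamma(N,N_2)\neq 0$ makes the top summand of $\Ext^{\pd_\Lambda M+\pd_\Gamma N}_{\Lambda\otimes\Gamma}(M\otimes N,M_2\otimes N_2)$ nonzero, since any summand of that total degree with first index exceeding $\pd_\Lambda M$ or second index exceeding $\pd_\Gamma N$ vanishes, leaving only this one term; this gives $\ge$. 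Finally, for indecomposability, (\ref{Hom_Tensor}) gives $\End_{\Lambda\otimes\Gamma}(M\otimes N)\cong\End_\Lambda(M)\otimes\End_\Gamma(N)$; as $M,N$ are indecomposable finite-dimensional modules, $\End_\Lambda(M)$ and $\End_\Gamma(N)$ are local, and I would conclude by showing that the tensor product of two local finite-dimensional $k$-algebras $A,B$ is local. Here $k$ algebraically closed is essential (over $\mathbb{R}$ one has $\mathbb{C}\otimes_\mathbb{R}\mathbb{C}\cong\mathbb{C}\times\mathbb{C}$): the residue fields are then $k$, the ideal $\rad A\otimes B+A\otimes\rad B$ is nilpotent with quotient $(A/\rad A)\otimes(B/\rad B)=k$, hence equals $\rad(A\otimes B)$, so $A\otimes B$ is local and $M\otimes N$ is indecomposable. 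I expect this last locality argument to be the only genuinely algebra-specific step, and therefore the main point to get right, the rest being the standard Künneth machine together with the exactness of $-\otimes_k-$.
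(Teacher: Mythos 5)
Your proof is correct, but note that the paper does not actually argue this proposition at all: its ``proof'' consists of citing Cartan--Eilenberg (Chapter IX) for the isomorphisms \eqref{Hom_Tensor}--\eqref{Ext_Tensor} and the formula \eqref{pd_tensor_product}, and a separate reference for the indecomposability statement. What you have written is a faithful, self-contained reconstruction of exactly those classical arguments: reduction from the free/projective case via presentations, naturality and left exactness for \eqref{Hom_Tensor}; the K\"unneth theorem with vanishing $\mathrm{Tor}$-corrections over a field, applied to $\mathrm{Tot}(P_\bullet\otimes Q_\bullet)$, for \eqref{Ext_Tensor}; the length/non-vanishing argument for \eqref{pd_tensor_product}; and the locality of $A\otimes B$ for local finite dimensional $k$-algebras, which is precisely the content of the reference cited for the last claim. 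Your route buys self-containedness and, importantly, makes explicit where the paper's standing hypothesis that $k$ is algebraically closed is genuinely needed (only in the locality step, via $\End_\Lambda(M)/\rad\End_\Lambda(M)\cong k$; your $\mathbb{C}\otimes_{\mathbb{R}}\mathbb{C}$ example is the right cautionary one), whereas the paper's route buys brevity. Two small points to tidy if you write this up: the term-wise identification of $\Hom_{\Lambda\otimes\Gamma}(\mathrm{Tot}(P_\bullet\otimes Q_\bullet),M_2\otimes N_2)$ with the total Hom-complex uses that the resolutions consist of finitely generated projectives (automatic here, since $\modd$ denotes finitely generated modules over finite dimensional algebras, but worth saying); and your argument for \eqref{pd_tensor_product} as stated covers only the case where both projective dimensions are finite --- for the infinite case take $j=0$ and $N_2=N$ in \eqref{Ext_Tensor}, so that $\Ext^i_\Lambda(M,M_2)\otimes\Hom_\Gamma(N,N)\neq 0$ for arbitrarily large $i$, forcing $\pd_{\Lambda\otimes\Gamma}(M\otimes N)=\infty$.
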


\begin{proof}
    For the isomorphisms \eqref{Hom_Tensor}-\eqref{Ext_Tensor}, and the equation \eqref{pd_tensor_product}, see {\cite[Chapter IX]{CartanEilenberg}}. For the last statement see for example {\cite[Prop. 3.2]{Gproj_tau_tilting_modules}}.
\end{proof}

\begin{lemma}\label{tensor_of_minimal_proj_presentation}
    Let $\Lambda$ and $\Gamma$ be finite dimensional algebras. Let $N\in\modd\Gamma$, and let $Q_\bullet: Q_1\xrightarrow{d_1^N} Q_0\xrightarrow{d_0^N} N\to 0$ be its minimal projective presentation. Let $P$ be a projective $\Lambda$-module. Then,  $P\otimes Q_1\xrightarrow{\id\otimes d_1^N} P\otimes Q_0\xrightarrow{\id\otimes d_0^N} P\otimes N\to 0$ is a minimal projective presentation of $P\otimes N$ in $\modd(\Lambda\otimes \Gamma)$
\end{lemma}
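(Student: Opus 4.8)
The plan is to verify three things in turn: that the terms $P\otimes Q_1$ and $P\otimes Q_0$ are projective $\Lambda\otimes\Gamma$-modules, that the displayed complex is exact, and that it is moreover minimal. The first two are formal consequences of working over a field, while the minimality is where the actual content lies.

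For projectivity, I would note that $P$ is a direct summand of a free module $\Lambda^a$ and each $Q_i$ is a direct summand of some $\Gamma^{b}$; hence $P\otimes Q_i$ is a direct summand of $\Lambda^a\otimes\Gamma^b\cong(\Lambda\otimes\Gamma)^{ab}$, so it is projective. For exactness, the functor $P\otimes_k-$ is exact because the tensor product is taken over the field $k$, so applying it to the exact sequence $Q_1\to Q_0\to N\to 0$ yields the exact sequence in the statement; this already exhibits the complex as a projective presentation of $P\otimes N$. Exactness of $P\otimes_k-$ also gives the identifications $\im(\id\otimes d_1^N) = P\otimes\im(d_1^N)$ and $\ker(\id\otimes d_1^N) = P\otimes\ker(d_1^N)$ as $\Lambda\otimes\Gamma$-submodules, which I will use below.

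For minimality I would invoke the standard radical criterion: a two-term projective presentation $f\colon P_1\to P_0$ of a module is minimal if and only if $\im(f)\subseteq\rad P_0$ and $\ker(f)\subseteq\rad P_1$. Since $Q_\bullet$ is the minimal projective presentation of $N$, we have $\im(d_1^N)\subseteq\rad Q_0$ and $\ker(d_1^N)\subseteq\rad Q_1$. The key auxiliary fact is that $P\otimes\rad Q\subseteq\rad(P\otimes Q)$ for any projective $\Gamma$-module $Q$. This holds because $\Lambda\otimes\rad\Gamma$ is a two-sided ideal of $\Lambda\otimes\Gamma$ that is nilpotent, since $(\Lambda\otimes\rad\Gamma)^m=\Lambda\otimes(\rad\Gamma)^m=0$ for $m\gg 0$, and is therefore contained in $\rad(\Lambda\otimes\Gamma)$; applying this ideal to $P\otimes Q$ produces exactly $P\otimes\rad Q$ when $Q$ is projective, using $\rad Q=(\rad\Gamma)Q$ and $\Lambda P=P$. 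Combining these, $\im(\id\otimes d_1^N)=P\otimes\im(d_1^N)\subseteq P\otimes\rad Q_0\subseteq\rad(P\otimes Q_0)$, and likewise $\ker(\id\otimes d_1^N)=P\otimes\ker(d_1^N)\subseteq P\otimes\rad Q_1\subseteq\rad(P\otimes Q_1)$, which is precisely the minimality criterion.

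The main obstacle I anticipate is the radical containment $P\otimes\rad Q\subseteq\rad(P\otimes Q)$: one must be careful to observe that the nilpotent ideal $\Lambda\otimes\rad\Gamma$ already suffices, so that no full description of $\rad(\Lambda\otimes\Gamma)$ (which would require a separability argument) is needed, and that it is the projectivity of $Q$ that lets one rewrite the action of this ideal on $P\otimes Q$ as $P\otimes\rad Q$. Everything else is routine.
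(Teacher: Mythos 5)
Your proposal is correct, and it follows the same overall route as the paper's proof, but it is substantially more self-contained. The paper's argument is essentially a two-line assertion: projective $(\Lambda\otimes\Gamma)$-modules are of the form $P\otimes Q$ with $P\in\proj\Lambda$, $Q\in\proj\Gamma$, and the functor $P\otimes-$ is exact and \emph{preserves projective covers}; the claim then follows. What you have done is supply a complete proof of exactly the part the paper leaves as a citation to folklore: your radical criterion for minimality ($\im(\id\otimes d_1^N)\subseteq\rad(P\otimes Q_0)$ and $\ker(\id\otimes d_1^N)\subseteq\rad(P\otimes Q_1)$), together with the observation that $\Lambda\otimes\rad\Gamma$ is a nilpotent two-sided ideal and hence lies in $\rad(\Lambda\otimes\Gamma)$, is precisely what justifies ``$P\otimes-$ preserves projective covers'' without needing any description of $\rad(\Lambda\otimes\Gamma)$ itself. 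All steps check out: the summand argument for projectivity, exactness of $P\otimes_k-$ over the field $k$ (which also gives the identifications of image and kernel), and the containment $P\otimes\rad Q\subseteq\rad(P\otimes Q)$. One minor remark: projectivity of $Q$ is not actually needed for the rewriting $(\Lambda\otimes\rad\Gamma)(P\otimes Q)=P\otimes\rad Q$, since $\rad Q=(\rad\Gamma)Q$ holds for every finitely generated module over an artin algebra by Nakayama's lemma; this is a harmless extra hypothesis, as you only apply the containment to projective modules anyway.
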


\begin{proof}
It is well-known that projective $(\Lambda\otimes\Gamma)$-modules are of the form $P\otimes Q$, for $P\in\proj\Lambda$ and $Q\in\proj\Gamma$. Moreover, since $P$ is a projective $\Lambda$-module, the functor $P\otimes-$ is exact and preserves projective covers. The claim follows. 
\end{proof}

\begin{definition}[{\cite[Def. 0.1]{tau-tiling-theory}}]
    Let $\Lambda$ be a finite dimensional algebra and let $M$ in $\modd \Lambda$:
    \begin{enumerate}
        \item[(a)] $M$ is called $\tau$-\emph{rigid} if $\Hom_\Lambda(M,\tau M) = 0$; 
        \item[(b)] $M$ is called $\tau$-\emph{tilting} if $M$ is $\tau$-rigid and $|\Lambda|= |M|$; 
        \item[(c)] $M$ is called  \emph{support $\tau$-tilting} if there exists an idempotent $e$ of $\Lambda$ such that $M$ is a $\tau$-tilting $(\Lambda/\langle e \rangle)$-module. 
    \end{enumerate}
\end{definition}

It is often convenient to view support $\tau$-tilting $\Lambda$-modules as certain pairs of modules in $\modd \Lambda$. 

\begin{definition}[{\cite[Def. 0.3]{tau-tiling-theory}}]
    Let $(M,P)$ be a pair with $M\in\modd\Lambda$ and $P\in\proj\Lambda$. 
    \begin{enumerate}
        \item[(a)] We call $(M,P)$ a \emph{$\tau$-rigid} pair if $M$ is $\tau$-rigid and $\Hom_\Lambda(P,M) = 0$; 
        \item[(b)] We call $(M,P)$ a \emph{support $\tau$-tilting} pair if $(M,P)$ is $\tau$-rigid and $\abs{M}+\abs{P} = \abs{\Lambda}$. 
    \end{enumerate}
\end{definition}

The notions of support $\tau$-tilting $\Lambda$-module and support $\tau$-tilting pair are compatible in the following way. 

\begin{proposition}[{\cite[Prop. 2.3]{tau-tiling-theory}}]
        Let $(M,P)$ be a pair with $M\in\modd\Lambda$ and $P\in \proj\Lambda$. Let $e$ be and idempotent of $\Lambda$ such that $\add P = \add \Lambda e$.
        \begin{enumerate}
            \item[(i)]  $(M,P)$ is a support $\tau$-tilting pair for $\Lambda$ if and only if $M$ is a support $\tau$-tilting $(\Lambda/\langle e \rangle)$-module; 
            \item[(ii)] If $(M,P)$ and $(M,Q)$ are support $\tau$-tilting pairs for $\Lambda$, then $\add P = \add Q$. In other words, $M$ determines $P$ and $e$ uniquely.  
        \end{enumerate}
\end{proposition}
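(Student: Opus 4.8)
The plan is to deduce both parts from a single comparison lemma relating $\tau$-rigidity over $\Lambda$ to $\tau$-rigidity over the quotient $\Gamma := \Lambda/\langle e\rangle$. Fix a complete set of primitive orthogonal idempotents $e_1,\dots,e_n$ of $\Lambda$ and write $e=\sum_{i\in I}e_i$, so that $\add P=\add\Lambda e$ forces $\abs{P}=\abs{I}$ and $\abs{\Gamma}=n-\abs{P}$. The restriction functor identifies $\modd\Gamma$ with the full subcategory of those $M\in\modd\Lambda$ satisfying $eM=0$, equivalently $\Hom_\Lambda(P,M)=0$ (since $\Hom_\Lambda(\Lambda e,M)\cong eM$); being fully faithful, exact and reflecting indecomposables, it leaves $\abs{M}$ unchanged. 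The crux is the following \emph{comparison lemma}: for $M\in\modd\Gamma$, $M$ is $\tau_\Lambda$-rigid if and only if $M$ is $\tau_\Gamma$-rigid.

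To prove this I would use the standard identification $\Hom_\Lambda(M,\tau_\Lambda M)\cong D\coker\big(\Hom_\Lambda(P_0,M)\xrightarrow{\Hom(p,M)}\Hom_\Lambda(P_1,M)\big)$ attached to a minimal projective presentation $P_1\xrightarrow{p}P_0\to M\to 0$, so that $\tau$-rigidity is equivalent to surjectivity of $\Hom_\Lambda(p,M)$. Applying the right exact functor $\Gamma\otimes_\Lambda-$ yields an exact sequence $\bar P_1\xrightarrow{\bar p}\bar P_0\to M\to 0$, and the adjunction $\Hom_\Lambda(X,M)\cong\Hom_\Gamma(\Gamma\otimes_\Lambda X,M)$ (valid as $eM=0$) identifies $\Hom_\Lambda(p,M)$ with $\Hom_\Gamma(\bar p,M)$. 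Thus the two rigidity conditions coincide provided $\bar p$ is again a minimal projective presentation.

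Establishing this minimality is the main obstacle. Minimality at $\bar P_0$ is immediate, since $\btop M$ is supported outside $I$, so $P_0$, hence $\bar P_0$, is the projective cover of $M$ over $\Gamma$ and $\im\bar p\subseteq\rad\bar P_0$. The delicate point is that no minimal generator of the syzygy $\Omega_\Lambda M$ sitting at a vertex $j\notin I$ is lost after reduction modulo $\langle e\rangle$. I would argue this directly: such a lost generator would produce an element $c\in\langle e\rangle P_0\cap\Omega_\Lambda M$ with $c=e_jc$ and $c\notin\rad\Omega_\Lambda M$. Writing $c$ as a $\Lambda$-combination of elements $w\in eP_0$, the identity $c=e_jc$ with $j\notin I$ shows every coefficient is a nonconstant path from a vertex of $I$ to $j$, hence lies in $\rad\Lambda$; moreover each such $w\in eP_0$ already lies in $\Omega_\Lambda M$ because $eM=0$. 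Therefore $c\in\rad\Lambda\cdot\Omega_\Lambda M=\rad\Omega_\Lambda M$, a contradiction. Consequently $\btop\Omega_\Gamma M$ equals the part of $\btop\Omega_\Lambda M$ supported outside $I$, $\bar p$ is minimal, and the comparison lemma follows.

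With the lemma in hand, part (i) becomes a matter of counting: if $(M,P)$ is a support $\tau$-tilting pair then $eM=0$, so $M\in\modd\Gamma$, the lemma makes $M$ $\tau_\Gamma$-rigid, and $\abs{M}=n-\abs{P}=\abs{\Gamma}$ forces $M$ to be $\tau$-tilting over $\Gamma$; conversely a $\tau$-tilting $\Gamma$-module $M$ satisfies $\Hom_\Lambda(P,M)=0$, is $\tau_\Lambda$-rigid by the lemma, and has $\abs{M}+\abs{P}=\abs{\Gamma}+\abs{P}=n$ (here the count $\abs{M}+\abs{P}=n$ forces $M$ to be genuinely $\tau$-tilting, not merely support $\tau$-tilting, over $\Gamma$). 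For part (ii) I would show that the support $I$ is determined by $M$ alone, namely $I=\{i : e_iM=0\}$: the inclusion $I\subseteq\{i : e_iM=0\}$ is just $\Hom_\Lambda(P,M)=0$, while if $e_iM=0$ with $i\notin I$ then $M$ is a module over $\Gamma/\langle\bar e_i\rangle$, hence $\tau$-rigid there by the lemma, so the standard bound $\abs{M}\le\abs{\Gamma/\langle\bar e_i\rangle}=\abs{\Gamma}-1$ contradicts $\abs{M}=\abs{\Gamma}$ from part (i). Thus two support $\tau$-tilting pairs $(M,P)$ and $(M,Q)$ share the same support, giving $\add P=\add\Lambda e=\add\Lambda f=\add Q$.
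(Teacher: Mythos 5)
This proposition is recalled in the paper from \cite[Prop.~2.3]{tau-tiling-theory} \emph{without proof}, so there is no in-paper argument to compare against; your proposal can only be judged on its own terms and against the original Adachi--Iyama--Reiten proof. Your argument is correct, and it is essentially a reconstruction of that original proof: your comparison lemma is their reduction lemma (AIR Lemma~2.1(b)), obtained, as there, by combining the Auslander--Smal{\o} criterion ($M$ is $\tau$-rigid if and only if $\Hom_\Lambda(p,M)$ is surjective for a minimal presentation $p$) with the fact that $(\Lambda/\langle e\rangle)\otimes_\Lambda-$ carries minimal presentations of modules annihilated by $e$ to minimal presentations; parts (i) and (ii) then follow by the same counting and support-determination (sincerity) arguments used in that paper. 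A few refinements. Your minimality argument, though phrased via paths, really only needs $e_j\Lambda e\subseteq\rad\Lambda$ for $j\notin I$, which holds for any basic algebra (the paper's standing assumption, so no loss of generality); it can be streamlined by noting that $eM=0$ gives $eP_0\subseteq\Omega_\Lambda M$, hence $\langle e\rangle P_0=\Lambda e\Lambda P_0=\Lambda e P_0=\Lambda e\,\Omega_\Lambda M$, so that $\rad\Omega_\Lambda M+\langle e\rangle P_0=\rad\Omega_\Lambda M+\Lambda e\,\Omega_\Lambda M$ and the relevant tops visibly agree, with no element-by-element path analysis. The bound $\abs{N}\le\abs{A}$ for $\tau$-rigid modules over an algebra $A$, on which your part (ii) rests, is a genuine external input (it is also what AIR use, via sincerity of $\tau$-tilting modules), so it deserves a citation rather than the label ``standard bound.'' Finally, you prove the statement in AIR's original formulation --- $M$ is $\tau$-\emph{tilting} over $\Lambda/\langle e\rangle$ --- and this is the correct reading: with the paper's literal wording ``support $\tau$-tilting $(\Lambda/\langle e\rangle)$-module,'' the ``if'' direction of (i) would fail (take $M=0$ and $P$ a proper direct summand of $\Lambda$), so your implicit correction is warranted.
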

 
We recall the following useful characterizations of Auslander and Smal{\o}.

\begin{lemma}[{\cite[Proposition 5.8]{AlmostSplitSeqInSubcategories}}]\label{Smalo's_result}
Let $M,N\in \modd\Lambda$. The following holds: 
\begin{enumerate}
    \item[(i)] $\Hom_\Lambda(X,\tau Y) = 0$ if and only if $\Ext^1_\Lambda(Y, \Gen X) = 0$;
    \item[(ii)] $X$ is $\tau$-rigid if and only if $X$ is $\Ext$-projective in $\Gen X$. 
\end{enumerate}
\end{lemma}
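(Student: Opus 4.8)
The plan is to derive both statements from the Auslander--Reiten duality formula
\[
\overline{\Hom}_\Lambda(X,\tau Y)\cong D\,\Ext^1_\Lambda(Y,X),
\]
where $D=\Hom_k(-,k)$ and $\overline{\Hom}$ denotes homomorphisms modulo those factoring through an injective module, together with the classical fact that $\tau Y\cong D\operatorname{Tr}Y$ has no nonzero injective direct summand. I would first observe that (ii) is just the special case $Y=X$ of (i): since $X\in\Gen X$ always holds, the condition that $X$ be $\Ext$-projective in $\Gen X$ is exactly $\Ext^1_\Lambda(X,\Gen X)=0$, and (i) with $Y=X$ identifies this with $\Hom_\Lambda(X,\tau X)=0$, i.e.\ with $\tau$-rigidity of $X$. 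So all the effort goes into (i).

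For the implication $\Hom_\Lambda(X,\tau Y)=0\Rightarrow\Ext^1_\Lambda(Y,\Gen X)=0$, I would take $C\in\Gen X$ and fix an epimorphism $X^{n}\twoheadrightarrow C$. Every map $g\colon C\to\tau Y$ yields, after composition with this epimorphism, a map $X^{n}\to\tau Y$ whose components $X\to\tau Y$ all vanish by hypothesis; since $X^{n}\twoheadrightarrow C$ is epic this forces $g=0$. Hence $\Hom_\Lambda(C,\tau Y)=0$, so a fortiori $\overline{\Hom}_\Lambda(C,\tau Y)=0$, and the duality formula gives $\Ext^1_\Lambda(Y,C)=0$; as $C$ was arbitrary, $\Ext^1_\Lambda(Y,\Gen X)=0$.

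The converse is where the genuine difficulty lies, and I expect the main obstacle to be upgrading ``factors through an injective'' to ``is zero''. Assuming $\Ext^1_\Lambda(Y,\Gen X)=0$, the duality formula applied to $X\in\Gen X$ already gives $\overline{\Hom}_\Lambda(X,\tau Y)=0$, i.e.\ every $f\colon X\to\tau Y$ factors through an injective; the task is to show such an $f$ must actually vanish. Suppose not, and put $N=\im f$, a nonzero quotient of $X$, so $N\in\Gen X$. I would show that the inclusion $\iota\colon N\hookrightarrow\tau Y$ cannot factor through an injective: if $\iota=vu$ with $u\colon N\to I$, $v\colon I\to\tau Y$ and $I$ injective, then $u$ is monic, and since $I$ is injective it contains a copy of the injective envelope $E(N)$ as a direct summand; replacing $I$ by $E(N)$ and $v$ by its restriction, we may assume $N$ is essential in $I$. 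Then $v|_{N}=\iota$ is monic, so $\ker v\cap N=0$, whence $\ker v=0$ by essentiality, exhibiting the nonzero injective $E(N)$ as a submodule, hence a direct summand, of $\tau Y$ --- contradicting that $\tau Y$ has no injective summand. Thus $\overline{\Hom}_\Lambda(N,\tau Y)\neq0$, so by the duality formula $\Ext^1_\Lambda(Y,N)\neq0$ with $N\in\Gen X$, contradicting the hypothesis. Therefore $\Hom_\Lambda(X,\tau Y)=0$, which proves (i) and hence, by the specialization noted above, (ii).
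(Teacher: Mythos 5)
Your proposal is correct. Note that the paper itself offers no proof of this lemma: it is quoted verbatim from Auslander--Smal{\o} \cite[Prop.\ 5.8]{AlmostSplitSeqInSubcategories}, so there is no internal argument to compare against; your write-up is essentially the standard proof of that classical result. The reduction of (ii) to the case $Y=X$ of (i) is exactly right, since $X\in\Gen X$ and $\Ext$-projectivity in $\Gen X$ means $\Ext^1_\Lambda(X,\Gen X)=0$. The forward direction is routine and correct (precompose with an epimorphism $X^n\twoheadrightarrow C$ and use that each component lands in $\Hom_\Lambda(X,\tau Y)=0$). The genuine content is, as you identify, the converse, and your handling of it is sound: the Auslander--Reiten formula $\Ext^1_\Lambda(Y,C)\cong D\overline{\Hom}_\Lambda(C,\tau Y)$ only kills maps modulo injectives, and you close the gap by passing to $N=\im f$, extending the factorization through an injective to one through the injective envelope $E(N)$, and using essentiality of $N$ in $E(N)$ to produce a monomorphism $E(N)\hookrightarrow\tau Y$, which splits and contradicts the standard fact that $\tau Y = D\operatorname{Tr} Y$ (computed from a minimal projective presentation) has no nonzero injective direct summands. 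Every step invoked is a standard fact over an Artin algebra, so the argument is complete.
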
 

Let $R$ be a finite dimensional local commutative algebra and let $\Lambda= R\otimes kQ$. Notice that $R\cong kQ_R/I_R$ where $Q_R$ is the quiver with one vertex and $t$ loops $x_i$, for $1\leq i\leq t$, and $I_R$ is an admissible ideal of $kQ_R$. It is easy to see that $\Lambda\cong kQ'/I'$, where the quiver $Q'$ is obtained from $Q$ by adding loops $x_{i,j}$ at each vertex $j\in Q_0$ for $1\leq i\leq t$ (i.e. a copy of $Q_R$ at every vertex $j$). The ideal $I'$ is generated by the relations $\alpha\cdot x_{i,u} = x_{i,v}\cdot \alpha$ for every arrow $\alpha: u\to v$ in $Q$, together with $I_{R,j}$, where $I_{R,j}$ denotes the inclusion of the admissible ideal $I_R$ for each copy of $Q_R$ at a vertex $j$; see \cite{On_the_representation_type_of_tensor_product_algebras} for the general construction of the tensor product of path algebras modulo admissible ideal. 
Observe that for $t=1$, our class of algebras coincides with the class studied in \cite{Quivers_with_relations_for_symmetrizable_Cartan_matrices_I}.

It is well-known there is a functor $$ \Lambda\otimes_{kQ}- : \modd{kQ}\to \modd\Lambda$$ known as the \textit{induction functor}. Notice that, for $M\in \modd{kQ}$, we have $$\Lambda\otimes_{kQ}M = R\otimes (kQ\otimes_{kQ} M) \cong R\otimes M.$$ Moreover, the induction functor has a right adjoint given by the \textit{restriction of scalars} $\res(-)$. The $\Lambda$-modules in the image of the induction functor are called \textit{induced modules}. 

\begin{remark}\label{res(ind)}
    Let $d:=\dim R$. Observe that for a $kQ$-module $M$, composing the induction functor with the restriction of scalars gives $d$ copies of $M$, that is $\res(\Lambda\otimes_{kQ}M)= M^d$. 
\end{remark}

The third statement of the next result was mentioned (without proof) in the proof of  \cite[Corollary 4.4]{super_Caldero-Chapoton_map_for_type_A}. We include the proof here for completeness. 

\begin{proposition}\label{tauCommutesWithInduction}
    Let $\Lambda = R\otimes kQ$. The following statements hold: 
    \begin{enumerate}
        \item[(i)] The induction functor is exact;
        \item[(ii)] The Nakayama functor commutes with the induction functor, that is, for $M\in\modd{kQ}$, there is a natural isomorphism in $\modd\Lambda$ $$ \nu_\Lambda(\Lambda\otimes_{kQ} M)\cong \Lambda\otimes_{kQ}\nu_{kQ}(M);$$ 
        \item[(iii)] The AR-translation commutes with the induction functor, that is, for $M\in\modd{kQ}$, there is a natural isomorphism in $\modd\Lambda$ $$ \tau_\Lambda(\Lambda\otimes_{kQ} M)\cong \Lambda\otimes_{kQ}\tau_{kQ}(M).$$ 
    \end{enumerate}
\end{proposition}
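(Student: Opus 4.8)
I would prove the three parts in sequence, since (iii) rests on (ii) and (ii) on the external–tensor description of induced modules. Throughout I would use the identification $\Lambda\otimes_{kQ}M\cong R\otimes M$, under which $\Lambda=R\otimes kQ$ acts by $(r\otimes q)(s\otimes m)=rs\otimes qm$; that is, $\Lambda\otimes_{kQ}M$ is the external tensor product of the regular left $R$-module $R$ with the $kQ$-module $M$. Part (i) is then immediate: as a right $kQ$-module $\Lambda$ is free of rank $\dim_k R$, hence flat, so $\Lambda\otimes_{kQ}-$ is exact (equivalently, on underlying spaces the functor is $R\otimes_k-$, which is exact as $k$ is a field).

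For (ii) I would compute the Nakayama functor $\nu_\Lambda=D\Hom_\Lambda(-,\Lambda)$ directly from the Hom–tensor formula \eqref{Hom_Tensor}. Writing $\Lambda=R\otimes kQ$ and $\Lambda\otimes_{kQ}M\cong R\otimes M$,
$$\Hom_\Lambda(R\otimes M,\,R\otimes kQ)\cong\Hom_R(R,R)\otimes\Hom_{kQ}(M,kQ)\cong R\otimes\Hom_{kQ}(M,kQ),$$
and applying $D$ together with $D(X\otimes Y)\cong DX\otimes DY$ for finite-dimensional spaces yields $\nu_\Lambda(\Lambda\otimes_{kQ}M)\cong DR\otimes\nu_{kQ}(M)$. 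Naturality in $M$ is inherited from that of \eqref{Hom_Tensor} and of $D$. To reach the asserted form $\Lambda\otimes_{kQ}\nu_{kQ}(M)\cong R\otimes\nu_{kQ}(M)$ one must identify the $R$-factor $\nu_R(R)=DR$ with the regular module $R$.

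For (iii) I would reduce to the projective instance of (ii) via minimal projective presentations. Let $P_1\to P_0\to M\to0$ be a minimal projective presentation in $\modd kQ$; by Lemma \ref{tensor_of_minimal_proj_presentation} and part (i), inducing up gives a minimal projective presentation $\Lambda\otimes_{kQ}P_1\to\Lambda\otimes_{kQ}P_0\to\Lambda\otimes_{kQ}M\to0$. Using the standard exact sequence $0\to\tau X\to\nu P_1'\to\nu P_0'$ attached to a minimal projective presentation $P_1'\to P_0'\to X\to0$, applying $\nu_\Lambda$ to the induced presentation, substituting $\nu_\Lambda(\Lambda\otimes_{kQ}P_i)\cong\Lambda\otimes_{kQ}\nu_{kQ}(P_i)$ from (ii), and invoking exactness of induction (so that it commutes with kernels), I would obtain
$$\tau_\Lambda(\Lambda\otimes_{kQ}M)\cong\ker\!\big(\Lambda\otimes_{kQ}\nu_{kQ}P_1\to\Lambda\otimes_{kQ}\nu_{kQ}P_0\big)\cong\Lambda\otimes_{kQ}\ker(\nu_{kQ}P_1\to\nu_{kQ}P_0)\cong\Lambda\otimes_{kQ}\tau_{kQ}(M).$$

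The step I expect to be the main obstacle is exactly the identification of the $R$-factor, $\nu_R(R)=DR\cong R$. This is precisely the self-injectivity (Frobenius property) of $R$, which holds for the truncated polynomial algebras $R=k[x]/(x^t)$ motivating the paper; the computation in (ii) shows that in general the method only delivers $\tau_\Lambda(\Lambda\otimes_{kQ}M)\cong DR\otimes\tau_{kQ}(M)$, so this point is essential rather than cosmetic. All remaining ingredients — exactness of induction, the Hom–tensor isomorphism, and the $\nu$-sequence computing $\tau$ — are formal.
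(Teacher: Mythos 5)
Your proposal follows the paper's own argument in all three parts. Part (i) is identical ($\Lambda$ is free of finite rank as a $kQ$-module, hence induction is exact). Part (iii) is the same $\nu$-sequence argument applied to the induced minimal projective presentation; your use of exactness of induction to commute kernels is just a compressed form of the paper's comparison of two exact sequences via a commutative diagram. Part (ii) differs only cosmetically: you compute $\nu_\Lambda = D\Hom_\Lambda(-,\Lambda)$ directly via the Hom--tensor formula, whereas the paper uses $\nu_\Lambda \cong D(\Lambda)\otimes_\Lambda -$ and computes $D(\Lambda)\cong R\otimes D(kQ)$; both reduce to the same identification.

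The ``main obstacle'' you flag is exactly the step the paper performs silently: in its chain of isomorphisms the paper passes from $D(R)\otimes D(kQ)$ to $R\otimes D(kQ)$, i.e.\ it uses $D(R)\cong R$ as $R$-modules without comment. You are right that this is the self-injectivity (Frobenius property) of $R$ and that it is essential rather than cosmetic. Under the paper's stated hypothesis ($R$ an arbitrary finite dimensional commutative local algebra) it can fail: for $R = k[x,y]/(x^2,xy,y^2)$ the module $R$ has simple top and two-dimensional socle while $D(R)$ has the reverse, so $D(R)\not\cong R$. For such $R$, parts (ii) and (iii) are genuinely false, not merely harder to prove: taking $Q\colon 1\to 2$ and $M = S_1$ the simple at the source, a correct run of either computation gives $\tau_\Lambda(\Lambda\otimes_{kQ}S_1)\cong D(R)\otimes \tau_{kQ}(S_1)$, which is not isomorphic to $\Lambda\otimes_{kQ}\tau_{kQ}(S_1)\cong R\otimes \tau_{kQ}(S_1)$ unless $D(R)\cong R$. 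So your proposal is correct to precisely the same extent as the paper's proof, and the caveat you raise identifies a hidden hypothesis of the proposition itself --- one satisfied by the motivating examples $k[x]/(x^t)$ and by the algebra $k[x,y]/(x^2,y^2,xy-yx)$ in the paper's final section, both of which are Frobenius, but not by all algebras covered by the paper's standing assumptions.
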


\begin{proof}
    \begin{enumerate}
        \item[(i)] We have that $_{kQ}\Lambda \cong (kQ)^d$ in $\modd{kQ}$. Hence, $\Lambda$ is a projective $kQ$-module and therefore the induction functor $\Lambda\otimes_{kQ}-$ is exact. 
        \item[(ii)]  First, observe that 
        \begin{align*}
            D(\Lambda) = D(R\otimes kQ) &= \Hom_k(R\otimes kQ, k)\\
            &\cong \Hom_{k\otimes k}(R\otimes kQ, k\otimes k)\\
            &\cong \Hom_k(R,k)\otimes\Hom_k(kQ,k)\\
            &= D(R)\otimes D(kQ)\\
            &\cong R\otimes D(kQ).
        \end{align*}
        Moreover, for a finite dimensional algebra $A$, we have a functorial isomorphism $\nu_A(-)\cong D(A)\otimes_A-$. It follows that, for $M\in\modd{kQ}$ 
        \begin{align*}
            \nu_\Lambda(\Lambda\otimes_{kQ} M)&\cong D(\Lambda)\otimes_\Lambda (\Lambda\otimes_{kQ}M) \\
            &\cong D(\Lambda)\otimes_{kQ} M\\
            &\cong R\otimes D(kQ) \otimes_{kQ} M \\
            &\cong R\otimes \nu_{kQ}(M) \\
            & \cong (R\otimes kQ)\otimes_{kQ} \nu_{kQ}(M) \cong \Lambda\otimes_{kQ}\nu_{kQ}(M).
        \end{align*}
    \end{enumerate}
    \item[(iii)] Let $M\in \modd{kQ}$ and let $P_1\to P_0\to M\to 0$ a minimal projective presentation. By Lemma \ref{tensor_of_minimal_proj_presentation}(i), applying the induction functor $\Lambda\otimes_{kQ}-$ we get a minimal projective presentation of $\Lambda\otimes_{kQ}M$ in $\modd{\Lambda}$ of the form 
    $$\Lambda\otimes_{kQ}P_1\to \Lambda\otimes_{kQ}P_0\to\Lambda\otimes_{kQ}M\to 0.$$
    Thus,
    \begin{equation}\label{seq1}
        0\to\tau_\Lambda(\Lambda\otimes_{kQ}M)\to \nu_\Lambda(\Lambda\otimes_{kQ}P_1)\to \nu_\Lambda(\Lambda\otimes_{kQ}P_0)\to \nu_\Lambda(\Lambda\otimes_{kQ}M)\to 0
    \end{equation}
    is an exact sequence in $\modd\Lambda$. Now consider the exact sequence $$ 0\to \tau_{kQ}(M)\to \nu_{kQ}(P_1)\to \nu_{kQ}(P_0)\to \nu_{kQ}(M)\to 0$$
    in $\modd{kQ}$. Then, the induction functor yields an exact sequence 
    \begin{equation}\label{seq2}
        0\to \Lambda\otimes_{kQ}\tau_{kQ}(M)\to \Lambda\otimes_{kQ}\nu_{kQ}(P_1)\to \Lambda\otimes_{kQ}\nu_{kQ}(P_0)\to \Lambda\otimes_{kQ}\nu_{kQ}(M)\to 0
    \end{equation}
    in $\modd{\Lambda}$. Comparing the sequences \eqref{seq1} and \eqref{seq2}, we obtain a commutative diagram 

    \[\begin{tikzcd}[ampersand replacement=\&,cramped,column sep=small]
	0 \& {\tau_{\Lambda}(\Lambda\otimes_{kQ}M)} \& {\nu_\Lambda(\Lambda\otimes_{kQ}P_1)} \& {\nu_\Lambda(\Lambda\otimes_{kQ}P_0)} \& {\nu_\Lambda(\Lambda\otimes_{kQ}M)} \& 0 \\
	0 \& {\Lambda\otimes_{kQ}\tau_{kQ}(M)} \& {\Lambda\otimes_{kQ}\nu_{kQ}(P_1)} \& {\Lambda\otimes_{kQ}\nu_{kQ}(P_0)} \& {\Lambda\otimes_{kQ}\nu_{kQ}(M)} \& 0
	\arrow[from=1-1, to=1-2]
	\arrow[from=1-2, to=1-3]
	\arrow[dashed, from=1-2, to=2-2]
	\arrow[from=1-3, to=1-4]
	\arrow["\cong", from=1-3, to=2-3]
	\arrow[from=1-4, to=1-5]
	\arrow["\cong", from=1-4, to=2-4]
	\arrow[from=1-5, to=1-6]
	\arrow["\cong", from=1-5, to=2-5]
	\arrow[from=2-1, to=2-2]
	\arrow[from=2-2, to=2-3]
	\arrow[from=2-3, to=2-4]
	\arrow[from=2-4, to=2-5]
	\arrow[from=2-5, to=2-6]
    \end{tikzcd}\]
    
in $\modd{\Lambda}$ where the last three maps are isomorphism by part (ii) and therefore the induced map $\tau_\Lambda(\Lambda\otimes_{kQ}M)\to \Lambda\otimes_{kQ}\tau_{kQ}(M)$ is also an isomorphism. This finishes the proof.
\end{proof}

As a consequence, the induction functor preserves $\tau$-rigid modules. 

\begin{corollary}\label{inductionPreservesTauRigidModules}
     Let $M\in\modd{kQ}$ be $\tau$-rigid. Then, $\Lambda\otimes_{kQ}M$ is a $\tau$-rigid $\Lambda$-module.
\end{corollary}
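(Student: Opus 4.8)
The plan is to reduce the $\tau$-rigidity of $\Lambda\otimes_{kQ}M$ to that of $M$ using only the two tensor-product tools already in place. By definition, $\Lambda\otimes_{kQ}M$ is $\tau$-rigid exactly when $\Hom_\Lambda(\Lambda\otimes_{kQ}M,\ \tau_\Lambda(\Lambda\otimes_{kQ}M)) = 0$, so the first step is to rewrite the target of this Hom-space. Applying Proposition \ref{tauCommutesWithInduction}(iii), I would replace $\tau_\Lambda(\Lambda\otimes_{kQ}M)$ by $\Lambda\otimes_{kQ}\tau_{kQ}M$, reducing the problem to showing that $\Hom_\Lambda(\Lambda\otimes_{kQ}M,\ \Lambda\otimes_{kQ}\tau_{kQ}M) = 0$.

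Next, I would use the identification $\Lambda\otimes_{kQ}N\cong R\otimes N$, valid for any $kQ$-module $N$ (recorded just before Remark \ref{res(ind)}), to present both the source and the target as external tensor products over $k$: namely $\Lambda\otimes_{kQ}M\cong R\otimes M$ and $\Lambda\otimes_{kQ}\tau_{kQ}M\cong R\otimes\tau_{kQ}M$, as $\Lambda$-modules. This puts the Hom-space into precisely the form handled by the Hom-tensor formula \eqref{Hom_Tensor} of Proposition \ref{HomExt_tensor_product}, taken with the two factor algebras $R$ and $kQ$, and with the constant module $R$ playing the role of $M_1 = M_2$ on the $R$-side while $M$ and $\tau_{kQ}M$ play the roles of $N_1$ and $N_2$ on the $kQ$-side. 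Invoking \eqref{Hom_Tensor} then yields
\[
\Hom_\Lambda(R\otimes M,\ R\otimes\tau_{kQ}M)\ \cong\ \Hom_R(R,R)\otimes\Hom_{kQ}(M,\tau_{kQ}M).
\]
Since $M$ is $\tau$-rigid, the tensor factor $\Hom_{kQ}(M,\tau_{kQ}M)$ is zero, so the entire Hom-space vanishes, which is exactly the assertion that $\Lambda\otimes_{kQ}M$ is $\tau$-rigid.

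I do not expect a genuine obstacle: the corollary is a formal consequence of the commutation of $\tau$ with induction and of the Hom-tensor isomorphism. The only points deserving a little care are bookkeeping ones — checking that the roles of the two tensor factors in \eqref{Hom_Tensor} are matched correctly (the constant regular module $R$ on one side, the $\tau$-rigidity data on the $kQ$-side), and that the isomorphism $\Lambda\otimes_{kQ}M\cong R\otimes M$ is being used as an isomorphism of $\Lambda=R\otimes kQ$-modules, so that the vanishing is genuinely computed in $\modd\Lambda$.
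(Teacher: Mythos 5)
Your proposal is correct and follows exactly the paper's own argument: commute $\tau$ with induction via Proposition \ref{tauCommutesWithInduction}(iii), identify $\Lambda\otimes_{kQ}(-)$ with $R\otimes(-)$, and apply the Hom-tensor isomorphism \eqref{Hom_Tensor} of Proposition \ref{HomExt_tensor_product} to reduce the vanishing to $\Hom_{kQ}(M,\tau_{kQ}M)=0$. No gaps; the bookkeeping points you flag are handled the same way in the paper.
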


\begin{proof}
    Let $M\in\modd{kQ}$ be $\tau$-rigid. By Proposition \ref{tauCommutesWithInduction} (iii), we have that $\tau_\Lambda(\Lambda\otimes_{kQ} M)\cong \Lambda\otimes_{kQ}\tau_{kQ}(M)$. Hence, 
    \begin{align*}
        \Hom_\Lambda(\Lambda\otimes_{kQ} M,\tau_\Lambda(\Lambda\otimes_{kQ} M)) &\cong \Hom_\Lambda(\Lambda\otimes_{kQ}M, \Lambda\otimes_{kQ}\tau_{kQ}(M))\\
        &\cong \Hom_{R\otimes kQ}(R\otimes M, R\otimes\tau_{kQ}(M))\\
        &\cong \Hom_{R}(R, R)\otimes \Hom_{kQ}(M, \tau_{kQ}(M)) = 0
    \end{align*}
    This concludes the proof. 
\end{proof}

Before getting to the main result of this section we need the following.  

\begin{definition}
    Let $\Lambda$ be a finite dimensional algebra and let $M\in\modd\Lambda$. Let $P_1\to P_0\to M\to 0$ be a minimal projective presentation with $P_0 = \bigoplus_{i=1}^n P(i)^{\alpha_i}$ and $P_1 = \bigoplus_{i=1}^n P(i)^{\beta_i}$, where each $P(i)$ is an indecomposable projective $\Lambda$-module. Then, the \emph{g-vector} of $M$ is defined as $$g^M:=(\alpha_1-\beta_1,\cdots, \alpha_n-\beta_n).$$
\end{definition}

\begin{theorem}[{\cite{tauTiltingFiniteAlgebrasAnd_g-vectors}}]\label{M=Niffg^M=g^N}
    Let $\Lambda$ be a finite dimensional algebra and let $M$ and $N$ be $\tau$-rigid $\Lambda$-modules. Suppose that $(g^M)_i\leq (g^N)_i$ for $1\leq i\leq n$. Then $M$ is a quotient of $N$. In particular, $g^M=g^N$ if and only if $M\cong N$. 
\end{theorem}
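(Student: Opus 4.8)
The plan is to reduce everything to a single homological identity relating the $g$-vector of a $\tau$-rigid module to Hom-dimensions, and then to play the componentwise sign of $g^N-g^M$ against a canonical torsion sequence.

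First I would establish the pairing formula: for a $\tau$-rigid $M$ with minimal projective presentation $P_1\xrightarrow{d}P_0\to M\to 0$ and any $X\in\modd\Lambda$,
\[
\dim_k\Hom_\Lambda(M,X)-\dim_k\Hom_\Lambda(X,\tau M)=\sum_{i=1}^n (g^M)_i\,\dim_k e_iX,
\]
where $e_1,\dots,e_n$ are the primitive idempotents and $P(i)=\Lambda e_i$. To prove it, apply $\Hom_\Lambda(-,X)$ to the presentation: the kernel term is $\Hom_\Lambda(M,X)$, the two middle terms have dimensions $\sum_i\alpha_i\dim_k e_iX$ and $\sum_i\beta_i\dim_k e_iX$ (using $\Hom_\Lambda(P(i),X)\cong e_iX$), and the cokernel is $\mathrm{Tr}\,M\otimes_\Lambda X$, via $\Hom_\Lambda(P_j,X)\cong\Hom_\Lambda(P_j,\Lambda)\otimes_\Lambda X$ and right-exactness of $-\otimes_\Lambda X$ applied to the transpose presentation. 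Since $\tau M=D\,\mathrm{Tr}\,M$, the standard isomorphism $D(\mathrm{Tr}\,M)\otimes_\Lambda X\cong D\Hom_\Lambda(X,\tau M)$ identifies this cokernel's dimension with $\dim_k\Hom_\Lambda(X,\tau M)$, and the alternating sum of dimensions yields the displayed formula. Writing $\langle g,X\rangle:=\sum_i g_i\dim_k e_iX$, note that $\langle g,X\rangle\ge 0$ whenever $g\ge 0$ componentwise.

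The core step is to show $M\in\Gen N$. Since $N$ is $\tau$-rigid, $\Gen N$ is a functorially finite torsion class with torsion-free class $N^\perp$, so $M$ admits a canonical sequence $0\to tM\to M\xrightarrow{\pi}M_f\to 0$ with $tM\in\Gen N$ and $M_f\in N^\perp$. I claim $M_f=0$. On one hand $M_f$ is a quotient of $M$, hence lies in $\Gen M$, so $\Hom_\Lambda(M_f,\tau M)=0$ by Lemma \ref{Smalo's_result}; the formula applied to $(M,M_f)$ then gives $\langle g^M,M_f\rangle=\dim_k\Hom_\Lambda(M,M_f)\ge 1$ if $M_f\neq 0$, since $\pi$ is a nonzero map $M\to M_f$. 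On the other hand $M_f\in N^\perp$ gives $\Hom_\Lambda(N,M_f)=0$, so the formula applied to $(N,M_f)$ gives $\langle g^N,M_f\rangle=-\dim_k\Hom_\Lambda(M_f,\tau N)\le 0$. Subtracting, $\langle g^N-g^M,M_f\rangle<0$, which contradicts $\langle g^N-g^M,M_f\rangle\ge 0$, valid because $g^N-g^M\ge 0$ componentwise by hypothesis. Hence $M_f=0$ and $M=tM\in\Gen N$, i.e.\ $M$ is a quotient of a finite direct sum of copies of $N$.

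Finally, the ``in particular'' statement follows formally. If $g^M=g^N$ then $g^M\le g^N$ and $g^N\le g^M$, so the previous step yields $\Gen M\subseteq\Gen N$ and $\Gen N\subseteq\Gen M$, whence $\Gen M=\Gen N=:\mathcal T$. Each of $M,N$ is $\Ext$-projective in $\mathcal T$ (Lemma \ref{Smalo's_result}), so both lie in $\add B$, where $B$ is the support $\tau$-tilting module with $\P(\mathcal T)=\add B$. Since the $g$-vectors of the indecomposable summands of a support $\tau$-tilting module are linearly independent \cite{tau-tiling-theory}, the common value $g^M=g^N$ determines the multiplicity of each summand of $B$ uniquely; as $M$ and $N$ are basic this forces $M\cong N$, and the converse is trivial. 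The main obstacle is the core step: one must align the two instances of the pairing formula (for $(M,M_f)$ and $(N,M_f)$) so that the hypothesis $g^M\le g^N$ collides with the torsion-free quotient $M_f$, and the whole argument hinges on the sign bookkeeping supplied by Lemma \ref{Smalo's_result}.
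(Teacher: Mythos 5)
The paper does not actually prove this statement; it is imported verbatim from \cite{tauTiltingFiniteAlgebrasAnd_g-vectors}, so there is no internal proof to compare against. Your argument is correct, and it reconstructs what is essentially the standard proof from that literature: the Auslander--Reiten pairing formula $\dim_k\Hom_\Lambda(M,X)-\dim_k\Hom_\Lambda(X,\tau M)=\langle g^M,X\rangle$, played against the canonical sequence for the torsion pair $(\Gen N, N^\perp)$, forces the torsion-free part $M_f$ to vanish, and the injectivity statement then follows from both modules being $\Ext$-projective in the common torsion class $\Gen M=\Gen N$ together with the linear independence of the $g$-vectors of the indecomposable summands of a support $\tau$-tilting module from \cite{tau-tiling-theory}. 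Three small repairs, none of which affects the structure of the argument. First, your appeal to Lemma \ref{Smalo's_result} for $\Hom_\Lambda(M_f,\tau M)=0$ is both unnecessary and not quite what that lemma says: since $\pi\colon M\to M_f$ is surjective, left exactness of $\Hom_\Lambda(-,\tau M)$ embeds $\Hom_\Lambda(M_f,\tau M)$ into $\Hom_\Lambda(M,\tau M)=0$, which is all you need. Second, the duality you invoke should read $\mathrm{Tr}\,M\otimes_\Lambda X\cong D\Hom_\Lambda(X,\tau M)$ (equivalently $D(\mathrm{Tr}\,M\otimes_\Lambda X)\cong\Hom_\Lambda(X,\tau M)$); as written, $D(\mathrm{Tr}\,M)\otimes_\Lambda X$ does not typecheck, because $D(\mathrm{Tr}\,M)=\tau M$ and $X$ are both left modules. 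The corrected isomorphism gives exactly the dimension identification you use. Third, basicness is not needed in the last step: writing $M$ and $N$ as sums of indecomposable summands of $B$ with multiplicities, additivity of $g$-vectors plus linear independence of the $g^{B_j}$ pins down all multiplicities, so $g^M=g^N$ forces $M\cong N$ for arbitrary (not necessarily basic) $\tau$-rigid modules.
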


\begin{theorem}[{\cite[Thm. 11]{ReductionTheoremForTauRigidModules}}]\label{EJR_Thm11}
    Let $\Lambda$ be a finite dimensional algebra. Let $I$ be an ideal generated by central elements and contained in the Jacobson radical of $\Lambda$. Then, the $g$-vectors of the indecomposable $\tau$-rigid $\Lambda$-modules coincide with the $g$-vectors of the indecomposable $\tau$-rigid $\Lambda/I$-modules.
\end{theorem}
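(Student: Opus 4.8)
The plan is to translate everything into the language of two-term (pre)silting complexes, where $g$-vectors become intrinsic $K_0$-invariants and the hypotheses on $I$ can be exploited homologically. By Adachi--Iyama--Reiten, sending an indecomposable $\tau$-rigid module $M$ to its minimal projective presentation, viewed as a two-term complex $P_1\xrightarrow{d}P_0$ in $K^b(\proj\Lambda)$, identifies the indecomposable $\tau$-rigid modules with the indecomposable two-term presilting complexes $T$ satisfying $H^0(T)\neq 0$, and carries $g^M$ to the class $[P_0]-[P_1]$ in $K_0(K^b(\proj\Lambda))\cong\mathbb{Z}^n$. Since $I\subseteq\rad\Lambda$, the projection $\Lambda\to\bar\Lambda:=\Lambda/I$ preserves tops and hence induces a bijection $P(i)\mapsto\bar P(i)$ on indecomposable projectives; this identifies the two ambient lattices $\mathbb{Z}^n$ canonically, so that the asserted equality of $g$-vectors is meaningful.

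First I would analyse the base-change functor $F=\bar\Lambda\otimes_\Lambda-\colon K^b(\proj\Lambda)\to K^b(\proj\bar\Lambda)$. It is additive and triangulated and sends $P(i)$ to $\bar P(i)$, hence preserves the $K_0$-class; in particular $g^{FT}=g^T$ for every two-term complex $T$, where one need not track minimality since contractible summands contribute $0$ to the $g$-vector. Moreover $F$ is right exact, so $H^0(FT)=H^0(T)/I\,H^0(T)$, which is nonzero exactly when $H^0(T)$ is, by Nakayama. The problem thus reduces to showing that $F$ restricts to a bijection between two-term presilting complexes over $\Lambda$ and over $\bar\Lambda$; applying $F$ and its inverse to minimal projective presentations then yields the desired coincidence of $g$-vectors.

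To make the presilting condition tractable I would reduce to a principal square-zero central ideal. As $\Lambda$ is finite dimensional and $I\subseteq\rad\Lambda$, the ideal $I$ is nilpotent; filtering $I$ by its powers $I\supseteq I^2\supseteq\cdots$ and then splitting each successive square-zero quotient into its central generators one at a time, one factors $\Lambda\to\bar\Lambda$ into finitely many quotient maps, each killing an ideal of the form $\Lambda z$ with $z$ central and $z^2=0$. Since bijections of the desired type compose, it suffices to treat a single such step, so I may assume $I=\Lambda z$ with $z$ central and $z^2=0$.

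The heart of the matter, and the step I expect to be the main obstacle, is the homological comparison in this square-zero central case. For $T=(P_1\xrightarrow{d}P_0)$ the self-extension space $\Hom_{K^b(\proj\Lambda)}(T,T[1])$ is the cokernel of
$$\Hom_\Lambda(P_1,P_1)\oplus\Hom_\Lambda(P_0,P_0)\longrightarrow\Hom_\Lambda(P_1,P_0),\qquad (a,b)\mapsto d\circ a-b\circ d,$$
and presilting means this cokernel vanishes. Because $z$ is central, multiplication by $z$ is a $\Lambda$-bimodule map, and the short exact sequence of complexes $0\to IT\to T\to FT\to 0$ has $IT$ annihilated by $I$ (as $z^2=0$), hence a complex of $\bar\Lambda$-modules tightly related to $FT$. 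This produces a long exact comparison between the self-extension cokernels over $\Lambda$ and over $\bar\Lambda$, and the task is to prove that vanishing on one side forces vanishing on the other. The subtle direction is lifting: one must show that a two-term presilting complex over $\bar\Lambda$ admits a lift over $\Lambda$ whose differential is an honest $\Lambda$-map and which remains presilting, with the centrality of $z$ and the condition $z^2=0$ being exactly what kills the obstruction to lifting and the relevant correction terms. Once the bijection is established in the square-zero case, composing it along the filtration and matching the condition $H^0\neq 0$ completes the proof.
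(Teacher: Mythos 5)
The paper itself contains no proof of this statement: Theorem \ref{EJR_Thm11} is imported verbatim from \cite{ReductionTheoremForTauRigidModules} and used as a black box, so the only meaningful comparison is with the proof in that reference. Against that benchmark, your proposal sets up the same (correct) framework as the cited argument: passing to two-term complexes via the Adachi--Iyama--Reiten correspondence, reading $g$-vectors as classes in $K_0(K^b(\proj\Lambda))$, identifying the two lattices through $P(i)\mapsto \overline{P(i)}$, and studying the reduction functor $\overline{\Lambda}\otimes_\Lambda-$. All of that is sound, as is the Nakayama argument matching the condition $H^0\neq 0$. But the proposal stops exactly where the theorem lives: the entire content of the result is the two-way transfer of the presilting property, i.e.\ that for two-term complexes $P,Q$ of projectives one has $\Hom_{K^b(\proj\Lambda)}(P,Q[1])=0$ if and only if $\Hom_{K^b(\proj(\Lambda/I))}(\overline{P},\overline{Q}[1])=0$. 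You explicitly defer this (``the task is to prove that vanishing on one side forces vanishing on the other''), the ``long exact comparison'' is never constructed, and the claim that centrality and $z^2=0$ ``kill the obstruction'' is asserted rather than proved. As written, this is a plan, not a proof.

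For the record, the missing step has a short direct argument which also shows your dévissage to principal square-zero ideals is unnecessary. The reduction map $\Hom_{K^b(\proj\Lambda)}(P,Q[1])\to\Hom_{K^b(\proj(\Lambda/I))}(\overline{P},\overline{Q}[1])$ is surjective because $P_1$ is projective (any map $\overline{P_1}\to\overline{Q_0}$ and any homotopy lift along $\Lambda\to\Lambda/I$), and its kernel is exactly $I\cdot\Hom_{K^b(\proj\Lambda)}(P,Q[1])$: if $f\colon P_1\to Q_0$ reduces to a null-homotopic map, then after lifting the homotopy one gets $f\equiv f'$ with $f'\in\Hom_\Lambda(P_1,IQ_0)=I\cdot\Hom_\Lambda(P_1,Q_0)$, and centrality of the generators of $I$ is precisely what makes multiplication by them descend to the homotopy category (it sends null-homotopic maps to null-homotopic maps), so this membership passes to $\Hom_K$. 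Since $I$ is generated by central elements inside the radical of a finite dimensional algebra, $I$ is nilpotent, so vanishing of the target gives $\Hom_K=I\cdot\Hom_K=I^2\cdot\Hom_K=\cdots=0$; vanishing over $\Lambda$ gives vanishing over $\Lambda/I$ by surjectivity. Note also that your stated ``subtle direction'' is misplaced: lifting a two-term complex over $\Lambda/I$ to one over $\Lambda$ is trivial (projectives and maps between them lift along nilpotent ideals); the only issue is whether the lift remains presilting, which is again the displayed equivalence. With that lemma supplied, your outline completes to the proof in \cite{ReductionTheoremForTauRigidModules}; without it, there is a genuine gap.
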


We are now ready to state the main result of this section. We use a similar argument to the one in \cite[Cor. 4.4]{super_Caldero-Chapoton_map_for_type_A}. We include the proof for completeness.

\begin{proposition}\label{tau-rigid-Lambda1:1tau-rigid-kQ}
    Let $\Lambda = R\otimes kQ$. Then, the induction functor $\Lambda\otimes_{kQ}-$ induces a bijection between the set of isoclasses of indecomposable $\tau$-rigid $kQ$-modules and the set of isoclasses of indecomposable $\tau$-rigid $\Lambda$-modules. 
\end{proposition}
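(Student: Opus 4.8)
The plan is to show the induction functor $\Lambda\otimes_{kQ}-$ gives a well-defined, injective, and surjective map on isoclasses of indecomposable $\tau$-rigid modules. Most of the pieces are already in place from the earlier results.

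\textbf{Well-definedness and injectivity.} By Corollary \ref{inductionPreservesTauRigidModules}, if $M\in\modd{kQ}$ is $\tau$-rigid then $\Lambda\otimes_{kQ}M$ is $\tau$-rigid over $\Lambda$. For indecomposability, write $\Lambda\otimes_{kQ}M\cong R\otimes M$ and recall $R$ is local, hence indecomposable; by the last statement of Proposition \ref{HomExt_tensor_product}, the tensor product of indecomposables $R$ and $M$ is indecomposable. So the map sends indecomposable $\tau$-rigid $kQ$-modules to indecomposable $\tau$-rigid $\Lambda$-modules. For injectivity, suppose $\Lambda\otimes_{kQ}M\cong\Lambda\otimes_{kQ}M'$; applying the restriction of scalars and using Remark \ref{res(ind)} gives $M^d\cong (M')^d$, whence $M\cong M'$ by Krull--Schmidt.

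\textbf{Surjectivity.} This is the main point, and the idea is to pass through $g$-vectors. First I would observe that $\Lambda=R\otimes kQ\cong kQ'/I'$ where $I'$ contains the loop relations and the lifted ideal $I_R$; the loops $x_{i,j}$ are central and lie in the radical, so one can realise $\Lambda$ as a quotient of a suitable algebra by an ideal generated by central radical elements in order to apply Theorem \ref{EJR_Thm11}. More directly, the kernel of the natural surjection $\Lambda\twoheadrightarrow\Lambda/(\operatorname{rad}R\otimes kQ)\cong kQ$ is $\operatorname{rad}R\otimes kQ$, which is generated by central elements (the images of $\operatorname{rad}R$) lying in the Jacobson radical of $\Lambda$. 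Hence Theorem \ref{EJR_Thm11} applies to the pair $(\Lambda,\,kQ)$ and tells us the $g$-vectors of indecomposable $\tau$-rigid $\Lambda$-modules coincide with those of indecomposable $\tau$-rigid $kQ$-modules.

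\textbf{Matching $g$-vectors to the induced modules.} The remaining step is to compute the $g$-vector of an induced module and check it equals the $g$-vector of the corresponding $kQ$-module under the identification $Q'_0\leftrightarrow Q_0$. Given a minimal projective presentation $P_1\to P_0\to M\to 0$ over $kQ$, Lemma \ref{tensor_of_minimal_proj_presentation} shows that applying $\Lambda\otimes_{kQ}-$ yields a minimal projective presentation of $\Lambda\otimes_{kQ}M$, and since $\Lambda\otimes_{kQ}P(i)$ is the indecomposable projective $\Lambda$-module at vertex $i$, the multiplicities $\alpha_i,\beta_i$ are unchanged; thus $g^{\Lambda\otimes_{kQ}M}=g^{M}$. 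Combining this with Theorem \ref{EJR_Thm11}, every indecomposable $\tau$-rigid $\Lambda$-module $N$ has $g$-vector equal to $g^M$ for some indecomposable $\tau$-rigid $kQ$-module $M$, hence $g^N=g^{\Lambda\otimes_{kQ}M}$, and Theorem \ref{M=Niffg^M=g^N} forces $N\cong\Lambda\otimes_{kQ}M$. This proves surjectivity and completes the bijection.

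\textbf{Main obstacle.} The delicate part is the correct bookkeeping in applying Theorem \ref{EJR_Thm11}: one must verify that the relevant ideal is genuinely generated by \emph{central} elements contained in the radical, which is where the commutativity and locality of $R$ are essential, and that the vertex sets of $Q$ and $Q'$ are canonically identified so that the equality of $g$-vectors is meaningful across the two module categories.
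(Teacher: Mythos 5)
Your proposal is correct and follows essentially the same route as the paper's own proof: well-definedness via Corollary \ref{inductionPreservesTauRigidModules} and Proposition \ref{HomExt_tensor_product}, injectivity via restriction of scalars and Krull--Schmidt, and surjectivity by noting that the ideal generated by the central radical elements $x_i\otimes 1$ satisfies $\Lambda/I\cong kQ$, so that Theorem \ref{EJR_Thm11}, the $g$-vector computation from Lemma \ref{tensor_of_minimal_proj_presentation}, and Theorem \ref{M=Niffg^M=g^N} combine to give $N\cong\Lambda\otimes_{kQ}M$. The only cosmetic difference is that you phrase the ideal as $\operatorname{rad}R\otimes kQ$ rather than $\langle x_i\otimes 1\mid 1\leq i\leq t\rangle$, which is the same ideal.
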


\begin{proof}
    By Proposition \ref{HomExt_tensor_product} and Corollary \ref{inductionPreservesTauRigidModules}, we observe that the induction functor maps indecomposable $\tau$-rigid $kQ$-modules to indecomposable $\tau$-rigid $\Lambda$-modules. Hence, the above map is well-defined.
    
    We show that $\Lambda\otimes_{kQ}-$ is injective. Let $X,Y$ be indecomposable $\tau$-rigid $kQ$-modules such that $\Lambda\otimes_{kQ}X\cong\Lambda\otimes_{kQ}Y$. Applying restriction of scalars we get $X^d \cong Y^d$ (see Remark \ref{res(ind)}), and therefore the induction functor is injective on indecomposable $\tau$-rigid modules.

    Notice that $I:= \langle x_i\otimes 1\mid 1\leq i\leq t\rangle$ is an ideal generated by central elements and contained in the Jacobson radical of $\Lambda$. Moreover, $\Lambda/I \cong kQ$. Hence, the $g$-vectors of the indecomposable $\tau$-rigid $kQ$-modules coincide with the $g$-vectors of the indecomposable $\tau$-rigid $\Lambda$-modules by Theorem \ref{EJR_Thm11}. To prove surjectivity, let $N$ be an indecomposable $\tau$-rigid $\Lambda$-module. By Theorem \ref{EJR_Thm11}, there exists an indecomposable $\tau$-rigid $kQ$-module $M$ with $g^M = g^N$. Let 
    $$\bigoplus_{i=1}^n P(i)^{\beta_i}\to \bigoplus_{i=1}^n P(i)^{\alpha_i}\to M\to 0$$
    be a minimal projective presentation of $M$ in $\modd kQ$. Since every indecomposable projective $\Lambda$-module is of the form $\Lambda\otimes_{kQ}P(i)$, where $P(i)$ is the indecomposable projective $kQ$-module corresponding to the vertex $i$, the induction functor $\Lambda\otimes_{kQ}-$ applied to the above sequence yields a minimal projective presentation of $\Lambda\otimes_{kQ}M$
    $$\bigoplus_{i=1}^n (\Lambda\otimes_{kQ}P(i))^{\beta_i}\to \bigoplus_{i=1}^n (\Lambda\otimes_{kQ}P(i))^{\alpha_i}\to \Lambda\otimes_{kQ}M\to 0$$
    in $\modd{\Lambda}$ by Lemma \ref{tensor_of_minimal_proj_presentation}. It follows that $g^M = g^{\Lambda\otimes_{kQ}M}$. But then $g^N = g^{\Lambda\otimes_{kQ}M}$ and therefore $N\cong \Lambda\otimes_{kQ}M$ by Theorem \ref{M=Niffg^M=g^N}. We conclude that $\Lambda\otimes_{kQ}-$ is surjective on indecomposable $\tau$-rigid objects. This finishes the proof. 
\end{proof}

For a finite dimensional algebra $\Lambda$, we denote by $\text{f-tors }\Lambda$ and $\text{s$\tau$-tilt }\Lambda$ the set of functorially finite torsion classes in $\modd \Lambda$ and the set of support $\tau$-tilting $\Lambda$-modules, respectively. There is a bijection \cite[Thm. 2.7]{tau-tiling-theory} between $\text{s$\tau$-tilt }\Lambda$ and $\text{f-tors }\Lambda$ given by $\text{s$\tau$-tilt }\Lambda\ni T\mapsto \Gen T\in \text{f-tors }\Lambda$ and $\text{f-tors }\Lambda\ni\T\mapsto \P(\T)\in\text{s$\tau$-tilt }\Lambda$.  As a first consequence of Proposition \ref{tau-rigid-Lambda1:1tau-rigid-kQ}, we establish a bijection between $\text{s$\tau$-tilt }kQ$ and $\text{s$\tau$-tilt }\Lambda$, and between $\text{f-tors }kQ$ and $\text{f-tors }\Lambda$, where $\Lambda = R\otimes kQ$. 

\begin{corollary}\label{stautitl(kQ)1:1stautilt(Lambda)}
    Let $\Lambda = R\otimes kQ$. Then, the following statements hold.
    \begin{enumerate}
        \item[(i)] The induction functor induces a bijection between the set of $\tau$-rigid pairs in $\modd kQ$ and and the set of $\tau$-rigid pairs in $\modd \Lambda$; 
        \item[(ii)] There is a commutative diagram of bijections 
    \[\begin{tikzcd}[ampersand replacement=\&,cramped]
	{\stautilt kQ} \&\& {\ftors kQ} \\
	{\stautilt \Lambda} \&\& {\ftors \Lambda}
	\arrow["{\Gen(-)}", from=1-1, to=1-3]
	\arrow["{\Lambda\otimes_{kQ}-}"', from=1-1, to=2-1]
	\arrow["{F'}", from=1-3, to=2-3]
	\arrow["{\Gen(-)}"', from=2-1, to=2-3]
\end{tikzcd}\]
where $F'$ is given by $\Gen(M)\mapsto \Gen(\Lambda\otimes_{kQ}M)$, for $M\in \stautilt kQ$.

    \end{enumerate}
\end{corollary}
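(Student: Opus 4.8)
The plan is to reduce both statements to the indecomposable bijection of Proposition \ref{tau-rigid-Lambda1:1tau-rigid-kQ} together with the Hom tensor formula \eqref{Hom_Tensor}, and to handle the torsion-class square purely formally once the left vertical arrow is known to be bijective.

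\textbf{Part (i).} First I would observe that $\Lambda\otimes_{kQ}-$ commutes with direct sums and, by Proposition \ref{tau-rigid-Lambda1:1tau-rigid-kQ}, restricts to a bijection on indecomposable $\tau$-rigid modules; since the indecomposable projective $\Lambda$-modules are precisely the $\Lambda\otimes_{kQ}P(i)$, it likewise restricts to a bijection on indecomposable projectives. As distinct indecomposables are sent to distinct indecomposables, basic modules go to basic modules, so $(M,P)\mapsto(\Lambda\otimes_{kQ}M,\Lambda\otimes_{kQ}P)$ is defined on pairs and bijective on the underlying modules. It then suffices to check that the orthogonality condition is both preserved and reflected, which is immediate from \eqref{Hom_Tensor}: using $\Lambda\otimes_{kQ}X\cong R\otimes X$ one gets
$$\Hom_\Lambda(\Lambda\otimes_{kQ}P,\Lambda\otimes_{kQ}M)\cong\Hom_R(R,R)\otimes\Hom_{kQ}(P,M),$$
and since $\Hom_R(R,R)\neq 0$ the left-hand side vanishes if and only if $\Hom_{kQ}(P,M)=0$. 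Together with Corollary \ref{inductionPreservesTauRigidModules} and the fact that the preimage module supplied by Proposition \ref{tau-rigid-Lambda1:1tau-rigid-kQ} is automatically $\tau$-rigid, this yields the desired bijection on $\tau$-rigid pairs.

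\textbf{Part (ii).} The two horizontal maps are the mutually inverse bijections of \cite[Thm. 2.7]{tau-tiling-theory}. For the left vertical map I would pass through the pair correspondence of \cite[Prop. 2.3]{tau-tiling-theory}, under which a support $\tau$-tilting module corresponds to a $\tau$-rigid pair $(M,P)$ with $\abs{M}+\abs{P}=\abs{kQ}$. Here I use that, since $R$ is local, $\Lambda\cong kQ'/I'$ has the same vertex set as $Q$, so $\abs{\Lambda}=n=\abs{kQ}$, and that induction preserves the number of indecomposable summands; hence the numerical condition $\abs{M}+\abs{P}=n$ is both preserved and reflected, so the bijection of part (i) restricts to a bijection between support $\tau$-tilting pairs, and therefore between support $\tau$-tilting modules. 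Finally, $F'$ is defined as the composite $\Gen(-)\circ(\Lambda\otimes_{kQ}-)\circ\Gen(-)^{-1}$; it is well-defined because $\Gen M$ determines $M\in\stautilt kQ$, it makes the square commute by construction, and it is a bijection as a composite of bijections.

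\textbf{Expected obstacle.} The individual computations are routine; the only points requiring genuine care are the pair bookkeeping, namely verifying that induction \emph{reflects}, and not merely preserves, each defining condition of a $\tau$-rigid (resp. support $\tau$-tilting) pair. This is exactly where the nonvanishing of $\Hom_R(R,R)$ and the equality $\abs{\Lambda}=n$ enter, and they are what make the correspondence a genuine two-sided bijection rather than just a well-defined map.
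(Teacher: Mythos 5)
Your proposal is correct and follows essentially the same route as the paper: reduce part (i) to the indecomposable bijection of Proposition \ref{tau-rigid-Lambda1:1tau-rigid-kQ} plus the Hom-reflection coming from \eqref{Hom_Tensor}, then get part (ii) by restricting along the pair correspondence using that induction preserves the number of summands and $\abs{\Lambda}=\abs{kQ}$, with the horizontal maps being the bijections of \cite[Thm. 2.7]{tau-tiling-theory}. Your write-up is in fact slightly more explicit than the paper's (e.g.\ the nonvanishing of $\Hom_R(R,R)$ and the well-definedness of $F'$ as a composite), but the substance is identical.
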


\begin{proof}
   By Proposition \ref{tau-rigid-Lambda1:1tau-rigid-kQ}, the induction functor induces a bijection between the set of isoclasses of indecomposable $\tau$-rigid $kQ$-modules and the set of isoclasses of indecomposable $\tau$-rigid $\Lambda$-modules. Moreover, every indecomposable projective $\Lambda$-module is of the form $\Lambda\otimes_{kQ}(kQe_i)$ for a primitive idempotent $e_i$ in $kQ$. Hence, using the fact that the induction functor preserves indecomposable modules and commutes with direct sums, and $\Hom_\Lambda(\Lambda\otimes_{kQ}P,\Lambda\otimes_{kQ}M) = 0$ if and only if $\Hom_{kQ}(P,M) = 0$, we obtain that the assignment $(M, P)\mapsto (\Lambda\otimes_{kQ}M, \Lambda\otimes_{kQ}P)$ defines a bijection between $\tau$-rigid pairs in $\modd kQ$ and $\tau$-rigid pairs in $\modd \Lambda$. This proves (i).

   By \cite[Thm. 2.7]{tau-tiling-theory} the horizontal maps in the above diagram are bijections.  Using the fact that the induction functor preserves the number of direct summands together with the first part of the proof, we infer that $\Lambda\otimes_{kQ}-$ induces a bijection between $\stautilt kQ$ and $\stautilt\Lambda$. Since the diagram above is clearly commutative, we conclude that $F'$ induces a bijection between $\ftors kQ$ and $\ftors\Lambda$. This proves (ii) and concludes the proof. 
\end{proof}

Recall that a finite dimensional algebra $\Lambda$ is called $\tau$-\textit{tilting finite} if there are only finitely many isomorphism classes of basic $\tau$-tilting $\Lambda$-modules; see {\cite[Def. 1.1]{tauTiltingFiniteAlgebrasAnd_g-vectors}}. As a consequence of Proposition \ref{tau-rigid-Lambda1:1tau-rigid-kQ}, we characterize when $\Lambda=R\otimes kQ$ is $\tau$-tilting finite. In the following result, the equivalence between (ii) and (iii) is well-known. We provide a proof for the sake of completeness. 

\begin{corollary}\label{Lambda_tau-tilt_finite_iff_Q_Dynkin}
    Let $\Lambda=R\otimes kQ$. Then, the following statements are equivalent. 
    \begin{enumerate}
        \item[(i)] $\Lambda$ is $\tau$-tilting finite;
        \item[(ii)] $kQ$ is  $\tau$-tilting finite;
        \item[(iii)] $Q$ is of Dyinkin type.  
    \end{enumerate}
\end{corollary}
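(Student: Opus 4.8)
The plan is to prove the three equivalences by splitting them into (i)$\Leftrightarrow$(ii) and (ii)$\Leftrightarrow$(iii), where the first is where the bijections of this section do the work and the second is the classical Gabriel-type statement. The guiding principle throughout is the theorem of Demonet, Iyama and Jasso \cite{tauTiltingFiniteAlgebrasAnd_g-vectors} that a finite dimensional algebra is $\tau$-tilting finite if and only if it admits only finitely many isomorphism classes of indecomposable $\tau$-rigid modules (equivalently, finitely many support $\tau$-tilting modules). With this reformulation, everything reduces to counting these objects on each side.

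For (i)$\Leftrightarrow$(ii), I would simply invoke the bijection of Proposition \ref{tau-rigid-Lambda1:1tau-rigid-kQ}: the induction functor $\Lambda\otimes_{kQ}-$ matches indecomposable $\tau$-rigid $kQ$-modules with indecomposable $\tau$-rigid $\Lambda$-modules. Since this is a bijection of sets, one side is finite precisely when the other is, and by the Demonet--Iyama--Jasso characterization finiteness of either set is equivalent to $\tau$-tilting finiteness of the corresponding algebra. Alternatively, one can run the same argument through Corollary \ref{stautitl(kQ)1:1stautilt(Lambda)}, using the bijection between $\stautilt kQ$ and $\stautilt \Lambda$, which makes the equivalence even more transparent since $\tau$-tilting finiteness is literally the finiteness of the set of support $\tau$-tilting modules.

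For (ii)$\Leftrightarrow$(iii), the key point is that $kQ$ is hereditary, so that an indecomposable $kQ$-module is $\tau$-rigid if and only if it is exceptional, i.e. has no self-extensions (as noted in the introduction, $\tau$-rigidity and rigidity coincide in the hereditary case, via the Auslander--Reiten duality relating $\Hom_{kQ}(M,\tau M)$ and $\Ext^1_{kQ}(M,M)$). If $Q$ is Dynkin, then $kQ$ is representation-finite by Gabriel's theorem, so there are only finitely many indecomposables and a fortiori finitely many $\tau$-rigid ones, giving (iii)$\Rightarrow$(ii). For the converse I would argue contrapositively: if $Q$ is not Dynkin then $kQ$ is representation-infinite, and (assuming $Q$ connected, reducing componentwise otherwise) its preprojective component is infinite; since every indecomposable preprojective module over a hereditary algebra is directing, hence exceptional and therefore $\tau$-rigid, this produces infinitely many indecomposable $\tau$-rigid modules, so $kQ$ is not $\tau$-tilting finite.

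I expect the only real subtlety---and the step I would treat most carefully---to be the converse direction of (ii)$\Rightarrow$(iii), namely that a representation-infinite hereditary algebra carries infinitely many exceptional modules. This is exactly what forces $\tau$-tilting finiteness to coincide with representation finiteness in the hereditary case, and it rests on the standard structure theory of the preprojective component. Since the statement (ii)$\Leftrightarrow$(iii) is flagged as well known, I would keep this part brief and cite the relevant representation-theoretic facts rather than reprove them, letting the genuinely new content remain the painless transfer (i)$\Leftrightarrow$(ii) afforded by the induction functor.
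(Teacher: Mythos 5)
Your proof is correct, and the two equivalences split exactly as in the paper: for (i)$\Leftrightarrow$(ii) you do precisely what the paper does, namely combine the Demonet--Iyama--Jasso characterization of $\tau$-tilting finiteness via finiteness of indecomposable $\tau$-rigid modules with the bijection of Proposition \ref{tau-rigid-Lambda1:1tau-rigid-kQ}. Where you genuinely diverge is in (ii)$\Leftrightarrow$(iii). The paper argues through cluster theory: by the Adachi--Iyama--Reiten bijection, indecomposable $\tau$-rigid $kQ$-modules correspond to indecomposable rigid objects in the cluster category $\C_Q$, which in turn correspond to cluster variables of $\A_Q$, and the Fomin--Zelevinsky finite-type classification says these are finite exactly when $Q$ is Dynkin. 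You instead stay inside classical representation theory of hereditary algebras: $\tau$-rigid coincides with rigid (by AR duality and $\pd\le 1$), Gabriel's theorem gives (iii)$\Rightarrow$(ii), and for the converse the infinite preprojective component of a connected representation-infinite hereditary algebra supplies infinitely many directing, hence rigid, indecomposables. Your route is more elementary and self-contained, avoiding cluster categories entirely (indeed, for a preprojective module $M$ one even has $\Hom_{kQ}(M,\tau M)=0$ directly, since $\tau M$ lies in an earlier slice); the paper's route is shorter on the page because it outsources all the work to two strong cited theorems. Both are complete proofs, provided you do cite the structure theory of preprojective components for the final step, as you propose.
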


\begin{proof}
    By {\cite[Cor. 2.9]{tauTiltingFiniteAlgebrasAnd_g-vectors}}, we know that $\Lambda$ is $\tau$-tilting finite if and only if there are only finitely many isomorphism classes of indecomposable $\tau$-rigid $\Lambda$-modules. This is the case if and only if there are only finitely many isomorphism classes of indecomposable $\tau$-rigid $kQ$-modules by the bijection established in Proposition \ref{tau-rigid-Lambda1:1tau-rigid-kQ}. Using {\cite[Theorem 4.1]{tau-tiling-theory}}, we have that the isoclasses of indecomposable $\tau$-rigid $kQ$-modules are in bijection with indecomposable rigid objects in the cluster category $\C_Q$. But indecomposable rigid objects in $\mathcal{C}_Q$ are in one-to-one correspondence with the cluster variables of the cluster algebra $\A_Q$,  where the latter set is finite if and only if $Q$ is of Dynkin type. This finishes the proof.
\end{proof}

\begin{remark}
    Notice that if $R$ is representation infinite, then so is $\Lambda$. In this way, we can construct examples of algebras that are representation infinite but $\tau$-tilting finite, provided that $Q$ is of Dynkin type (for more examples of $\tau$-tilting finite algebras see \cite{Mizuno_tau-tilting-finite, Plamondon_tau-tilting-finite, Adachi_tau-tilting-finite,a-characterisation-of-tau-tilting-finite-algebras}).
\end{remark}

\section{Induction functor, 2-term objects, and approximations}\label{induction-2-term-approx}

In this section, we discuss the relationship between induction functor, 2-term rigid objects, and approximations. 

Let $\Lambda$ be a finite dimensional algebra. We consider the bounded homotopy category of projectives $\K := K^b(\proj\Lambda)$ as a full subcategory of $D^b(\modd \Lambda)$. Recall that an object $\PP\in \K$ is called \textit{rigid} (or \textit{presilting}) if $\Hom_K(\PP,\PP[i])=0$ for all $i>0$, and \textit{silting} if it is rigid and $\text{thick}(\PP) = \K$, where $\text{thick}(\PP)$ is the smallest full subcategory of $\K$ containing $\PP$ and is closed under cones,$[\pm1]$, direct summands and isomorphisms. An object $\PP = (P_i, d_i)$ in $\K$ is called \textit{2-term} if $P_{i} = 0$ for all $i \neq -1,0$. A 2-term object in $\K$ which is rigid (respectively, silting) is called \textit{2-term rigid} (respectively, \textit{2-term silting}). 

We denote by $\twosilt \Lambda$ the set of 2-term silting objects in $\K$. For a $\Lambda$-module $U$, we denote by $\PP_U$ its minimal projective presentation, viewed as a 2-term object in $\K$. 

For the rest of this section, let $\Lambda = R\otimes kQ$. 

\begin{lemma}\label{2-term_and_iduction}
      The following statements hold. 
      \begin{enumerate}
          \item[(i)] Let $M\in \modd kQ$. Then $\Lambda\otimes_{kQ}\PP_M \cong \PP_{\Lambda\otimes_{kQ}M}$ in $K^b(\proj\Lambda)$; 
          \item[(ii)] The induction functor induces a bijection between 2-term silting objects in $K^b(\proj kQ)$ and 2-terms silting objects in $K^b(\proj\Lambda)$. 
      \end{enumerate}
\end{lemma}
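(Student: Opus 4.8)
The plan for part (i) is to recognize it as Lemma~\ref{tensor_of_minimal_proj_presentation} phrased at the level of complexes. Writing $\PP_M = (P_1\xrightarrow{d_1}P_0)$ for the minimal projective presentation of $M$, viewed as a $2$-term object concentrated in degrees $-1$ and $0$, I would apply the induction functor termwise. Since induction is additive and exact (Proposition~\ref{tauCommutesWithInduction}(i)) and sends each $P(i)$ to the indecomposable projective $\Lambda\otimes_{kQ}P(i)$, the result is the complex $(\Lambda\otimes_{kQ}P_1\xrightarrow{\id\otimes d_1}\Lambda\otimes_{kQ}P_0)$. By Lemma~\ref{tensor_of_minimal_proj_presentation} this is exactly the minimal projective presentation $\PP_{\Lambda\otimes_{kQ}M}$ of $\Lambda\otimes_{kQ}M$, giving the claimed isomorphism in $K^b(\proj\Lambda)$.

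For part (ii), I would first note that, being exact and carrying projectives to projectives, the induction functor extends to an additive triangle functor $K^b(\proj kQ)\to K^b(\proj\Lambda)$ which preserves $2$-term objects and commutes with $[1]$, with direct sums, and (by exactness) with the cohomology functor $H^0$. The strategy is then to transport the claim across the Adachi--Iyama--Reiten bijection \cite{tau-tiling-theory} between basic $2$-term silting objects and basic support $\tau$-tilting pairs: a $2$-term silting object $\PP$ corresponds to the support $\tau$-tilting pair $(H^0(\PP),P)$, where $P$ is the projective determined by the stalk (shifted-projective) summands of $\PP$, with inverse $(M,P)\mapsto\PP_M\oplus P[1]$. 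Applying induction to $\PP_M\oplus P[1]$ and using part (i) together with commutation with $[1]$ and $\oplus$ yields $\PP_{\Lambda\otimes_{kQ}M}\oplus(\Lambda\otimes_{kQ}P)[1]$, which is precisely the $2$-term silting object attached by the same bijection to the pair $(\Lambda\otimes_{kQ}M,\Lambda\otimes_{kQ}P)$. Since Corollary~\ref{stautitl(kQ)1:1stautilt(Lambda)}(i) shows that $(M,P)\mapsto(\Lambda\otimes_{kQ}M,\Lambda\otimes_{kQ}P)$ is a bijection on support $\tau$-tilting pairs, the induction functor intertwines the two AIR bijections with this pair bijection; being a composite of bijections, the induced map on $2$-term silting objects is itself a bijection.

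The only point requiring genuine care---and the main obstacle---is verifying that induction respects the AIR decomposition without mixing summands, i.e. that the image $\Lambda\otimes_{kQ}\PP$ really is silting and that its AIR pair is the induced pair. This I would settle by applying $H^0$: since induction commutes with $H^0$, one has $H^0(\Lambda\otimes_{kQ}\PP)\cong\Lambda\otimes_{kQ}H^0(\PP)$, so the module part of the AIR pair of $\Lambda\otimes_{kQ}\PP$ is exactly $\Lambda\otimes_{kQ}M$; minimality of the presentation (part (i)) guarantees that $\PP_{\Lambda\otimes_{kQ}M}$ contributes no stalk summand $P'[1]$, so the shifted-projective part is exactly $(\Lambda\otimes_{kQ}P)[1]$. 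Once $\Lambda\otimes_{kQ}\PP$ is identified with the silting object of a support $\tau$-tilting pair, its being silting is automatic, so there is no need to recompute the rigidity condition $\Hom_{K}(\Lambda\otimes_{kQ}\PP,(\Lambda\otimes_{kQ}\PP)[1])=0$ directly; a direct verification via the Hom--Ext formulas of Proposition~\ref{HomExt_tensor_product} would be an alternative, but less economical, route.
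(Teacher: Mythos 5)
Your proposal is correct and follows essentially the same route as the paper: part (i) via Lemma \ref{tensor_of_minimal_proj_presentation} and exactness of induction, and part (ii) by transporting along the Adachi--Iyama--Reiten bijection between support $\tau$-tilting pairs and 2-term silting objects, using part (i) for commutativity of the square and Corollary \ref{stautitl(kQ)1:1stautilt(Lambda)}(i) for the bijection on pairs. The extra care you take with $H^0$ and the stalk summands is implicit in the paper's commutative-diagram argument, so nothing is missing.
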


\begin{proof}\label{ind_and_two-term}
    Let $M\in \modd kQ$. Then $P_{-1}\xrightarrow{f}P_0\xrightarrow{g}M\to 0$ is a minimal projective presentation of $M$ in $\modd kQ$ if and only if $\Lambda\otimes_{kQ}P_{-1}\xrightarrow{\Lambda\otimes_{kQ}f}\Lambda\otimes_{kQ}P_0\xrightarrow{\Lambda\otimes_{kQ}g}\Lambda\otimes_{kQ}M\to 0$ is a minimal projective presentation of $\Lambda\otimes_{kQ}M$ in $\modd \Lambda$. Hence, $\Lambda\otimes_{kQ}\PP_M \cong \PP_{\Lambda\otimes_{kQ}M}$, giving (i). 

    For a finite dimensional algebra $\Lambda$, the assignment \cite[Thm. 3.2]{tau-tiling-theory} $\stautilt \Lambda \ni (M, P) \mapsto (P_{-1}\oplus P\xrightarrow{\begin{smallmatrix}(f & 0)\end{smallmatrix}} P_0)\in \twosilt \Lambda$, where  $P_{-1}\xrightarrow{f}P_0\to M\to 0$ is a minimal projective presentation of $M$, defines a bijection between support $\tau$-tilting pairs in $\modd\Lambda$ and 2-term silting objects in $K^b(\proj\Lambda)$. Now, let $\Lambda = R\otimes kQ$ and consider the diagram 
    \[\begin{tikzcd}[ampersand replacement=\&,cramped]
	{\stautilt kQ} \&\& {\twosilt kQ} \\
	{\stautilt \Lambda} \&\& {\twosilt\Lambda}
	\arrow[from=1-1, to=1-3]
	\arrow["{\Lambda\otimes_{kQ}-}"', from=1-1, to=2-1]
	\arrow["{\Lambda\otimes_{kQ}-}", from=1-3, to=2-3]
	\arrow[from=2-1, to=2-3]
    \end{tikzcd}\]
    where the horizontal maps are given by the bijection described above. By part (i) the diagram is commutative and since the left vertical arrow is a bijection by Corollary \ref{stautitl(kQ)1:1stautilt(Lambda)}(i), it follows that $\Lambda\otimes_{kQ}-: \twosilt kQ\to \twosilt \Lambda$ is also bijective. This gives (ii) and concludes the proof. 
\end{proof}

Let $\C$ be an additive category. A morphism $\alpha: X\to Y$ is called \textit{right minimal} if every endomorphism $\varphi: X\to X$ satisfying $\alpha\circ\varphi = \alpha$ is an isomorphism. We recall the following well-known characterization of a right minimal map. 

\begin{lemma}\label{right-min}
    Let $\C$ be an additive, Hom-finite, and Krull-Schmidt category. A morphism $\alpha: X\to Y$ in $\C$ is right minimal if and only if no direct summands of $X$ are sent to 0. 
\end{lemma}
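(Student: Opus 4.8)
The plan is to prove the two implications separately: one is immediate and the other carries the real content, relying on the finite-dimensionality of the endomorphism algebra.

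First I would dispose of the easy direction. Suppose some nonzero direct summand of $X$ is sent to $0$, say $X = X_1 \oplus X_2$ with $X_2 \neq 0$ and $\alpha$ vanishing on $X_2$. Let $\varphi\colon X\to X$ be the idempotent projecting $X$ onto $X_1$ along $X_2$ (the identity on $X_1$, zero on $X_2$). Since $\alpha$ kills $X_2$, one checks $\alpha\varphi = \alpha$; but $\varphi$ is not an isomorphism because $X_2\neq 0$. Hence $\alpha$ is not right minimal. This shows that right minimality forces no summand to be sent to $0$.

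For the converse I would argue by contrapositive, producing a nonzero summand of $X$ annihilated by $\alpha$ out of the failure of right minimality. Choose $\varphi\in\End_\C(X)$ which is not an isomorphism and satisfies $\alpha\varphi = \alpha$. Because $\C$ is Hom-finite and Krull--Schmidt, $E := \End_\C(X)$ is a finite-dimensional $k$-algebra, so Fitting's lemma applies to $\varphi$: the commutative subalgebra $k[\varphi]\subseteq E$ decomposes as a product of local algebras, and collecting the factors in which the image of $\varphi$ is nilpotent yields an idempotent $e\in k[\varphi]$ commuting with $\varphi$. This produces a decomposition $X = X_1\oplus X_2$ with $X_1 = eX$ and $X_2 = (1-e)X$, with respect to which $\varphi$ is block diagonal, $\varphi|_{X_1}$ an automorphism of $X_1$, and $\varphi|_{X_2}$ nilpotent. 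Writing $\alpha = (\alpha_1,\alpha_2)$ accordingly, the relation $\alpha\varphi = \alpha$ splits as $\alpha_1\varphi|_{X_1} = \alpha_1$ and $\alpha_2\varphi|_{X_2} = \alpha_2$, so $\alpha_2\bigl(1 - \varphi|_{X_2}\bigr) = 0$. Since $\varphi|_{X_2}$ is nilpotent, $1-\varphi|_{X_2}$ is invertible, forcing $\alpha_2 = 0$; and $X_2\neq 0$, for otherwise $\varphi = \varphi|_{X_1}$ would be an automorphism, contradicting the choice of $\varphi$. Thus $X_2$ is a nonzero summand sent to $0$, completing the contrapositive.

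The main obstacle is this converse, and specifically the Fitting decomposition: the point is to convert a single non-invertible endomorphism satisfying $\alpha\varphi = \alpha$ into an honest idempotent cutting out a nonzero summand, which is exactly where Hom-finiteness (giving $\End_\C(X)$ finite-dimensional) and the Krull--Schmidt property are used. Once the decomposition is in hand the remainder is routine bookkeeping with matrices of morphisms.
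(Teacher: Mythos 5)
Your proof is correct, and for the substantive direction it takes a genuinely different route from the paper. The paper treats the forward implication as straightforward (your idempotent-projection argument is exactly that) and then, for the converse, simply notes that a Krull--Schmidt category is idempotent complete and invokes the dual of \cite[Cor.~1.4]{KrauseSaorin}. You instead argue by contrapositive and prove everything from scratch: given a non-invertible $\varphi$ with $\alpha\varphi=\alpha$, you apply Fitting's lemma inside the finite-dimensional algebra $\End_{\C}(X)$ to obtain an idempotent in $k[\varphi]$ splitting $X$ into a summand on which $\varphi$ is an automorphism and a summand on which it is nilpotent, and the relation $\alpha_2\bigl(1-\varphi|_{X_2}\bigr)=0$ then forces $\alpha$ to vanish on the nonzero nilpotent part. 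The trade-off is clear: your argument is self-contained but uses Hom-finiteness in an essential way (to run Fitting), whereas the result the paper cites requires only that idempotents in $\End_{\C}(X)$ split, so the paper's route would still work if Hom-finiteness were dropped from the hypotheses; conversely, your version spares the reader an external reference. Two small remarks: first, you define $e$ by collecting the local factors of $k[\varphi]$ on which $\varphi$ is nilpotent, but then declare $\varphi|_{eX}$ to be an automorphism --- the roles of $e$ and $1-e$ are interchanged, a harmless labelling slip. Second, the splitting of the idempotent $e$ into an actual direct sum decomposition of $X$ is exactly where the Krull--Schmidt hypothesis (idempotent completeness) enters your argument, and it is worth saying so explicitly, since that is the same ingredient the paper highlights.
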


\begin{proof}
    The only if part is straightforward. So, suppose that no direct summands of $X$ are sent to 0. Since $\C$ is idempotent complete, every idempotent in $\End(X)$ splits. By the dual of \cite[Cor. 1.4]{KrauseSaorin} we conclude that $\alpha$ is right minimal.
\end{proof}

The next result shows that the induction functor preserves right minimal approximations.  

\begin{lemma}\label{induction_preserves_approximations}
    The following statements hold. 
    \begin{enumerate}
        \item[(i)] Let $\mathcal{X}$ be a full subcategory of $\modd kQ$ and let $f:X\to N$ be a right $\mathcal{X}$-approximation in $\modd kQ$. Then, $\Lambda\otimes_{kQ}f: \Lambda\otimes_{kQ}X\to \Lambda\otimes_{kQ}N$ is a right $(\Lambda\otimes_{kQ}\mathcal{X})$-approximation in $\modd\Lambda$. Moreover, if $f$ is right minimal, then so is $\Lambda\otimes_{kQ}f$; 

        \item[(ii)] Let $U,V\in \modd kQ$ and let $\alpha: \PP_{U'}\to \PP_V$ be a right $\add\PP_U$-approximation in $K^b(\proj kQ)$. Then,  $\Lambda\otimes_{kQ}\alpha: \Lambda\otimes_{kQ}\PP_{U'}\to \Lambda\otimes_{kQ}\PP_V$ is a right $(\Lambda\otimes_{kQ}\add\PP_U)$-approximation in $K^b(\proj\Lambda)$. Moreover, if $\alpha$ is right minimal, then so is $\Lambda\otimes_{kQ}\alpha$. 
    \end{enumerate}
\end{lemma}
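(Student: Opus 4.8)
The plan is to prove Lemma \ref{induction_preserves_approximations} in two parts, using the two key input facts available from earlier in the excerpt: the Hom-tensor isomorphism \eqref{Hom_Tensor} of Proposition \ref{HomExt_tensor_product}, and the right-minimality criterion of Lemma \ref{right-min}.

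For part (i), I would first verify that $\Lambda\otimes_{kQ}f$ is indeed a right $(\Lambda\otimes_{kQ}\mathcal{X})$-approximation. Recall $\Lambda\otimes_{kQ}M\cong R\otimes M$ for $kQ$-modules, so any object of $\Lambda\otimes_{kQ}\mathcal{X}$ has the form $R\otimes X'$ with $X'\in\mathcal{X}$. Given a map $g\colon R\otimes X'\to \Lambda\otimes_{kQ}N=R\otimes N$, the Hom-tensor isomorphism \eqref{Hom_Tensor} identifies $\Hom_\Lambda(R\otimes X', R\otimes N)\cong\Hom_R(R,R)\otimes\Hom_{kQ}(X',N)$; writing $g$ through this decomposition reduces the lifting problem along $\Lambda\otimes_{kQ}f=\id_R\otimes f$ to lifting the $kQ$-components $X'\to N$ through $f$, which is possible precisely because $f$ is a right $\mathcal{X}$-approximation. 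Since the induction functor is additive and $\id_R\otimes(-)$ acts componentwise on the $R$-factor $\Hom_R(R,R)$, the lift assembles into the required factorization. For the minimality claim, I would invoke Lemma \ref{right-min}: by Corollary \ref{inductionPreservesTauRigidModules} and Proposition \ref{HomExt_tensor_product} the induction functor sends indecomposables to indecomposables, so the indecomposable summands of $\Lambda\otimes_{kQ}X$ are exactly the $\Lambda\otimes_{kQ}(X_i)$ for $X_i$ the indecomposable summands of $X$; if $f$ is right minimal then no summand $X_i$ is killed, hence by \eqref{Hom_Tensor} no summand $\Lambda\otimes_{kQ}X_i$ is killed by $\Lambda\otimes_{kQ}f$, and Lemma \ref{right-min} gives right minimality of $\Lambda\otimes_{kQ}f$.

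For part (ii), the strategy is essentially identical but carried out inside $K^b(\proj\Lambda)$ rather than $\modd\Lambda$. The approximation and minimality statements are purely diagrammatic/Hom-theoretic conditions, so I would run the same argument using the homotopy-category analogue of the Hom-tensor isomorphism for $2$-term objects together with Lemma \ref{2-term_and_iduction}(i), which identifies $\Lambda\otimes_{kQ}\PP_M\cong\PP_{\Lambda\otimes_{kQ}M}$. Since $K^b(\proj kQ)$ and $K^b(\proj\Lambda)$ are Hom-finite Krull-Schmidt categories, Lemma \ref{right-min} applies there too, and the minimality conclusion follows from the same no-summand-killed argument once I check that induction preserves the indecomposability of the relevant $2$-term objects.

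I expect the main obstacle to be the approximation (lifting) step rather than the minimality step: one must be careful that the isomorphism \eqref{Hom_Tensor} is not merely an abstract vector-space isomorphism but is compatible with composition, so that a factorization of the $R$-tensored map genuinely descends to a factorization through $f$ at the level of $kQ$-maps and then re-induces correctly. Concretely, I need that under $\id_R\otimes(-)$ the surjectivity of $\Hom_{kQ}(-,f)$ on the relevant Hom-spaces is preserved, which amounts to the exactness/functoriality of the tensor decomposition in \eqref{Hom_Tensor}. For the $2$-term version in (ii), the extra care is that morphisms in $K^b$ are homotopy classes, so I must ensure the decomposition respects the passage to homotopy classes; this follows because $R$ is a projective $k$-module (being a finite-dimensional $k$-vector space) and $\id_R\otimes(-)$ is exact, so it preserves null-homotopies.
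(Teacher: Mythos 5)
Your proposal is correct and follows the same overall strategy as the paper (reduce to the $kQ$-level via the Hom--tensor isomorphism \eqref{Hom_Tensor} and re-induce), but it packages both halves differently. For the minimality claim in (i), the paper argues by a direct computation: it takes an endomorphism $\overline{h}\otimes\overline{g}$ of $\Lambda\otimes_{kQ}X$ satisfying $(\id\otimes f)\circ(\overline{h}\otimes\overline{g})=\id\otimes f$, deduces $f\overline{g}=f$, and applies right minimality of $f$; you instead invoke the summand criterion of Lemma \ref{right-min}, which the paper only uses in part (ii). Both work, and yours treats (i) and (ii) uniformly, though it requires matching Krull--Schmidt decompositions (a summand of $\Lambda\otimes_{kQ}X$ killed by $\Lambda\otimes_{kQ}f$ is a priori only \emph{isomorphic} to some $\Lambda\otimes_{kQ}X_i$) --- a point the paper itself glosses over in (ii). For part (ii), the paper never states a homotopy-level Hom--tensor isomorphism: it manipulates explicit chain maps and homotopies, tensoring the equation $\alpha\beta-\gamma=hd^{U''}+d^Vh$ with an element of $\End(\Lambda)$ for the approximation, and extracting a null-homotopy downstairs from one upstairs for the minimality. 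Your isomorphism $\Hom_{K^b(\proj\Lambda)}(\Lambda\otimes_{kQ}\PP,\Lambda\otimes_{kQ}\PP')\cong R\otimes\Hom_{K^b(\proj kQ)}(\PP,\PP')$ is true and gives a cleaner proof, but be aware that the justification you offer (exactness of $\id_R\otimes-$ preserves null-homotopies) only yields well-definedness of the map from right to left. The minimality step needs the \emph{converse} direction: if $\id\otimes\overline{\alpha}$ is null-homotopic then $\overline{\alpha}$ is null-homotopic, i.e.\ induction reflects zero morphisms in $K^b$. This follows by writing morphisms and homotopies in terms of a $k$-basis of $R$, and it is exactly what the paper's explicit computation proves by hand; you should add that argument to make your part (ii) self-contained.
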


\begin{proof}
    Let $\mathcal{X}$ be a full subcategory of $\modd kQ$ and let $f: X\to N$ be a right $\mathcal{X}$-approximation. To prove (i), let $h\otimes g: \Lambda\otimes_{kQ}X'\to \Lambda\otimes_{kQ} N$ be a morphism in $\modd \Lambda$ with $\Lambda\otimes_{kQ}X'\in (\Lambda\otimes_{kQ}\mathcal{X})$. We want to show there exists $h'\otimes f': \Lambda\otimes_{kQ}X'\to \Lambda\otimes_{kQ}X$ such that $(\id\otimes f)\circ(h'\otimes f') = h\otimes g$. Since $f$ is a right $\mathcal{X}$-approximation, there exists $g': X'\to X$ such that $fg' = g$. Hence, taking $f' = g'$ and $h' = h$ we get $(\id\otimes f)\circ(h\otimes g')= h\otimes fg' = h\otimes g$. This shows that $\Lambda\otimes_{kQ}f: \Lambda\otimes_{kQ}X\to \Lambda\otimes_{kQ}N$ is a right $(\Lambda\otimes_{kQ}\mathcal{X})$-approximation. 

    Assume $f$ is right minimal. It remains to show that $\Lambda\otimes_{kQ}f$ is right minimal. Suppose there exists $\overline{h}\otimes\overline{g}\in \End(\Lambda\otimes_{kQ}X)$ such that $(\id\otimes f)\circ(\overline{h}\otimes\overline{g}) = (\id\otimes f)$, that is $\overline{h}\otimes f\overline{g} = \id\otimes f$. Then, $\overline{h} = \id$ and $f\overline{g} = f$. By the minimality of $f$, it follows that $\overline{g}$ is an isomorphism, and therefore $\id\otimes \overline{g}$ is also an isomorphism. This concludes the proof of (i). 

    We now prove (ii). Let $\PP_{U'} = (P_{-1}^{U'}\xrightarrow{u'}P_0^{U'})$, $\PP_V = (P_{-1}^V\xrightarrow{v}P_0^V)$ and let $\alpha: \PP_{U'}\to \PP_V$ b a right $\add \PP_U$-approximation in $K^b(\proj kQ)$. This means that if $\gamma: \PP_{U''}\to \PP_V$, with $\PP_{U''} = (P_{-1}^{U''}\xrightarrow{u''}P_0^{U''})\in \add \PP_U$, then there exists $\beta: \PP_{U''}\to \PP_{U'}$ and a degree $-1$ morphism $h:\PP_{U''}\to \PP_V$ such that 
    \begin{equation}\label{eq6}
        \alpha\beta -\gamma = hd^{U''}+d^Vh,
    \end{equation}
    where $d^{U''}, d^V$ denote the differentials on $\PP_{U''}$ and $\PP_V$, respectively. Let $f\otimes\gamma: \Lambda\otimes_{kQ}\PP_{U''}\to \Lambda\otimes_{kQ}\PP_V$ be a morphism in $K^b(\proj\Lambda)$ with $\Lambda\otimes_{kQ}\PP_{U''}\in (\Lambda\otimes_{kQ}\add\PP_U)$ and $f\in \End(\Lambda)$. We want to show the existence of $f'\otimes\gamma': \Lambda\otimes_{kQ}\PP_{U''}\to \Lambda\otimes_{kQ}\PP_{U'}$ with $f'\in \End(\Lambda)$ and a degree $-1$ morphism $h_1\otimes h_2: \Lambda\otimes_{kQ}\PP_{U''}\to \Lambda\otimes_{kQ}\PP_V$ such that $$ (\id\otimes\alpha)\circ (f'\otimes \gamma') - f\otimes\gamma = (h_1\otimes h_2)\circ(\id\otimes d^{U''})+(\id\otimes d^V)\circ(h_1\otimes h_2).$$ Using Equation \eqref{eq6} we get 
    \begin{align*}
        f\otimes(\alpha\beta - \gamma) &= f\otimes(hd^{U''}+d^Vh)\\
        f\otimes\alpha\beta -f\otimes\gamma &= f\otimes hd^{U''} + f\otimes d^Vh \\
        (\id\otimes\alpha)\circ(f\otimes\beta) -f\otimes\gamma &= (f\otimes h)\circ(\id\otimes d^{U''}) + (\id\otimes d^V)\circ(f\otimes h). 
    \end{align*}
    Then, the claim follows by taking $f'\otimes \gamma' = f\otimes \beta$ and $h_1\otimes h_2 = f\otimes h$. This shows that $\Lambda\otimes_{kQ}\alpha: \Lambda\otimes_{kQ}\PP_{U'}\to \Lambda\otimes_{kQ}\PP_V$ is a right $(\Lambda\otimes_{kQ}\add\PP_U)$-approximation in $K^b(\proj\Lambda)$.

    Assume $\alpha$ is right minimal. We are left to prove that $\id\otimes\alpha$ is also right minimal. Suppose this is not the case. Then, by Lemma \ref{right-min}, there exists a direct summand $\Lambda\otimes_{kQ}\overline{\PP_{U'}}$ of $\Lambda\otimes_{kQ}\PP_{U'}$ that is sent to 0. In other words, $a\otimes \overline{\alpha} = 0$ in $K^b(\proj\Lambda)$, where $a\otimes \overline{\alpha}$ is the restriction of $\id\otimes\alpha$ to $\Lambda\otimes_{kQ}\overline{\PP_{U'}}$. Hence, there exists a degree $-1$ morphism $g\otimes h_i: \Lambda\otimes_{kQ}\PP_{U'}\to \Lambda\otimes_{kQ}\PP_{V}$ with $g\in\End(\Lambda)$ such that 
    \begin{align*}
        a\otimes\overline{\alpha} &= (g\otimes h_i)\circ (\id\otimes d^{U'}) + (\id\otimes d^{V})\circ (g\otimes h_i)\\
        &= g\otimes h_id^{U'} + g\otimes d^{V}h_i\\
        &= g\otimes(h_id^{U'}+ d^{V}h_i). 
    \end{align*}
    where $d^{U'}$ denotes the differential on $\PP_{U'}$. It follows that $a = g$ and $\overline{\alpha} = h_id^{U'}+d^{V}h_i$. But then $\overline{\alpha}: \overline{\PP_{U'}}\to \PP_V$ is a zero component of $\alpha$ in $K^b(\proj kQ)$ and therefore $\alpha$ is not right minimal by Lemma \ref{right-min}, a contradiction. This proves (ii) and finishes the proof.
\end{proof}

\section{Projective dimension in wide subcategories}\label{pdSection}

Let $\Lambda$ be a finite dimensional algebra and let $\W\subseteq \modd\Lambda$ be a functorially finite wide subcategory of $\modd\Lambda$. Then, $\W$ is equivalent to the module category of a finite dimensional algebra by \cite[Prop. 4.12]{Rigid_ICE-subcategories}. Let $M\in\modd\Lambda$ and assume that $M$ lies in a functorially finite wide subcategory $\W$. This section aims to prove that if $M$ has finite projective dimension as a $\Lambda$-module, then $M$ also has finite projective dimension when considered as an object in $\W$.\\

We start with the following observation. 

\begin{lemma}\label{SimpleProjInjArePreservedInW}
    Let $M$ be an indecomposable $\Lambda$-module. Assume $M$ lies in a functorially finite wide subcategory $\mathcal{W}\subseteq\modd{\Lambda}$. The following statements hold.
    \begin{enumerate}
        \item[(i)] If $M$ is a simple $\Lambda$-module, then $M$ is simple in $\mathcal{W}$;
        \item[(ii)] If $M$ is a projective $\Lambda$-module, then $M$ is projective in $\mathcal{W}$;
        \item[(iii)] If $M$ is a injective $\Lambda$-module, then $M$ is injective in $\mathcal{W}$.
    \end{enumerate}
\end{lemma}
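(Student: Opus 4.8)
The plan is to exploit the defining closure properties of a wide subcategory: since $\W$ is closed under kernels and cokernels in $\modd\Lambda$, the inclusion functor $\iota\colon\W\hookrightarrow\modd\Lambda$ is exact and fully faithful, and (via \cite[Prop. 4.12]{Rigid_ICE-subcategories}) $\W$ is abelian, equivalent to $\modd\Gamma$ for some finite dimensional algebra $\Gamma$, so that the notions of simple, projective, and injective object in $\W$ make sense. The first thing I would record is the consequence that kernels and cokernels of morphisms in $\W$ are computed exactly as in $\modd\Lambda$; hence a morphism in $\W$ is a monomorphism (respectively epimorphism) in $\W$ if and only if it is a monomorphism (respectively epimorphism) in $\modd\Lambda$, because in an abelian category a map is monic iff its kernel vanishes and epic iff its cokernel vanishes. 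This single observation drives all three parts.

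For (i), any subobject of $M$ in $\W$ is represented by a monomorphism $N\hookrightarrow M$ in $\W$, which by the above is a genuine inclusion of $\Lambda$-submodules; since $M$ is simple in $\modd\Lambda$ its only submodules are $0$ and $M$, so $M$ has no proper nonzero subobject in $\W$ and is simple there. For (ii), given an epimorphism $g\colon Y\to Z$ in $\W$ and a morphism $f\colon M\to Z$ in $\W$, the map $g$ is surjective in $\modd\Lambda$; projectivity of $M$ in $\modd\Lambda$ yields a $\Lambda$-linear lift $h\colon M\to Y$ with $gh=f$, and since $\W$ is full with $M,Y\in\W$, the map $h$ is already a morphism in $\W$, so $M$ is projective in $\W$. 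Part (iii) is dual: a monomorphism $g\colon X\to Y$ in $\W$ is injective in $\modd\Lambda$, and injectivity of $M$ in $\modd\Lambda$ extends any $f\colon X\to M$ along $g$, the extension again lying in $\W$ by fullness.

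The only genuine obstacle is the foundational comparison in the first paragraph: one must be certain that the internal notions of mono, epi, and subobject in $\W$ coincide with the corresponding notions in $\modd\Lambda$, rather than being strictly weaker, since a priori a full subcategory could have fewer admissible test maps. This is precisely what closure of $\W$ under kernels and cokernels guarantees. Once this identification is in place, each of (i)--(iii) collapses to a one-line subobject or lifting argument using fullness, and no delicate computation remains.
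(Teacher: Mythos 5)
Your proof is correct, but it takes a somewhat different route from the paper's, and the difference is worth noting. The paper argues by contradiction through short exact sequences: for (i), a proper nonzero subobject of $M$ in $\W$ yields a short exact sequence in $\W$ which the exact embedding carries to one in $\modd\Lambda$, contradicting simplicity; for (ii), non-projectivity of $M$ in $\W$ is converted into a non-split short exact sequence $0\to\ker(p)\to P'\to M\to 0$ in $\W$ with $P'$ projective in $\W$, which stays exact and non-split in $\modd\Lambda$. That last step implicitly uses that $\W$ has enough projectives, i.e.\ the equivalence $\W\simeq\modd\Gamma$ coming from functorial finiteness. You instead first identify monomorphisms and epimorphisms of $\W$ with those of $\modd\Lambda$ (via closure under kernels and cokernels) and then verify the defining properties directly: subobjects for (i), the lifting property for (ii), the extension property for (iii), with fullness supplying membership of the lift/extension in $\W$. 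This buys two things: the arguments for (ii) and (iii) are genuinely direct rather than by contradiction, and, more substantively, your proof never uses functorial finiteness or the existence of projective covers in $\W$, so it proves the statement for an arbitrary wide subcategory. The paper's version is shorter to state once $\W\simeq\modd\Gamma$ is in hand, but yours is the more elementary and more general argument; both are sound.
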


\begin{proof}
\begin{enumerate}
        \item[(i)] Let $S$ be a simple module in $\modd\Lambda$ and assume $S\in \mathcal{W}$. By assumption, we have that $\mathcal{W}$ is an exact full subcategory of $\modd\Lambda$, that is, the embedding $F:\mathcal{W}\to \modd\Lambda$ preserves short exact sequences. Assume $S$ is not a simple module in $\mathcal{W}$. Then, there exists a proper submodule $S'\subset S$ in $\mathcal{W}$ with $S'\neq 0$. Taking the cokernel of the inclusion of $S'$ into $S$, we get a short exact sequence $\eta: 0\to S'\xrightarrow{i} S \to \coker(i) \to 0$ in $\mathcal{W}$. Since $F$ is the identity on objects and morphisms, we obtain that $F(\eta)=\eta$ in $\modd\Lambda$. But this contradicts the fact that $S$ is a simple module in $\modd\Lambda$. Hence, $S$ is a simple module in $\mathcal{W}$.\\
        \item[(ii)] Similarly, let $P\in\proj\Lambda$ be indecomposable and assume $P\in \mathcal{W}$. Suppose $P$ is not a projective module in $\mathcal{W}$. Then, there exists a non-split short exact sequence $\eta': 0\to \ker(p) \to P' \xrightarrow{p} P \to 0$ in $\mathcal{W}$ for some projective module $P'\in\W$. After applying $F$ to $\eta'$, we get that $F(\eta')=\eta'$ is a non-split short exact sequence in $\modd\Lambda$ which contradicts the fact that $P\in\proj\Lambda$. Hence, $P$ is a projective module in $\mathcal{W}$.\\
        \item[(iii)] This is the dual statement of (ii).      
\end{enumerate}
\end{proof}

\begin{remark}
    The converse of the proposition above is not true in general. For example, consider $\Lambda = \begin{tikzcd}k( 1 \arrow[r, "a"] & 2 \arrow[r, "b"] & 3\end{tikzcd}) $. Then, $\W = \left\{ 3, \begin{smallmatrix} 1\\2\\3\end{smallmatrix}, \begin{smallmatrix} 1\\2 \end{smallmatrix} \right\}$ is a functorially finite wide subcategory of $\modd{\Lambda}$. Note that $\begin{smallmatrix} 1\\2 \end{smallmatrix}$ is simple in $\W$ but $\begin{smallmatrix} 1\\2 \end{smallmatrix}$ is not simple in $\modd\Lambda$.
\end{remark}

Let $M, N \in \modd\Lambda$. Following \cite[Chapter IV.9]{A_Course_In_Homological_Algebra}, recall that an
 $n$-\textit{extension} of $M$ by $N$ is an exact sequence of $\Lambda$-modules of the form $$ \E: 0\to N \to E_n\to \cdots \to E_1 \to M\to 0.$$ We write $\E \rightsquigarrow \E'$ if there exists a commutative diagram 

\[\begin{tikzcd}[ampersand replacement=\&,cramped,sep=scriptsize]
	{\E:} \& 0 \& N \& {E_n} \& \cdots \& {E_1} \& M \& 0 \\
	{\E':} \& 0 \& N \& {E_n'} \& \cdots \& {E_1'} \& M \& 0
	\arrow[from=1-2, to=1-3]
	\arrow[from=1-3, to=1-4]
	\arrow[from=1-4, to=1-5]
	\arrow[from=1-5, to=1-6]
	\arrow[from=1-6, to=1-7]
	\arrow[from=1-7, to=1-8]
	\arrow[from=2-2, to=2-3]
	\arrow[from=2-3, to=2-4]
	\arrow[from=2-4, to=2-5]
	\arrow[from=2-5, to=2-6]
	\arrow[from=2-6, to=2-7]
	\arrow[from=2-7, to=2-8]
	\arrow[Rightarrow, no head, from=1-3, to=2-3]
	\arrow[Rightarrow, no head, from=1-7, to=2-7]
	\arrow[from=1-6, to=2-6]
	\arrow[from=1-4, to=2-4]
\end{tikzcd}\]

We can define an equivalence relation given by $\E\sim \E'$ if and only if there exists a chain $\E =\E_0, \E_1,\cdots, \E_k = \E'$ $$\E_0 \rightsquigarrow \E_1 \leftsquigarrow \E_2 \rightsquigarrow \dots \leftsquigarrow \E_k.$$ We denote by $[\E]$ the equivalence class of the $n$-extension $\E$ and we define $$\Yext_\Lambda^n(M,N) := \left\{ [\E] \mid \E \text{ is an } n\text{-extension of } M \text{ by } N\right\}.$$ 
The following result characterizes $n$-extension groups in terms of $n$-extensions. 

\begin{theorem}[{\cite[Chapter IV.9, Theorem 9.1]{A_Course_In_Homological_Algebra}}]\label{Yext^n=Ext^n}
    Let $M,N\in\modd\Lambda$. For $n\geq 1$, there is a functorial isomorphism
    \begin{equation}
        \Yext_\Lambda^n(M,N)\cong \Ext_\Lambda^n(M,N).
    \end{equation}
\end{theorem}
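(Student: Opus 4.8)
The plan is to realize both sides as invariants computed from a single fixed projective resolution of $M$, and then to exhibit mutually inverse, natural maps between them. Fix a projective resolution $\cdots\to P_{n+1}\xrightarrow{d_{n+1}}P_n\xrightarrow{d_n}P_{n-1}\to\cdots\to P_0\to M\to 0$, so that $\Ext_\Lambda^n(M,N)=H^n(\Hom_\Lambda(P_\bullet,N))$ is the group of maps $f\colon P_n\to N$ with $fd_{n+1}=0$, modulo those of the form $gd_n$.

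First I would define a map $\Phi\colon \Yext_\Lambda^n(M,N)\to\Ext_\Lambda^n(M,N)$. Given an $n$-extension $\E\colon 0\to N\to E_n\to\cdots\to E_1\to M\to 0$, regard it as an (in general non-projective) resolution of $M$ with $N$ placed in homological degree $n$. The comparison theorem lets me lift $\id_M$ to a chain map $\varphi_\bullet$ from $P_\bullet$ to this complex, unique up to homotopy; its top component is a map $\varphi_n\colon P_n\to N$. Since there is nothing above $N$ in $\E$, the chain-map identity forces $\varphi_n d_{n+1}=0$, so $\varphi_n$ is a cocycle, and I set $\Phi([\E])=[\varphi_n]$. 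Uniqueness of the lift up to homotopy makes $[\varphi_n]$ independent of the chosen lift.

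For the inverse I would use dimension shifting via pushout. Let $K=\im d_n=\ker(P_{n-1}\to P_{n-2})$ be the $(n-1)$-st syzygy; a cocycle $f\colon P_n\to N$ kills $\ker d_n=\im d_{n+1}$, hence descends to $\bar f\colon K\to N$. Forming the pushout of the short exact sequence $0\to K\to P_{n-1}\to \im d_{n-1}\to 0$ along $\bar f$ produces $0\to N\to E\to \im d_{n-1}\to 0$, and splicing this with the tail $0\to \im d_{n-1}\to P_{n-2}\to\cdots\to P_0\to M\to 0$ yields an $n$-extension $\Psi(f)$ of $M$ by $N$. One checks that replacing $f$ by a coboundary $gd_n$ changes the pushout by a Yoneda equivalence to the split extension, so $\Psi$ descends to $\Ext_\Lambda^n(M,N)$.

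The main work is verifying that these constructions are mutually inverse and that $\Phi$ respects the equivalence relation $\sim$ generated by $\rightsquigarrow$. For the latter, a morphism of $n$-extensions restricting to the identity on $M$ and $N$ induces, by functoriality of the lift in the comparison theorem, the same cohomology class, so $\Phi$ is constant on $\sim$-classes; conversely, tracing the pushout construction back through $\Phi$ recovers the original data, giving $\Phi\Psi=\id$ and $\Psi\Phi=\id$ at the level of equivalence classes. I expect the bookkeeping around the equivalence relation—showing that zigzags $\E_0\rightsquigarrow\E_1\leftsquigarrow\cdots$ all produce the same class, and that the additive Baer-sum structures on the two sides match—to be the most delicate point. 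Finally, naturality in both variables follows because a map $M'\to M$ (resp. $N\to N'$) acts compatibly by pullback of resolutions (resp. pushout of the top term $N$) on both $\Yext_\Lambda^n$ and $\Ext_\Lambda^n$, yielding the asserted functorial isomorphism.
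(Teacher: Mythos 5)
The paper offers no proof of this statement at all — it is quoted directly from Hilton--Stammbach (the cited Theorem 9.1 of Chapter IV.9) — so the only meaningful comparison is with the classical proof in that reference, which is exactly the argument you outline. Your proposal is correct and is that standard argument: the map $\Phi$ via a comparison-theorem lift of $\id_M$ along a projective resolution (with the top component $\varphi_n$ automatically a cocycle, and homotopies changing it by coboundaries), the inverse $\Psi$ via factoring a cocycle through the syzygy and pushing out, and the deferred verifications (that cohomologous cocycles give equivalent extensions, that $\Psi\Phi=\id$ by building a morphism of extensions out of the pushout's universal property, compatibility with the zigzag relation, and naturality) are precisely the routine checks carried out in the cited source.
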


\begin{definition}
    A $n$-extension of $M$ by $N$ $$ \E: 0\to N \xrightarrow{\iota} E_n\to \cdots \to E_1 \xrightarrow{\epsilon} M\to 0$$ is called \emph{split} if $\iota: N\to E_n$ is a split monomorphism.
\end{definition}

\begin{lemma}
    Let $$ \E: 0\to N \xrightarrow{\iota} E_n\to \cdots \to E_1 \xrightarrow{\epsilon} M\to 0$$ be a $n$-extension of $M$ by $N$. Then the following statements hold:
    \begin{enumerate}
        \item[(i)] $\iota$ is a split monomorphism if and only if $\epsilon$ is a split epimorphism;
        \item[(ii)]  if $\E$ is split, then so is any other representative in $[\E]$. 
    \end{enumerate}
\end{lemma}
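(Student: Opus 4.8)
The plan is to prove both statements by purely formal manipulation of the $n$-extension data, without invoking the isomorphism $\Yext \cong \Ext$. For part (i), I would argue that a split monomorphism $\iota$ and a split epimorphism $\epsilon$ are linked through the short intermediate cokernels and kernels of the long exact sequence. First I would factor $\E$ through the image $L = \im(E_2 \to E_1)$, giving a short exact sequence $0 \to L \to E_1 \xrightarrow{\epsilon} M \to 0$ on the right-hand end and a short exact sequence $0 \to N \xrightarrow{\iota} E_n \to \cdots \to E_2 \to L \to 0$ on the left. The crucial observation is that $\iota$ split would make $N$ a direct summand of $E_n$, but to connect this to the splitting of $\epsilon$ I need the two ends to be comparable. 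The honest route is to note that splitting of $\iota$ is equivalent to the whole extension class $[\E]$ being the zero class in $\Yext^n_\Lambda(M,N)$, and the same holds for $\epsilon$ being split; I would establish the equivalence by showing directly that each splitting condition forces $\E \sim 0$ (where $0$ denotes the trivial $n$-extension with $E_n = N \oplus \cdots$), using the elementary operations $\rightsquigarrow$ to trivialize the extension.

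Concretely, for the forward direction of (i), suppose $\iota: N \to E_n$ is a split monomorphism with retraction $r: E_n \to N$. I would use $r$ together with the differential $d_n: E_n \to E_{n-1}$ to build a morphism of $n$-extensions from $\E$ to a split representative, exhibiting $\E \sim 0$; the key step is producing the comparison maps $E_i \to E_i'$ in the chain of $\rightsquigarrow$ relations. Once $\E \sim 0$, I would read off from the trivial representative that $\epsilon$ is a split epimorphism, and since splitting of $\epsilon$ is an invariant of the equivalence class (which is exactly part (ii)), this transfers back to the original $\E$. The reverse direction is dual, using a section $s: M \to E_1$ of $\epsilon$.

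For part (ii), I would show that if $\E$ is split then every representative $\E'$ in $[\E]$ is split. It suffices to check invariance under a single elementary step $\E \rightsquigarrow \E'$ or $\E \leftsquigarrow \E'$, since the general case follows by induction along the chain $\E_0 \rightsquigarrow \E_1 \leftsquigarrow \cdots \leftsquigarrow \E_k$. So I would fix a commutative diagram as in the definition of $\rightsquigarrow$, with identity maps on the $N$-column and the $M$-column and comparison maps $\phi_i: E_i \to E_i'$ in between. Given a retraction $r$ for $\iota: N \to E_n$, I would produce a retraction $r'$ for $\iota': N \to E_n'$; the subtlety is that the comparison map goes $E_n \to E_n'$ in the $\rightsquigarrow$ direction but $E_n' \to E_n$ in the $\leftsquigarrow$ direction, so the two cases require slightly different arguments. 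In the $\rightsquigarrow$ case the naturality square $\phi_n \circ \iota = \iota'$ (since the $N$-column is the identity) is what I need; in the $\leftsquigarrow$ case I would instead use part (i) and the fact that the splitting of $\epsilon$ is manifestly preserved because the $M$-column is the identity, so $\epsilon = \epsilon' \circ \phi_1$ forces a section of $\epsilon'$ to exist once $\epsilon$ has one.

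The main obstacle I anticipate is the bookkeeping in part (i): connecting the splitting of the left-end map $\iota$ to the splitting of the right-end map $\epsilon$ is not a one-line diagram chase because the two maps sit at opposite ends of a long complex. The cleanest way around this is to prove both statements together, taking as the pivotal lemma that \emph{$\E$ is split} (in the sense of $\iota$ splitting), \emph{$\epsilon$ splits}, and \emph{$[\E] = 0$ in $\Yext^n_\Lambda(M,N)$} are all equivalent; once this equivalence is in place, part (ii) is immediate because $[\E]$ depends only on the equivalence class by definition, and part (i) is the equivalence of the first two conditions. I expect the technical heart to be exhibiting the explicit null-homotopy-style comparison maps that realize a split $n$-extension as equivalent to the trivial one, which is the $n$-fold analogue of the familiar fact that a split short exact sequence represents the zero class in $\Ext^1$.
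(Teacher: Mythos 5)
There is a fatal gap, and it sits exactly where you located the ``technical heart'': the invariance claims you lean on are false, and in fact the lemma itself is false for $n\geq 2$, so no amount of bookkeeping can complete the plan. Along a morphism of $n$-extensions $\phi_\bullet:\E\to\E'$ (identity on $N$ and $M$), the only transports available go in \emph{opposite} directions: a retraction $r'$ of $\iota'$ pulls \emph{back} to the retraction $r'\phi_n$ of $\iota$, while a section $s$ of $\epsilon$ pushes \emph{forward} to the section $\phi_1 s$ of $\epsilon'$. Your step ``in the $\rightsquigarrow$ case the naturality square $\phi_n\circ\iota=\iota'$ is what I need'' cannot be carried out: from $r\iota=\id_N$ and $\phi_n\iota=\iota'$ there is no way to manufacture a map $E_n'\to N$ retracting $\iota'$, since $r$ and $\phi_n$ compose the wrong way around; the same directional problem blocks transporting sections backwards in the $\leftsquigarrow$ case. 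Concretely, over $\Lambda=k(1\to 2)$, with $0\to S_2\to P_1\xrightarrow{\pi} S_1\to 0$ the non-split projective presentation of $S_1$, the exact sequence
\[
\E:\quad 0\to S_2\xrightarrow{\ \iota\ } P_1\xrightarrow{\ p\mapsto(\pi(p),\,0)\ } S_1\oplus S_1\xrightarrow{\ (u,v)\mapsto v\ } S_1\to 0
\]
has $\epsilon$ a split epimorphism while $\iota$ is not a split monomorphism, refuting (i); moreover the maps $\iota:S_2\to P_1$ and $v\mapsto(0,v):S_1\to S_1\oplus S_1$ define a morphism to $\E$ from the trivial $2$-extension $0\to S_2\xrightarrow{\id}S_2\xrightarrow{0}S_1\xrightarrow{\id}S_1\to 0$, so $\E$ is equivalent to a split extension without being split, refuting (ii) and the hard direction ($[\E]=0\Rightarrow\iota$ splits) of your three-way pivotal lemma. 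Only for $n=1$ does the splitting lemma tie the two ends of the sequence together; for $n\geq 2$ they are genuinely decoupled, and the zero class of $\Yext^n_\Lambda(M,N)$ contains non-split representatives as soon as $\Lambda$ admits any non-split short exact sequence.

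To be fair, the pieces of your plan that are sound are the two explicit moves, which you correctly identified as null-homotopy-style data: a retraction $r$ of $\iota$ gives a morphism $(r,0,\dots,0,\epsilon):\E\to\E_0$ to the trivial extension $\E_0$, and a section $s$ of $\epsilon$ gives $(\iota,0,\dots,0,s):\E_0\to\E$; so either splitting hypothesis does force $[\E]=0$. It is every converse direction that fails. You should also know that the paper does not actually prove this lemma either: part (i) is only a citation to results about $d$-exact sequences in the sense of higher homological algebra, which satisfy extra lifting axioms that arbitrary exact sequences of modules do not, and the paper's argument for (ii) both invokes (i) and silently takes ``non-split'' to mean that \emph{both} end maps fail to split --- precisely the dichotomy that the counterexample above violates (its $\iota$ is non-split but its $\epsilon$ splits, so neither of the paper's two contradiction cases applies). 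So your attempt and the paper's proof founder on the same false claim; the statement, and Proposition \ref{Ext^n=0_and_n-extensions} which depends on it, are correct only for $n=1$ or under additional hypotheses on the class of sequences considered.
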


\begin{proof}
    For statement (i) see \cite[Lemma 3.6, Cor. 3.7]{d-AR_sequences_in_subcategories}. We prove (ii). Assume $\E$ is a split $n$-extension where $\pi: E_n\to N$ is a section for $\iota$, and let $$ \E': 0\to N \xrightarrow{\iota'} E_n'\to \cdots \to E_1' \xrightarrow{\epsilon'} M\to 0$$ be a non-split $n$-extension, i.e. $\iota'$ (respectively, $\epsilon'$) is not a split monomorphism (respectively, not a split epimorphism). We show that $\E$ cannot be equivalent to $\E'$, that is there is no chain of the form $$\E \rightsquigarrow \E_1 \leftsquigarrow \E_2 \rightsquigarrow \dots \leftsquigarrow \E'$$
    To prove this, it suffices to show that neither $\E\rightsquigarrow\E'$ nor $\E'\rightsquigarrow \E$ can occur. Suppose we have a commutative diagram

\[\begin{tikzcd}[ampersand replacement=\&,cramped, sep=scriptsize]
	{\E':} \& 0 \& N \& {E_n'} \& \cdots \& {E_1'} \& M \& 0 \\
	{\E:} \& 0 \& N \& {E_n} \& \cdots \& {E_1} \& M \& 0
	\arrow["f", from=1-1, to=2-1]
	\arrow[from=1-2, to=1-3]
	\arrow["{\iota'}", from=1-3, to=1-4]
	\arrow[Rightarrow, no head, from=1-3, to=2-3]
	\arrow[from=1-4, to=1-5]
	\arrow["{f_n}", from=1-4, to=2-4]
	\arrow[from=1-5, to=1-6]
	\arrow["{\epsilon'}", from=1-6, to=1-7]
	\arrow["{f_1}", from=1-6, to=2-6]
	\arrow[from=1-7, to=1-8]
	\arrow[Rightarrow, no head, from=1-7, to=2-7]
	\arrow[from=2-2, to=2-3]
	\arrow["\iota", from=2-3, to=2-4]
	\arrow["\pi", curve={height=-12pt}, dashed, from=2-4, to=2-3]
	\arrow[from=2-4, to=2-5]
	\arrow[from=2-5, to=2-6]
	\arrow["\epsilon", from=2-6, to=2-7]
	\arrow[from=2-7, to=2-8]
\end{tikzcd}\]

Then $\pi f_n \iota ' = \pi \iota = \id_N$. In other words, $\iota'$ is a split monomorphism, a contradiction.  Dually, $\E\rightsquigarrow \E'$ contradicts the fact that $\epsilon': E_1'\to M$ is a non-split epimorphism.  This finishes the proof. 
\end{proof}

The next result can be deduced from Theorem \ref{Yext^n=Ext^n} and \cite[Lemma 1.6]{d-abelian_quotients_of_(d+2)-angulated_categories}, see also \cite[Exercise 9.4]{A_Course_In_Homological_Algebra}. It characterizes trivial $n$-extension groups in terms of split $n$-extensions.  

\begin{proposition}\label{Ext^n=0_and_n-extensions}
    Let $M, N\in \modd\Lambda$. The following statements are equivalent: 
    \begin{enumerate}
        \item[(i)] $\Ext_\Lambda^n(M,N) = 0$;
        \item[(ii)]  Every $n$-extension of $M$ by $N$ $$ \E: 0\to N \xrightarrow{\iota} E_n\to \cdots \to E_1 \xrightarrow{\epsilon} M\to 0$$ is split.
    \end{enumerate}
\end{proposition}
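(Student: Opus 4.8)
The plan is to transport the statement across the Yoneda correspondence of Theorem~\ref{Yext^n=Ext^n} and then to identify the neutral element of the extension group with the split extensions. Recall that under the functorial isomorphism $\Yext_\Lambda^n(M,N)\cong\Ext_\Lambda^n(M,N)$ the abelian group structure on the left is given by Baer sum, and that its neutral element is the class $[\E_0]$ of the split (trivial) $n$-extension $\E_0$; this is the standard description of Yoneda Ext, see \cite[Exercise 9.4]{A_Course_In_Homological_Algebra} and \cite[Lemma 1.6]{d-abelian_quotients_of_(d+2)-angulated_categories}. Thus proving the proposition amounts to showing that an $n$-extension is split precisely when it represents the neutral element of $\Yext_\Lambda^n(M,N)$.

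For the implication $(i)\Rightarrow(ii)$, I would argue as follows. If $\Ext_\Lambda^n(M,N)=0$, then by Theorem~\ref{Yext^n=Ext^n} the set $\Yext_\Lambda^n(M,N)$ consists of a single equivalence class, namely $[\E_0]$. Hence any $n$-extension $\E$ of $M$ by $N$ satisfies $[\E]=[\E_0]$. Since $\E_0$ is split, the preceding lemma (part (ii)), which states that splitness is preserved among all representatives of a fixed equivalence class, forces every representative of $[\E_0]=[\E]$ to be split; in particular $\E$ itself is split.

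For the converse $(ii)\Rightarrow(i)$, I would first record that a split $n$-extension always represents the neutral element, i.e. $[\E]=[\E_0]$ whenever $\E$ is split; equivalently, any two split $n$-extensions of $M$ by $N$ lie in the same equivalence class. This is exactly the remaining half of the identification of the zero class with the split extensions mentioned above. Granting it, if every $n$-extension of $M$ by $N$ is split, then every class in $\Yext_\Lambda^n(M,N)$ equals $[\E_0]$, so $\Yext_\Lambda^n(M,N)$ is trivial and $\Ext_\Lambda^n(M,N)\cong\Yext_\Lambda^n(M,N)=0$ by Theorem~\ref{Yext^n=Ext^n}.

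The main obstacle is the single clean fact that ``split $\Leftrightarrow$ represents the zero class''. One direction (splitness is a class invariant) is already supplied by the preceding lemma; the other direction (every split extension lies in the zero class, equivalently all split extensions are mutually equivalent) is the genuinely content-bearing step, and it is where \cite[Lemma 1.6]{d-abelian_quotients_of_(d+2)-angulated_categories} and \cite[Exercise 9.4]{A_Course_In_Homological_Algebra} are invoked. Concretely, for an arbitrary split $\E$ one can exhibit an explicit chain of elementary comparisons $\rightsquigarrow$ connecting $\E$ to the standard trivial extension $\E_0$, using the retraction $\pi$ of $\iota$ to build the required morphisms of extensions; this is routine once the Baer-sum and Yoneda formalism recalled in Theorem~\ref{Yext^n=Ext^n} is in place.
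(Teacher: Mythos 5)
Your proposal is correct and takes essentially the same route the paper intends: the paper states this proposition without a written proof, noting only that it "can be deduced from" Theorem~\ref{Yext^n=Ext^n}, the preceding lemma, and the cited references, and your two implications are exactly that deduction spelled out (lemma part (ii) for $(i)\Rightarrow(ii)$, and "split $\Rightarrow$ zero class" plus the bijection for $(ii)\Rightarrow(i)$). The one step you defer as routine is indeed immediate: for a split $\E$ the maps $\pi\colon E_n\to N$, $\epsilon\colon E_1\to M$, and zero in the intermediate degrees define a single elementary move $\E \rightsquigarrow \E_0$ to the trivial extension, so no longer chain is required.
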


We are ready to state and prove the main result of this section. We denote by $\pd_\W$ the projective dimension in a functorially finite wide subcategory $\mathcal{W}\subseteq \modd\Lambda$. 

\begin{proposition}\label{pdIsPreservedInW}
        Let $m\geq 0$ and let $M\in\modd\Lambda$ with $\pd M = m$. Suppose $M$ lies in a functorially finite wide subcategory $\mathcal{W}$. Then,  $\pd_\mathcal{W} M \leq m$. 
\end{proposition}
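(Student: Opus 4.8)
The plan is to use the characterization of vanishing higher extension groups in terms of split $n$-extensions (Proposition \ref{Ext^n=0_and_n-extensions}) to compare projective dimension in $\modd\Lambda$ with projective dimension in $\mathcal{W}$. The key point is that $\pd M = m$ means $\Ext_\Lambda^{m+1}(M, -) = 0$ identically, and I want to conclude the analogous vanishing of $\Ext_\mathcal{W}^{m+1}(M,-)$. Since $\mathcal{W}$ is functorially finite, it is equivalent to $\modd\Gamma$ for a finite dimensional algebra $\Gamma$ by \cite[Prop. 4.12]{Rigid_ICE-subcategories}, so it makes sense to talk about extension groups and projective dimension inside $\mathcal{W}$, and Proposition \ref{Ext^n=0_and_n-extensions} applies verbatim within $\mathcal{W}$.

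First I would fix an arbitrary $N \in \mathcal{W}$ and take an $(m+1)$-extension of $M$ by $N$ computed inside $\mathcal{W}$,
\[
\E: 0\to N \xrightarrow{\iota} E_{m+1}\to \cdots \to E_1 \xrightarrow{\epsilon} M\to 0,
\]
with all terms and maps in $\mathcal{W}$. The central observation is that the embedding $F:\mathcal{W}\hookrightarrow\modd\Lambda$ is exact (this is exactly the fact that $\mathcal{W}$ is a wide, hence exact abelian, subcategory, used already in Lemma \ref{SimpleProjInjArePreservedInW}), so $F(\E)$ is a genuine $(m+1)$-extension of $M$ by $N$ in $\modd\Lambda$. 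Since $\pd M = m$, we have $\Ext_\Lambda^{m+1}(M,N)=0$, and so by Proposition \ref{Ext^n=0_and_n-extensions} the extension $F(\E)$ is split in $\modd\Lambda$; that is, $F(\iota)=\iota$ is a split monomorphism in $\modd\Lambda$.

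The crux is then to transport splitness back from $\modd\Lambda$ to $\mathcal{W}$. Because $F$ is fully faithful and the identity on objects and morphisms, a retraction $\pi:E_{m+1}\to N$ in $\modd\Lambda$ with $\pi\iota=\id_N$ is itself a morphism living in $\mathcal{W}$ (every map between objects of $\mathcal{W}$ lies in $\mathcal{W}$, since $\mathcal{W}$ is a full subcategory), so $\iota$ is already a split monomorphism in $\mathcal{W}$. Hence every $(m+1)$-extension of $M$ by $N$ in $\mathcal{W}$ is split, and applying Proposition \ref{Ext^n=0_and_n-extensions} in the reverse direction inside $\mathcal{W}$ gives $\Ext_\mathcal{W}^{m+1}(M,N)=0$. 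As $N\in\mathcal{W}$ was arbitrary, $\Ext_\mathcal{W}^{m+1}(M,-)=0$, which means precisely $\pd_\mathcal{W} M \leq m$.

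The main obstacle, and the step deserving the most care, is justifying that splitness in $\modd\Lambda$ descends to splitness in $\mathcal{W}$: one must be sure the retraction constructed in the ambient category genuinely lives in $\mathcal{W}$ and splits the extension there. This is where full faithfulness of $F$ together with $\mathcal{W}$ being a full subcategory does the work, but it should be stated explicitly rather than taken for granted. A secondary subtlety is confirming that Proposition \ref{Ext^n=0_and_n-extensions} is legitimately applicable within $\mathcal{W}$ as an abelian category in its own right; this is fine because $\mathcal{W}\simeq\modd\Gamma$, so all the homological machinery (minimal projective resolutions, $\Yext$, Theorem \ref{Yext^n=Ext^n}) is available internally. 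I would also note the degenerate case $m=0$ (i.e. $M$ projective in $\modd\Lambda$) is already covered by Lemma \ref{SimpleProjInjArePreservedInW}(ii), giving $\pd_\mathcal{W} M = 0$ directly.
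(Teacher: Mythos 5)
Your proof is correct and follows essentially the same route as the paper: both arguments hinge on Proposition \ref{Ext^n=0_and_n-extensions} applied in $\modd\Lambda$ and in $\mathcal{W}$, together with the exact, fully faithful embedding $F:\mathcal{W}\hookrightarrow\modd\Lambda$ to transport (non-)splitness of $(m+1)$-extensions between the two categories. The only difference is presentational: the paper argues by contradiction (a non-split extension in $\mathcal{W}$ would stay non-split in $\modd\Lambda$), whereas you argue the contrapositive directly (every extension in $\mathcal{W}$ splits in $\modd\Lambda$ and the retraction descends by fullness), which is the same proof.
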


\begin{proof}
    Let $M\in\modd\Lambda$ with $\pd M = m$. Suppose that $M$ lies in a functorially finite wide subcategory $\mathcal{W}\subseteq\modd\Lambda$. If $M\in\proj\Lambda$, then $\pd_\mathcal{W}M = 0$ by Proposition \ref{SimpleProjInjArePreservedInW} (ii). Thus, we can assume $\pd M = m >0$. Then, $\Ext_\Lambda^{m+i}(M,N) = 0$ for every $N\in \modd\Lambda$ and $i\geq 1$. If $M$ is a projective module in $\mathcal{W}$, then $\pd_\mathcal{W} M = 0$. 
    Now consider the case in which $M$ is not a projective module in $\mathcal{W}$, and suppose $\pd_\mathcal{W}M > m$. Then, there exists $N_\mathcal{W}\in \mathcal{W}$ such that $\Ext_\mathcal{W}^{m+1}(M,N_\mathcal{W}) \neq 0$. By Proposition \ref{Ext^n=0_and_n-extensions}, we obtain a non-split $(m+1)$-extension of $M$ by $N_\W$ in $\mathcal{W}$, that is an exact sequence in $\W$ of the form 

     \[\begin{tikzcd}[ampersand replacement=\&,cramped,sep=small]
	{} \& {\E: 0} \& {N_\mathcal{W}} \& {E_{m+1}} \&\& {E_m} \& \cdots \& {E_1} \& M \& 0 \\
	\&\&\&\& {R_{m+1}}
	\arrow["i", from=1-3, to=1-4]
	\arrow["{u_{m+1}}"', from=1-4, to=2-5]
	\arrow[from=2-5, to=1-6]
	\arrow[from=1-4, to=1-6]
	\arrow[from=1-6, to=1-7]
	\arrow[from=1-7, to=1-8]
	\arrow[from=1-8, to=1-9]
	\arrow[from=1-9, to=1-10]
	\arrow[from=1-2, to=1-3]
\end{tikzcd}\]

in which $i$ is a non-split monomorphism. This gives a non-split short exact sequence $$\eta : 0\to N_\mathcal{W}\xrightarrow{i} E_{m+1}\xrightarrow{u_{m+1}} R_{m+1}\to 0 $$ in $\mathcal{W}$. Since $F: \mathcal{W}\to \modd{\Lambda}$ is an exact embedding, we have that 
$F(\eta) = \eta$ is a non-split short exact sequence in $\modd\Lambda$. Hence, we get an exact sequence in $\modd\Lambda$ of the form $$ F(\E): 0\to N_\mathcal{W}\xrightarrow{F(i)} E_{m+1}\to \cdots \to E_1 \to M\to 0$$ where the monomorphism $F(i): N_\mathcal{W}\to E_{m+1}$ is non-split. But this implies that $\Ext_\Lambda^{m+1}(M,N_\mathcal{W}) \neq 0$, a contradiction. The claim follows. 
\end{proof}

As a first consequence of Proposition \ref{pdIsPreservedInW} we have the following. 

\begin{corollary}
    Let $\Lambda$ be a finite dimensional algebra with finite global dimension. Then every functorially finite wide subcategory $\W$ has finite global dimension.  
\end{corollary}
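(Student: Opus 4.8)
The plan is to apply Proposition~\ref{pdIsPreservedInW} to every object of $\W$ and take a supremum. The claim is that if $\gldim\Lambda = d < \infty$, then $\gldim\W \leq d$, so in particular $\W$ has finite global dimension.

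First I would recall that $\gldim\Lambda = \sup\{\pd_\Lambda X \mid X\in\modd\Lambda\}$, and since this supremum is finite and equal to $d$, every $\Lambda$-module has projective dimension at most $d$. Now let $M$ be any object of $\W$. Viewing $M$ as a $\Lambda$-module via the exact embedding $F\colon\W\to\modd\Lambda$, we have $\pd_\Lambda M \leq d$. Since $\W$ is functorially finite and $M$ lies in $\W$, Proposition~\ref{pdIsPreservedInW} applies and yields $\pd_\W M \leq \pd_\Lambda M \leq d$. As $M\in\W$ was arbitrary, taking the supremum over all objects of $\W$ gives $\gldim\W = \sup_{M\in\W}\pd_\W M \leq d$, which is finite.

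There is essentially no obstacle here: the corollary is an immediate termwise application of the proposition, bounded uniformly by $d$. The only point worth a word is that $\W$ is equivalent to the module category of a finite dimensional algebra (as noted at the start of Section~\ref{pdSection}, via \cite{Rigid_ICE-subcategories}), so that the global dimension of $\W$ is a well-defined notion computed as the supremum of projective dimensions of its objects. I would state this explicitly to justify that the uniform bound $d$ on the $\pd_\W M$ indeed certifies finiteness of $\gldim\W$.

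\begin{proof}
    Since $\W$ is a functorially finite wide subcategory, it is equivalent to the module category of a finite dimensional algebra, so $\gldim\W = \sup\{\pd_\W M \mid M\in\W\}$ is well-defined. Let $m := \gldim\Lambda < \infty$. For any $M\in\W$, viewing $M$ as a $\Lambda$-module gives $\pd_\Lambda M \leq m$, and hence $\pd_\W M \leq m$ by Proposition~\ref{pdIsPreservedInW}. Taking the supremum over all $M\in\W$ yields $\gldim\W \leq m < \infty$.
\end{proof}
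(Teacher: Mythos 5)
Your proof is correct and is exactly the argument the paper has in mind: the paper states this corollary without proof as an immediate consequence of Proposition~\ref{pdIsPreservedInW}, namely bounding $\pd_\W M$ by $\pd_\Lambda M \leq \gldim\Lambda$ for every $M\in\W$. Your additional remark that $\W$ is equivalent to the module category of a finite dimensional algebra (so that $\gldim\W$ is well-defined) is a sensible point to make explicit, and matches the setup recalled at the start of Section~\ref{pdSection}.
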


\begin{corollary}\label{chainOfW&pd}
 Let $\mathcal{W}_1\subset\cdots\subset\mathcal{W}_l\subset \mathcal{W}_{l+1} = \modd\Lambda$ be a chain of wide subcategories with $\W_i$ functorially finite in $\W_{i+1}$ for $1\leq i \leq l+1$. Let $M\in\modd\Lambda$ with $\pd M = m$ and assume $M$ lies in $\mathcal{W}_1$.  Then, $\pd_{\W_1}M\leq m$.
\end{corollary}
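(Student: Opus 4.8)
The plan is to prove Corollary \ref{chainOfW&pd} by an iterated application of Proposition \ref{pdIsPreservedInW}, descending through the chain of wide subcategories one step at a time. The key observation is that each $\W_i$ is itself equivalent to a module category of a finite dimensional algebra (by \cite[Prop. 4.12]{Rigid_ICE-subcategories}), so Proposition \ref{pdIsPreservedInW} can be applied \emph{internally} at each stage, with $\W_{i+1}$ playing the role of the ambient module category $\modd\Lambda$ and $\W_i$ playing the role of the functorially finite wide subcategory sitting inside it. Thus the statement is really a telescoping of local comparisons of projective dimension along the chain.

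First I would set up the induction. Writing $\W_{l+1} = \modd\Lambda$, I start from the top: Proposition \ref{pdIsPreservedInW} applied to the inclusion $\W_l \subseteq \W_{l+1} = \modd\Lambda$ gives $\pd_{\W_l} M \leq \pd_{\modd\Lambda} M = m$, since $M \in \W_1 \subseteq \W_l$ and $\W_l$ is functorially finite in $\modd\Lambda$. Then I descend one level at a time: assuming $\pd_{\W_{i+1}} M \leq m$, I apply Proposition \ref{pdIsPreservedInW} with ambient category $\W_{i+1}$ (a module category, by the cited result) and functorially finite wide subcategory $\W_i \subseteq \W_{i+1}$ to conclude $\pd_{\W_i} M \leq \pd_{\W_{i+1}} M \leq m$. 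Iterating down to $i = 1$ yields $\pd_{\W_1} M \leq m$, as desired.

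The one point that requires care — and which I expect to be the main technical obstacle — is verifying that Proposition \ref{pdIsPreservedInW} genuinely applies at each intermediate step, rather than only to subcategories of $\modd\Lambda$ for a bona fide algebra $\Lambda$. Concretely, I must check two compatibility conditions along the chain: that $\W_i$ remains a functorially finite wide subcategory \emph{of $\W_{i+1}$} (this is exactly the hypothesis imposed in the statement), and that being a wide subcategory of $\W_{i+1}$ together with functorial finiteness in $\W_{i+1}$ is preserved under the equivalence $\W_{i+1} \simeq \modd \Gamma_{i+1}$ for the appropriate finite dimensional algebra $\Gamma_{i+1}$. Since an equivalence of categories transports exact full subcategories, functorial finiteness, and projective dimension verbatim, this transfer is formal, and the hypotheses of Proposition \ref{pdIsPreservedInW} are met at every stage. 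Once this is in place, the proof is a clean finite induction on the length $l$ of the chain.

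\begin{proof}
    We argue by descending induction along the chain, using that each $\W_{i+1}$ is equivalent to the module category of a finite dimensional algebra by \cite[Prop. 4.12]{Rigid_ICE-subcategories}. Since $M\in\W_1\subseteq \W_i$ for all $i$, and $\W_l$ is functorially finite in $\W_{l+1}=\modd\Lambda$, Proposition \ref{pdIsPreservedInW} gives $\pd_{\W_l}M\leq \pd M = m$. Now suppose $\pd_{\W_{i+1}}M\leq m$ for some $1\leq i\leq l$. Identifying $\W_{i+1}$ with $\modd\Gamma_{i+1}$ for a finite dimensional algebra $\Gamma_{i+1}$, the subcategory $\W_i$ is a functorially finite wide subcategory of $\W_{i+1}$ by hypothesis, and $M$ lies in $\W_i$ with $\pd_{\W_{i+1}}M\leq m$. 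Applying Proposition \ref{pdIsPreservedInW} inside $\W_{i+1}$ yields $\pd_{\W_i}M\leq \pd_{\W_{i+1}}M\leq m$. Iterating down to $i=1$ gives $\pd_{\W_1}M\leq m$, completing the proof.
\end{proof}
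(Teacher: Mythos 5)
Your proof is correct and is essentially the paper's own argument: the paper's proof consists of the single line "The result follows applying Proposition \ref{pdIsPreservedInW} inductively," and your write-up simply makes explicit the induction and the (routine) transfer through the equivalences $\W_{i+1}\simeq\modd\Gamma_{i+1}$ that this inductive application tacitly relies on.
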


\begin{proof}
    The result follows applying Proposition \ref{pdIsPreservedInW} inductively. 
\end{proof}

\begin{corollary}\label{cor2.15}
    Let $M,N \in \modd\Lambda$ with $\Ext_\Lambda^n(M,N) = 0$. Let $\mathcal{W}$ be a functorially finite wide subcategory of $\modd\Lambda$ and assume $M, N\in \mathcal{W}$. Then, $\Ext^n_\mathcal{W}(M,N) = 0$. 
\end{corollary}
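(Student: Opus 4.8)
The plan is to mirror the argument used in the proof of Proposition \ref{pdIsPreservedInW}, transferring the vanishing of $\Ext$ between $\W$ and $\modd\Lambda$ through the characterization of $\Ext$-vanishing by split $n$-extensions. First I would dispose of the trivial case $n=0$: here $\Ext^0 = \Hom$, and since $\W$ is a \emph{full} subcategory of $\modd\Lambda$ we have $\Hom_\W(M,N) = \Hom_\Lambda(M,N) = 0$ immediately. So we may assume $n\geq 1$, where the $n$-extension machinery of Theorem \ref{Yext^n=Ext^n} and Proposition \ref{Ext^n=0_and_n-extensions} is available.

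By Proposition \ref{Ext^n=0_and_n-extensions} (applicable in $\W$, since $\W$ is equivalent to a module category by \cite[Prop. 4.12]{Rigid_ICE-subcategories}), to conclude $\Ext^n_\W(M,N)=0$ it suffices to show that \emph{every} $n$-extension of $M$ by $N$ in $\W$ is split. So I would take an arbitrary $n$-extension
$$\E: 0\to N\xrightarrow{\iota} E_n\to\cdots\to E_1\to M\to 0$$
in $\W$ and aim to split it.

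Next I would push $\E$ forward to $\modd\Lambda$. Since the embedding $F:\W\to\modd\Lambda$ is an exact functor that is the identity on objects and morphisms, $F(\E)=\E$ is an $n$-extension of $M$ by $N$ in $\modd\Lambda$. By hypothesis $\Ext^n_\Lambda(M,N)=0$, so Proposition \ref{Ext^n=0_and_n-extensions} applied in $\modd\Lambda$ forces $F(\E)$ to be split; that is, there is a retraction $\pi:E_n\to N$ in $\modd\Lambda$ with $\pi\iota=\id_N$.

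The key observation that closes the argument is that fullness of $\W$ lets this splitting descend. Indeed $E_n, N\in\W$ and $\W$ is full in $\modd\Lambda$, so the retraction $\pi$ is already a morphism in $\W$, and hence $\iota$ is a split monomorphism in $\W$; thus $\E$ is split in $\W$. As $\E$ was arbitrary, every $n$-extension of $M$ by $N$ in $\W$ is split, and Proposition \ref{Ext^n=0_and_n-extensions} yields $\Ext^n_\W(M,N)=0$. I do not anticipate a serious obstacle; the only point deserving emphasis is precisely that the splitting map a priori lives in $\modd\Lambda$, and it is the fullness of $\W$ — rather than merely its being an exact subcategory — that allows it to be read back inside $\W$.
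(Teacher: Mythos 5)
Your proof is correct and follows essentially the same route the paper intends: the paper states this corollary without a separate proof, as it follows from the extension-splitting argument (Proposition \ref{Ext^n=0_and_n-extensions} combined with the exact, full embedding $F:\W\to\modd\Lambda$) used verbatim in the proof of Proposition \ref{pdIsPreservedInW}, of which your argument is the direct (rather than contrapositive) version. Your explicit remark that fullness — not mere exactness — is what lets the splitting in $\modd\Lambda$ descend to $\W$ is a point the paper leaves implicit, and it is the right justification.
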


\begin{example}
    Let$\begin{tikzcd} \Lambda= k( 1 \arrow[r, "a"] & 2 \arrow[r, "b"] & 3\end{tikzcd})/\rad^2$. The AR quiver of $\Lambda$ can be depicted as follows 
    \[\begin{tikzcd}[ampersand replacement=\&,cramped,sep=scriptsize]
	\& {\begin{smallmatrix}2\\3\end{smallmatrix}} \&\& {\begin{smallmatrix}1\\2\end{smallmatrix}} \\
	{\begin{smallmatrix}3\end{smallmatrix}} \&\& {\begin{smallmatrix}2\end{smallmatrix}} \&\& {\begin{smallmatrix}1\end{smallmatrix}}.
	\arrow[from=2-1, to=1-2]
	\arrow[from=1-2, to=2-3]
	\arrow[from=2-3, to=1-4]
	\arrow[from=1-4, to=2-5]
	\arrow[dashed, from=2-5, to=2-3]
	\arrow[dashed, from=2-3, to=2-1]
\end{tikzcd}\]
Consider the functorially finite wide subcategory given by $$\mathcal{W} = \add(1\oplus 3) \simeq k(\bullet\quad \bullet).$$ Then, $\Ext_\mathcal{W}^2(1,3)=0$. On the other hand, there is a non-split 2-extension of 1 by 3 given by 
\[\begin{tikzcd}[ampersand replacement=\&,cramped,sep=scriptsize]
	0 \& {\begin{smallmatrix}3\end{smallmatrix}} \& {\begin{smallmatrix}2\\3\end{smallmatrix}} \&\& {\begin{smallmatrix}1\\2\end{smallmatrix}} \& {\begin{smallmatrix}1\end{smallmatrix}} \& 0. \\
	\&\&\& {\begin{smallmatrix}2\end{smallmatrix}}
	\arrow[from=1-3, to=2-4]
	\arrow[from=2-4, to=1-5]
	\arrow[from=1-2, to=1-3]
	\arrow[from=1-3, to=1-5]
	\arrow[from=1-5, to=1-6]
	\arrow[from=1-1, to=1-2]
	\arrow[from=1-6, to=1-7]
\end{tikzcd}\]
In other words, $\Ext_\Lambda^2(1,3)\neq 0$. This shows that the converse of Corollary \ref{cor2.15} does not hold in general. 

Finally (as pointed out by Eric Hanson), let $\begin{tikzcd} \W = \left\{3, \begin{smallmatrix}1\\2\end{smallmatrix}, 1\right\}\simeq  k( 1 \arrow[r] & 2 \end{tikzcd})$. Note that $S(1)$ lies in $\W$. Then, $\pd S(1) = 2$ but $\pd_\W S(1) = 1$. This shows that the inequality in Proposition \ref{pdIsPreservedInW} can be strict. 
\end{example}

\section{$\tau$-perpendicular subcategories}\label{section2}

Let $\Lambda$ be a finite dimenstional algebra. An important class of functorially finite wide subcategories of $\modd\Lambda$ is the $\tau$-\textit{perpendicular categories} introduced by Jasso in \cite{Jasso_Reduction} as a generalization of the Geigle-Lenzing perpendicular categories. This section aims to describe the $\tau$-perpendicular subcategories of $\modd\Lambda$, where $\Lambda = R\otimes kQ$, in terms of the $\tau$-perpendicular subcategories of $\modd kQ$.  

We start recalling the following definition. 

\begin{definition}[{\cite[Definition 3.3]{Jasso_Reduction}}]\label{tau-perp_Jasso}
    Let $\Lambda$ be a finite dimensional algebra and let $M\in\modd\Lambda$ be $\tau$-rigid. The \emph{$\tau$-perpendicular subcategory} associated to $M$ is the full subcategory of $\modd\Lambda$ given by $$J(M) = {^{\perp}(\tau M)} \cap M^\perp.$$
\end{definition}

Recall that $\mathcal{P}(^\perp\tau M)$ is the full subcategory of $^\perp\tau M$ consisting of $\Ext$-projective modules in $^\perp\tau M$, i.e. the modules $X$ in $^\perp\tau M$ such that $\Ext^1_\Lambda(X,{^\perp\tau}M) = 0$. The \textit{Bongartz completion} of $M$, denoted by $B_M$, is given by the direct sum of all indecomposable $\Ext$-projective modules in $^\perp\tau M$. The following summarizes some important facts about $J(M)$. 

\begin{proposition}\label{propertiesOfJ}
    Let $M\in\modd\Lambda$. Then we have: 
    \begin{enumerate} 
        \item[(i)] \cite[Thm. 3.8]{Jasso_Reduction} If $M$ is a basic $\tau$-rigid $\Lambda$-module, then $J(M)$ is equivalent to $\modd\Gamma_M$, where $\Gamma_M = \End_\Lambda(B_M)^\mathrm{op}/I$ and $I$ is the ideal generated by all maps factoring through $M$.
        \item[(ii)] \cite[Cor. 3.22]{Wall_Chamber_Structure} $J(M)$ is an exact abelian (wide) subcategory of $\modd\Lambda$;
        \item[(iii)] \cite[Thm. 3.8]{Jasso_Reduction} If $M$ is an indecomposable $\tau$-rigid $\Lambda$-module, then $J(M)$ has $\abs\Lambda -1$ simple modules up to isomorphism. 
    \end{enumerate}
\end{proposition}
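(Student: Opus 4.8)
The plan is to treat the three parts separately, since each is an instance of an established result in $\tau$-tilting theory; the work here is to indicate why each holds and to record them in the precise form we shall need later. For part (i), I would invoke Jasso's reduction theorem directly. The key point is that the Bongartz completion $B_M$ is a $\tau$-tilting module having $M$ as a direct summand (as $M$ is $\tau$-rigid it lies in ${}^{\perp}(\tau M)$ and is $\Ext$-projective there by Lemma \ref{Smalo's_result}(ii)), so $\End_\Lambda(B_M)^{\mathrm{op}}$ is its endomorphism algebra. Passing to the quotient by the ideal $I$ of maps factoring through $\add M$ removes precisely the contribution of the summand $M$, and the resulting algebra $\Gamma_M$ governs the reduced category. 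The equivalence $J(M)\simeq\modd\Gamma_M$ is realized by an appropriate restriction of the functor $\Hom_\Lambda(B_M,-)$; verifying that it is exact, fully faithful and essentially surjective is exactly the content of \cite[Thm. 3.8]{Jasso_Reduction}, which I would cite rather than reprove.

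For part (ii), the cleanest route is to deduce it from part (i): once $J(M)\simeq\modd\Gamma_M$ is known, $J(M)$ is abelian, and the statement that the inclusion $J(M)\hookrightarrow\modd\Lambda$ is exact is precisely the assertion that $J(M)$ is an exact abelian (wide) subcategory. Alternatively, one checks directly that $J(M)={}^{\perp}(\tau M)\cap M^{\perp}$ is closed under kernels, cokernels and extensions, rephrasing the condition ${}^{\perp}(\tau M)$ via Lemma \ref{Smalo's_result}(i) in terms of $\Ext^1$-vanishing against $\Gen$. Either way I would cite \cite[Cor. 3.22]{Wall_Chamber_Structure}.

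For part (iii), the simple count is a formal consequence of part (i). When $M$ is indecomposable, its Bongartz completion $B_M$ is a $\tau$-tilting module and hence has exactly $\abs{\Lambda}=n$ indecomposable summands. Quotienting $\End_\Lambda(B_M)^{\mathrm{op}}$ by the ideal generated by the idempotent attached to the summand $M$ deletes exactly one indecomposable projective, so $\Gamma_M$ has $n-1$ isomorphism classes of simple modules; transporting these across the equivalence $J(M)\simeq\modd\Gamma_M$ yields the $\abs{\Lambda}-1$ simple objects of $J(M)$.

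The main obstacle here is presentational rather than mathematical: all three statements are available in the literature, so the task reduces to matching our conventions to the cited theorems. In particular I would be careful that the Bongartz complement is computed inside ${}^{\perp}(\tau M)$ as defined in the paper, and that the description of $\Gamma_M$ and of the ideal $I$ of maps factoring through $M$ agrees with \cite[Thm. 3.8]{Jasso_Reduction}; no genuinely new argument is required.
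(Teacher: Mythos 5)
Your proposal is correct and takes essentially the same approach as the paper: the paper states this proposition purely as a compilation of known results, citing \cite[Thm. 3.8]{Jasso_Reduction} and \cite[Cor. 3.22]{Wall_Chamber_Structure} with no proof given, which is exactly what you do. Your supplementary sketches are sound, with the minor caveat that $\Ext$-projectivity of $M$ in ${}^{\perp}(\tau M)$ follows from Lemma \ref{Smalo's_result}(i) (applied to objects of ${}^{\perp}(\tau M)$) rather than from part (ii), which only gives $\Ext$-projectivity in $\Gen M$.
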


\begin{remark}\label{Lambda_hereditary_J_hereditary}
    $\tau$-perpendicular subcategories are functorially finite wide; see for example \cite[Remark 4.10]{tau-perpendicular_wide_subategories}. Hence, if $\Lambda$ is hereditary, then so is any $\tau$-perpendicular subcategory of $\modd\Lambda$ by Proposition \ref{pdIsPreservedInW}. In this way, we recover \cite[Cor. 3.19(a)]{Jasso_Reduction} (see also \cite[Prop. 1.1]{Geigle_Lenzing_Perp_Subcat}).
\end{remark}

The next observation is useful.

\begin{lemma}[{\cite[Lemma 1.4]{Mutating-signed-tau-exceptional-sequences}}]\label{tauRigidInModLamTauRigidInJ}
    Let $\mathcal{W}$ be a $\tau$-perpendicular subcategory and let $X\in\modd\Lambda$ be indecomposable $\tau$-rigid. Suppose $X$ lies in $\mathcal{W}$. Then $X$ is $\tau$-rigid in $\mathcal{W}$. 
\end{lemma}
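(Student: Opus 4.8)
The plan is to reduce the statement to the Auslander--Smal\o{} criterion and then transport a vanishing $\Ext^1$ from $\modd\Lambda$ into $\mathcal{W}$. Since $\mathcal{W}$ is a $\tau$-perpendicular subcategory, it is functorially finite wide (Remark~\ref{Lambda_hereditary_J_hereditary}) and hence equivalent to the module category of a finite dimensional algebra by \cite[Prop.~4.12]{Rigid_ICE-subcategories}; in particular the notions $\tau_{\mathcal{W}}$, $\Ext^1_{\mathcal{W}}$ and $\Gen_{\mathcal{W}}$ are well defined, and Lemma~\ref{Smalo's_result} applies verbatim inside $\mathcal{W}$. By Lemma~\ref{Smalo's_result}(ii) it therefore suffices to show that $X$ is $\Ext^1_{\mathcal{W}}$-projective in $\Gen_{\mathcal{W}}X$, that is, $\Ext^1_{\mathcal{W}}(X,Y)=0$ for every $Y\in\Gen_{\mathcal{W}}X$.

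The first key step is the containment $\Gen_{\mathcal{W}}X\subseteq\Gen_\Lambda X$. Let $Y\in\Gen_{\mathcal{W}}X$, so there is an epimorphism $X^{n}\twoheadrightarrow Y$ in $\mathcal{W}$. As $\mathcal{W}$ is an exact abelian subcategory (Proposition~\ref{propertiesOfJ}(ii)), the embedding $F\colon\mathcal{W}\to\modd\Lambda$ is exact and is the identity on objects and morphisms, so this map remains an epimorphism in $\modd\Lambda$; hence $Y\in\Gen_\Lambda X$. Now $X$ is $\tau$-rigid in $\modd\Lambda$, so by Lemma~\ref{Smalo's_result}(ii) it is $\Ext^1_\Lambda$-projective in $\Gen_\Lambda X$, whence $\Ext^1_\Lambda(X,Y)=0$.

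The second key step pushes this vanishing into $\mathcal{W}$. Since $X,Y\in\mathcal{W}$, the subcategory $\mathcal{W}$ is functorially finite wide, and $\Ext^1_\Lambda(X,Y)=0$, Corollary~\ref{cor2.15} (with $n=1$) gives $\Ext^1_{\mathcal{W}}(X,Y)=0$. As $Y\in\Gen_{\mathcal{W}}X$ was arbitrary, $X$ is $\Ext^1_{\mathcal{W}}$-projective in $\Gen_{\mathcal{W}}X$, and Lemma~\ref{Smalo's_result}(ii) read inside $\mathcal{W}$ yields $\Hom_{\mathcal{W}}(X,\tau_{\mathcal{W}}X)=0$, i.e. $X$ is $\tau$-rigid in $\mathcal{W}$.

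The only genuinely delicate point is the interplay between the two ambient categories: one must ensure that $\Gen$ computed in $\mathcal{W}$ does not exceed $\Gen$ computed in $\modd\Lambda$, and that the one-directional transfer of $\Ext^1$-vanishing (from $\modd\Lambda$ to $\mathcal{W}$) supplied by Corollary~\ref{cor2.15} is exactly the direction required here---the converse, which fails in general as the example following Corollary~\ref{cor2.15} shows, is not needed. Both facts rest on the exactness of the embedding $F$, and neither uses that $X$ is indecomposable, so this argument in fact proves the statement for any $\tau$-rigid $X$ lying in $\mathcal{W}$.
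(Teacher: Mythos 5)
Your argument is correct, but note that the paper does not actually prove this lemma: it is quoted as \cite[Lemma 1.4]{Mutating-signed-tau-exceptional-sequences}, so there is no internal proof to compare against. Your route is a clean, self-contained derivation built entirely from the paper's own toolkit: the Auslander--Smal\o{} criterion (Lemma \ref{Smalo's_result}(ii)), read inside $\mathcal{W}$ via the equivalence $\mathcal{W}\simeq\modd\Gamma$, together with the one-directional $\Ext^1$-transfer of Corollary \ref{cor2.15}. Each step checks out: epimorphisms in $\mathcal{W}$ are genuine surjections because $\mathcal{W}$ is closed under cokernels, so $\Gen_{\mathcal{W}}X=\Gen_\Lambda X\cap\mathcal{W}\subseteq\Gen_\Lambda X$; $\tau$-rigidity of $X$ in $\modd\Lambda$ gives $\Ext^1_\Lambda(X,\Gen_\Lambda X)=0$; Corollary \ref{cor2.15} (whose hypotheses you verify -- $\mathcal{W}$ is functorially finite wide by Remark \ref{Lambda_hereditary_J_hereditary}) pushes this vanishing into $\mathcal{W}$; and Auslander--Smal\o{} in $\mathcal{W}$ converts it back into $\Hom_{\mathcal{W}}(X,\tau_{\mathcal{W}}X)=0$. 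You are also right, and it is worth having observed explicitly, that the direction of $\Ext$-transfer you need is precisely the one that holds (the converse fails, as the paper's example shows), and that indecomposability of $X$ plays no role, so your argument establishes the statement for arbitrary basic $\tau$-rigid modules lying in $\mathcal{W}$ -- a mild strengthening of the statement as quoted.
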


Recall that for $M, N\in\modd \Lambda$ there is a functorial isomorphism (AR duality)
\begin{equation*}
    \Ext_\Lambda^1(M,N) \cong D\overline{\Hom}_\Lambda(N,\tau M).
\end{equation*}

 In particular, if $\pd M \leq 1$ we have that 
 \begin{equation}\label{AR_Duality_pdM<=1}
      \Ext_\Lambda^1(M,N) \cong D\Hom_\Lambda(N,\tau M).
 \end{equation}
 
 We need the following useful lemma. 
\begin{lemma}\label{tau-rigid_in_J(X)_iff_taurigid_in_J(Lambda_otimes_X)}
    Let $X, Y, Z\in \modd kQ$, and suppose that $X$ is $\tau$-rigid. The following statements hold. 
    \begin{enumerate}
    \item[(i)]  $Y$ lies in $J(X)$ if and only if $\Lambda\otimes_{kQ}Y$ lies in $J(\Lambda\otimes_{kQ}X)$;
    \item[(ii)] Assume $Y,Z$ lie in $J(X)$. Then, $\Hom_{J(X)}(Z,\tau_{J(X)}Y) = 0$ if and only if $$\Hom_{J(\Lambda\otimes_{kQ} X)}(\Lambda\otimes_{kQ}Z,\tau_{J(\Lambda\otimes_{kQ}X)}(\Lambda\otimes_{kQ}Y)) = 0;$$
    \item[(iii)] Assume $Y$ lies in $J(X)$. Then, $Y$ is $\tau_{J(X)}$-rigid if and only if $\Lambda\otimes_{kQ}Y$ is $\tau_{J(\Lambda\otimes_{kQ}X)}$-rigid;
    \item[(iv)] If $\Lambda\otimes_{kQ}X$ is $\Ext$-projective in ${^\perp}\tau(\Lambda\otimes_{kQ}Y)$, then $X$ is $\Ext$-projective in ${^\perp}\tau Y$.
    \end{enumerate}

\end{lemma}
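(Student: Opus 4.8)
The plan is to treat the four parts in the order (i), (ii) (with (iii) as an immediate consequence), and (iv). Part (i) I would prove directly from the definitions: $Y\in J(X)$ means $\Hom_{kQ}(Y,\tau X)=0$ and $\Hom_{kQ}(X,Y)=0$, while $\Lambda\otimes_{kQ}Y\in J(\Lambda\otimes_{kQ}X)$ means $\Hom_\Lambda(\Lambda\otimes_{kQ}Y,\tau_\Lambda(\Lambda\otimes_{kQ}X))=0$ and $\Hom_\Lambda(\Lambda\otimes_{kQ}X,\Lambda\otimes_{kQ}Y)=0$. Using Proposition \ref{tauCommutesWithInduction}(iii) to rewrite $\tau_\Lambda(\Lambda\otimes_{kQ}X)\cong R\otimes\tau X$, together with $\Lambda\otimes_{kQ}Y\cong R\otimes Y$, the Hom--tensor formula \eqref{Hom_Tensor} turns these two Hom-spaces into $R\otimes\Hom_{kQ}(Y,\tau X)$ and $R\otimes\Hom_{kQ}(X,Y)$; since $R\neq0$, each vanishes if and only if its $kQ$-factor does, which gives (i). Note that $\Lambda\otimes_{kQ}X$ is $\tau$-rigid by Corollary \ref{inductionPreservesTauRigidModules}, so $J(\Lambda\otimes_{kQ}X)$ is defined and equivalent to $\modd\Gamma_{\Lambda\otimes_{kQ}X}$ by Proposition \ref{propertiesOfJ}(i). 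Part (iii) is the special case $Z=Y$ of (ii), so it needs no separate argument.

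For (ii) the key observation is that both $J(X)$ and $J(\Lambda\otimes_{kQ}X)$ consist of modules of projective dimension at most one over the corresponding algebras, which lets Auslander--Reiten duality replace the otherwise inaccessible relative AR-translates by ordinary relative $\Ext^1$-groups. Indeed, since $kQ$ is hereditary we have $\pd_{kQ}Y\leq1$, and $\pd_\Lambda(\Lambda\otimes_{kQ}Y)=\pd_\Lambda(R\otimes Y)=\pd_R R+\pd_{kQ}Y\leq1$ by the additivity \eqref{pd_tensor_product}; hence Proposition \ref{pdIsPreservedInW} yields $\pd_{J(X)}Y\leq1$ and $\pd_{J(\Lambda\otimes_{kQ}X)}(\Lambda\otimes_{kQ}Y)\leq1$. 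Feeding this into the projective-dimension-one form of AR duality \eqref{AR_Duality_pdM<=1}, applied inside each of these module categories, gives
$$\Hom_{J(X)}(Z,\tau_{J(X)}Y)=0\iff \Ext^1_{J(X)}(Y,Z)=0$$
and the analogous equivalence for the induced modules in $J(\Lambda\otimes_{kQ}X)$. So the task reduces to the purely homological statement $\Ext^1_{J(X)}(Y,Z)=0\iff\Ext^1_{J(\Lambda\otimes_{kQ}X)}(\Lambda\otimes_{kQ}Y,\Lambda\otimes_{kQ}Z)=0$.

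That statement I would prove by descending to the ambient categories and then transferring across the tensor factorisation. Because a $\tau$-perpendicular subcategory is wide, hence closed under extensions, relative and ambient $\Ext^1$ agree on its objects: Corollary \ref{cor2.15} (for $n=1$) gives one implication, and the extension-closure of $J(X)$ combined with the characterisation of vanishing $1$-extension groups in Proposition \ref{Ext^n=0_and_n-extensions} gives the other, so that $\Ext^1_{J(X)}(Y,Z)=0\iff\Ext^1_{kQ}(Y,Z)=0$, and likewise $\Ext^1_{J(\Lambda\otimes_{kQ}X)}(\Lambda\otimes_{kQ}Y,\Lambda\otimes_{kQ}Z)=0\iff\Ext^1_\Lambda(\Lambda\otimes_{kQ}Y,\Lambda\otimes_{kQ}Z)=0$. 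Finally the Ext--tensor formula \eqref{Ext_Tensor}, together with $\Ext^1_R(R,R)=0$ (as $R$ is free over itself), collapses to $\Ext^1_\Lambda(R\otimes Y,R\otimes Z)\cong R\otimes\Ext^1_{kQ}(Y,Z)$, which vanishes exactly when $\Ext^1_{kQ}(Y,Z)=0$. Chaining these equivalences proves (ii), and hence (iii). For (iv) I would use \eqref{AR_Duality_pdM<=1} in the ambient categories instead: for a module $P$ of projective dimension at most one, it shows $P$ is $\Ext$-projective in ${}^{\perp}\tau N$ if and only if $P\in{}^{\perp}\tau N$ and ${}^{\perp}\tau N\subseteq{}^{\perp}\tau P$. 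Granting that $\Lambda\otimes_{kQ}X$ is $\Ext$-projective in ${}^{\perp}\tau(\Lambda\otimes_{kQ}Y)$, I would take any $W\in{}^{\perp}\tau Y$, note via \eqref{Hom_Tensor} that $\Lambda\otimes_{kQ}W\in{}^{\perp}\tau(\Lambda\otimes_{kQ}Y)$, invoke the hypothesis to place it in ${}^{\perp}\tau(\Lambda\otimes_{kQ}X)$, and read off $\Hom_{kQ}(W,\tau X)=0$ from \eqref{Hom_Tensor} again; the membership $X\in{}^{\perp}\tau Y$ is obtained the same way from $\Lambda\otimes_{kQ}X\in{}^{\perp}\tau(\Lambda\otimes_{kQ}Y)$, and together these give that $X$ is $\Ext$-projective in ${}^{\perp}\tau Y$.

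The step I expect to require the most care is the agreement of relative and ambient $\Ext^1$ used in (ii): this is genuinely special to degree one, since the paper's own examples show that higher relative $\Ext$-groups can vanish while the ambient ones do not, so the argument must exploit closure under extensions rather than any general comparison of $\Ext^n$. The whole reduction rests on this fact together with the inheritance of projective dimension at most one by the perpendicular subcategories; that inheritance is precisely what legitimises trading the relative AR-translate $\tau_{J(-)}$—for which no workable explicit formula is on hand—for a relative $\Ext^1$-group that descends to the ambient category and transfers cleanly across $R\otimes(-)$. This is also why I would avoid computing $\tau_{J(-)}$ directly through relative projective presentations: that route would force a matching of the relative projectives of $J(X)$ and $J(\Lambda\otimes_{kQ}X)$ under induction, which is delicate because $J(\Lambda\otimes_{kQ}X)$ contains modules that are not induced, whereas the homological route above sidesteps the issue entirely.
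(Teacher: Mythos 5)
Your proposal is correct and follows essentially the same route as the paper's proof: part (i) via the Hom--tensor formula and the commutation of $\tau$ with induction, part (ii) via the chain of projective dimension at most one (from \eqref{pd_tensor_product} and Proposition \ref{pdIsPreservedInW}), AR duality \eqref{AR_Duality_pdM<=1}, agreement of relative and ambient $\Ext^1$ on the wide subcategories, and the Ext--tensor formula, with (iii) as the case $Z=Y$; your more explicit justification of the $\Ext^1$-agreement step (Corollary \ref{cor2.15} in one direction, extension-closure plus Proposition \ref{Ext^n=0_and_n-extensions} in the other) is precisely what the paper's terse appeal to wideness means. The only cosmetic difference is in (iv), where the paper applies the hypothesis to the inclusion $\Lambda\otimes_{kQ}({^\perp}\tau Y)\subseteq{^\perp}\tau(\Lambda\otimes_{kQ}Y)$ and concludes directly via \eqref{Ext_Tensor}, while you route the same transfer through the AR-duality characterisation of $\Ext$-projectivity; both arguments rest on the identical key step.
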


\begin{proof}
    We have that 
    \begin{align*}
        \Hom_\Lambda(\Lambda\otimes_{kQ}M, \Lambda\otimes_{kQ}N)&\cong \Hom_\Lambda(R\otimes M, R\otimes N))\\
        &\cong\Hom_{R}(R,R)\otimes\Hom_{kQ}(M,N) = 0 
    \end{align*}
    if and only if $N\in M^\perp$, and 
    \begin{align*}
        \Hom_\Lambda(\Lambda\otimes_{kQ}N, \tau(\Lambda\otimes_{kQ}M))&\cong \Hom_\Lambda(\Lambda\otimes_{kQ}N, \Lambda\otimes_{kQ}\tau M) \quad \text{ by Proposition \ref{tauCommutesWithInduction}}\\
        &\cong \Hom_\Lambda(R\otimes N, R\otimes  \tau M)\\
        &\cong \Hom_{R}(R,R)\otimes\Hom_{kQ}(N,\tau M) = 0
    \end{align*}
    if and only if $N\in {^\perp \tau M}$. This proves (i).
    
    Now assume that $Y,Z \in J(X)$. Using the fact that $Y\in \modd kQ$ together with Proposition \ref{HomExt_tensor_product} \eqref{pd_tensor_product}, we have that 
    \begin{align*}
        \pd_\Lambda(\Lambda\otimes_{kQ} Y) &= \pd_{R\otimes kQ}(R\otimes kQ\otimes_{kQ}Y)\\
                                           &= \pd_{R\otimes kQ}(R\otimes Y)\\
                                           &= \pd_R(R)+ \pd_{kQ}(Y) \leq 1.
    \end{align*}
    In particular, $\pd_{J(\Lambda\otimes_{kQ}X)}(\Lambda\otimes_{kQ}Y)\leq 1$ by Proposition \ref{pdIsPreservedInW}. Thus, we obtain
    \begin{align*}
    0&=\Hom_{J(\Lambda\otimes_{kQ}X)}(\Lambda\otimes_{kQ}Z, \tau_{J(\Lambda\otimes_{kQ}M_n)}(\Lambda\otimes_{kQ}Y))\\ 
    &\cong \Ext^1_{J(\Lambda\otimes_{kQ}X)}(\Lambda\otimes_{kQ}Y, \Lambda\otimes_{kQ}Z) && \text{by the AR-duality \eqref{AR_Duality_pdM<=1}}\\
    &\cong \Ext^1_\Lambda(\Lambda\otimes_{kQ}Y, \Lambda\otimes_{kQ}Z) && \text{$J(\Lambda\otimes_{kQ}X)$ is wide}\\
    &\cong \Hom_{R}(R,R)\otimes\Ext^1_{kQ}(Y, Z) && \text{by Proposition \ref{HomExt_tensor_product}}\\
    &\cong \Hom_{R}(R,R)\otimes\Ext^1_{J(X)}(Y,Z) && Y,Z\in J(X) \text{ and $J(X)$ is wide}\\
    &\cong \Hom_{R}(R,R)\otimes\Hom_{J(X)}(Y, \tau_{J(X)}(Z)) &&\text{by the AR-duality \eqref{AR_Duality_pdM<=1}}
    \end{align*}
    if and only if $\Hom_{J(X)}(Y, \tau_{J(X)}(Z)) = 0$. This shows (ii). 
    Now (iii) follows from (ii) taking $Y=Z$. 
    
    By the proof of part (i), if $\Lambda\otimes_{kQ}X \in {^\perp}\tau(\Lambda\otimes_{kQ}Y)$, then $X \in {^\perp}\tau Y$. Suppose $\Lambda\otimes_{kQ}X$ is $\Ext$-projective in ${^\perp}\tau(\Lambda\otimes_{kQ}Y)$.  By part (i), we have $\Lambda\otimes_{kQ}({^\perp}\tau Y)\subseteq {^{\perp}\tau(\Lambda\otimes_{kQ} Y)}$  and therefore
    $$ 0 = \Ext^1_\Lambda(\Lambda\otimes_{kQ}X, \Lambda\otimes_{kQ}({^\perp}\tau Y))\cong \Hom_{R}(R,R)\otimes\Ext^1_{kQ}(X, {{^\perp}\tau}Y).$$ This implies that $X$ is $\Ext$-projective in ${^\perp}\tau Y$. This shows (iv) and concludes the proof. 
\end{proof}

Recall that, given a finite dimensional algebra $\Lambda$, a pair of subcategories $(\mathcal{T}, \mathcal{F})$ is a \textit{torsion pair} in $\modd\Lambda$ if $\mathcal{T}={^\perp}\mathcal{F}$ and $\mathcal{F}=\mathcal{T}^\perp$. Given a torsion pair $(\mathcal{T}, \mathcal{F})$ and an arbitrary $\Lambda$-module $X$, there exists a unique short exact sequence (up to isomorphism) $$ 0\to tX\to X\to fX\to 0$$ where $tX\in \mathcal{T}$ and $fX\in \mathcal{F}$. This is called the \textit{canonical sequence} for $X$. For a $\tau$-rigid module $U$, we denote by $t_U$  (resp. $f_U$) the torsion (resp. torsion-free) functor associated with the torsion pair $(\Gen U, U^{\perp})$. \\

Let $M$ be a $\tau$-rigid $kQ$-module. The next key proposition gives a relationship between the torsion-free functors $f_M$ and $f_{\Lambda\otimes_{kQ}M}$, and the induction functor. As a consequence, the induction functor induces a bijection between indecomposable $\tau$-rigid modules in $J(M)$ and indecomposable $\tau$-rigid modules in $J(\Lambda\otimes_{kQ}M)$. 

\begin{proposition}\label{tau_rigid_in_J(M)_1:1_tau_rigid_in_J(Lambda_otimes_kQM)}
    Let $M$ be a $\tau$-rigid $kQ$-module. Then, for every $N\in \ind\modd kQ$ there is a functorial isomorphism 
    $$f_{\Lambda\otimes_{kQ}M}(\Lambda\otimes_{kQ}N)\cong \Lambda\otimes_{kQ}f_M N$$ in $\modd{\Lambda}$. In particular, we have a commutative diagram of bijections
\[
\begin{tikzcd}[ampersand replacement=\&, cramped, row sep=large, column sep=scriptsize, 
every matrix/.append style={font=\small}]
{\left\{ N \in\ind\modd kQ \;\middle|\; \begin{tabular}{@{}l@{}} $N\oplus M$ is $\tau_{kQ}$-rigid, \\ $N\notin \mathrm{Gen} M$ \end{tabular} \right\}} \&\& {\left\{X\in \mathrm{ind}J(M)\;\middle|\; \begin{tabular}{@{}l@{}} $X$ is $\tau_{J(M)}$-rigid \end{tabular} \right\}} \\
{\left\{ Y \in \ind\modd\Lambda \;\middle|\; \begin{tabular}{@{}l@{}} $Y\oplus(\Lambda\otimes_{kQ}M)$ is $\tau_{\Lambda}$-rigid, \\ $Y\notin \mathrm{Gen}(\Lambda\otimes_{kQ}M)$ \end{tabular} \right\}} \&\& {\left\{Z\in \mathrm{ind}J(\Lambda\otimes_{kQ}M)\;\middle|\; \begin{tabular}{@{}l@{}} $Z$ is $\tau_{J(\Lambda\otimes_{kQ}M)}$-rigid \end{tabular} \right\}}.
\arrow["{f_M}", from=1-1, to=1-3]
\arrow["{\Lambda\otimes_{kQ}-}"', from=1-1, to=2-1]
\arrow["{\Lambda\otimes_{kQ}-}", from=1-3, to=2-3]
\arrow["{f_{\Lambda\otimes_{kQ}M}}", from=2-1, to=2-3]
\end{tikzcd}
\]
\end{proposition}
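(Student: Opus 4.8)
The plan is to prove the functorial isomorphism first, and then read off the commutative diagram of bijections from it by a ``three out of four'' argument on the square, using the general $\tau$-tilting reduction bijections $f_M$ and $f_{\Lambda\otimes_{kQ}M}$ together with Proposition \ref{tau-rigid-Lambda1:1tau-rigid-kQ}.

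For the functorial isomorphism I would start from the canonical sequence of $N$ with respect to the torsion pair $(\Gen M, M^\perp)$ in $\modd kQ$, namely
\[
0\to t_M N\to N\to f_M N\to 0 .
\]
Since the induction functor is exact (Proposition \ref{tauCommutesWithInduction}(i)), applying $\Lambda\otimes_{kQ}-$ yields a short exact sequence
\[
0\to \Lambda\otimes_{kQ}t_M N\to \Lambda\otimes_{kQ}N\to \Lambda\otimes_{kQ}f_M N\to 0
\]
in $\modd\Lambda$, and the point is to recognise this as the canonical sequence of $\Lambda\otimes_{kQ}N$ for the torsion pair $(\Gen(\Lambda\otimes_{kQ}M),(\Lambda\otimes_{kQ}M)^\perp)$. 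I would verify $\Lambda\otimes_{kQ}t_M N\in\Gen(\Lambda\otimes_{kQ}M)$ by applying the exact functor to a surjection $M^r\twoheadrightarrow t_M N$, and $\Lambda\otimes_{kQ}f_M N\in(\Lambda\otimes_{kQ}M)^\perp$ via \eqref{Hom_Tensor}, which gives $\Hom_\Lambda(\Lambda\otimes_{kQ}M,\Lambda\otimes_{kQ}f_M N)\cong \Hom_R(R,R)\otimes\Hom_{kQ}(M,f_M N)=0$ because $f_M N\in M^\perp$. Uniqueness of the canonical sequence then identifies $\Lambda\otimes_{kQ}f_M N$ with $f_{\Lambda\otimes_{kQ}M}(\Lambda\otimes_{kQ}N)$; naturality in $N$ follows because the torsion-free functor and the induction functor are functorial and the comparison isomorphisms produced by uniqueness are natural.

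For the diagram, commutativity of the square is exactly the functorial isomorphism restricted to the top-left set. The horizontal maps $f_M$ and $f_{\Lambda\otimes_{kQ}M}$ are instances of the $\tau$-tilting reduction bijection \cite{Jasso_Reduction} applied to $kQ$ and to $\Lambda$ respectively, hence both are bijections. The left vertical map $\Lambda\otimes_{kQ}-$ is a bijection by Proposition \ref{tau-rigid-Lambda1:1tau-rigid-kQ}: I would check it matches the two index sets by showing, using that $\tau$ commutes with induction, the isomorphism \eqref{Hom_Tensor}, and restriction of scalars, that $N\oplus M$ is $\tau_{kQ}$-rigid if and only if $\Lambda\otimes_{kQ}(N\oplus M)$ is $\tau_\Lambda$-rigid, and that $N\in\Gen M$ if and only if $\Lambda\otimes_{kQ}N\in\Gen(\Lambda\otimes_{kQ}M)$ (the forward direction by exactness, the reverse by applying $\res$ and using that $\Gen M$ is closed under summands). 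The right vertical map is well defined by Lemma \ref{tau-rigid_in_J(X)_iff_taurigid_in_J(Lambda_otimes_X)}(i) and (iii) together with preservation of indecomposability. Since the square commutes and three of its four maps are bijections, the right vertical map is forced to be a bijection as well, which is the stated consequence.

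The main obstacle I expect is on the horizontal edges: justifying that $N\mapsto f_M N$ genuinely lands in the indecomposable $\tau_{J(M)}$-rigid objects of $J(M)$ and is a bijection. The containment $f_M N\in J(M)$ is routine (one has $f_M N\in M^\perp$ from the canonical sequence, and $f_M N\in{}^{\perp}\tau M$ because ${}^{\perp}\tau M$ is a torsion class containing $N$ and hence closed under the quotient $N\twoheadrightarrow f_M N$, with $f_M N\neq 0$ precisely because $N\notin\Gen M$), but the bijectivity is the substance of $\tau$-tilting reduction, which I would invoke rather than reprove.
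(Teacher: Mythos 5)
Your proposal is correct and follows essentially the same route as the paper: the functorial isomorphism is obtained by applying the exact induction functor to the canonical sequence of $N$, checking the two torsion/torsion-free membership conditions via \eqref{Hom_Tensor} and exactness, and invoking uniqueness of the canonical sequence; the bijection then follows from Jasso's reduction bijections for the horizontal maps, Proposition \ref{tau-rigid-Lambda1:1tau-rigid-kQ} plus the restriction-of-scalars argument for the left vertical map, and commutativity of the square (your ``three out of four'' step is exactly the paper's injectivity-plus-surjectivity-from-commutativity argument). Your explicit verification that $N\oplus M$ is $\tau_{kQ}$-rigid if and only if $\Lambda\otimes_{kQ}(N\oplus M)$ is $\tau_{\Lambda}$-rigid is in fact slightly more careful than the paper's appeal to the bijection on indecomposables.
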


\begin{proof}
    Let $M\in \modd kQ$ be $\tau$-rigid and let $N\in \ind\modd kQ$. Then, there exists a unique short exact sequence (up to isomorphism) in $\modd kQ$ \begin{equation}\label{canonical_ses_of_N}
        0\to t_MN \to N \to f_MN \to 0
    \end{equation}
    with $t_M N \in \Gen M$ and $f_M N\in M^\perp$. Similarly, consider the canonical short exact sequence in $\modd\Lambda$ 
    \begin{equation}\label{canonical_ses_of_inducedN}
        0\to t_{\Lambda\otimes_{kQ}M}(\Lambda\otimes_{kQ}N) \to \Lambda\otimes_{kQ}N\to f_{\Lambda\otimes_{kQ}M}(\Lambda\otimes_{kQ}N)\to 0 
    \end{equation}
    with $t_{\Lambda\otimes_{kQ}M}(\Lambda\otimes_{kQ}N)\in \Gen(\Lambda\otimes_{kQ}M)$ and $f_{\Lambda\otimes_{kQ}M}(\Lambda\otimes_{kQ}N)\in (\Lambda\otimes_{kQ}M)^\perp$. Applying the induction functor $\Lambda\otimes_{kQ}-$ to \eqref{canonical_ses_of_N}, we get an exact sequence in $\modd\Lambda$ 
    
    \begin{equation}\label{induced13}
        0\to \Lambda\otimes_{kQ}t_MN\to \Lambda\otimes_{kQ}N\to \Lambda\otimes_{kQ}f_MN\to 0.
    \end{equation}

    Since $\Lambda\otimes_{kQ}\Gen M \subseteq \Gen(\Lambda\otimes_{kQ}M)$, it follows that $\Lambda\otimes_{kQ}t_MN \in \Gen(\Lambda\otimes_{kQ}M)$. Moreover,
    since $f_MN\in M^\perp$, we infer that $\Lambda\otimes_{kQ}f_MN\in (\Lambda\otimes_{kQ}M)^\perp$ by Lemma \ref{tau-rigid_in_J(X)_iff_taurigid_in_J(Lambda_otimes_X)} (i). Hence, since the canonical sequence for $\Lambda\otimes_{kQ}N$ is unique (up to isomorphism), comparing the short exact sequences \eqref{canonical_ses_of_inducedN} and \eqref{induced13}, we get 
    \[\begin{tikzcd}[ampersand replacement=\&,cramped,column sep=small]
	0 \& {t_{\Lambda\otimes_{kQ}M}(\Lambda\otimes_{kQ}N)} \& {\Lambda\otimes_{kQ}N} \& {f_{\Lambda\otimes_{kQ}M}(\Lambda\otimes_{kQ}N)} \& 0 \\
	0 \& {\Lambda\otimes_{kQ}t_MN} \& {\Lambda\otimes_{kQ}N} \& {\Lambda\otimes_{kQ}f_MN} \& 0.
	\arrow[from=1-1, to=1-2]
	\arrow[from=1-2, to=1-3]
	\arrow["\cong"', from=1-2, to=2-2]
	\arrow[from=1-3, to=1-4]
	\arrow[Rightarrow, no head, from=1-3, to=2-3]
	\arrow[from=1-4, to=1-5]
	\arrow["\cong", from=1-4, to=2-4]
	\arrow[from=2-1, to=2-2]
	\arrow[from=2-2, to=2-3]
	\arrow[from=2-3, to=2-4]
	\arrow[from=2-4, to=2-5]
\end{tikzcd}\]
By {\cite[Prop. 4.5]{tauExcSeq_BM}} (see also \cite[Thm. 3.16]{Jasso_Reduction}), we have that the horizontal maps $f_M$ and $f_{\Lambda\otimes_{kQ}M}$ are bijections. Now we show that 
$$ {\left\{ N \in \ind\modd kQ \;\middle|\; \begin{tabular}{@{}l@{}} $N\oplus M$ is $\tau_{kQ}$-rigid, \\ $N\notin \mathrm{Gen} M$ \end{tabular} \right\}} \xrightarrow{\Lambda\otimes_{kQ}-} {\left\{ Y \in \ind\modd\Lambda \;\middle|\; \begin{tabular}{@{}l@{}} $Y\oplus(\Lambda\otimes_{kQ}M)$ is $\tau_{\Lambda}$-rigid, \\ $Y\notin \mathrm{Gen}(\Lambda\otimes_{kQ}M)$ \end{tabular} \right\}}$$

is bijective. Since $N$ is $\tau_{kQ}$-rigid and by Proposition \ref{tau-rigid-Lambda1:1tau-rigid-kQ} $\Lambda\otimes_{kQ}-$ induces a bijection between indecomposable $\tau$-rigid $kQ$-modules and indecomposable $\tau$-rigid $\Lambda$-modules, we have that $(\Lambda\otimes_{kQ}N)\oplus(\Lambda\otimes_{kQ}M)$ is $\tau_\Lambda$-rigid. It remains to show that $\Lambda\otimes_{kQ}N\notin \Gen(\Lambda\otimes_{kQ}M)$. Suppose this was not the case. Then we would have an epimorphism $(\Lambda\otimes_{kQ}M)^r\to (\Lambda\otimes_{kQ}N)$, for some $r\geq 1$. Applying the restriction of scalars we get an epimorphism $M^{dr}\to N^d$ in $\modd kQ$; see Remark \ref{res(ind)}. But this contradicts the fact that $N\notin\Gen M$. The claim follows. 

It remains to prove that 
$${\left\{X\in \mathrm{ind}J(M)\;\middle|\; \begin{tabular}{@{}l@{}} $X$ is $\tau_{J(M)}$-rigid \end{tabular} \right\}} \xrightarrow{\Lambda\otimes_{kQ}-} {\left\{Z\in \mathrm{ind}J(\Lambda\otimes_{kQ}M)\;\middle|\; \begin{tabular}{@{}l@{}} $X$ is $\tau_{J(\Lambda\otimes_{kQ}M)}$-rigid \end{tabular} \right\}}$$
is also bijective. Since $X\cong f_M N \in J(M)$ is $\tau$-rigid in $J(M)$,  it follows from Lemma \ref{tau-rigid_in_J(X)_iff_taurigid_in_J(Lambda_otimes_X)} (iii) that $\Lambda\otimes_{kQ}f_MN$ is $\tau$-rigid in $J(\Lambda\otimes_{kQ}M)$. This shows that the above map is well-defined and injective. Using the fact that $f_{\Lambda\otimes_{kQ}M}(\Lambda\otimes_{kQ}N)\cong \Lambda\otimes_{kQ}f_M N$ for every $N\in \ind \modd kQ$, we obtain that the diagram in the statement commutes. In other words, we have $$\Lambda\otimes_{kQ}f_M(-) = f_{\Lambda\otimes_{kQ}M}(\Lambda\otimes_{kQ}-).$$ Hence, we conclude that the above map between indecomposable $\tau$-rigid modules in $J(M)$ and indecomposable $\tau$-rigid modules in $J(\Lambda\otimes_{kQ}M)$ is also surjective and therefore bijective. This finishes the proof. 
\end{proof}

We have the following observation. 

\begin{lemma}\label{B_M-tau-rigid}
    Let $M\in \modd\Lambda$ be a $\tau$-rigid module. Then, the Bongartz completion $B_M$ is an induced module.  
\end{lemma}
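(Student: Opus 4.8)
The plan is to reduce the statement to the bijection between indecomposable $\tau$-rigid modules established in Proposition \ref{tau-rigid-Lambda1:1tau-rigid-kQ}. The only property of the Bongartz completion that I really need is that $B_M$ is itself $\tau$-rigid: once this is known, every indecomposable direct summand of $B_M$ is an indecomposable $\tau$-rigid $\Lambda$-module, hence is induced by Proposition \ref{tau-rigid-Lambda1:1tau-rigid-kQ}, and since $\Lambda\otimes_{kQ}-$ commutes with finite direct sums, $B_M$ is induced. Note that I do not even need $M$ itself to be an induced module for this argument (although it is, again by Proposition \ref{tau-rigid-Lambda1:1tau-rigid-kQ}); the whole weight falls on the summands of $B_M$.

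First I would verify that $B_M$ is $\tau$-rigid. By definition $\add B_M = \P({}^\perp\tau M)$, the subcategory of $\Ext$-projective objects of the torsion class ${}^\perp\tau M$. Let $X$ be any indecomposable summand of $B_M$. Since ${}^\perp\tau M$ is closed under quotients and contains $X$, we have $\Gen X \subseteq {}^\perp\tau M$; combined with $\Ext^1_\Lambda(X,{}^\perp\tau M)=0$ this yields $\Ext^1_\Lambda(X,\Gen X)=0$, i.e.\ $X$ is $\Ext$-projective in $\Gen X$. By Lemma \ref{Smalo's_result}(ii) this means $X$ is $\tau$-rigid. (Alternatively one may simply invoke \cite[Thm.\ 2.7]{tau-tiling-theory}: for any functorially finite torsion class $\mathcal{T}$ the module $\P(\mathcal{T})$ is support $\tau$-tilting, hence $\tau$-rigid, and ${}^\perp\tau M$ is such a torsion class.)

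Writing $B_M \cong \bigoplus_{i=1}^{s} B_i$ with each $B_i$ indecomposable and $\tau$-rigid, Proposition \ref{tau-rigid-Lambda1:1tau-rigid-kQ} then supplies, for each $i$, an indecomposable $\tau$-rigid $kQ$-module $C_i$ with $B_i \cong \Lambda\otimes_{kQ}C_i$. Because the induction functor commutes with finite direct sums, I obtain $B_M \cong \Lambda\otimes_{kQ}\left(\bigoplus_{i=1}^{s} C_i\right)$, which exhibits $B_M$ as an induced module, as required. The argument is essentially bookkeeping and I expect no serious obstacle; the only point deserving care is the reduction step — the claim that $B_M$ is $\tau$-rigid — which rests on the standard support $\tau$-tilting theory of Adachi--Iyama--Reiten, namely that ${}^\perp\tau M$ is a functorially finite torsion class whose $\Ext$-projective generator is $\tau$-rigid.
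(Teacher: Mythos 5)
Your proposal is correct and follows essentially the same route as the paper: both reduce to the fact that $B_M$ is $\tau$-rigid (the paper cites \cite[Thm.~2.10]{tau-tiling-theory}, which says $B_M$ is even $\tau$-tilting) and then conclude that each indecomposable summand is induced via Proposition \ref{tau-rigid-Lambda1:1tau-rigid-kQ}, using that induction commutes with finite direct sums. Your self-contained verification of the $\tau$-rigidity of the summands via Lemma \ref{Smalo's_result} is a valid substitute for the citation, but it does not change the substance of the argument.
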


\begin{proof}
   By {\cite[Thm. 2.10]{tau-tiling-theory}} we have that $B_M$ is a $\tau$-tilting module in $\modd\Lambda$. In particular, $B_M$ is $\tau$-rigid and therefore is an induced module by Proposition \ref{tau-rigid-Lambda1:1tau-rigid-kQ}.
\end{proof}

The next proposition is a crucial step toward proving the main result of this section. 

\begin{proposition}\label{indprojJ(M)1-1indprojJ(Lambda_otimes_M)}
    The bijection 
    $$ \Lambda\otimes_{kQ}- : {\left\{X\in \mathrm{ind}J(M)\;\middle|\; \begin{tabular}{@{}l@{}} $X$ is $\tau_{J(M)}$-rigid \end{tabular} \right\}} \to {\left\{Y\in \mathrm{ind}J(\Lambda\otimes_{kQ}M)\;\middle|\; \begin{tabular}{@{}l@{}} $Y$ is $\tau_{J(\Lambda\otimes_{kQ}M)}$-rigid \end{tabular} \right\}} $$
    restricts to a bijection $$ \Lambda\otimes_{kQ}- : \mathrm{ind.proj}J(M)\to \mathrm{ind.proj}J(\Lambda\otimes_{kQ}M).$$
\end{proposition}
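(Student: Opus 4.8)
The plan is to describe the indecomposable projective objects of a $\tau$-perpendicular category through the Bongartz completion, and then to show that induction commutes with this construction. Recall that, under the equivalence $J(M)\simeq\modd\Gamma_M$ of Proposition \ref{propertiesOfJ}(i), the indecomposable projective objects of $J(M)$ are exactly the modules $f_M(C_i)$, where the $C_i$ are the indecomposable direct summands of the Bongartz complement of $M$ (equivalently, the indecomposable $\Ext$-projective objects of $^\perp\tau M$ not lying in $\add M$); this is part of Jasso's reduction \cite{Jasso_Reduction} (see also \cite{tauExcSeq_BM}), and the same description holds verbatim for $J(\Lambda\otimes_{kQ}M)$. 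Thus, writing $B_M = M\oplus C$ with $C = \bigoplus_i C_i$, the crux of the argument is to prove that induction commutes with Bongartz completion, namely $B_{\Lambda\otimes_{kQ}M}\cong\Lambda\otimes_{kQ}B_M$, so that the Bongartz complement of $\Lambda\otimes_{kQ}M$ is precisely $\bigoplus_i(\Lambda\otimes_{kQ}C_i)$.

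To establish $B_{\Lambda\otimes_{kQ}M}\cong\Lambda\otimes_{kQ}B_M$, I would proceed as follows. By Lemma \ref{B_M-tau-rigid}, the Bongartz completion $B_{\Lambda\otimes_{kQ}M}$ is an induced module, say $B_{\Lambda\otimes_{kQ}M}\cong\Lambda\otimes_{kQ}T$; since it is $\tau$-tilting and induction restricts to a bijection on support $\tau$-tilting modules preserving the number of indecomposable summands (Corollary \ref{stautitl(kQ)1:1stautilt(Lambda)}), the module $T$ is $\tau$-tilting in $\modd kQ$. As $\Lambda\otimes_{kQ}M$ is a direct summand of $\Lambda\otimes_{kQ}T$, injectivity of induction on indecomposable $\tau$-rigid modules (Proposition \ref{tau-rigid-Lambda1:1tau-rigid-kQ}) forces $M\in\add T$; write $T = M\oplus C'$. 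Each indecomposable summand $\Lambda\otimes_{kQ}C'_k$ of $\Lambda\otimes_{kQ}C'$, being a summand of the Bongartz completion $B_{\Lambda\otimes_{kQ}M}$, is $\Ext$-projective in $^\perp\tau(\Lambda\otimes_{kQ}M)$; by Lemma \ref{tau-rigid_in_J(X)_iff_taurigid_in_J(Lambda_otimes_X)}(iv) it follows that $C'_k$ is $\Ext$-projective in $^\perp\tau M$, i.e. $C'_k\in\add B_M$, and $C'_k\notin\add M$ (otherwise $\Lambda\otimes_{kQ}C'_k\in\add(\Lambda\otimes_{kQ}M)$, contradicting that $T$ is basic). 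Hence $C'\in\add C$, and since $\abs{C'} = \abs{T} - \abs{M} = \abs{B_M} - \abs{M} = \abs{C}$, we obtain $C'\cong C$ and therefore $T\cong B_M$.

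With the Bongartz complements matched up, Proposition \ref{tau_rigid_in_J(M)_1:1_tau_rigid_in_J(Lambda_otimes_kQM)} gives a natural isomorphism $f_{\Lambda\otimes_{kQ}M}(\Lambda\otimes_{kQ}C_i)\cong\Lambda\otimes_{kQ}f_M(C_i)$ for each $i$. Consequently the bijection $\Lambda\otimes_{kQ}-$ carries the indecomposable projective $f_M(C_i)$ of $J(M)$ to the indecomposable projective $f_{\Lambda\otimes_{kQ}M}(\Lambda\otimes_{kQ}C_i)$ of $J(\Lambda\otimes_{kQ}M)$, and, by the same description of projectives applied to $J(\Lambda\otimes_{kQ}M)$ together with the identification of its Bongartz complement as $\bigoplus_i(\Lambda\otimes_{kQ}C_i)$, every indecomposable projective of $J(\Lambda\otimes_{kQ}M)$ is of this form. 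This is exactly the asserted restriction of the bijection.

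The step I expect to be the main obstacle is the identification $B_{\Lambda\otimes_{kQ}M}\cong\Lambda\otimes_{kQ}B_M$: the difficulty is not that $\Lambda\otimes_{kQ}B_M$ is a $\tau$-tilting module admitting $\Lambda\otimes_{kQ}M$ as a summand (which is immediate), but that it is the Bongartz completion itself. Lemma \ref{tau-rigid_in_J(X)_iff_taurigid_in_J(Lambda_otimes_X)}(iv), which transports $\Ext$-projectivity in $^\perp\tau(-)$ from $\modd\Lambda$ back to $\modd kQ$, is precisely the tool that resolves this, by pinning down the induced summands of $B_{\Lambda\otimes_{kQ}M}$ as the induction of the Bongartz complement of $M$.
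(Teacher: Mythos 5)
Your proof is correct, and it is not circular: none of the results you invoke (Proposition \ref{tau-rigid-Lambda1:1tau-rigid-kQ}, Corollary \ref{stautitl(kQ)1:1stautilt(Lambda)}, Lemma \ref{tau-rigid_in_J(X)_iff_taurigid_in_J(Lambda_otimes_X)}, Lemma \ref{B_M-tau-rigid}, Proposition \ref{tau_rigid_in_J(M)_1:1_tau_rigid_in_J(Lambda_otimes_kQM)}) depend on this proposition. However, you organize the argument in the opposite order from the paper. The paper works entirely at the level of sets of Ext-projectives: using \cite[Lemma 4.9]{tauExcSeq_BM} and Proposition \ref{tau_rigid_in_J(M)_1:1_tau_rigid_in_J(Lambda_otimes_kQM)}, it reduces the claim to showing that induction bijects $\mathcal{P}({^\perp}\tau M)\setminus \mathrm{ind.}\add M$ onto $\mathcal{P}({^\perp}\tau(\Lambda\otimes_{kQ}M))\setminus \mathrm{ind.}\add(\Lambda\otimes_{kQ}M)$, which it gets by counting (both sets have $n-\abs{M}$ elements since Bongartz completions are $\tau$-tilting and $\abs{kQ}=\abs{\Lambda}$), by Lemma \ref{B_M-tau-rigid} (every element of the target is induced), and by Lemma \ref{tau-rigid_in_J(X)_iff_taurigid_in_J(Lambda_otimes_X)}(iv) (the de-induced module lands in the source); the isomorphism $B_{\Lambda\otimes_{kQ}M}\cong \Lambda\otimes_{kQ}B_M$ is then recorded afterwards as Corollary \ref{Bongartz_commutes_with_induction}, a consequence of the proposition. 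You instead prove that isomorphism first, as the central step, and deduce the proposition from it; this costs you two extra inputs the paper's proof does not need, namely Corollary \ref{stautitl(kQ)1:1stautilt(Lambda)} (to see that $T$ with $\Lambda\otimes_{kQ}T\cong B_{\Lambda\otimes_{kQ}M}$ is $\tau$-tilting) and the injectivity part of Proposition \ref{tau-rigid-Lambda1:1tau-rigid-kQ} (to see that $M\in\add T$), but it buys a self-contained proof of Corollary \ref{Bongartz_commutes_with_induction} rather than one that routes through this proposition. The key mechanism is the same in both: Lemma \ref{tau-rigid_in_J(X)_iff_taurigid_in_J(Lambda_otimes_X)}(iv) pins down the induced Ext-projectives, and the count $\abs{kQ}=\abs{\Lambda}$ forces the bijection.
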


\begin{proof}
    By {\cite[Lemma 4.9]{tauExcSeq_BM}}, the torsion-free functor $f_M$ induces a bijection 
    $$\mathcal{P}(^\perp\tau M)\setminus \mathrm{ind.}\add M \to \mathrm{ind.proj}J(M)$$
    in $\modd kQ$, and similarly there is a bijection 
    $$\mathcal{P}(^\perp\tau(\Lambda\otimes_{kQ}M))\setminus \mathrm{ind.}\add(\Lambda\otimes_{kQ}M) \to\mathrm{ind.proj}J(\Lambda\otimes_{kQ}M)$$
    in $\modd\Lambda$ induced by $f_{\Lambda\otimes_{kQ}M}$. Hence, using Proposition \ref{tau_rigid_in_J(M)_1:1_tau_rigid_in_J(Lambda_otimes_kQM)}, proving the bijection between indecomposable projective modules in $J(M)$ and indecomposable projective modules in $J(\Lambda\otimes_{kQ}M)$ is equivalent to showing that the induction functor $\Lambda\otimes_{kQ}-$ induces a bijection 
    $$\mathcal{P}(^\perp\tau M)\setminus \mathrm{ind.}\add M \to \mathcal{P}(^\perp\tau(\Lambda\otimes_{kQ}M))\setminus \mathrm{ind.}\add(\Lambda\otimes_{kQ}M).$$
    Since $B_{\Lambda\otimes_{kQ}M}$ is a $\tau$-tilting $\Lambda$-module and $|kQ| = |\Lambda|$, we obtain that
    $$ |\mathcal{P}(^\perp\tau M)\setminus \mathrm{ind.}\add M| = |\mathcal{P}(^\perp\tau(\Lambda\otimes_{kQ}M))\setminus \mathrm{ind.}\add(\Lambda\otimes_{kQ}M)|.$$
    Moreover, using Lemma \ref{B_M-tau-rigid} we get that 
    $$\mathcal{P}(^\perp\tau(\Lambda\otimes_{kQ}M))\setminus \mathrm{ind.}\add(\Lambda\otimes_{kQ}M) = {\left\{ \Lambda\otimes_{kQ}X \;\middle|\; \begin{tabular}{@{}l@{}} $\Ext_\Lambda^1(\Lambda\otimes_{kQ}X, {^\perp\tau(\Lambda\otimes_{kQ}M)}) = 0$, \\ $X\in \ind\modd kQ$ \end{tabular} \right\}}.$$
    Observe that, if $X$ is indecomposable in $\modd kQ$, so is $\Lambda\otimes_{kQ}X$ in $\modd\Lambda$; see Proposition \ref{HomExt_tensor_product}. So, let $\Lambda\otimes_{kQ}X \in \mathcal{P}(^\perp\tau(\Lambda\otimes_{kQ}M))\setminus \mathrm{ind.}\add(\Lambda\otimes_{kQ}M)$. Then, by Lemma \ref{tau-rigid_in_J(X)_iff_taurigid_in_J(Lambda_otimes_X)} (iv), $X$ is in $\mathcal{P}(^\perp\tau M)\setminus \mathrm{ind.}\add M$. We conclude that the induction functor induces the desired bijection. The claim follows. 
\end{proof}

We have the following immediate corollary. 

\begin{corollary}\label{Bongartz_commutes_with_induction}
        Let $M$ be a $\tau$-rigid $kQ$-module. Then, $B_{\Lambda\otimes_{kQ}M}\cong \Lambda\otimes_{kQ} B_M$ in $\modd\Lambda$. 
\end{corollary}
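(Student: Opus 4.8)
The plan is to split each Bongartz completion into its $M$-part and the complementary $\Ext$-projectives, and then transport one piece to the other via the bijection established inside the proof of Proposition \ref{indprojJ(M)1-1indprojJ(Lambda_otimes_M)}.

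First I would record that each indecomposable summand of $M$ itself belongs to $\mathcal{P}({}^\perp\tau M)$. Since $M$ is $\tau$-rigid we have $M\in{}^\perp\tau M$, and for any $Z\in{}^\perp\tau M$ the AR-duality \eqref{AR_Duality_pdM<=1} (available because $kQ$ is hereditary, so $\pd M\le 1$) gives $\Ext^1_{kQ}(M,Z)\cong D\Hom_{kQ}(Z,\tau M)=0$; hence $M$ is $\Ext$-projective in ${}^\perp\tau M$. Consequently $\mathrm{ind.}\add M\subseteq\mathcal{P}({}^\perp\tau M)$, and the definition of the Bongartz completion yields a decomposition $B_M\cong M\oplus M'$, where $M'$ is the direct sum of the indecomposables in $\mathcal{P}({}^\perp\tau M)\setminus\mathrm{ind.}\add M$. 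The identical argument over $\Lambda$ gives $B_{\Lambda\otimes_{kQ}M}\cong(\Lambda\otimes_{kQ}M)\oplus N'$, where $N'$ is the direct sum of the indecomposables in $\mathcal{P}({}^\perp\tau(\Lambda\otimes_{kQ}M))\setminus\mathrm{ind.}\add(\Lambda\otimes_{kQ}M)$.

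Next, since the induction functor commutes with finite direct sums, applying it to the first decomposition gives $\Lambda\otimes_{kQ}B_M\cong(\Lambda\otimes_{kQ}M)\oplus(\Lambda\otimes_{kQ}M')$. In the proof of Proposition \ref{indprojJ(M)1-1indprojJ(Lambda_otimes_M)} it is shown that $\Lambda\otimes_{kQ}-$ restricts to a bijection $\mathcal{P}({}^\perp\tau M)\setminus\mathrm{ind.}\add M\to\mathcal{P}({}^\perp\tau(\Lambda\otimes_{kQ}M))\setminus\mathrm{ind.}\add(\Lambda\otimes_{kQ}M)$; since it also preserves indecomposability (Proposition \ref{HomExt_tensor_product}), it sends the indecomposable summands of $M'$ bijectively onto those of $N'$, so $\Lambda\otimes_{kQ}M'\cong N'$. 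Substituting, $\Lambda\otimes_{kQ}B_M\cong(\Lambda\otimes_{kQ}M)\oplus N'\cong B_{\Lambda\otimes_{kQ}M}$, as desired.

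There is essentially no deep obstacle here: the corollary is a bookkeeping consequence of the preceding proposition. The only points needing a little care are verifying that the summands of $M$ sit inside $\mathcal{P}({}^\perp\tau M)$ (so that the two pieces genuinely partition the set of indecomposable $\Ext$-projectives) and invoking preservation of indecomposability, so that the summand-by-summand matching is legitimate.
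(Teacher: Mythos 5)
Your proof is correct and is essentially the paper's own argument: the paper records this corollary as an immediate consequence of Proposition \ref{indprojJ(M)1-1indprojJ(Lambda_otimes_M)}, whose proof establishes precisely the bijection $\mathcal{P}({}^\perp\tau M)\setminus\mathrm{ind.}\add M\to\mathcal{P}({}^\perp\tau(\Lambda\otimes_{kQ}M))\setminus\mathrm{ind.}\add(\Lambda\otimes_{kQ}M)$ induced by $\Lambda\otimes_{kQ}-$, which you then combine with the evident matching of the $\add M$-parts, exactly as intended. The only caveat is that, since $\Lambda$ is not hereditary, your ``identical argument over $\Lambda$'' for the $\Ext$-projectivity of $\Lambda\otimes_{kQ}M$ in ${}^\perp\tau(\Lambda\otimes_{kQ}M)$ needs the extra observation that $\pd_\Lambda(\Lambda\otimes_{kQ}M)\le 1$ (which holds by \eqref{pd_tensor_product}), or, more cleanly, one can invoke Lemma \ref{Smalo's_result}(i), which requires no projective-dimension hypothesis.
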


We are now prepared to state and prove the main result of this section. 

\begin{theorem}\label{J(Lambda_otimes_kQ)=mod(R_otimes_kQ')}
      Let $\Lambda = R\otimes kQ$. Let $M$ be a basic $\tau$-rigid $kQ$-module and let $\Lambda\otimes_{kQ}M$ be the corresponding basic $\tau$-rigid $\Lambda$-module. Then, $J(\Lambda\otimes_{kQ}M)\simeq \modd\Gamma_{\Lambda\otimes_{kQ}M}$, with $$ \Gamma_{\Lambda\otimes_{kQ}M} \cong R\otimes \Gamma_M \quad \text{and} \quad \Gamma_M = \End_{kQ}(B_M)^{\mathrm{op}}/\langle e_M\rangle$$
      where $e_M$ is the idempotent corresponding to the projective $\End_{kQ}(B_M)^\mathrm{op}$-module $\Hom_{kQ}(B_M,M)$. In particular, $\Gamma_M$ is hereditary. 
\end{theorem}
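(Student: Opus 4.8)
The plan is to deduce everything from Jasso's description of $\tau$-perpendicular categories (Proposition \ref{propertiesOfJ}(i)) together with the behaviour of the Bongartz completion and of $\Hom$-spaces under induction. Write $N:=\Lambda\otimes_{kQ}M$. Since $N$ is a basic $\tau$-rigid $\Lambda$-module, Proposition \ref{propertiesOfJ}(i) already supplies the equivalence $J(N)\simeq\modd\Gamma_N$ with $\Gamma_N=\End_\Lambda(B_N)^{\mathrm{op}}/I_N$, where $I_N$ is the ideal of endomorphisms factoring through $\add N$; so the only remaining task is to identify $\Gamma_N$ with $R\otimes\Gamma_M$. First I would rewrite the ambient algebra: by Corollary \ref{Bongartz_commutes_with_induction} we have $B_N\cong\Lambda\otimes_{kQ}B_M\cong R\otimes B_M$, and then the $\Hom$-tensor isomorphism \eqref{Hom_Tensor} of Proposition \ref{HomExt_tensor_product}, combined with $\Hom_R(R,R)\cong R$ and the commutativity of $R$ (so $R^{\mathrm{op}}\cong R$), yields an algebra isomorphism $\End_\Lambda(B_N)^{\mathrm{op}}\cong R\otimes\End_{kQ}(B_M)^{\mathrm{op}}$.

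The heart of the argument, and the step I expect to be the main obstacle, is to show that under this isomorphism the ideal $I_N$ corresponds exactly to $R\otimes I_M$, where $I_M\subseteq\End_{kQ}(B_M)^{\mathrm{op}}$ is the ideal of maps factoring through $\add M$. One inclusion is easy: if $\phi=\beta\alpha$ factors through some $M'\in\add M$, then for any $r\in R$ the element $r\otimes\phi$ factors as $(r\otimes\beta)\circ(1\otimes\alpha)$ through $R\otimes M'\in\add N$, so $R\otimes I_M$ lands inside $I_N$. For the reverse inclusion I would use that, by Proposition \ref{tau-rigid-Lambda1:1tau-rigid-kQ}, the induction functor restricts to a bijection on indecomposable $\tau$-rigid summands, so every object of $\add N$ has the form $R\otimes M''$ with $M''\in\add M$; hence any $f\in I_N$ factors as $R\otimes B_M\xrightarrow{g}R\otimes M''\xrightarrow{h}R\otimes B_M$. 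Decomposing $g$ and $h$ through \eqref{Hom_Tensor} as $g=\sum_i s_i\otimes\alpha_i$ and $h=\sum_j t_j\otimes\beta_j$, and using that the $\Hom$-tensor isomorphism is compatible with composition (so $f$ corresponds to $\sum_{i,j}t_js_i\otimes\beta_j\alpha_i$), each $\beta_j\alpha_i\colon B_M\to M''\to B_M$ lies in $I_M$, whence $f\in R\otimes I_M$. The care needed here is precisely checking that the tensor decomposition is an isomorphism of algebras compatible with these factorisations, and that every summand of $N$ is genuinely induced from $\add M$.

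With the ideal identified, I would conclude
$$\Gamma_N=\End_\Lambda(B_N)^{\mathrm{op}}/I_N\cong\bigl(R\otimes\End_{kQ}(B_M)^{\mathrm{op}}\bigr)/\bigl(R\otimes I_M\bigr)\cong R\otimes\bigl(\End_{kQ}(B_M)^{\mathrm{op}}/I_M\bigr)=R\otimes\Gamma_M,$$
using exactness of $R\otimes-$ over the field $k$ to commute the tensor factor past the quotient. To match the stated form I would invoke the standard fact that the maps factoring through the summand $M$ of $B_M$ form the two-sided ideal generated by the idempotent $e_M$ projecting onto $M$, namely the idempotent attached to the projective $\End_{kQ}(B_M)^{\mathrm{op}}$-module $\Hom_{kQ}(B_M,M)$; thus $I_M=\langle e_M\rangle$ and $\Gamma_M=\End_{kQ}(B_M)^{\mathrm{op}}/\langle e_M\rangle$. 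Finally, since $kQ$ is hereditary, Remark \ref{Lambda_hereditary_J_hereditary} shows that $J(M)\simeq\modd\Gamma_M$ is hereditary, so $\Gamma_M$ is hereditary, which settles the last assertion.
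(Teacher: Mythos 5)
Your proof is correct and follows essentially the same route as the paper: Jasso's equivalence, the isomorphism $B_{\Lambda\otimes_{kQ}M}\cong\Lambda\otimes_{kQ}B_M$ from Corollary \ref{Bongartz_commutes_with_induction}, and the Hom-tensor isomorphism of Proposition \ref{HomExt_tensor_product} to identify $\End_\Lambda(B_{\Lambda\otimes_{kQ}M})^{\mathrm{op}}\cong R\otimes\End_{kQ}(B_M)^{\mathrm{op}}$, with heredity deduced exactly as in Remark \ref{Lambda_hereditary_J_hereditary}. The only difference is presentational: the paper tracks the idempotent, noting $e_{\Lambda\otimes_{kQ}M}\mapsto 1\otimes e_M$ under the induced isomorphism, whereas you identify the ideal of maps factoring through $\add(\Lambda\otimes_{kQ}M)$ with $R\otimes I_M$ by explicit factorisations and only then invoke $I_M=\langle e_M\rangle$ --- the same verification in different clothing.
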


\begin{proof}
    Let $\Lambda\otimes_{kQ}M$ be a basic $\tau$-rigid $\Lambda$-module. We know from {\cite[Theorem 3.8]{Jasso_Reduction}} that the $\tau$-perpendicular subcategory $J(\Lambda\otimes_{kQ}M)$ is equivalent to $\modd\Gamma_{\Lambda\otimes_{kQ}M}$, where 
    \begin{align*}
        \Gamma_{\Lambda\otimes_{kQ}M} &= \End_{\Lambda}(B_{\Lambda\otimes_{kQ}M})^{\mathrm{op}}/\langle e_{\Lambda\otimes_{kQ}M}\rangle\\ &= \End_{kQ}(B_{\Lambda\otimes_{kQ}M})^{\mathrm{op}}/\left\{f \mid f \text{ factors through } \Lambda\otimes_{kQ}M\right\}.
    \end{align*}
    Using Corollary \ref{Bongartz_commutes_with_induction}, we get that 
    $$ \End_\Lambda(B_{\Lambda\otimes_{kQ}M})^{\mathrm{op}} \cong \End_\Lambda(\Lambda\otimes_{kQ}B_M)^{\mathrm{op}}\cong R\otimes \End_{kQ}(B_M)^{\mathrm{op}}. $$
    Moreover, the idempotent $e_{\Lambda\otimes_{kQ}M}$ corresponding to the projective $\End_{\Lambda}(B_{\Lambda\otimes_{kQ}M})^\mathrm{op}$-module\\ $\Hom_\Lambda(B_{\Lambda\otimes_{kQ}M},\Lambda\otimes_{kQ}M)$ gets sent to the idempotent $1\otimes e_M$ under the isomorphism 
    $$ \Hom_\Lambda(B_{\Lambda\otimes_{kQ}M}, \Lambda\otimes_{kQ}M) \cong R\otimes \Hom_{kQ}(B_M,M).$$
    Hence, we obtain that 
    \begin{align*}
        \Gamma_{\Lambda\otimes_{kQ}M} = \End_{\Lambda}(B_{\Lambda\otimes_{kQ}M})^{\mathrm{op}}/\langle e_{\Lambda\otimes_{kQ}M}\rangle 
        &\cong (R\otimes \End({B_M})^{\mathrm{op}})/\langle 1\otimes e_M\rangle\\
        &\cong R\otimes(\End_{kQ}(B_M)^{\mathrm{op}}/\langle e_M \rangle) = R \otimes\Gamma_M.
    \end{align*}
    In particular, $\Gamma_M$ is hereditary by \cite[Cor. 3.18 (a)]{Jasso_Reduction}; see also Remark \ref{Lambda_hereditary_J_hereditary}. This finishes the proof. 
\end{proof}

Let $M$ be a basic $\tau$-rigid $kQ$-module and let $\Lambda\otimes_{kQ}M$ be the corresponding basic $\tau$-rigid $\Lambda$-module.  By \cite[Thm. 3.8]{Jasso_Reduction} we have equivalences of categories $$G_M := \Hom_{kQ}(B_M,-): J(M)\to \modd \Gamma_M$$ and $$G_{\Lambda\otimes_{kQ}M} := \Hom_{\Lambda}(B_{\Lambda\otimes_{kQ}M},-): J(\Lambda\otimes_{kQ}M)\to \modd \Gamma_{\Lambda\otimes_{kQ}M}.$$ We denote by $F_M$ and $F_{\Lambda\otimes_{kQ}M}$ the inverses of $G_M$ and $G_{\Lambda\otimes_{kQ}M}$, respectively. Using this notation and the one from Theorem \ref{J(Lambda_otimes_kQ)=mod(R_otimes_kQ')} we can deduce the following. 

\begin{corollary}\label{transport_of_structure}
    There is a commutative diagram of the form 
\[
\begin{tikzcd}[ampersand replacement=\&, column sep=normal, row sep=normal]
	J(M) 
	\arrow[rr, bend left=15, "{G_M}"] 
	\arrow[dd, "{{\Lambda\otimes_{kQ}-}}"'] 
	\&\& \modd \Gamma_M 
	\arrow[ll, bend left=15, "{F_M}"] 
	\arrow[dd, "{\Gamma_{\Lambda\otimes_{kQ}M}\otimes_{\Gamma_M}-}"] \\
	\\
	J({\Lambda\otimes_{kQ}M}) 
	\arrow[rr, bend left=15, "{G_{{\Lambda\otimes_{kQ}M}}}"] 
	\&\& \modd \Gamma_{\Lambda\otimes_{kQ}M} 
	\arrow[ll, bend left=15, "{F_{{\Lambda\otimes_{kQ}M}}}"', swap]
\end{tikzcd}
\]
\end{corollary}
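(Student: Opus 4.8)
The plan is to produce a natural isomorphism of functors
$$
G_{\Lambda\otimes_{kQ}M}\circ(\Lambda\otimes_{kQ}-)\;\cong\;(\Gamma_{\Lambda\otimes_{kQ}M}\otimes_{\Gamma_M}-)\circ G_M\colon J(M)\to\modd\Gamma_{\Lambda\otimes_{kQ}M},
$$
which is exactly the commutativity of the upper-right triangle of the diagram. Commutativity of the companion square built from the quasi-inverses $F_M$ and $F_{\Lambda\otimes_{kQ}M}$ then follows formally, since the horizontal arrows are mutually inverse equivalences: conjugating the above isomorphism by $F_M$ and $F_{\Lambda\otimes_{kQ}M}$ yields $(\Lambda\otimes_{kQ}-)\circ F_M\cong F_{\Lambda\otimes_{kQ}M}\circ(\Gamma_{\Lambda\otimes_{kQ}M}\otimes_{\Gamma_M}-)$.

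First I would unwind the right-hand vertical functor. By Theorem \ref{J(Lambda_otimes_kQ)=mod(R_otimes_kQ')} we have $\Gamma_{\Lambda\otimes_{kQ}M}\cong R\otimes\Gamma_M$, so $\Gamma_{\Lambda\otimes_{kQ}M}\otimes_{\Gamma_M}-\cong(R\otimes\Gamma_M)\otimes_{\Gamma_M}-\cong R\otimes-$; that is, the right vertical arrow is itself an induction functor, for the extension $\Gamma_M\hookrightarrow R\otimes\Gamma_M$. Thus both vertical functors are induction functors (for $kQ\hookrightarrow\Lambda$ and for $\Gamma_M\hookrightarrow\Gamma_{\Lambda\otimes_{kQ}M}$ respectively), and the assertion is that $G_{(-)}=\Hom(B_{(-)},-)$ intertwines them.

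Next, for $X\in J(M)$ I would compute the clockwise composite directly. Replacing $B_{\Lambda\otimes_{kQ}M}$ by $\Lambda\otimes_{kQ}B_M\cong R\otimes B_M$ via Corollary \ref{Bongartz_commutes_with_induction} and applying the Hom-tensor formula \eqref{Hom_Tensor} of Proposition \ref{HomExt_tensor_product},
\begin{align*}
G_{\Lambda\otimes_{kQ}M}(\Lambda\otimes_{kQ}X)&=\Hom_\Lambda(B_{\Lambda\otimes_{kQ}M},\Lambda\otimes_{kQ}X)\\
&\cong\Hom_{R\otimes kQ}(R\otimes B_M,R\otimes X)\\
&\cong R\otimes\Hom_{kQ}(B_M,X)=R\otimes G_M(X),
\end{align*}
which is precisely $(\Gamma_{\Lambda\otimes_{kQ}M}\otimes_{\Gamma_M}-)(G_M(X))$ at the level of vector spaces. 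Since the Hom-tensor isomorphism is natural in both arguments, this identification is natural in $X$, so the two composites already agree as functors valued in $k$-vector spaces.

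The hard part, and the only genuine content, is to upgrade this to an isomorphism of $\Gamma_{\Lambda\otimes_{kQ}M}$-modules rather than merely of vector spaces. The left $\Gamma_{\Lambda\otimes_{kQ}M}$-action on $\Hom_\Lambda(B_{\Lambda\otimes_{kQ}M},\Lambda\otimes_{kQ}X)$ arises by precomposition with $\End_\Lambda(B_{\Lambda\otimes_{kQ}M})^{\mathrm{op}}\cong R\otimes\End_{kQ}(B_M)^{\mathrm{op}}$, whereas the $R\otimes\Gamma_M$-action on $R\otimes G_M(X)$ acts factorwise. I would verify that the isomorphism \eqref{Hom_Tensor}, being functorial in its first variable, sends precomposition by $r\otimes\phi\in R\otimes\End_{kQ}(B_M)^{\mathrm{op}}$ to the operator $r\otimes(\phi\cdot-)$ on $R\otimes G_M(X)$, so the two precomposition actions correspond. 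Finally, the idempotent ideal $\langle e_{\Lambda\otimes_{kQ}M}\rangle$ of maps factoring through $\Lambda\otimes_{kQ}M$ is carried to $1\otimes\langle e_M\rangle$, as already recorded in the proof of Theorem \ref{J(Lambda_otimes_kQ)=mod(R_otimes_kQ')}; hence the $R\otimes\End_{kQ}(B_M)^{\mathrm{op}}$-action descends to the $\Gamma_{\Lambda\otimes_{kQ}M}\cong R\otimes\Gamma_M$-action on both sides, and the natural isomorphism above is one of $\Gamma_{\Lambda\otimes_{kQ}M}$-modules. This establishes the natural isomorphism of functors and, by the first paragraph, completes the proof.
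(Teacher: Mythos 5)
Your proposal is correct and follows essentially the same route as the paper: both proofs identify the two composites via the chain $\Gamma_{\Lambda\otimes_{kQ}M}\cong R\otimes\Gamma_M$ (Theorem \ref{J(Lambda_otimes_kQ)=mod(R_otimes_kQ')}), the Hom-tensor isomorphism \eqref{Hom_Tensor}, and $B_{\Lambda\otimes_{kQ}M}\cong\Lambda\otimes_{kQ}B_M$ (Corollary \ref{Bongartz_commutes_with_induction}), merely traversed in opposite directions. Your additional checks---that the vector-space isomorphism respects the $\Gamma_{\Lambda\otimes_{kQ}M}$-module structures and is natural, and that the square with the quasi-inverses $F_M$, $F_{\Lambda\otimes_{kQ}M}$ commutes formally---are points the paper leaves implicit, and they are sound.
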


\begin{proof}
    Let $N\in J(M)$. Then, 
    \begin{align*}
        \Gamma_{\Lambda\otimes_{kQ}M}\otimes_{\Gamma_M}G_M(N) &= \Gamma_{\Lambda\otimes_{kQ}M}\otimes_{\Gamma_M}\Hom_{kQ}(B_M,N)\\
        &\cong R\otimes \Gamma_M \otimes_{\Gamma_M}\Hom_{kQ}(B_M,N) &&\text{by Theorem \ref{J(Lambda_otimes_kQ)=mod(R_otimes_kQ')}}\\
        &\cong R\otimes \Hom_{kQ}(B_M,N)\\
        &\cong \Hom_R(R,R)\otimes \Hom_{kQ}(B_M,N)\\
        &\cong \Hom_{R\otimes kQ}(R\otimes B_M, R\otimes N)\\
        &\cong \Hom_\Lambda(\Lambda\otimes_{kQ}B_M, \Lambda\otimes_{kQ}N)\\
        &\cong \Hom_\Lambda(B_{\Lambda\otimes_{kQ}M}, \Lambda\otimes_{kQ}N) &&\text{by Corollary \ref{Bongartz_commutes_with_induction}}\\
        &= G_{\Lambda\otimes_{kQ}M}(\Lambda\otimes_{kQ}N). 
    \end{align*}
    The claim follows. 
\end{proof}

\begin{corollary}
    Let $M\in \modd kQ$ and let $N\in J(M)$. Then, $$\tau_{J(\Lambda\otimes_{kQ}M)}(\Lambda\otimes_{kQ}N)\cong \Lambda\otimes_{kQ}\tau_{J(M)}N$$ in $J(\Lambda\otimes_{kQ}M)$.
\end{corollary}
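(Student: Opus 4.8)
The plan is to transport the statement through the two equivalences $G_M\colon J(M)\xrightarrow{\sim}\modd\Gamma_M$ and $G_{\Lambda\otimes_{kQ}M}\colon J(\Lambda\otimes_{kQ}M)\xrightarrow{\sim}\modd\Gamma_{\Lambda\otimes_{kQ}M}$, reduce the claim to an identity of functors on these module categories, and then recognise that identity as an instance of Proposition \ref{tauCommutesWithInduction}(iii). The whole argument is a diagram chase once the right objects are in place; the only genuine content is setting up the reduction.

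First I would record the two structural facts that make the reduction possible. By Theorem \ref{J(Lambda_otimes_kQ)=mod(R_otimes_kQ')} the algebra $\Gamma_M$ is hereditary; being basic and finite-dimensional over the algebraically closed field $k$, it is therefore isomorphic to a path algebra $kQ''$ for some acyclic quiver $Q''$. The same theorem gives $\Gamma_{\Lambda\otimes_{kQ}M}\cong R\otimes\Gamma_M=R\otimes kQ''$. Thus the pair $(\Gamma_M,\Gamma_{\Lambda\otimes_{kQ}M})$ is exactly of the shape $(kQ'',\,R\otimes kQ'')$ to which Proposition \ref{tauCommutesWithInduction} applies, and the functor $\Gamma_{\Lambda\otimes_{kQ}M}\otimes_{\Gamma_M}-$ appearing in Corollary \ref{transport_of_structure} is precisely the induction functor for this pair.

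Next I would invoke Corollary \ref{transport_of_structure}, which states that the induction functor $\Lambda\otimes_{kQ}-$ corresponds, under the above equivalences, to $\Gamma_{\Lambda\otimes_{kQ}M}\otimes_{\Gamma_M}-$; concretely $G_{\Lambda\otimes_{kQ}M}(\Lambda\otimes_{kQ}N)\cong\Gamma_{\Lambda\otimes_{kQ}M}\otimes_{\Gamma_M}G_M(N)$ for every $N\in J(M)$. I would also use the standard fact that an equivalence of module categories commutes with the Auslander--Reiten translation (since $\tau$ is defined intrinsically via minimal projective presentations and the Nakayama functor), so that $G_M(\tau_{J(M)}N)\cong\tau_{\Gamma_M}(G_M N)$ and likewise $G_{\Lambda\otimes_{kQ}M}(\tau_{J(\Lambda\otimes_{kQ}M)}X)\cong\tau_{\Gamma_{\Lambda\otimes_{kQ}M}}(G_{\Lambda\otimes_{kQ}M}X)$.

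With these in hand, writing $L:=G_M(N)\in\modd\Gamma_M$, I apply $G_{\Lambda\otimes_{kQ}M}$ to the left-hand side of the claim and compute
\begin{align*}
G_{\Lambda\otimes_{kQ}M}\big(\tau_{J(\Lambda\otimes_{kQ}M)}(\Lambda\otimes_{kQ}N)\big)
&\cong \tau_{\Gamma_{\Lambda\otimes_{kQ}M}}\big(G_{\Lambda\otimes_{kQ}M}(\Lambda\otimes_{kQ}N)\big)\\
&\cong \tau_{\Gamma_{\Lambda\otimes_{kQ}M}}\big(\Gamma_{\Lambda\otimes_{kQ}M}\otimes_{\Gamma_M}L\big)\\
&\cong \Gamma_{\Lambda\otimes_{kQ}M}\otimes_{\Gamma_M}\tau_{\Gamma_M}L\\
&\cong \Gamma_{\Lambda\otimes_{kQ}M}\otimes_{\Gamma_M}G_M(\tau_{J(M)}N)\\
&\cong G_{\Lambda\otimes_{kQ}M}\big(\Lambda\otimes_{kQ}\tau_{J(M)}N\big),
\end{align*}
where the first and fourth isomorphisms use that equivalences commute with $\tau$, the second and fifth are Corollary \ref{transport_of_structure}, and the crucial third isomorphism is Proposition \ref{tauCommutesWithInduction}(iii) applied to $R\otimes kQ''$. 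Since $G_{\Lambda\otimes_{kQ}M}$ is an equivalence it reflects isomorphisms, so the claimed isomorphism $\tau_{J(\Lambda\otimes_{kQ}M)}(\Lambda\otimes_{kQ}N)\cong\Lambda\otimes_{kQ}\tau_{J(M)}N$ follows. The one step that needs care --- and the only point beyond bookkeeping --- is the third isomorphism: it is legitimate precisely because heredity of $\Gamma_M$ lets us present $\Gamma_{\Lambda\otimes_{kQ}M}$ in the tensor form $R\otimes kQ''$ required by Proposition \ref{tauCommutesWithInduction}, so the main obstacle is the verification that this identification respects the relevant bimodule structures, which is exactly what Corollary \ref{transport_of_structure} supplies.
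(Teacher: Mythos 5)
Your proof is correct and takes essentially the same route as the paper: the paper's own (much terser) proof likewise invokes Theorem \ref{J(Lambda_otimes_kQ)=mod(R_otimes_kQ')} to identify $J(\Lambda\otimes_{kQ}M)\simeq \modd(R\otimes\Gamma_M)$ with $\Gamma_M$ hereditary, and then concludes by Proposition \ref{tauCommutesWithInduction}(iii). Your write-up simply makes explicit the bookkeeping the paper leaves implicit, namely the transport through the equivalences $G_M$, $G_{\Lambda\otimes_{kQ}M}$ via Corollary \ref{transport_of_structure} and the standard fact that equivalences of module categories commute with the Auslander--Reiten translation.
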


\begin{proof}
    By Theorem \ref{J(Lambda_otimes_kQ)=mod(R_otimes_kQ')} we have that $J(\Lambda\otimes_{kQ}M)\simeq \modd(R\otimes \Gamma_M)$, for a hereditary alebra $\Gamma_M$. The result follows from Proposition \ref{tauCommutesWithInduction} (iii). 
\end{proof}

\section{(signed) $\tau$-exceptional sequences}\label{section3}

In this section, let $\Lambda = R\otimes kQ$ unless stated otherwise, where $R$ is a local commutative finite dimensional algebra. Our goal is to establish a bijection between complete (signed) ($\tau$-)exceptional sequences in $\modd kQ$ and complete (signed) $\tau$-exceptional sequences in $\modd\Lambda$. $\tau$-exceptional sequences are a generalization of exceptional sequences introduced by Crawley-Boevey in \cite{Boevey-ExcSeq}. We start by recalling the definition of an exceptional sequence.

\begin{definition}\label{def_exc_seq}
    Let $\Lambda$ be a finite dimensional algebra. For a positive integer $t$, an ordered $t$-tuple of indecomposable $\Lambda$-modules $(M_1,\cdots, M_t)$ in $\modd\Lambda$ is called \emph{exceptional} if the following conditions hold. 
    \begin{enumerate}
        \item[(a)] $\End_\Lambda(M_i)\cong k$ for $1\leq i\leq t$;
        \item[(b)]  $\Ext_\Lambda^{\geq 1}(M_i,M_i) = 0$ for $1\leq i \leq t$; 
        \item[(c)]  $\Hom_\Lambda(M_i,M_j) = 0 = \Ext_\Lambda^{\geq 1}(M_i,M_j)$ for $1\leq j < i \leq t$.  
    \end{enumerate}
    If $t = \abs\Lambda$ the sequence is said to be \emph{complete}.
\end{definition}

Let $\Lambda$ be a finite dimensional algebra. Recall that a pair of $\Lambda$-modules $(M,P)$ is called $\tau$-rigid if $M$ is $\tau$-rigid, $P\in\proj\Lambda$, and $\Hom_\Lambda(P,M) = 0$. Buan and Marsh considered the corresponding object $M\oplus P[1]$ in the full subcategory $\mathcal{C}(\Lambda) = \modd\Lambda \oplus \modd\Lambda[1]$ of $D^b(\modd\Lambda)$. 

\begin{definition}[{\cite[Def. 1.1]{tauExcSeq_BM}}]
    Let $\Lambda$ be a finite dimensional algebra. An object $M\oplus P[1]$ in $\C(\Lambda)$ is called \emph{support $\tau$-rigid} if
    \begin{enumerate}
        \item[(a)] $M$ is a $\tau$-rigid $\Lambda$ module;
        \item[(b)] $P$ is a projective $\Lambda$-module and satisfies $\Hom_\Lambda(P,M) = 0$.
    \end{enumerate}
    An object $M\oplus P[1]$ is called \emph{support $\tau$-tilting} if $M\oplus P[1]$ is support $\tau$-rigid and $\abs{M}+\abs{P} = \abs{\Lambda}$.
\end{definition}

Let $M$ be an indecomposable $\tau$-rigid $\Lambda$-module. Recall that the $\tau$-perpendicular subcategory $J(M)$ is equivalent to the module category of a finite dimensional algebra $\Gamma_M$ with $\abs{\Lambda}-1$ simple modules. If $P\in \proj\Lambda$, define $J(P[1]) = J(P)$ and set $\Gamma_{P[1]} = \Gamma_P$. Hence, for an indecomposable object $X\in \C(\Lambda)$,  we have an equivalence $J(X)\simeq \modd\Gamma_X$. For a full subcategory $\mathcal{X}$ of $\modd\Lambda$, define $\C(\mathcal{X})$ to be the full subcategory $\mathcal{X}\oplus \mathcal{X}[1]$ of $\C(\Lambda)$. Hence, for $X$ as above, using the fact that $J(X)$ is a wide subcategory of $\modd\Lambda$, one can see that $\C(J(X))\simeq \C(\Gamma_X)$.

An object $M\oplus P[1]\in \C(J(X))$ is \textit{support $\tau$-rigid} in $\C(J(X))$ if the corresponding object in $\C(\Gamma_X)$ is support $\tau$-rigid, that is $M$ is $\tau_{J(X)}$-rigid, $P$ lies in $\proj J(X)$, and $\Hom_{\Gamma_X}(P,M) = 0$. We recall the following definitions. 

\begin{definition}[{\cite[Def. 1.2]{tauExcSeq_BM}}]
    Let $\Lambda$ be a finite dimensional algebra. For a positive integer $t$, an ordered $t$-tuple of indecomposable objects $(\T_1,\cdots, \T_t)$ in $\C(\Lambda)$ is called an \emph{ordered support $\tau$-rigid object} if $\bigoplus_{i=1}^t\T_i$ is a basic support $\tau$-rigid object. If, in addition, $t=n$, then $(\T_1,\cdots, \T_t)$ is called an ordered support $\tau$-tilting object. 
\end{definition}

\begin{definition}[{\cite[Def. 1.3]{tauExcSeq_BM}}]
    Let $\Lambda$ be a finite dimensional algebra and let $t$ be a positive integer. An ordered $t$-tuple of indecomposable objects $(\U_1,\cdots, \U_t)$ in $\C(\Lambda)$ is a \emph{signed $\tau$-exceptional sequence} if 
    \begin{enumerate}
        \item[(a)] $\U_t$ is support $\tau$-rigid in $\C(\Lambda)$, and
        \item[(b)] $(\U_1,\cdots, \U_{t-1})$ is a signed $\tau$-exceptional sequence in $\C(J(M_t))$.
    \end{enumerate}
     A \emph{$\tau$-exceptional sequence} $(\U_1,\cdots, \U_t)$ is a signed $\tau$-exceptional sequence in which every indecomposable object lies in $\modd\Lambda$. If $t=n$, the sequence is said to be \emph{complete}.  
\end{definition}  

\begin{theorem}[{\cite[Thm. 5.4]{tauExcSeq_BM}}]\label{main_BM}
    Let $\Lambda$ be a finite dimensional algebra. For each $t\in \{1,\cdots, n\}$ there is a bijection $\varphi_t$ from the set of signed $\tau$-exceptional sequences of length $t$ in $\C(\Lambda)$ to the set of ordered support $\tau$-rigid objects of length $t$ in $\C(\Lambda)$. 
\end{theorem}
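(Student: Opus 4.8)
The plan is to prove the statement by induction on $t$, carried out simultaneously for every finite dimensional algebra, since the recursive definition of a signed $\tau$-exceptional sequence forces us to descend to the smaller algebra $\Gamma_X$. The base case $t=1$ is immediate: a signed $\tau$-exceptional sequence of length $1$ is by definition a single indecomposable support $\tau$-rigid object of $\C(\Lambda)$, which is precisely an ordered support $\tau$-rigid object of length $1$, so one may take $\varphi_1=\mathrm{id}$.

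For the inductive step, assume the result holds at length $t-1$ over every algebra. Given a signed $\tau$-exceptional sequence $(\U_1,\dots,\U_t)$, its last term $X:=\U_t$ is an indecomposable support $\tau$-rigid object of $\C(\Lambda)$ and, by definition, $(\U_1,\dots,\U_{t-1})$ is a signed $\tau$-exceptional sequence in $\C(J(X))\simeq\C(\Gamma_X)$. As $\Gamma_X$ has $\abs\Lambda-1$ simples by Proposition \ref{propertiesOfJ}(iii), the induction hypothesis provides $\varphi_{t-1}$ sending $(\U_1,\dots,\U_{t-1})$ to an ordered support $\tau$-rigid object $(\overline\V_1,\dots,\overline\V_{t-1})$ of $\C(J(X))$. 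I would then set $\varphi_t(\U_1,\dots,\U_t)=(\V_1,\dots,\V_{t-1},X)$, where each $\V_i\in\C(\Lambda)$ is the lift of $\overline\V_i$ under the reduction correspondence described next.

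The technical core is a reduction/coreduction bijection: for a fixed indecomposable support $\tau$-rigid $X$, the embedding $J(X)\hookrightarrow\modd\Lambda$ together with Jasso's equivalence $J(X)\simeq\modd\Gamma_X$ (Proposition \ref{propertiesOfJ}(i)) and the torsion-free functor $f$ should induce a bijection, preserving indecomposable summands, between support $\tau$-rigid objects of $\C(J(X))$ and support $\tau$-rigid objects of $\C(\Lambda)$ admitting $X$ as a direct summand; in particular $\bigl(\bigoplus_i\V_i\bigr)\oplus X$ is again support $\tau$-rigid. Bijectivity of $\varphi_t$ then follows by exhibiting the inverse through the same scheme: from an ordered support $\tau$-rigid object $(\V_1,\dots,\V_t)$ set $X=\V_t$, reduce $\bigoplus_{i<t}\V_i$ into $\C(J(X))$ (legitimate because the whole sum is support $\tau$-rigid and contains $X$), and apply $\varphi_{t-1}^{-1}$ to recover the first $t-1$ terms; since the reduction correspondence is a bijection, the two constructions are mutually inverse and the induction closes.

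The hard part will be establishing this reduction correspondence in the signed setting, i.e.\ checking that Jasso's equivalence --- stated only for module categories --- upgrades to a length- and summand-preserving bijection of support $\tau$-rigid objects in $\C(\Lambda)=\modd\Lambda\oplus\modd\Lambda[1]$ that singles out $X$ as the distinguished last summand. The delicate points are that shifted projective summands $P[1]$ must transport correctly under reduction, that the vanishing conditions $\Hom_\Lambda(P,M)=0$ defining support $\tau$-rigidity are preserved in both directions, and that the lift $\V_i$ of each $\overline\V_i$ is independent of the chosen representatives, so that the recursion is genuinely well defined.
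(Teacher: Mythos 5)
This theorem is not proved in the paper at all; it is imported verbatim from Buan--Marsh \cite{tauExcSeq_BM}, so the only meaningful comparison is with the proof in that reference. Your proposal reconstructs exactly its strategy: induction on $t$ run simultaneously over all finite dimensional algebras, $\varphi_1=\mathrm{id}$, the last entry $\U_t$ peeled off, the inductive hypothesis applied inside $\C(J(\U_t))\simeq\C(\Gamma_{\U_t})$, and the remaining entries transported back to $\C(\Lambda)$ by the inverse of a reduction bijection --- your description of the inverse is precisely the map $\psi_t$ recorded in Remark \ref{BM_inverse_bijection}. The ``reduction/coreduction bijection'' you rightly isolate as the technical core is not something to re-derive: it is Buan--Marsh's Theorem 5.1, quoted in this paper as Theorem \ref{BM_bijection}, and its compatibility with direct sums (so that $\bigl(\bigoplus_i\V_i\bigr)\oplus X$ is again support $\tau$-rigid and the iterated reductions compose correctly) is Theorem \ref{Eps_{U+V}}; granting those, your induction closes and is correct, including the points you flag about shifted projectives and $\Hom$-vanishing, which are parts (a)--(c) of Theorem \ref{BM_bijection}. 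Be aware, though, that proving that bijection is the bulk of the original paper (Bongartz complements, torsion-free functors, the treatment of $P[1]$ summands), so as a self-contained argument your sketch has that one large deferred step; as a proof modulo the cited reduction theorem it is the same proof as the source's.
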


Given a $\tau$-exceptional sequence $\N = (N_1,\cdots, N_t)$ we write $$J(N_i,\cdots, N_t) = J_{J(N_{i+1},\cdots, N_t)}(N_i)$$ for the iterated $\tau$-perpendicular subcategory where $J(N_{t-1},N_t) = J_{J(N_t)}(N_{t-1})$.\\

Let $\Lambda = R\otimes kQ$ for the remainder of this section. Corollary \ref{stautitl(kQ)1:1stautilt(Lambda)} and Proposition \ref{indprojJ(M)1-1indprojJ(Lambda_otimes_M)} allow us to extend the definition of the induction functor on support $\tau$-rigid objects in the following way. 

\begin{definition}
    Let $X$ be a basic $\tau$-rigid $kQ$-module and let $J(X)\subseteq \modd kQ$ be a $\tau$-perpendicular subcategory. Let $U = M\oplus P[1]$ be a support $\tau$-rigid object in $\C(J(X))$.  We define $$\Lambda\otimes_{kQ}U = (\Lambda\otimes_{kQ}M)\oplus (\Lambda\otimes_{kQ}P[1]) := (\Lambda\otimes_{kQ}M)\oplus (\Lambda\otimes_{kQ}P)[1].$$
\end{definition}

 Let $\E = (M_1,\cdots, M_t)$ be an exceptional sequence in $\modd kQ$. We define $$\Lambda\otimes_{kQ}\E := (\Lambda\otimes_{kQ}M_1, \cdots, \Lambda\otimes_{kQ}M_t),$$ a sequence of $\Lambda$-modules.
 
\begin{proposition}\label{PTensorTauExcIs_TauExc}
    Let $t\in\{1,\cdots,n\}$ and let $\N = (N_1,\cdots, N_t)$ be a $\tau$-exceptional sequence in $\modd kQ$. Then, $\Lambda\otimes_{kQ}\N = (\Lambda\otimes_{kQ}N_1,\cdots, \Lambda\otimes_{kQ}N_t)$ is a $\tau$-exceptional sequence in $\modd\Lambda$. 
\end{proposition}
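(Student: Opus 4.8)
The plan is to argue by induction on $t$, but to set up the statement so that it is quantified over all acyclic quivers rather than over the single $Q$ fixed in the section. Concretely, I would prove: for every acyclic quiver $Q'$ and every $t$, the induction functor $(R\otimes kQ')\otimes_{kQ'}-$ sends a $\tau$-exceptional sequence of length $t$ in $\modd kQ'$ to a $\tau$-exceptional sequence of length $t$ in $\modd(R\otimes kQ')$. Phrasing it this way is essential, because the recursive definition of a $\tau$-exceptional sequence forces one to pass to $\tau$-perpendicular subcategories, and by Theorem \ref{J(Lambda_otimes_kQ)=mod(R_otimes_kQ')} these are again module categories of algebras of the same shape $R\otimes k(\text{acyclic quiver})$. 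Since each entry $N_i$ is a genuine module, each $\Lambda\otimes_{kQ}N_i$ is a module too, so the output is automatically unsigned, as required.

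For the base case $t=1$, the single entry $N_1$ is an indecomposable $\tau$-rigid $kQ$-module, so Corollary \ref{inductionPreservesTauRigidModules} together with the indecomposability clause of Proposition \ref{HomExt_tensor_product} shows that $\Lambda\otimes_{kQ}N_1$ is indecomposable $\tau$-rigid, hence support $\tau$-rigid in $\C(\Lambda)$; this is exactly a $\tau$-exceptional sequence of length $1$.

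For the inductive step, write $\N=(N_1,\dots,N_t)$. By definition $N_t$ is indecomposable $\tau$-rigid in $\modd kQ$ and $(N_1,\dots,N_{t-1})$ is a $\tau$-exceptional sequence in $\C(J(N_t))$. Condition (a) for $\Lambda\otimes_{kQ}\N$ is immediate exactly as in the base case, so $\Lambda\otimes_{kQ}N_t$ is support $\tau$-rigid in $\C(\Lambda)$. For condition (b) I must show that $(\Lambda\otimes_{kQ}N_1,\dots,\Lambda\otimes_{kQ}N_{t-1})$ is a $\tau$-exceptional sequence in $\C(J(\Lambda\otimes_{kQ}N_t))$. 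Here I would invoke the equivalences $G_{N_t}\colon J(N_t)\xrightarrow{\sim}\modd\Gamma_{N_t}$ and $G_{\Lambda\otimes_{kQ}N_t}\colon J(\Lambda\otimes_{kQ}N_t)\xrightarrow{\sim}\modd\Gamma_{\Lambda\otimes_{kQ}N_t}$. By Theorem \ref{J(Lambda_otimes_kQ)=mod(R_otimes_kQ')} the algebra $\Gamma_{N_t}$ is hereditary, hence isomorphic to $kQ'$ for some acyclic quiver $Q'$, while $\Gamma_{\Lambda\otimes_{kQ}N_t}\cong R\otimes\Gamma_{N_t}\cong R\otimes kQ'$. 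Since the notion of $\tau$-exceptional sequence is categorical, $G_{N_t}$ turns $(N_1,\dots,N_{t-1})$ into a $\tau$-exceptional sequence of $\Gamma_{N_t}$-modules; the inductive hypothesis applied to $kQ'$ then yields a $\tau$-exceptional sequence over $R\otimes kQ'\cong\Gamma_{\Lambda\otimes_{kQ}N_t}$, and transporting back along $F_{\Lambda\otimes_{kQ}N_t}$ delivers condition (b). Because $|J(N_t)|=\abs{kQ}-1$ by Proposition \ref{propertiesOfJ}(iii), the length drops from $t$ to $t-1$, so the induction is well-founded.

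The step carrying the real content---and the one I expect to be the main obstacle---is matching the two induction functors. I would use the transport-of-structure square of Corollary \ref{transport_of_structure}, which identifies the restriction of $\Lambda\otimes_{kQ}-$ to $J(N_t)$ with $\Gamma_{\Lambda\otimes_{kQ}N_t}\otimes_{\Gamma_{N_t}}-$, i.e. precisely the induction functor $(R\otimes kQ')\otimes_{kQ'}-$ to which the inductive hypothesis applies. Thus the $\tau$-exceptional sequence produced over $\Gamma_{\Lambda\otimes_{kQ}N_t}$ corresponds under $F_{\Lambda\otimes_{kQ}N_t}$ exactly to $(\Lambda\otimes_{kQ}N_1,\dots,\Lambda\otimes_{kQ}N_{t-1})$. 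Checking that this square commutes compatibly with the chosen equivalences, and that the identification $\Gamma_{N_t}\cong kQ'$ does not scramble the ordering of the sequence, is the delicate bookkeeping; once it is in place, conditions (a) and (b) combine to close the induction.
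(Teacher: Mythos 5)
Your proposal is correct and follows essentially the same route as the paper's proof: an induction whose key step passes to the $\tau$-perpendicular category via Theorem \ref{J(Lambda_otimes_kQ)=mod(R_otimes_kQ')}, so that the reduced categories are again of the form $\modd(R\otimes k(\text{acyclic quiver}))$, together with the compatibility of the two induction functors. The only difference is presentational: the paper runs the induction on the tail index within the fixed $Q$ and applies Lemma \ref{tau-rigid_in_J(X)_iff_taurigid_in_J(Lambda_otimes_X)}(iii) at the level of the reduced algebras, leaving the transport of structure implicit, whereas you strengthen the inductive statement to range over all acyclic quivers and invoke Corollary \ref{transport_of_structure} explicitly to match the induction functors.
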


\begin{proof}
    We proceed by induction on the length of $\N$. By Corollary \ref{inductionPreservesTauRigidModules}, we have that $\Lambda\otimes_{kQ}N_t$ is $\tau$-rigid in $\modd\Lambda$. Since $N_{t-1}$ is $\tau$-rigid in $J(N_t)$, if follows that $\Lambda\otimes_{kQ}N_{t-1}$ is $\tau$-rigid in $J(\Lambda\otimes_{kQ}N_t)$ by Proposition \ref{tau_rigid_in_J(M)_1:1_tau_rigid_in_J(Lambda_otimes_kQM)}. This proves the base case.
    
    Now let $i\leq t-2$. Assume the claim holds for $i+1$, that is $(\Lambda\otimes_{kQ}N_{i+1},\cdots, \Lambda\otimes_{kQ}N_{t-1})$ is a $\tau$-exceptional sequence in $J(\Lambda\otimes_{kQ}N_t)$. We want to show that the claim holds for $i$. In other words, we want to show that $\Lambda\otimes_{kQ}N_i$ is $\tau$-rigid in 
    ${J(\Lambda\otimes_{kQ} N_{i+1},\cdots,\Lambda\otimes_{kQ}N_{t})}$. Applying Theorem \ref{J(Lambda_otimes_kQ)=mod(R_otimes_kQ')} inductively, we infer that 
    $$ J(\Lambda\otimes_{kQ} N_{i+1},\cdots,\Lambda\otimes_{kQ}N_{t}) \simeq \modd{\left( R\otimes \Gamma_{N_{i+1}} \right)}$$
    where $J(N_{i+1},\cdots, N_t)\simeq \modd{\Gamma_{N_{i+1}}}$  for a hereditary algebra $\Gamma_{N_{i+1}}$. Since $N_i$ is $\tau$-rigid in $J(N_{i+1},\cdots, N_t)$ by assumption, the claim follows from Lemma \ref{tau-rigid_in_J(X)_iff_taurigid_in_J(Lambda_otimes_X)} (iii). This finishes the proof.
\end{proof}

A key statement for the proof of the main result of this section is the following case of a theorem by Buan and Hanson. 

\begin{theorem}[{\cite[Theorem 6.4]{tau-perpendicular_wide_subategories}}]\label{Buan-Hanson-Theorem}
    Let $\Lambda$ be a finite dimensional algebra and let $M\oplus N$ be a basic $\tau$-rigid object in $\modd\Lambda$. Suppose that $N\notin \Gen M$. Then, $$J(M\oplus N) = J_{J(M)}(f_MN).$$
\end{theorem}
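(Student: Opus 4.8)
The plan is to prove that the two wide subcategories coincide by first establishing the inclusion $J(M\oplus N)\subseteq J_{J(M)}(f_MN)$ by a direct computation, and then upgrading it to an equality by a count of simple objects. I would begin by unwinding the definition: since $\tau$ is additive, $\tau(M\oplus N)\cong\tau M\oplus\tau N$, so
\[ J(M\oplus N)={}^{\perp}\tau(M\oplus N)\cap(M\oplus N)^{\perp}=\bigl({}^{\perp}\tau M\cap M^{\perp}\bigr)\cap\bigl({}^{\perp}\tau N\cap N^{\perp}\bigr)=J(M)\cap{}^{\perp}\tau N\cap N^{\perp}. \]
In particular $J(M\oplus N)\subseteq J(M)$, so it suffices to test, for $Y\in J(M)$, the two defining conditions of $J_{J(M)}(f_MN)$, namely $\Hom_{J(M)}(f_MN,Y)=0$ and $\Hom_{J(M)}(Y,\tau_{J(M)}f_MN)=0$.

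For the first condition I would use the canonical sequence $0\to t_MN\to N\to f_MN\to 0$ of $N$ for the torsion pair $(\Gen M,M^{\perp})$. Applying $\Hom_\Lambda(-,Y)$ and using left exactness gives $\Hom_\Lambda(f_MN,Y)\hookrightarrow\Hom_\Lambda(N,Y)=0$, and since $J(M)$ is a full subcategory this is exactly $\Hom_{J(M)}(f_MN,Y)=0$. The second condition is the core of the inclusion: by the Auslander--Smal{\o} characterisation (Lemma \ref{Smalo's_result}(i)) applied inside the length category $J(M)\simeq\modd\Gamma_M$, it is equivalent to $\Ext^1_{J(M)}(f_MN,W)=0$ for all $W\in\Gen_{J(M)}Y$. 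Fix such a $W$; then $W\in J(M)\subseteq M^{\perp}$ and $W\in\Gen_\Lambda Y$. From $Y\in{}^{\perp}\tau N$ and Auslander--Smal{\o} in $\modd\Lambda$ we get $\Ext^1_\Lambda(N,\Gen_\Lambda Y)=0$, hence $\Ext^1_\Lambda(N,W)=0$; feeding the canonical sequence into $\Hom_\Lambda(-,W)$ yields the exact piece $\Hom_\Lambda(t_MN,W)\to\Ext^1_\Lambda(f_MN,W)\to\Ext^1_\Lambda(N,W)=0$, and $\Hom_\Lambda(t_MN,W)=0$ because $t_MN\in\Gen M$ while $W\in M^{\perp}$. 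Therefore $\Ext^1_\Lambda(f_MN,W)=0$, and Corollary \ref{cor2.15} transports this vanishing into $J(M)$, giving $\Ext^1_{J(M)}(f_MN,W)=0$ as required. This proves $J(M\oplus N)\subseteq J_{J(M)}(f_MN)$.

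To conclude equality I would compare the numbers of simple objects. Both sides are $\tau$-perpendicular, hence functorially finite wide, subcategories. By the count of simples for $\tau$-perpendicular categories (Proposition \ref{propertiesOfJ}(iii) and its extension to basic $\tau$-rigid modules), $J(M\oplus N)$ has $n-\abs{M}-\abs{N}$ simples; on the other hand $J(M)$ has $n-\abs{M}$ simples, and $f_MN$ is a $\tau_{J(M)}$-rigid object with the same number of indecomposable summands as $N$ (under the standing hypothesis $N\notin\Gen M$) by \cite[Prop. 4.5]{tauExcSeq_BM}, so $J_{J(M)}(f_MN)$ also has $(n-\abs{M})-\abs{N}$ simples. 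An inclusion of functorially finite wide subcategories with the same number of simples must be an equality, which finishes the proof.

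The step I expect to be the main obstacle is precisely this last one. The reverse inclusion cannot be obtained by the same direct computation: for $Y\in J_{J(M)}(f_MN)$ one would need $\Ext^1_\Lambda(N,\Gen_\Lambda Y)=0$, but $\Gen_\Lambda Y$ is strictly larger than $\Gen_{J(M)}Y$, and the extensions contributed by the $\Gen M$-part $t_MN$ need not vanish against arbitrary $\Lambda$-quotients of $Y$ (already for $\Lambda=kA_2$ one sees $\Ext^1_\Lambda(\Gen M,\Gen Y)\neq0$ in general). This forces the passage through the rank argument, whose subtlety is that equality of the number of simples does \emph{not} force equality for arbitrary wide subcategories; a wild hereditary module category can contain a proper wide subcategory of full rank. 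One therefore genuinely needs the functorial finiteness of both subcategories, i.e. their $\tau$-perpendicularity, to rule out such behaviour.
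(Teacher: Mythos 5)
Your first half is sound. The decomposition $J(M\oplus N)=J(M)\cap{}^{\perp}\tau N\cap N^{\perp}$, the vanishing $\Hom_{J(M)}(f_MN,Y)=0$ via the canonical sequence, and the chain $Y\in{}^{\perp}\tau N\Rightarrow\Ext^1_\Lambda(f_MN,W)=0\Rightarrow\Ext^1_{J(M)}(f_MN,W)=0$ for $W\in\Gen_{J(M)}Y$ are all correct, and together they do prove the inclusion $J(M\oplus N)\subseteq J_{J(M)}(f_MN)$. Note, however, that the paper contains no proof of this statement to compare against: it is imported verbatim from Buan--Hanson, whose own argument establishes \emph{both} inclusions directly through the reduction machinery (torsion functors, Bongartz completions, the $\mathcal{E}$-bijections), not through any count of simple objects.

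The genuine gap is your final step. The assertion that an inclusion of functorially finite wide subcategories with the same number of simples must be an equality is precisely the crux, and you give no proof of it; it is not in the paper and is not, to my knowledge, available in the literature. It amounts to the claim that $\modd B$ admits no proper functorially finite wide subcategory of full rank, for an arbitrary finite dimensional algebra $B$ (your reduction to this situation, via the fact that functorial finiteness in $\modd\Lambda$ restricts to functorial finiteness in $J(M)\simeq\modd\Gamma_M$, is fine). This claim is easy when every functorially finite wide subcategory is $\tau$-perpendicular --- e.g. for hereditary or $\tau$-tilting finite algebras, where Jasso's rank formula forces the defining $\tau$-rigid object to be zero --- but in general it is not known that functorially finite wide subcategories are $\tau$-perpendicular, and exactly this kind of relative-versus-absolute question is the content of Conjecture \ref{Buan-Hanson_Conjecture}, which is open. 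Moreover, the natural way to make your count rigorous, namely showing that $J_{J(M)}(f_MN)$ is $\tau$-perpendicular in $\modd\Lambda$ so that Jasso's formula applies to both sides, is essentially Corollary 6.7 of Buan--Hanson, which is deduced from the very theorem you are proving; that route is circular. So your (accurate) diagnosis of why the direct computation fails in the reverse direction is not repaired by the rank argument: the latter is an unproven lemma, not a finish. Two smaller points: the count $\abs{f_MN}=\abs{N}$ requires that no indecomposable direct summand of $N$ lies in $\Gen M$ (otherwise $f_M$ annihilates it), which is the intended reading of the hypothesis but should be said; and the rank of $J(M\oplus N)$ for $M\oplus N$ basic $\tau$-rigid needs Jasso's theorem in its general (not just indecomposable) form, as you correctly flag.
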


We are now ready to prove the main result of this section.

\begin{theorem}\label{bijection_of_tau_exc_seq}
    Let $t\in\{1,\cdots,n\}$. Then, the induction functor induces a bijection between the set of ($\tau$-)exceptional sequences of length $t$ in $\modd{kQ}$ and the set of $\tau$-exceptional sequences of length $t$ in $\modd{\Lambda}$. 
\end{theorem}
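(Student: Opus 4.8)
The plan is to show that the assignment $\N \mapsto \Lambda\otimes_{kQ}\N$ is a well-defined map from $(\tau$-$)$exceptional sequences of length $t$ in $\modd kQ$ to $\tau$-exceptional sequences of length $t$ in $\modd\Lambda$, and then to exhibit an inverse. The well-definedness is exactly the content of Proposition \ref{PTensorTauExcIs_TauExc}, so the crux of the present theorem is bijectivity. First I would establish injectivity: if $\Lambda\otimes_{kQ}\N \cong \Lambda\otimes_{kQ}\N'$ componentwise, then each $\Lambda\otimes_{kQ}N_i \cong \Lambda\otimes_{kQ}N_i'$, and since the induction functor is injective on indecomposable $\tau$-rigid modules (Proposition \ref{tau-rigid-Lambda1:1tau-rigid-kQ}, using restriction of scalars as in Remark \ref{res(ind)}), we deduce $N_i \cong N_i'$ for all $i$. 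Note here one must check that the entries of a $\tau$-exceptional sequence are $\tau$-rigid \emph{in the appropriate iterated $\tau$-perpendicular category} so that the relevant bijection applies; this is handled by Lemma \ref{tau-rigid_in_J(X)_iff_taurigid_in_J(Lambda_otimes_X)}(iii) together with the fact (Theorem \ref{J(Lambda_otimes_kQ)=mod(R_otimes_kQ')}) that every iterated $\tau$-perpendicular category of $\modd\Lambda$ arising here is of the form $\modd(R\otimes\Gamma)$ for a hereditary $\Gamma$.

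The heart of the argument is surjectivity, and I expect this to be the main obstacle. The natural strategy is induction on $t$, running in parallel with the recursive structure of a $\tau$-exceptional sequence. Given a $\tau$-exceptional sequence $\M = (M_1,\dots,M_t)$ in $\modd\Lambda$, the last term $M_t$ is an indecomposable $\tau$-rigid $\Lambda$-module, hence by Proposition \ref{tau-rigid-Lambda1:1tau-rigid-kQ} is of the form $\Lambda\otimes_{kQ}N_t$ for a unique indecomposable $\tau$-rigid $kQ$-module $N_t$. By Theorem \ref{J(Lambda_otimes_kQ)=mod(R_otimes_kQ')} the category $J(M_t) = J(\Lambda\otimes_{kQ}N_t)$ is equivalent to $\modd(R\otimes\Gamma_{N_t})$, where $\Gamma_{N_t}$ is hereditary, and under the equivalences $G_{N_t}$, $G_{\Lambda\otimes_{kQ}N_t}$ of Corollary \ref{transport_of_structure} the induction functor $\Lambda\otimes_{kQ}-\colon J(N_t)\to J(\Lambda\otimes_{kQ}N_t)$ is identified with the induction functor $\Gamma_{\Lambda\otimes_{kQ}N_t}\otimes_{\Gamma_{N_t}}-$ for the pair $(\Gamma_{N_t}, R\otimes\Gamma_{N_t})$. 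Thus $(M_1,\dots,M_{t-1})$ is a $\tau$-exceptional sequence in a category $\modd(R\otimes\Gamma_{N_t})$ of exactly the same shape as the ambient one, and the inductive hypothesis applied in $J(N_t) = \modd\Gamma_{N_t}$ produces an exceptional sequence $(N_1,\dots,N_{t-1})$ with $\Lambda\otimes_{kQ}N_i \cong M_i$.

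The key technical point making the induction go through is that the tower of iterated $\tau$-perpendicular categories for $\M$ is, at every stage, the image under induction of the corresponding tower for $\N$; concretely, one needs $J(M_i,\dots,M_t) \simeq \modd(R\otimes \Gamma_{N_i})$ compatibly with $J(N_i,\dots,N_t)\simeq\modd\Gamma_{N_i}$, which follows by iterating Theorem \ref{J(Lambda_otimes_kQ)=mod(R_otimes_kQ')} and invoking Theorem \ref{Buan-Hanson-Theorem} to identify $J(M\oplus N)$ with $J_{J(M)}(f_M N)$, together with Proposition \ref{tau_rigid_in_J(M)_1:1_tau_rigid_in_J(Lambda_otimes_kQM)} which guarantees that the torsion-free reduction commutes with induction. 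Since $\Gamma_{N_t}$ is hereditary, in $\modd\Gamma_{N_t}$ the notions of exceptional and $\tau$-exceptional sequence coincide (by the Buan--Marsh result that they agree over hereditary algebras), which both justifies the phrase ``($\tau$-)exceptional'' in the statement and lets the inductive hypothesis be stated purely in terms of $\tau$-exceptional sequences. Finally, the Hom- and Ext-vanishing conditions (a)--(c) of Definition \ref{def_exc_seq} are preserved and reflected by induction via the tensor formulas \eqref{Hom_Tensor}--\eqref{Ext_Tensor} of Proposition \ref{HomExt_tensor_product}, with $\Hom_R(R,R)\cong R\neq 0$ ensuring that vanishing upstairs is equivalent to vanishing downstairs; this takes care of promoting the abstract $\tau$-exceptional output of the induction into an honest exceptional sequence in $\modd kQ$ and confirms that the constructed $\N$ maps back to $\M$.
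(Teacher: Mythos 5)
Your proposal is correct, but your surjectivity argument --- the heart of the theorem --- runs along a genuinely different track from the paper's. The paper never leaves the fixed pair $(kQ,\Lambda)$: it unwinds $\X=(X_1,\cdots,X_t)$ entry by entry, and at each stage uses Theorem \ref{Buan-Hanson-Theorem} together with the commutation of torsion-free functors with induction (Proposition \ref{tau_rigid_in_J(M)_1:1_tau_rigid_in_J(Lambda_otimes_kQM)}) to rewrite the iterated $\tau$-perpendicular category $J_{J(\Lambda\otimes_{kQ}M_t)}(\Lambda\otimes_{kQ}M_{t-1})$ as $J(\Lambda\otimes_{kQ}(M_t\oplus N))$, i.e.\ as the $\tau$-perpendicular category of a single, larger basic $\tau$-rigid $kQ$-module, to which Proposition \ref{tau_rigid_in_J(M)_1:1_tau_rigid_in_J(Lambda_otimes_kQM)} is applied again to lift the next entry. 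You instead recurse on the algebra itself: identify $J(\Lambda\otimes_{kQ}N_t)\simeq \modd(R\otimes\Gamma_{N_t})$ by Theorem \ref{J(Lambda_otimes_kQ)=mod(R_otimes_kQ')}, match the two induction functors under the reduction equivalences via Corollary \ref{transport_of_structure}, and apply the inductive hypothesis to the pair $(\Gamma_{N_t}, R\otimes\Gamma_{N_t})$ --- in effect the same mechanism the paper uses to prove well-definedness (Proposition \ref{PTensorTauExcIs_TauExc}), now run in reverse. This works, but two points should be made explicit: the induction must range over the whole class of pairs (local commutative $R$)~$\otimes$~(acyclic path algebra) with $t$ as the induction variable, and you must note that $\Gamma_{N_t}$ --- which Theorem \ref{J(Lambda_otimes_kQ)=mod(R_otimes_kQ')} only asserts is hereditary --- is basic (being a quotient of $\End_{kQ}(B_{N_t})^{\mathrm{op}}$ with $B_{N_t}$ basic), hence isomorphic to $kQ'$ for an acyclic quiver $Q'$ since $k$ is algebraically closed, so the theorem's hypotheses genuinely apply to it. Your route buys a cleaner, essentially formal inductive step exploiting the closure of this class of algebras under $\tau$-perpendicular reduction, and it renders the Buan--Hanson collapse superfluous (your invocation of Theorem \ref{Buan-Hanson-Theorem} for ``compatibility of towers'' is not actually needed once you recurse); the paper's route avoids quantifying over a class of algebras, stays concrete inside $\modd kQ$ and $\modd\Lambda$, and constructs the preimage sequence explicitly. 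Well-definedness and injectivity are handled identically in both.
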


\begin{proof}
    Let $\M=(M_1,\cdots, M_t)$ be a $\tau$-exceptional sequence in $\modd{kQ}$. Then, $$\Lambda\otimes_{kQ}\M = (\Lambda\otimes_{kQ} M_1,\cdots, \Lambda\otimes_{kQ}M_t)$$ is a $\tau$-exceptional sequence in $\modd{\Lambda}$ by Proposition \ref{PTensorTauExcIs_TauExc}. In particular, $\Lambda\otimes_{kQ}-$ is injective on ($\tau$-)exceptional sequences.
    
    In order to prove surjectivity, let $\X = (X_1,\cdots, X_t)$ be a $\tau$-exceptional sequence in $\modd\Lambda$. We want to show there exists a $\tau$-exceptional $\M = (M_1,\cdots, M_t)$ in $\modd kQ$ such that $\Lambda\otimes_{kQ}\M = \X$.  Since $X_t$ is indecomposable $\tau$-rigid in $\modd\Lambda$, we get that $X_t\cong\Lambda\otimes_{kQ}M_t$ for an indecomposable $\tau$-rigid $kQ$-module $M_t$ by Proposition \ref{tau-rigid-Lambda1:1tau-rigid-kQ}. By assumption, we have that $X_{t-1}$ is indecomposable $\tau$-rigid in $J(\Lambda\otimes_{kQ}M_t)$. Using the commutativity of the diagram in Proposition \ref{tau_rigid_in_J(M)_1:1_tau_rigid_in_J(Lambda_otimes_kQM)}, we infer that $X_{t-1}\cong \Lambda\otimes_{kQ}M_{t-1}$, where $M_{t-1}$ is an indecomposable $\tau$-rigid module in $J(M_t)$. In particular, $M_{t-1}$ is of the form $f_{M_t}N$ for an indecomposable $\tau$-rigid $kQ$-module $N$. 
    
    Now consider $X_{t-2}\in J_{J(\Lambda\otimes_{kQ}M_t)}(\Lambda\otimes_{kQ}M_{t-1})$. By assumption, we have that $X_{t-2}$ is $\tau$-rigid in this subcategory. Since $M_{t-1}$ is of the form $f_{M_t}N$ for an indecomposable $\tau$-rigid $kQ$-module $N$, combining Proposition \ref{tau_rigid_in_J(M)_1:1_tau_rigid_in_J(Lambda_otimes_kQM)} and Theorem \ref{Buan-Hanson-Theorem} we get that
    \begin{align*}
        J_{J(\Lambda\otimes_{kQ}M_t)}(\Lambda\otimes_{kQ}M_{t-1}) &= J_{J(\Lambda\otimes_{kQ}M_t)}(\Lambda\otimes_{kQ}f_{M_t}N)\\
        &= J_{J(\Lambda\otimes_{kQ}M_t)}(f_{\Lambda\otimes_{kQ}M_{t}}(\Lambda\otimes_{kQ}N))\\
        &= J((\Lambda\otimes_{kQ}M_{t}) \oplus (\Lambda\otimes_{kQ}N))\\
        &= J(\Lambda\otimes_{kQ}(M_t\oplus N)).
    \end{align*}
    Using again the commutativity of the diagram in Proposition \ref{tau_rigid_in_J(M)_1:1_tau_rigid_in_J(Lambda_otimes_kQM)}, we infer that $X_{t-2}\cong \Lambda\otimes_{kQ}M_{t-2}$ for an indecomposable $\tau_{J(M_t\oplus N)}$-rigid module $M_{t-2}$, where$$J(M_t\oplus N) = J_{J(M_t)}(f_{M_t}N) = J_{J(M_t)}(M_{t-1})$$ by Theorem \ref{Buan-Hanson-Theorem}.
    Iterating this argument inductively on the length of the sequence, we can construct a $\tau$-exceptional sequence $\M = (M_1,\cdots, M_t)$ in $\modd kQ$ such that $\Lambda\otimes_{kQ}\M = \X$. This proves surjectivity. The claim follows. 
\end{proof}

\begin{remark}\label{central_element_rmk}
     We observed that $I:=\langle x_i\otimes 1 \mid 1\leq i\leq t \rangle$ is an ideal generated by central elements and contained in the Jacobson radical of $\Lambda$ and $kQ \cong \Lambda/I$. Hence, if the lattice of torsion classes of $\Lambda$ is finite, it is isomorphic to the lattice of torsion classes of $kQ$, as shown in \cite{ReductionTheoremForTauRigidModules}. Since $\tau$-exceptional sequences can be determined from the underlying structure of the lattice of torsion classes \cite[Thm 8.10, Rmk 8.11]{Barnard_Hanson}, one can recover an alternative bijection between complete ($\tau$-)exceptional sequences in $\modd kQ$ and complete $\tau$-exceptional sequences in $\modd\Lambda$, provided that $\Lambda$ is $\tau$-tilting finite. However, we remark that no $\tau$-tilting finiteness assumption is required in Theorem \ref{bijection_of_tau_exc_seq}.   
\end{remark}

The following is a significant consequence of Theorem \ref{bijection_of_tau_exc_seq}. 

\begin{corollary}\label{bijection_signed_tau_exceptional_sequences}
     Let $t\in\{1,\cdots,n\}$. Then, the induction functor induces a bijection between the set of signed ($\tau$-)exceptional sequences of length $t$ in $\C({kQ})$ and the set of signed $\tau$-exceptional sequences of length $t$ in $\C(\Lambda)$.
\end{corollary}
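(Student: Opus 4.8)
The plan is to mimic the inductive proof of Theorem \ref{bijection_of_tau_exc_seq}, now permitting the entries of the sequences to be shifted indecomposable projectives, and to run the induction simultaneously over all acyclic quivers. Recall that a signed $\tau$-exceptional sequence $(\U_1,\dots,\U_t)$ in $\C(\Lambda)$ is built recursively: the last term $\U_t$ is a support $\tau$-rigid indecomposable object of $\C(\Lambda)$—hence either an indecomposable $\tau$-rigid module or a shift $P[1]$ of an indecomposable projective—and $(\U_1,\dots,\U_{t-1})$ is a signed $\tau$-exceptional sequence in $\C(J(\U_t))$. I would prove the statement by induction on $t$, using the extended induction functor $\Lambda\otimes_{kQ}-$ on support $\tau$-rigid objects introduced just before the corollary, which by definition commutes with the shift.

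For the base case $t=1$, a length-one signed $\tau$-exceptional sequence is exactly a support $\tau$-rigid indecomposable object of $\C(kQ)$, and $\Lambda\otimes_{kQ}-$ bijects these: on indecomposable $\tau$-rigid modules this is Proposition \ref{tau-rigid-Lambda1:1tau-rigid-kQ}, while on shifted indecomposable projectives it follows from Corollary \ref{stautitl(kQ)1:1stautilt(Lambda)}(i) together with the fact that $\Lambda\otimes_{kQ}-$ sends indecomposable projective $kQ$-modules bijectively onto the indecomposable projective $\Lambda$-modules. For the inductive step, fix a support $\tau$-rigid indecomposable $\U_t$ in $\C(kQ)$, written $\U_t=X_t$ (a module) or $\U_t=P_t[1]$. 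In either case the underlying module is $\tau$-rigid (projectives are $\tau$-rigid, and $J(P_t[1]):=J(P_t)$), so Theorem \ref{J(Lambda_otimes_kQ)=mod(R_otimes_kQ')} gives $J(\U_t)\simeq\modd\Gamma_{\U_t}$ with $\Gamma_{\U_t}$ hereditary, say $\Gamma_{\U_t}\cong kQ'$, and $J(\Lambda\otimes_{kQ}\U_t)\simeq\modd(R\otimes kQ')$. Corollary \ref{transport_of_structure} then identifies, under these equivalences, the functor $\Lambda\otimes_{kQ}-\colon J(\U_t)\to J(\Lambda\otimes_{kQ}\U_t)$ with $\Gamma_{\Lambda\otimes_{kQ}\U_t}\otimes_{\Gamma_{\U_t}}-=(R\otimes kQ')\otimes_{kQ'}-$, i.e. precisely the induction functor attached to the pair $(kQ',R\otimes kQ')$. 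Since all functors involved commute with the shift, this identification extends to the extended induction functors $\C(J(\U_t))\to\C(J(\Lambda\otimes_{kQ}\U_t))$ and $\C(kQ')\to\C(R\otimes kQ')$, and Proposition \ref{indprojJ(M)1-1indprojJ(Lambda_otimes_M)} guarantees that projective objects of $J(\U_t)$ are matched with those of $J(\Lambda\otimes_{kQ}\U_t)$, so that support $\tau$-rigidity in $\C(J(\U_t))$ corresponds to support $\tau$-rigidity in $\C(kQ')$.

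With these identifications the recursion closes: $(\U_1,\dots,\U_t)$ is a signed $\tau$-exceptional sequence in $\C(kQ)$ if and only if $\U_t$ is support $\tau$-rigid indecomposable and $(\U_1,\dots,\U_{t-1})$ is signed $\tau$-exceptional in $\C(J(\U_t))\simeq\C(kQ')$, which by the inductive hypothesis applied to $(kQ',R\otimes kQ')$ at length $t-1$ is equivalent to $(\Lambda\otimes_{kQ}\U_1,\dots,\Lambda\otimes_{kQ}\U_{t-1})$ being signed $\tau$-exceptional in $\C(J(\Lambda\otimes_{kQ}\U_t))$; combined with the base case, this says exactly that $(\Lambda\otimes_{kQ}\U_1,\dots,\Lambda\otimes_{kQ}\U_t)$ is signed $\tau$-exceptional in $\C(\Lambda)$, and the correspondence is bijective. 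The parenthetical ``(signed) ($\tau$-)'' on the $kQ$ side is then automatic, since $kQ$ and each $kQ'$ are hereditary and there signed $\tau$-exceptional sequences coincide with signed exceptional sequences by \cite{tauExcSeq_BM}. I expect the main obstacle to be the compatibility claim in the inductive step, namely checking that Corollary \ref{transport_of_structure} transports not merely the module-level induction functor but also its extension to $\C(-)$ and the associated notion of support $\tau$-rigidity, in particular the matching of shifted projective objects in the two perpendicular categories; once this is secured, the closing of the recursion is purely formal. (Alternatively, one could factor the argument through the Buan--Marsh bijection $\varphi_t$ of Theorem \ref{main_BM} and the bijection of $\tau$-rigid pairs of Corollary \ref{stautitl(kQ)1:1stautilt(Lambda)}(i), but establishing commutativity of the resulting square requires the same inductive compatibility.)
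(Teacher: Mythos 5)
Your proposal is correct in substance, but it takes a genuinely different route from the paper. The paper's own proof is a one-liner: it combines Theorem \ref{bijection_of_tau_exc_seq} (the unsigned bijection, whose proof navigates the iterated $\tau$-perpendicular categories \emph{inside} the fixed categories $\modd kQ$ and $\modd\Lambda$ via the torsion-free-functor diagram of Proposition \ref{tau_rigid_in_J(M)_1:1_tau_rigid_in_J(Lambda_otimes_kQM)} and Buan--Hanson's reduction $J(M\oplus N)=J_{J(M)}(f_MN)$, Theorem \ref{Buan-Hanson-Theorem}) with Proposition \ref{indprojJ(M)1-1indprojJ(Lambda_otimes_M)}, which matches the shifted-projective entries; the recursion-closing details are left implicit. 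You instead run a fresh simultaneous induction over all pairs $(kQ',R\otimes kQ')$: at each step you use Theorem \ref{J(Lambda_otimes_kQ)=mod(R_otimes_kQ')} to identify $J(\U_t)\simeq\modd kQ'$ and $J(\Lambda\otimes_{kQ}\U_t)\simeq\modd(R\otimes kQ')$, and Corollary \ref{transport_of_structure} to identify the restricted functor $\Lambda\otimes_{kQ}-\colon J(\U_t)\to J(\Lambda\otimes_{kQ}\U_t)$ with the induction functor of the smaller pair, so the inductive hypothesis (strengthened to all acyclic quivers, which is legitimate since every supporting result in the paper holds in that generality) closes the recursion without ever invoking Theorem \ref{Buan-Hanson-Theorem} or the $f_M$-diagram. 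The compatibility you flag as the main obstacle---that the transport extends to $\C(-)$ and matches shifted projectives and support $\tau$-rigidity---is exactly what Proposition \ref{indprojJ(M)1-1indprojJ(Lambda_otimes_M)} together with Corollary \ref{transport_of_structure} provides (the $G$'s are exact equivalences, so they and the extended induction functor commute with $[1]$ and preserve projectives), so there is no genuine gap. What the two approaches buy: the paper's argument is shorter given that Theorem \ref{bijection_of_tau_exc_seq} is already established, while yours is more self-contained at the signed level, makes explicit the recursion-closing step that the paper's terse citation leaves to the reader, and recovers the unsigned theorem as a special case rather than relying on it as a black box.
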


\begin{proof}
    The claim follows combining Proposition \ref{indprojJ(M)1-1indprojJ(Lambda_otimes_M)} and Theorem \ref{bijection_of_tau_exc_seq}. 
\end{proof}

\section{$\tau$-cluster morphism categories}\label{section4}

Let $\Lambda$ be a finite dimensional algebra. The $\tau$-\textit{cluster morphism category} of $\Lambda$ is a small category $\Mcluster(\Lambda)$ whose objects are the $\tau$-perpendicular subcategories of $\modd\Lambda$ and whose morphisms are indexed by support $\tau$-rigid pairs in these subcategories. This category was first defined for hereditary algebras by Igusa and Todorov as the "cluster morphism category" in \cite{signed_exc_seq}. Later, the definition was extended to the $\tau$-tilting finite case by Buan and Marsh in \cite{a_category_of_wide_subcategories}, and given the name of "$\tau$-cluster morphism category" in \cite{Hanson-Igusa_tau-cluster_morphism_categories_and_picture_groups}. Finally, Buan and Hanson defined the $\tau$-cluster morphism category for an arbitrary finite dimensional algebra in \cite{tau-perpendicular_wide_subategories}.

Every morphism in $\Mcluster(\Lambda)$ factorizes uniquely into the composition of irreducible morphisms. It was shown in \cite[Prop. 11.8]{a_category_of_wide_subcategories} that the composition of irreducible morphisms in $\Mcluster(\Lambda)$ corresponds to signed $\tau$-exceptional sequences in $\C(\Lambda)$. Motivated by this result and Corollary \ref{bijection_signed_tau_exceptional_sequences}, this section aims to prove an equivalence of categories between $\Mcluster(kQ)$ and $\Mcluster(\Lambda)$, where $\Lambda = R\otimes kQ$.\\

This section uses the following definition of a $\tau$-perpendicular subcategory. 

\begin{definition}[{\cite[Def. 3.2]{tau-perpendicular_wide_subategories}}]\label{tau-perp_subcat_BH}
    Let $\Lambda$ be a finite dimensional algebra. A full subcategory $\W\subseteq\modd\Lambda$ is called a \emph{$\tau$-perpendicular subcategory} if there exists a support $\tau$-rigid object $U = M\oplus P[1]\in \C(\Lambda)$ such that $\W = J(U)$, where $$J(U) := {^\perp}\tau M\cap (M\oplus P)^{\perp}.$$
\end{definition}

Recall that an object $N\oplus Q[1]\in \C(J(U))\subseteq \C(\Lambda)$ is \textit{support $\tau$-rigid} in $\C(J(U))$ if the corresponding object in $\C(\Gamma_U)$ is support $\tau$-rigid, that is $N$ is $\tau_{J(U)}$-rigid, $Q$ lies in $\proj J(U)$, and $\Hom_{J(U)}(Q,N) = 0$.

\begin{theorem}[{\cite[Thm. 2.8]{BHM_mutation}}]\label{BM_bijection}
    Let $\Lambda$ be a finite dimensional algebra and let $U = M\oplus P[1]$ be a support $\tau$-rigid object in $\C(\Lambda)$. Then there is a bijection 
            \[
            \begin{tikzcd}
            {\left\{V\in \ind\C(\Lambda)\mid V\oplus U \text{ support }\tau\text{-rigid} \right\}} 
            \arrow[d, "\mathcal{E}_U"] \\
            {\left\{ W\in \ind\C(J(U)) \mid W \text{ support }\tau_{J(U)}\text{-rigid}\right\}}.
            \end{tikzcd}
            \]
        \begin{enumerate}
            \item[(a)] For $V\in\ind\C(\Lambda)$ with $V\oplus U$ support $\tau$-rigid, we have $\mathcal{E}_U(V)\in (\proj J(U))[1]$ if and only if $V\in \Gen M$ or $V\in(\proj\Lambda)[1]$; 
            \item[(b)] If $U\in \proj\Lambda[1]$ and $V\in \ind\C(\Lambda)$ with $V\oplus U$ support $\tau$-rigid, we have $\mathcal{E}_U(V) = V$. Equivalently, if $W$ is a $\tau_{J(P[1])}$-rigid module, then $\mathcal{E}^{-1}_{P[1]}(W) = W$;  
            \item[(c)] If $V\in (\ind\bmod{\Lambda})\setminus \Gen M$ with $V\oplus U$ support $\tau$-rigid, then $\mathcal{E}_U(V) = \mathcal{E}_M(V) = f_MV$. 
        \end{enumerate}
\end{theorem}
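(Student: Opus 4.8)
The plan is to construct $\mathcal{E}_U$ by reducing $U = M\oplus P[1]$ in two stages and to deduce bijectivity from Jasso's $\tau$-tilting reduction \cite{Jasso_Reduction} together with the mutation theory of support $\tau$-tilting objects \cite{tau-tiling-theory}. The backbone is that $J(U)\simeq\modd\Gamma_U$ for a finite dimensional algebra $\Gamma_U$ with $\abs{\Lambda}-\abs{U}$ simple modules, so that the relevant counts on the two sides of $\mathcal{E}_U$ agree.

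First I would treat the purely shifted-projective case $U = P[1]$. Writing $\add P = \add\Lambda e$, one has $J(P[1]) = P^\perp \simeq \modd(\Lambda/\langle e\rangle)$, and the assertion becomes the support $\tau$-tilting reduction by a shifted projective: the support $\tau$-rigid objects of $\C(\Lambda)$ compatible with $P[1]$ are identified, via the identity on objects, with the support $\tau_{J(P[1])}$-rigid objects of $\C(\Lambda/\langle e\rangle)$. I would verify this through the $g$-vector and $2$-term silting description as in \cite{tau-tiling-theory}, which yields the bijection and statement (b).

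Second, for the module case $U = M$ I would use the torsion pair $(\Gen M, M^\perp)$ with torsion-free functor $f_M$. By \cite[Prop.~4.5]{tauExcSeq_BM} (equivalently \cite[Thm.~3.16]{Jasso_Reduction}) $f_M$ restricts to a bijection from the indecomposable modules $V\notin\Gen M$ with $V\oplus M$ $\tau$-rigid onto the indecomposable $\tau_{J(M)}$-rigid modules of $J(M)$; setting $\mathcal{E}_M(V) = f_M V$ gives (c). The remaining compatible indecomposables, namely the modules $V\in\Gen M$ and the shifted projectives $Q[1]$, must then account for precisely the shifted projectives $(\proj J(M))[1]$. I would obtain this correspondence not summand-by-summand but through Jasso's poset isomorphism between the interval $\{T\in\stautilt\Lambda \mid M\in\add T\}$ and $\stautilt\Gamma_M$: under it the indecomposable summands correspond bijectively, with the summands in $\Gen M$ or of shifted-projective type on the $\Lambda$-side matching exactly the shifted-projective summands on the $\Gamma_M$-side. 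Since $J(M)$ has $\abs{\Lambda}-\abs{M}$ simple modules, a count shows the two leftover families have equal size, so the matching is forced; this pins down (a).

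Finally, the general case $U = M\oplus P[1]$ follows by composing the two reductions via an identity of the form $J(M\oplus P[1]) = J_{J(P[1])}(\overline{M})$, the shifted-projective analogue of Theorem \ref{Buan-Hanson-Theorem}, where $\overline{M}$ denotes the image of $M$ in $J(P[1])$; bijectivity of $\mathcal{E}_U$ is then the composite of the two bijections. I expect the main obstacle to be the leftover correspondence behind (a): showing that every compatible indecomposable not handled by $f_M$ lands in $(\proj J(U))[1]$ and that the resulting assignment is bijective, without circularity. The remedy is to let the poset isomorphism of Jasso's reduction carry the burden, so that the bijection of indecomposable summands is forced by the bijection of support $\tau$-tilting objects and their mutations rather than built by hand.
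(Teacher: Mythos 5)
First, a structural remark: the paper does not prove this statement at all --- it is imported verbatim from \cite[Thm.~2.8]{BHM_mutation} and used as a black box --- so there is no in-paper proof to compare with; your proposal has to be judged on its own terms. Doing so, the decisive gap is exactly at the point you yourself flag as the main obstacle. The part of your argument handled by $f_M$ is fine: \cite[Prop.~4.5]{tauExcSeq_BM} (equivalently Jasso's reduction \cite{Jasso_Reduction}) does give the bijection on indecomposable modules $V\notin\Gen M$ and yields (c). But the ``leftover'' correspondence --- indecomposables $V\in\Gen M$ or $V=Q[1]$ landing in $(\proj J(U))[1]$ --- cannot be obtained by the counting argument you propose. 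Equal (finite) cardinality of the two leftover families only gives existence of \emph{some} abstract matching; it produces no canonical map, and parts (a)--(c), as well as the compatibility with iterated reduction (Theorem \ref{Eps_{U+V}}) that any usable $\mathcal{E}_U$ must satisfy, require a specific one. Nor does Jasso's poset isomorphism between $\{T\in\stautilt\Lambda\mid M\in\add T\}$ and $\stautilt\Gamma_M$ ``carry the burden'': to extract a summand-wise map from it you must show that the putative image of an indecomposable $V$ is independent of the choice of completion of $U\oplus V$ to a support $\tau$-tilting object, and that well-definedness is essentially the content of the theorem, not something the poset isomorphism hands you. The actual argument in the literature (whose ingredients this paper recalls just before Lemma \ref{direct_summand_of_co-Bongartz_commutes_with_induction}) is explicit: to $V\in\Gen M$ one attaches the Bongartz-complement summand $B_M^V$ via a minimal right $\add\PP_M$-approximation triangle, to $V=Q[1]$ via a minimal left $\P({^\perp\tau}M)$-approximation, and then $\mathcal{E}_U(V)=f_M\big(B_M^V\big)[1]$ as in Equation \eqref{EpsEquation1}; bijectivity and (a) are proved from this construction, not forced by a count.

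Your base case is also incorrect as formulated: for $U=P[1]$ the map $\mathcal{E}_{P[1]}$ is \emph{not} the identity on shifted projectives. If $V=Q[1]$ with $\Hom_\Lambda(P,Q)\neq 0$ (e.g.\ $\Lambda=k(1\to 2)$, $P=P_2$, $Q=P_1$), then $Q\notin P^\perp=J(P[1])$, so $Q[1]$ is not even an object of $\C(J(P[1]))$; the correct image is $f_P(Q)[1]$ --- in the example, $S_1[1]\neq P_1[1]$. This is precisely why the present paper invokes \cite[Thm.~5.10a]{tauExcSeq_BM} alongside part (b) in the proof of Proposition \ref{E-version3.7}, and why the ``equivalently'' clause of (b) is phrased for modules $W$ only. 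So even the purely shifted-projective stage of your two-stage reduction already needs the torsion-free functor rather than the identity, and the composite map you build in the general case inherits both this error and the leftover-matching gap above.
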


The following result will be used multiple times in this section. 

\begin{theorem}[{\cite[Thm. 6.12]{tau-perpendicular_wide_subategories}}]\label{Eps_{U+V}}
    Let $\Lambda$ be a finite dimensional algebra and let $\W\subseteq\modd\Lambda$ be a $\tau$-perpendicular subcategory. Let $U\oplus V\in\C(\W)$ be support $\tau$-rigid an basic. Then, $$\Eps^{\W}_{U\oplus V} = \Eps_{\Eps_U(V)}^{J_\W(U)}\circ \Eps_U^{\W}.$$
\end{theorem}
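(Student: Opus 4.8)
The plan is first to reduce to the case $\W = \modd\Lambda$. Since $\W$ is a $\tau$-perpendicular subcategory, it is equivalent to $\modd\Gamma$ for a finite-dimensional algebra $\Gamma$ (Proposition \ref{propertiesOfJ}(i)), and every notion entering the statement---support $\tau$-rigidity, the operator $J$, and the reduction map $\Eps$---is defined internally to the ambient module category. Hence I may transport the whole situation along this equivalence and assume $\W = \modd\Lambda$. Writing $U = M\oplus P[1]$ and $V = N\oplus Q[1]$, the module part of $U\oplus V$ is $M\oplus N$.

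Then I would show both sides are bijections on the same domain and codomain. The common domain is $\{X\in\ind\C(\Lambda)\mid X\oplus U\oplus V \text{ is support }\tau\text{-rigid}\}$: for the composite this requires that $\Eps^\W_U$ carry this set into $\{Z\mid Z\oplus\Eps_U(V) \text{ is support }\tau_{J(U)}\text{-rigid}\}$, which is the domain of $\Eps^{J_\W(U)}_{\Eps_U(V)}$; this compatibility follows from $\Eps^\W_U$ being a bijection that preserves support $\tau$-rigidity of direct sums (Theorem \ref{BM_bijection}). The codomains coincide by the identity $J(U\oplus V)=J_{J(U)}(\Eps_U(V))$, which is the support-$\tau$-rigid form of Theorem \ref{Buan-Hanson-Theorem}: when $N\notin\Gen M$ we have $\Eps_U(V)=f_M N$ by Theorem \ref{BM_bijection}(c), and $J(M\oplus N)=J_{J(M)}(f_M N)$, the projective summands $P,Q$ being absorbed into the definition of $J$.

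Next I would check that the two maps agree on each indecomposable $X$, using the trichotomy of Theorem \ref{BM_bijection}(a)--(c). The decisive case is $X\in(\ind\modd\Lambda)\setminus\Gen(M\oplus N)$, where $\Eps_{U\oplus V}(X)=f_{M\oplus N}X$ by part (c). Here the heart is the torsion-free composition identity $f_{M\oplus N}X\cong f_{f_M N}(f_M X)$, the right-hand side being formed inside $J(M)$: because $\Gen M\subseteq\Gen(M\oplus N)$ and $\Gen(M\oplus N)\cap J(M)=\Gen_{J(M)}(f_M N)$, the canonical sequence of $X$ for $(\Gen(M\oplus N),(M\oplus N)^\perp)$ factors through that for $(\Gen M,M^\perp)$, so the torsion-free functors compose. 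Tracking the two reductions then gives $\Eps^{J_\W(U)}_{\Eps_U(V)}(\Eps_U(X))=f_{f_M N}(f_M X)=f_{M\oplus N}X$, as needed. In the complementary cases $X\in\Gen(M\oplus N)$ or $X\in(\proj\Lambda)[1]$, part (a) places $\Eps_{U\oplus V}(X)$ in $(\proj J(U\oplus V))[1]$; I would then verify, again via parts (a) and (b), that $\Eps_U(X)$ lands either in $(\proj J(U))[1]$ or in $\Gen$ of the module part of $\Eps_U(V)$, so that the second reduction sends it into $(\proj J_{J(U)}(\Eps_U(V)))[1]=(\proj J(U\oplus V))[1]$, and that both sides pick out the same indecomposable summand there.

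I expect the main obstacle to be twofold: establishing the equality $J(U\oplus V)=J_{J(U)}(\Eps_U(V))$ at the level of support $\tau$-rigid \emph{objects}, including the shifts $P[1]$ and $Q[1]$ rather than only the $\tau$-rigid modules covered by Theorem \ref{Buan-Hanson-Theorem}; and verifying that the canonical-sequence decomposition underlying $f_{M\oplus N}X\cong f_{f_M N}(f_M X)$ is functorial, so that it is compatible with the equivalences $J(U)\simeq\modd\Gamma_U$ through which $\Eps$ is defined. Once these are secured, the remaining case analysis is routine.
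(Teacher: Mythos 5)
First, a point of comparison: the paper offers no proof of Theorem \ref{Eps_{U+V}} at all --- it is imported verbatim from Buan--Hanson \cite[Thm.\ 6.12]{tau-perpendicular_wide_subategories}, so there is no internal argument to measure you against; your attempt has to be judged against what a complete proof would require. Much of your architecture is sound. The transport along $\W\simeq\modd\Gamma$ is legitimate, since support $\tau$-rigidity in $\C(\W)$ is \emph{defined} via this equivalence. Your first flagged obstacle is actually moot: the identity $J(U\oplus V)=J_{J(U)}(\Eps_U(V))$ for support $\tau$-rigid \emph{objects}, shifts included, is exactly Theorem \ref{Buan-Hanson-Theorem-General} as quoted in the paper (Buan--Hanson Thm.\ 6.4, whose proof is independent of 6.12, so no circularity), and you did not need to rederive it from the module-only Theorem \ref{Buan-Hanson-Theorem}. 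Your ``heart'' identity $f_{M\oplus N}X\cong f^{J(M)}_{f_M N}(f_M X)$ for $X\notin\Gen(M\oplus N)$ is correct and your second worry about it is overblown: it follows from uniqueness of canonical sequences, since the kernel of $X\twoheadrightarrow f_{f_MN}(f_MX)$ is an extension of $t_{f_MN}(f_MX)\in\Gen N$ by $t_MX\in\Gen M$, and $\Gen(M\oplus N)$ is a torsion class (hence extension-closed) because $M\oplus N$ is $\tau$-rigid, while $f_{f_MN}(f_MX)\in (M\oplus N)^\perp$ since any map from $N$ into it factors through $f_MN$.

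The genuine gap is the case you dismiss as routine. When $X\in\Gen(M\oplus N)$ (or $X$ is a shifted projective), $\Eps$ is not a torsion-free functor: it is computed through the Bongartz/co-Bongartz summand correspondence, $\Eps_M(X)=f_M\bigl(B_M^X\bigr)[1]$ as in Equations \eqref{EpsEquation1}--\eqref{EpsEquation2} of the paper. Knowing (via Theorem \ref{BM_bijection}(a) applied twice) that both sides land in $(\proj J(U\oplus V))[1]$ does not help: two bijections between the same finite sets can differ by a permutation, so ``both sides pick out the same indecomposable summand'' is precisely what must be proved and cannot be extracted from bijectivity. Concretely, for $X\in\Gen(M\oplus N)\setminus\Gen M$ one has $\Eps_U(X)=f_MX\in\Gen_{J(U)}(f_MN)$, and matching $\Eps^{J(U)}_{\Eps_U(V)}(f_MX)$ with $\Eps_{U\oplus V}(X)$ requires a compatibility of Bongartz complements with $\tau$-tilting reduction, roughly $f_M\bigl(B^X_{M\oplus N}\bigr)\cong B^{f_MX}_{f_MN}$ inside $J(M)$ --- the $f_M$-analogue of what this paper proves for the induction functor in Lemma \ref{direct_summand_of_co-Bongartz_commutes_with_induction}, and the actual technical core of Buan--Hanson's argument. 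A smaller citation slip: Theorem \ref{BM_bijection} as stated concerns indecomposables, so your claim that $\Eps_U$ ``preserves support $\tau$-rigidity of direct sums'' (needed even to compose the two reductions) is true but requires the stronger form of the bijection from \cite{BHM_mutation}, not the statement you invoke.
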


\begin{theorem}[{\cite[Theorem 6.4]{tau-perpendicular_wide_subategories}}]\label{Buan-Hanson-Theorem-General}
    Let $\Lambda$ be a finite dimensional algebra and let $\W\subseteq \modd \Lambda$ be a $\tau$-perpendicular subcategory. Let $U\oplus V$ be a basic and support $\tau$-rigid object in $\C(\Lambda)$. Then, $$J_\W(U\oplus V) = J_{J_\W(U)}(\Eps_U^{\W}(V)).$$
\end{theorem}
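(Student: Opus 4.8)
The plan is to deduce the identity from the compatibility of the bijections $\Eps$ recorded in Theorem \ref{Eps_{U+V}}, and then to upgrade an equality of \emph{sets of support $\tau$-rigid objects} into an equality of \emph{subcategories}. Throughout I would work relative to the ambient $\tau$-perpendicular category $\W$; since $\W$ is equivalent to a module category and all the notions involved (support $\tau$-rigid objects, the functors $J$, and the bijections $\Eps$) are intrinsic to $\W$, there is no loss in reasoning inside $\W$ directly. Both $J_\W(U\oplus V)$ and $J_{J_\W(U)}(\Eps_U^{\W}(V))$ are $\tau$-perpendicular subcategories of $\W$, so the task reduces to proving that they coincide.

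First I would apply Theorem \ref{Eps_{U+V}} to the basic support $\tau$-rigid object $U\oplus V\in\C(\W)$, giving the equality of bijections
\[
\Eps^{\W}_{U\oplus V} \;=\; \Eps_{\Eps_U(V)}^{J_\W(U)}\circ \Eps_U^{\W}.
\]
By Theorem \ref{BM_bijection} (applied with ambient category $\W$), the left-hand map is a bijection from $\{X\in\ind\C(\W)\mid X\oplus U\oplus V \text{ support }\tau\text{-rigid}\}$ onto the set of indecomposable support $\tau_{J_\W(U\oplus V)}$-rigid objects of $\C(J_\W(U\oplus V))$. On the other hand, the outermost factor $\Eps_{\Eps_U(V)}^{J_\W(U)}$ is a bijection onto the set of indecomposable support $\tau$-rigid objects of $\C\bigl(J_{J_\W(U)}(\Eps_U^{\W}(V))\bigr)$, and since $\Eps_U^{\W}$ is itself a bijection onto its domain, this is also the image of the whole composite. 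As the two maps are equal, their ranges coincide: the subcategories $J_\W(U\oplus V)$ and $J_{J_\W(U)}(\Eps_U^{\W}(V))$ have exactly the same indecomposable support $\tau$-rigid objects.

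It then remains to conclude equality of the subcategories themselves. Writing $\mathcal A := J_\W(U\oplus V)$ and $\mathcal B := J_{J_\W(U)}(\Eps_U^{\W}(V))$, I would recover each category from its support $\tau$-rigid objects as follows. The indecomposable support $\tau$-rigid objects of the form $P[1]$ are exactly the shifts of the indecomposable projective objects of the category; hence the coincidence above forces $\proj\mathcal A=\proj\mathcal B$ as subcategories of $\modd\Lambda$. By Proposition \ref{propertiesOfJ}(ii) each of $\mathcal A,\mathcal B$ is an exact abelian (wide) subcategory equivalent to a module category, hence closed under cokernels computed in $\modd\Lambda$, and every one of its objects is the cokernel of a morphism between objects of $\add(\proj\mathcal A)$ (respectively $\add(\proj\mathcal B)$), arising from a projective presentation under the equivalence. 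Consequently $\mathcal A$ is the cokernel-closure of $\add(\proj\mathcal A)$ in $\modd\Lambda$, and likewise for $\mathcal B$; as these projective generators agree, $\mathcal A=\mathcal B$.

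The main obstacle I anticipate is precisely this last passage from objects to subcategories: it is not a priori clear that matching the two bijectively described families of support $\tau$-rigid objects forces the subcategories to coincide, and the argument hinges on correctly identifying the indecomposable projectives of a $\tau$-perpendicular category among its support $\tau$-rigid objects (the $P[1]$-summands) together with the fact that a wide subcategory equivalent to a module category is regenerated by its projectives via cokernels. A secondary point requiring care is the bookkeeping in Theorem \ref{BM_bijection}, namely tracking which summands are modules and which are shifted projectives through the reduction, which is exactly where the dichotomy $V\in\Gen M$ versus $V\notin\Gen M$ and the identity $\Eps_{P[1]}=\mathrm{id}$ intervene; as a sanity check, one should recover the module case $J(M\oplus N)=J_{J(M)}(f_MN)$ of Theorem \ref{Buan-Hanson-Theorem} when $\W=\modd\Lambda$, $U=M$, and $V=N\notin\Gen M$, so that $\Eps_M(N)=f_MN$.
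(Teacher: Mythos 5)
The paper itself offers no proof of this statement: it is quoted verbatim from Buan--Hanson \cite[Thm.~6.4]{tau-perpendicular_wide_subategories}, so there is no internal argument to compare yours against. Judged on its own terms, your proposal has a genuine flaw: circularity. Your first step rests entirely on Theorem \ref{Eps_{U+V}}, i.e.\ \cite[Thm.~6.12]{tau-perpendicular_wide_subategories}, but in the source that composition formula is formulated and proved \emph{downstream} of Theorem 6.4. The equation $\Eps^{\W}_{U\oplus V} = \Eps_{\Eps_U(V)}^{J_\W(U)}\circ \Eps_U^{\W}$ is only meaningful as an identity of bijections once one knows that the codomain of $\Eps^{\W}_{U\oplus V}$ (the support $\tau$-rigid objects of $\C(J_\W(U\oplus V))$) coincides with the codomain of $\Eps_{\Eps_U(V)}^{J_\W(U)}$ (those of $\C(J_{J_\W(U)}(\Eps_U^{\W}(V)))$) --- which is precisely the identity you are trying to prove; Buan--Hanson deduce the composition formula from Theorem 6.4, not the other way around. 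Reading the equality purely set-theoretically, as an equality of graphs, and then extracting equality of images does not escape this: it uses the cited theorem in a strengthened form whose justification presupposes your goal. There is also a local gap in that extraction: to conclude that the image of the composite is \emph{all} of the support $\tau$-rigid objects of $\C(J_{J_\W(U)}(\Eps_U^{\W}(V)))$, you need $\Eps_U^{\W}$ to restrict to a surjection from $\{X \mid X\oplus U\oplus V \text{ support } \tau\text{-rigid}\}$ onto $\{Y \mid Y\oplus\Eps_U^{\W}(V) \text{ support } \tau\text{-rigid}\}$; the fact that $\Eps_U^{\W}$ is a bijection onto the full set of support $\tau$-rigid objects of $\C(J_\W(U))$ does not by itself give this restricted surjectivity (it can be repaired using equality of ranks and finiteness of the sets of indecomposable projectives, but you did not do so).

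By contrast, the second half of your argument is correct and well conceived: recovering a $\tau$-perpendicular subcategory from its indecomposable support $\tau$-rigid objects by identifying the shifted summands $P[1]$ with its indecomposable projectives, and then regenerating the subcategory as cokernels of maps between objects of $\add$ of those projectives, is legitimate because wide subcategories are closed under cokernels in $\modd\Lambda$, the inclusion is exact, and each such subcategory is equivalent to a module category (Proposition \ref{propertiesOfJ}). The problem is solely that the input this step needs cannot be obtained from Theorem \ref{Eps_{U+V}} without begging the question; an honest proof must go back to the torsion-theoretic analysis in Buan--Hanson (or, in the special case $\W=\modd\Lambda$, $U=M$, $V=N\notin\Gen M$, to the argument behind Theorem \ref{Buan-Hanson-Theorem}), which establishes the equality of subcategories directly, before any composition formula for the $\Eps$-maps is available.
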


The following observation reformulates Definition \ref{tau-perp_subcat_BH} in terms of Definition \ref{tau-perp_Jasso}. 

\begin{lemma}\label{two_def_of_J}
    Let $\Lambda$ be a finite dimensional algebra. Let $U = M \oplus P[1]\in \C(\Lambda)$ be  support $\tau$-rigid and let $J(U)$ be a $\tau$-perpendicular subcategory. Then $$J(U) = J_{J(P)}(M) = J(\overline{U}),$$ where $\overline{U} = P\oplus \Eps^{-1}_P(M)$. 
\end{lemma}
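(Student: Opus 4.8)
The plan is to prove the chain of equalities by establishing $J(U)=J_{J(P)}(M)$ and $J_{J(P)}(M)=J(\overline U)$ separately, each as a consequence of the reduction theorems recalled above; throughout I take $U=M\oplus P[1]$ to be basic. First I would record that $J(P)$ is unambiguous: whether one reads it as the support $\tau$-rigid object $P[1]$ or as the $\tau$-rigid module $P$, one computes $J(P)={}^\perp\tau P\cap P^\perp=P^\perp$ (using $\tau P=0$), which is consistent with the convention $J(P[1])=J(P)$. Since $U$ is support $\tau$-rigid we have $\Hom_\Lambda(P,M)=0$, so $M\in P^\perp=J(P)$; and since $J(P)$ is a $\tau$-perpendicular subcategory and $M$ is $\tau$-rigid in $\modd\Lambda$, Lemma \ref{tauRigidInModLamTauRigidInJ} shows $M$ is $\tau_{J(P)}$-rigid. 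Hence $J_{J(P)}(M)$ is well defined.

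For the first equality I would apply Theorem \ref{Buan-Hanson-Theorem-General} with $\W=\modd\Lambda$ to the decomposition $U=P[1]\oplus M$, taking the first summand to be $P[1]$ and the second to be $M$. This gives $J(U)=J_{J(P[1])}(\Eps^{\modd\Lambda}_{P[1]}(M))$. Now $J(P[1])=J(P)$, and by Theorem \ref{BM_bijection}(b) the bijection attached to a shifted projective acts as the identity on the support $\tau$-rigid modules in its domain, so $\Eps_{P[1]}(M)=M$. Therefore $J(U)=J_{J(P)}(M)$.

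For the second equality the key is to identify $\overline U$ correctly. Here $\Eps_P$ denotes the bijection of Theorem \ref{BM_bijection} attached to the $\tau$-rigid \emph{module} $P$, for which $\Eps_P(V)=f_PV$ by part (c); this is a genuinely different map from $\Eps_{P[1]}$, and keeping the two apart is the one real subtlety of the argument. I would set $V:=\Eps_P^{-1}(M)$ and check, using parts (a) and (c) of Theorem \ref{BM_bijection}, that $V$ is an honest module (not a shifted projective, since $M$ is a module of $J(P)$ and so does not lie in $(\proj J(P))[1]$), that $V\oplus P$ is $\tau$-rigid with $V\notin\Gen P$, and that $f_PV=M$. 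Thus $\overline U=P\oplus V$ is a basic $\tau$-rigid module.

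Finally, with $\overline U=P\oplus V$ I would invoke Theorem \ref{Buan-Hanson-Theorem} (the module-level Jasso reduction) applied to $P\oplus V$ with $V\notin\Gen P$, obtaining $J(\overline U)=J(P\oplus V)=J_{J(P)}(f_PV)=J_{J(P)}(M)$. Combined with the first equality this yields $J(U)=J_{J(P)}(M)=J(\overline U)$, as required. I expect no serious obstacle beyond carefully distinguishing $\Eps_P$ from $\Eps_{P[1]}$ and verifying that $\Eps_P^{-1}(M)$ lands among genuine modules, so that the module-level reduction theorem is applicable.
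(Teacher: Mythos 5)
Your proof is correct and takes essentially the same route as the paper: the first equality is obtained exactly as there, by applying Theorem \ref{Buan-Hanson-Theorem-General} to the decomposition $P[1]\oplus M$ together with $\Eps_{P[1]}(M)=M$ from Theorem \ref{BM_bijection}(b) and the convention $J(P[1])=J(P)$. For the second equality the paper just applies Theorem \ref{Buan-Hanson-Theorem-General} once more to $P\oplus\Eps_P^{-1}(M)$, whereas you first check via Theorem \ref{BM_bijection}(a),(c) that $\Eps_P^{-1}(M)$ is a genuine module outside $\Gen P$ with $f_P\Eps_P^{-1}(M)=M$ and then use the module-level Theorem \ref{Buan-Hanson-Theorem}; since that is a special case of the general statement, this is the same argument in substance (and your verification is precisely the computation the paper carries out later in Lemma \ref{J(LambdaxU)=J(LambdaxUbar)}).
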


\begin{proof}
    By Theorem \ref{BM_bijection}(b) we have that $\Eps_{P[1]}(M) = M$. Moreover, by definition $J(P[1]) = J(P)$. Hence, Theorem \ref{Buan-Hanson-Theorem-General} gives \begin{align*}
        J(U) &= J(M\oplus P[1]) \\
             &= J_{J(P[1])}(\Eps_{P[1]}(M))\\
             &= J_{J(P)}(M)\\
    \end{align*}
    (see also Proof of \cite[Lemma 2.17(a)]{BHM_mutation}). Applying Theorem \ref{Buan-Hanson-Theorem-General} again, we obtain that $J_{J(P)}(M) = J(P\oplus \Eps^{-1}_P(M)).$ The claim follows. 
\end{proof}

We are ready to recall the definition of the $\tau$-cluster morphism category from {\cite{tau-perpendicular_wide_subategories}}. 

\begin{definition}[{\cite[Def. 6.1]{tau-perpendicular_wide_subategories}}]\label{def_tau-cluster_morph_cat}
    Let $\Lambda$ be a finite dimensional algebra. The \emph{$\tau$-cluster morphism category} of $\Lambda$, denoted as $\Mcluster(\Lambda)$, consists of the following data. 
    \begin{enumerate}
        \item[(a)] The objects of $\Mcluster(\Lambda)$ are the $\tau$-perpendicular subcategories of $\modd\Lambda$. 
        \item[(b)] For a $\tau$-perpendicular subcategory $\W\subseteq \modd{\Lambda}$ and $U\in \C(\W)$ support $\tau$-rigid and basic, define a formal symbol $g_U^\W$. 
        \item[(c)]  Given $\W_1, \W_2$ two $\tau$-perpendicular subcategories of $\modd{\Lambda}$, we define 
        $$\Hom_{\Mcluster(\Lambda)}(\W_1,\W_2) = {\left\{ g_U^{\W_1} \;\middle|\; \begin{tabular}{@{}l@{}} U \text{ is a basic support $\tau$-rigid object in } $\C(\W_1)$ \\ \text{and} $\W_2 = J_{\W_1}(U)$ \end{tabular} \right\}}.$$
        In particular: 
        \begin{enumerate}
            \item[(i)] If $\W_1 \not\supseteq \W_2$, then $\Hom_{\Mcluster(\Lambda)}(\W_1,\W_2) = \emptyset$;
            \item[(ii)] $\Hom_{\Mcluster(\Lambda)}(\W_1,\W_1) = g_0^{\W_1}$. 
        \end{enumerate}
        \item[(d)]  Given $g_U^{\W_1}:\W_1\to \W_2$ and $g_V^{\W_2}: \W_2\to \W_3$ in $\Mcluster(\Lambda)$, denote $\widetilde{V}:= (\mathcal{E}_U^{\W_1})^{-1}(V)$. We define the composition to be 
        $$ g_V^{\W_2}\circ g_U^{\W_1}:= g_{U\oplus\widetilde{V}}^{\W_1}.$$
    \end{enumerate}
\end{definition}

A morphism $g$ in $\Mcluster(\Lambda)$ is called \textit{irreducible} \cite[Def. 10.2]{a_category_of_wide_subcategories} if, whenever $g$ is expressed as a composition $g_1\circ g_2$, we have that either $g_1$ or $g_2$ is the identity map. Moreover, recall that for every $t\in\{1,\cdots, n\}$, there is a bijection $\varphi_t$ between the set of signed $\tau$-exceptional sequences of length $t$ in $\C(\Lambda)$ and ordered support $\tau$-rigid objects of length $t$ in $\C(\Lambda)$ (see Theorem \ref{main_BM}). 

\begin{remark}\label{BM_inverse_bijection}
    Let $\Lambda$ be a finite dimensional algebra and let $\W$ be a $\tau$-perpendicular subcategory of $\modd \Lambda$. The bijection $\varphi_t$ from Theorem \ref{main_BM} has an inverse bijection $\psi_t$ \cite[Remark 5.13]{tauExcSeq_BM} from the set of ordered support $\tau$-rigid objects of length $t$ in $\C(\W)$ to the set of signed $\tau$-exceptional sequences of length $t$ in $\C(\W)$ that we now describe. Let $(\T_1,\cdots,\T_t)$ be an ordered support $\tau$-rigid object in $\C(\W)$. Let 
    \begin{alignat*}{2}
    \W_t      &= \W              &\qquad  \U_t      &= \T_t \\
    \W_{t-1}  &= J_{W_t}(\U_t)   &\qquad  \U_{t-1}  &= \Eps_{\U_t}^{\W_t}(\T_{t-1})\\
    &\vdots &\qquad &\vdots \\
    \W_i &= J_{\W_{i+1}}(\U_{i+1}) &\qquad \U_i &= \Eps_{\U_{i+1}}^{\W_{i+1}}\cdots\Eps_{\U_{t-1}}^{\W_{t-1}}\Eps_{\U_t}^{\W_t}(\T_i)\\
     &\vdots &\qquad &\vdots \\
    \W_1 &=J_{\W_2}(\U_2) &\qquad \U_1 &= \Eps_{\U_{2}}^{\W_{2}}\cdots\Eps_{\U_{t-1}}^{\W_{t-1}}\Eps_{\U_t}^{\W_t}(\T_1). 
\end{alignat*}
Then $$\psi_t(\T_1,\cdots,\T_t) = (\U_1\,\cdots,\U_t).$$
\end{remark}

Let $\W\subseteq \modd{\Lambda}$ be a $\tau$-perpendicular subcategory. For a signed $\tau$-exceptional sequence $(\U_1,\cdots,\U_t)$ in $\W$, we denote by $\overline{\varphi_t}(\U_1,\cdots,\U_t)$ the direct sum of the entries in ${\varphi_t}(\U_1,\cdots,\U_t)$. 

The following result generalizes \cite[Prop. 11.8]{a_category_of_wide_subcategories} from the $\tau$-titling finite case to the case where $\Lambda$ is an arbitrary finite dimensional algebra, and the proof follows a similar argument to that of \cite[Prop. 11.8]{a_category_of_wide_subcategories}.   

\begin{proposition}\label{irr_morph_in_Mcluster}
    Let $\Lambda$ be a finite dimensional algebra and let $\W\subseteq \modd{\Lambda}$ be a $\tau$-perpendicular subcategory. Suppose that ${V} = M\oplus P[1]$ is a support $\tau$-rigid object in $\C(\W)$ with $t$ indecomposable direct summands. Then there is a bijection between
    \begin{enumerate}
        \item[(i)] The set of signed $\tau$-exceptional sequences $(\U_1,\cdots, \U_t)$ in $\W$ such that $\overline{\varphi_t}(\U_1,\cdots,\U_t) = V$;
        \item[(ii)] The set of factorizations of $g_V^\W$ into composition of irreducible maps in $\Mcluster(\Lambda)$.  
    \end{enumerate}
\end{proposition}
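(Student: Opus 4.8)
The plan is to use the bijection $\varphi_t$ of Theorem \ref{main_BM} (equivalently its inverse $\psi_t$ from Remark \ref{BM_inverse_bijection}) as a bridge: I will show that a factorization of $g_V^\W$ into irreducible maps is the same data as a signed $\tau$-exceptional sequence $(\U_1,\cdots,\U_t)$ subject to the single constraint $\overline{\varphi_t}(\U_1,\cdots,\U_t)=V$.

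First I would identify the irreducible morphisms. Since each $\Eps_U^\W$ is a bijection preserving indecomposability, the composition rule $g_V^{\W_2}\circ g_U^{\W_1} = g_{U\oplus\widetilde V}^{\W_1}$ of Definition \ref{def_tau-cluster_morph_cat}(d), with $\widetilde V = (\Eps_U^{\W_1})^{-1}(V)$ and $|\widetilde V| = |V|$, shows that the number of indecomposable summands of the indexing object is additive under composition. Hence $g_U^\W$ is irreducible precisely when $U$ is indecomposable, and every factorization of $g_V^\W$ into irreducible maps has exactly $t = |V|$ factors. Such a factorization is therefore a chain
$$\W = \W_t \xrightarrow{\;g_{\U_t}^{\W_t}\;} \W_{t-1} \xrightarrow{\;g_{\U_{t-1}}^{\W_{t-1}}\;} \cdots \xrightarrow{\;g_{\U_1}^{\W_1}\;} J_\W(V),$$
with each $\U_i$ indecomposable support $\tau$-rigid in $\C(\W_i)$ and $\W_i = J_{\W_{i+1}}(\U_{i+1})$. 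This is exactly the recursive data of a signed $\tau$-exceptional sequence $(\U_1,\cdots,\U_t)$ in $\W$, and conversely each such sequence yields a chain of irreducible maps by the same recipe; so it remains only to match the two constraints.

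The heart of the argument is a descending induction on $j$ showing that the partial composite $g_{\U_j}^{\W_j}\circ\cdots\circ g_{\U_t}^{\W_t}$ equals $g_{W_j}^\W$, where $W_j = \bigoplus_{i=j}^t \T_i$ and $\T_i = (\Eps_{\U_t}^{\W_t})^{-1}\cdots(\Eps_{\U_{i+1}}^{\W_{i+1}})^{-1}(\U_i)$ is exactly the $i$-th entry produced by $\varphi_t$ as recorded in Remark \ref{BM_inverse_bijection}. The base case $j=t$ holds since $\T_t = \U_t$. For the inductive step I would apply Definition \ref{def_tau-cluster_morph_cat}(d) to $g_{\U_j}^{\W_j}\circ g_{W_{j+1}}^\W$ to obtain $g_{W_{j+1}\oplus\widetilde{\U_j}}^\W$ with $\widetilde{\U_j} = (\Eps_{W_{j+1}}^\W)^{-1}(\U_j)$; the crucial point is that iterating the factorization of the $\Eps$-maps in Theorem \ref{Eps_{U+V}} gives $\Eps_{W_{j+1}}^\W = \Eps_{\U_{j+1}}^{\W_{j+1}}\circ\cdots\circ\Eps_{\U_t}^{\W_t}$, whence $\widetilde{\U_j} = \T_j$ and $W_j = W_{j+1}\oplus\T_j$. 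Setting $j=1$ then gives that the whole chain composes to $g_{\overline{\varphi_t}(\U_1,\cdots,\U_t)}^\W$. Since $g_W^\W = g_V^\W$ holds (as formal symbols) if and only if $W\cong V$, the chain composes to $g_V^\W$ exactly when $\overline{\varphi_t}(\U_1,\cdots,\U_t)=V$, which is the required matching of constraints.

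I expect the main obstacle to be the bookkeeping in the inductive step: verifying that the repeated application of Theorem \ref{Eps_{U+V}} factors $\Eps_{W_{j+1}}^\W$ as the ordered composite $\Eps_{\U_{j+1}}^{\W_{j+1}}\circ\cdots\circ\Eps_{\U_t}^{\W_t}$, and reconciling the \emph{inverse} $\Eps$-maps appearing in the composition rule of Definition \ref{def_tau-cluster_morph_cat}(d) with the \emph{forward} $\Eps$-maps describing $\varphi_t$ in Remark \ref{BM_inverse_bijection}. Once this identification is in place, assembling the two assignments into mutually inverse bijections is routine, and the overall argument parallels the $\tau$-tilting finite case of \cite[Prop. 11.8]{a_category_of_wide_subcategories}.
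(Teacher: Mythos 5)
Your proposal is correct and is essentially the approach the paper intends: the paper gives no written proof, deferring to \cite[Prop.~11.8]{a_category_of_wide_subcategories} with the remark that the argument carries over, and your write-up is exactly that argument executed in the general setting, with Theorem \ref{Eps_{U+V}} and Theorem \ref{Buan-Hanson-Theorem-General} serving as the general-algebra replacements for the $\tau$-tilting finite tools, and Remark \ref{BM_inverse_bijection} supplying the formula $\T_i = (\Eps_{\U_t}^{\W_t})^{-1}\cdots(\Eps_{\U_{i+1}}^{\W_{i+1}})^{-1}(\U_i)$. Your three steps — identifying irreducible morphisms via additivity of the summand count, reading a length-$t$ chain of irreducibles out of $\W$ as precisely the recursive data of a signed $\tau$-exceptional sequence, and the descending induction (via the iterated factorization $\Eps_{W_{j+1}}^{\W} = \Eps_{\U_{j+1}}^{\W_{j+1}}\circ\cdots\circ\Eps_{\U_t}^{\W_t}$) showing the composite equals $g_{\overline{\varphi_t}(\U_1,\cdots,\U_t)}^{\W}$ — are all sound and complete.
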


For the remainder of this section, let $\Lambda = R\otimes kQ$ unless stated otherwise. Our goal is to establish an equivalence of categories between $\Mcluster(kQ)$ and $\Mcluster(\Lambda)$. The first step in this process will be to formulate an "$\Eps$-version" of Proposition \ref{tau_rigid_in_J(M)_1:1_tau_rigid_in_J(Lambda_otimes_kQM)}. To do that we need the following results. 

\begin{lemma}\label{J(LambdaxU)=J(LambdaxUbar)}
    Let $U = M\oplus P[1]$ be a support $\tau$-rigid object in $\C(kQ)$. Then, $$J(\Lambda\otimes_{kQ}U) = J(\Lambda\otimes_{kQ}\overline{U}),$$ where $\overline{U} = P\oplus \Eps^{-1}_P(M)$.
\end{lemma}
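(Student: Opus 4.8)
The plan is to reduce both sides of the claimed equality to Jasso-style $\tau$-perpendicular categories of genuine $\tau$-rigid modules by means of Lemma \ref{two_def_of_J}, and then to prove that the induction functor is compatible with the reduction bijection $\Eps$ taken at a projective module.

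First I would record that $\Lambda\otimes_{kQ}U = (\Lambda\otimes_{kQ}M)\oplus(\Lambda\otimes_{kQ}P)[1]$ is a support $\tau$-rigid object in $\C(\Lambda)$: the module $\Lambda\otimes_{kQ}M$ is $\tau$-rigid by Corollary \ref{inductionPreservesTauRigidModules}, $\Lambda\otimes_{kQ}P$ is projective, and $\Hom_\Lambda(\Lambda\otimes_{kQ}P,\Lambda\otimes_{kQ}M)=0$ since $\Hom_{kQ}(P,M)=0$ (Proposition \ref{HomExt_tensor_product}). Applying Lemma \ref{two_def_of_J} on the $\Lambda$-side and on the $kQ$-side gives
$$J(\Lambda\otimes_{kQ}U) = J\big((\Lambda\otimes_{kQ}P)\oplus\Eps_{\Lambda\otimes_{kQ}P}^{-1}(\Lambda\otimes_{kQ}M)\big), \qquad \overline{U}=P\oplus\Eps_P^{-1}(M).$$
Because induction is additive, $\Lambda\otimes_{kQ}\overline{U}=(\Lambda\otimes_{kQ}P)\oplus\big(\Lambda\otimes_{kQ}\Eps_P^{-1}(M)\big)$, so the whole statement reduces to the single identity
$$\Eps_{\Lambda\otimes_{kQ}P}^{-1}(\Lambda\otimes_{kQ}M)\;\cong\;\Lambda\otimes_{kQ}\Eps_P^{-1}(M).$$

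To establish this identity I would pass through the torsion-free functor $f_P$, working one indecomposable summand at a time (all the functors involved are additive, and $\Eps$ is defined on indecomposables and extended additively). Set $V:=\Eps_P^{-1}(M)$. By Theorem \ref{BM_bijection}(a) every indecomposable summand of $V$ is an honest module lying outside $\Gen P$, whence Theorem \ref{BM_bijection}(c) gives $\Eps_P(V)=f_PV=M$. Proposition \ref{tau_rigid_in_J(M)_1:1_tau_rigid_in_J(Lambda_otimes_kQM)}, applied with the projective ($\tau$-rigid) module $P$ in place of $M$, then yields
$$f_{\Lambda\otimes_{kQ}P}(\Lambda\otimes_{kQ}V)\;\cong\;\Lambda\otimes_{kQ}f_PV\;=\;\Lambda\otimes_{kQ}M.$$
It remains to recognise the left-hand side as $\Eps_{\Lambda\otimes_{kQ}P}(\Lambda\otimes_{kQ}V)$: the object $\Lambda\otimes_{kQ}V$ is indecomposable (Proposition \ref{HomExt_tensor_product}), the sum $(\Lambda\otimes_{kQ}V)\oplus(\Lambda\otimes_{kQ}P)$ is support $\tau$-rigid by Proposition \ref{tau-rigid-Lambda1:1tau-rigid-kQ} and Corollary \ref{stautitl(kQ)1:1stautilt(Lambda)}(i), and $\Lambda\otimes_{kQ}V\notin\Gen(\Lambda\otimes_{kQ}P)$ --- this last point follows from $V\notin\Gen P$ by applying restriction of scalars to a hypothetical epimorphism and invoking Remark \ref{res(ind)}, exactly as in the proof of Proposition \ref{tau_rigid_in_J(M)_1:1_tau_rigid_in_J(Lambda_otimes_kQM)}. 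Hence Theorem \ref{BM_bijection}(c) on the $\Lambda$-side gives $\Eps_{\Lambda\otimes_{kQ}P}(\Lambda\otimes_{kQ}V)=\Lambda\otimes_{kQ}M$, which is the required identity, and the lemma follows since $J(-)$ depends only on the isomorphism class of its $\tau$-rigid argument.

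I expect the only real obstacle to be conceptual bookkeeping: one must keep track of the fact that the reductions here are taken at the projective module $P$ rather than at the shifted projective $P[1]$ (the two give the same subcategory $J(P)=J(P[1])$ but genuinely different bijections, and only the former is computed by $f_P$ via Theorem \ref{BM_bijection}(c)), and one must reduce to indecomposable summands of $M$ and $P$ before invoking Proposition \ref{tau_rigid_in_J(M)_1:1_tau_rigid_in_J(Lambda_otimes_kQM)} and Theorem \ref{BM_bijection}. Once these are handled, every step is a direct application of results already established.
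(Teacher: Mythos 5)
Your proof is correct and takes essentially the same route as the paper's: both reduce via Lemma \ref{two_def_of_J}, identify $\Eps_P^{-1}(M)$ with $f_P^{-1}(M)$ using Theorem \ref{BM_bijection}(a) and (c), and then commute the torsion-free functor with induction via Proposition \ref{tau_rigid_in_J(M)_1:1_tau_rigid_in_J(Lambda_otimes_kQM)}. The only (immaterial) difference is that you traverse the bijection forwards, computing $\Eps_{\Lambda\otimes_{kQ}P}(\Lambda\otimes_{kQ}V)$ for $V = \Eps_P^{-1}(M)$ and then inverting, whereas the paper works directly with $f^{-1}$; your explicit verification of the hypotheses of Theorem \ref{BM_bijection}(c) on the $\Lambda$-side is what the paper compresses into ``similarly.''
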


\begin{proof}
    Let $\overline{U} = P\oplus \Eps_P^{-1}(M)$. By Lemma \ref{two_def_of_J}, we have $J(U) = J(\overline{U})$. Since $\Eps_P(\Eps_P^{-1}(M)) = M \notin (\proj J(U))[1]$, Theorem \ref{BM_bijection}(a) implies that $\Eps_P^{-1}(M)\notin\Gen P$, and therefore $\Eps_P^{-1}(M) = f_P^{-1}(M)$ by Theorem \ref{BM_bijection}(c). Similarly, $\Eps_{\Lambda\otimes_{kQ}P}^{-1}(\Lambda\otimes_{kQ}M) = f_{\Lambda\otimes_{kQ}P}^{-1}(\Lambda\otimes_{kQ}M)$. Then 
    \begin{align*}
        J(\Lambda\otimes_{kQ}U) &= J((\Lambda\otimes_{kQ}M)\oplus (\Lambda\otimes_{kQ}P)[1])\\
        &= J((\Lambda\otimes_{kQ}P)\oplus \Eps_{\Lambda\otimes_{kQ}P}^{-1}(\Lambda\otimes_{kQ}M)) &&\text{ by Lemma \ref{two_def_of_J}}\\
        &= J((\Lambda\otimes_{kQ}P)\oplus f_{\Lambda\otimes_{kQ}P}^{-1}(\Lambda\otimes_{kQ}M))\\
        &= J((\Lambda\otimes_{kQ}P)\oplus \Lambda\otimes_{kQ}f_P^{-1}(M)) &&\text{ by Proposition \ref{tau_rigid_in_J(M)_1:1_tau_rigid_in_J(Lambda_otimes_kQM)}}\\
        &= J(\Lambda\otimes_{kQ}(P\oplus f_P^{-1}(M)))\\
        &= J(\Lambda\otimes_{kQ}(P\oplus \Eps_P^{-1}(M)))\\
        &= J(\Lambda\otimes_{kQ}\overline{U}).
    \end{align*}
    This finishes the proof. 
\end{proof}

\begin{proposition}\label{supp_tau_J(U)-rigid1:1supp_tau_J(Lambda_x_U)-rigid}
    Let $U = M\oplus P[1]\in \C(kQ)$ be support $\tau$-rigid. Then there is a bijection
        \[
            \begin{tikzcd}
            {\left\{W\in \ind\C(J(U))\mid  W\text{ support }\tau_{J(U)}\text{-rigid} \right\}} 
            \arrow[d, "\Lambda\otimes_{kQ}-"] \\
            {\left\{ Z\in \ind\C(J(\Lambda\otimes_{kQ}U)) \mid Z\text{ support }\tau_{J(\Lambda\otimes_{kQ}U)}\text{-rigid}\right\}}.
            \end{tikzcd}
        \]
\end{proposition}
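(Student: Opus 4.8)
The plan is to split the set of indecomposable support $\tau$-rigid objects according to whether they are modules or shifted projectives, and to handle each piece with the bijections already established. Recall that an indecomposable object of $\C(J(U))$ that is support $\tau_{J(U)}$-rigid is either an indecomposable $\tau_{J(U)}$-rigid module lying in $J(U)$, or a shift $Q[1]$ of an indecomposable projective $Q\in\mathrm{ind.proj}J(U)$; the same dichotomy holds for $\C(J(\Lambda\otimes_{kQ}U))$. Since by definition $\Lambda\otimes_{kQ}(Q[1]) = (\Lambda\otimes_{kQ}Q)[1]$, the induction functor sends modules to modules and shifted projectives to shifted projectives, so it respects this dichotomy and it suffices to produce a bijection on each of the two classes separately.

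The key preliminary move is to replace the support $\tau$-rigid object $U = M\oplus P[1]$, which carries a shifted summand, by a genuine $\tau$-rigid $kQ$-module, so that Propositions \ref{tau_rigid_in_J(M)_1:1_tau_rigid_in_J(Lambda_otimes_kQM)} and \ref{indprojJ(M)1-1indprojJ(Lambda_otimes_M)} (both stated for $\tau$-rigid modules) become applicable. Setting $\overline{U} = P\oplus \Eps_P^{-1}(M)$, Lemma \ref{two_def_of_J} gives $J(U) = J(\overline{U})$ and Lemma \ref{J(LambdaxU)=J(LambdaxUbar)} gives $J(\Lambda\otimes_{kQ}U) = J(\Lambda\otimes_{kQ}\overline{U})$; moreover $\Eps_P^{-1}(M) = f_P^{-1}(M)$ is an honest module, as noted in the proof of Lemma \ref{J(LambdaxU)=J(LambdaxUbar)}, so $\overline{U}$ is a $\tau$-rigid $kQ$-module. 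With this substitution I would apply Proposition \ref{tau_rigid_in_J(M)_1:1_tau_rigid_in_J(Lambda_otimes_kQM)} with $\overline{U}$ in place of $M$ to get the bijection on the module parts, and apply Proposition \ref{indprojJ(M)1-1indprojJ(Lambda_otimes_M)} with $\overline{U}$ in place of $M$ to get the bijection $\Lambda\otimes_{kQ}-\colon \mathrm{ind.proj}J(U)\to \mathrm{ind.proj}J(\Lambda\otimes_{kQ}U)$ on the projective parts, which then lifts to the shifted projectives via $Q[1]\mapsto(\Lambda\otimes_{kQ}Q)[1]$.

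Finally I would assemble the two bijections: since $\Lambda\otimes_{kQ}-$ preserves the module/shift decomposition, the union of the two component bijections is a single bijection on all indecomposable support $\tau$-rigid objects, as claimed. I expect the only genuinely delicate point to be the reduction carried out in the second paragraph, namely ensuring that $\overline{U}$ is legitimately a $\tau$-rigid \emph{module} (rather than again an object with a shifted summand) and that the two identifications $J(U) = J(\overline{U})$ and $J(\Lambda\otimes_{kQ}U) = J(\Lambda\otimes_{kQ}\overline{U})$ are compatible with induction, i.e. that passing from $U$ to $\overline{U}$ commutes with $\Lambda\otimes_{kQ}-$ on both sides. This compatibility is exactly what Lemmas \ref{two_def_of_J} and \ref{J(LambdaxU)=J(LambdaxUbar)} secure, and the remaining checks (that induction preserves the relevant $\tau$-rigidity and respects the shift) are already contained in the cited propositions, hence routine.
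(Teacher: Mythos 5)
Your proposal is correct and follows essentially the same route as the paper: the paper likewise replaces $U$ by $\overline{U} = P\oplus \Eps_P^{-1}(M)$ via Lemmas \ref{two_def_of_J} and \ref{J(LambdaxU)=J(LambdaxUbar)}, splits the indecomposable support $\tau$-rigid objects into modules and shifted projectives, and concludes with Propositions \ref{tau_rigid_in_J(M)_1:1_tau_rigid_in_J(Lambda_otimes_kQM)} and \ref{indprojJ(M)1-1indprojJ(Lambda_otimes_M)} (using Lemma \ref{tau-rigid_in_J(X)_iff_taurigid_in_J(Lambda_otimes_X)} for well-definedness on the module part). No gaps to report.
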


\begin{proof}
    We start by showing that the above map is well-defined. So, let $W\in \ind\C(J(U))$ be support $\tau_{J(U)}$-rigid. We distinguish two cases. First, assume that $W$ is an indecomposable $\tau$-rigid module in $J(U)$. By Lemma \ref{two_def_of_J} and Lemma \ref{J(LambdaxU)=J(LambdaxUbar)}, $J(U)=J(\overline{U})$ and $J(\Lambda\otimes_{kQ}U) = J(\Lambda\otimes_{kQ}\overline{U})$, respectively, where $\overline{U} = P\oplus \Eps_P^{-1}(M)$.
    Hence, since $W$ is $\tau$-rigid in $J(\overline{U})$, it follows that  $\Lambda\otimes_{kQ}W$ is $\tau$-rigid in $J(\Lambda\otimes_{kQ}\overline{U})$ by Lemma \ref{tau-rigid_in_J(X)_iff_taurigid_in_J(Lambda_otimes_X)} (i), (iii).

    Now suppose $W\in (\ind \proj J(U))[1]$, i.e. $W=Q[1]$ for $Q\in \ind \proj J(U)$. By Proposition \ref{indprojJ(M)1-1indprojJ(Lambda_otimes_M)}, the induction functor induces a bijection between $\ind\proj J(\overline{U})$ and $\ind\proj J(\Lambda\otimes_{kQ}\overline{U})$. Since $J(U) = J(\overline{U}) $ and $J(\Lambda\otimes_{kQ}U)=J(\Lambda\otimes_{kQ}\overline{U})$, we conclude that $(\Lambda\otimes_{kQ}Q)[1]\in (\ind\proj J(\Lambda\otimes_{kQ}U))[1]$. This proves that the above map is well-defined. 

    Notice that 
    \begin{align*}
        {\left\{W\in \ind\C(J(U))\mid W \text{ supp. }\tau_{J(U)}\text{-rigid} \right\}} =\\{\left\{X\in \ind(J(\overline{U}))\mid X \text{ is }\tau_{J(\overline{U})}\text{-rigid} \right\}} 
        &\cup (\ind \proj J(\overline{U}))[1],
    \end{align*}
    and similarly 
    \begin{align*}
        {\left\{Z\in \ind\C(J(\Lambda\otimes_{kQ}U))\mid Z \text{ supp. }\tau_{J(\Lambda\otimes_{kQ}U)}\text{-rigid} \right\}} \\=  {\left\{Y\in \ind(J(\Lambda\otimes_{kQ}\overline{U}))\mid Y \text{ is }\tau_{J(\Lambda\otimes_{kQ}\overline{U})}\text{-rigid} \right\}} &\cup (\ind \proj J(\Lambda\otimes_{kQ}\overline{U}))[1].
    \end{align*}
    Hence, $\Lambda\otimes_{kQ}-$ induces the desired bijection by Proposition\ref{tau_rigid_in_J(M)_1:1_tau_rigid_in_J(Lambda_otimes_kQM)} and Proposition \ref{indprojJ(M)1-1indprojJ(Lambda_otimes_M)}. 
\end{proof}

Let $\Lambda$ be a finite dimensional algebra and let $U$ be a $\tau$-rigid $\Lambda$-module. Recall from \cite[Section 2.3]{tau-tiling-theory} that, up to isomorphism, there is a unique basic module $C_U$ and a basic projective module $Q$ such that $C_U\oplus U \oplus Q[1]$ is a support $\tau$-tilting object and $\add(C_U\oplus U) = \P(\Gen U)$. In particular, $\add Q = \proj kQ \cap {^\perp U}$. We refer to $C_U\oplus Q[1]$ as the \textit{co-Bongartz complement} of $U$. 

A key ingredient towards the proof of an "$\Eps$-version" of Proposition \ref{tau_rigid_in_J(M)_1:1_tau_rigid_in_J(Lambda_otimes_kQM)} is to establish a connection between the co-Bongartz complement of a $\tau$-rigid $kQ$-module $U$ and the co-Bongartz complement of the corresponding $\tau$-rigid $\Lambda$-module $\Lambda\otimes_{kQ}U$. We need the following lemma. 

\begin{lemma}\label{Ext-proj_in_Gen(LambdaxU)}
   Let $U$ be a $\tau$-rigid $kQ$-module. Then $\Lambda\otimes_{kQ}X\in \P(\Gen(\Lambda\otimes_{kQ}U))$ if and only if $X\in\P(\Gen U)$.
\end{lemma}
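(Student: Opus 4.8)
The plan is to decompose membership in $\P(\Gen U)$ into its two defining requirements and to check that the induction functor both preserves and reflects each of them separately. Recall that $X\in\P(\Gen U)$ means precisely that (1) $X\in\Gen U$ and (2) $\Ext^1_{kQ}(X,\Gen U)=0$, while $\Lambda\otimes_{kQ}X\in\P(\Gen(\Lambda\otimes_{kQ}U))$ means that (1$'$) $\Lambda\otimes_{kQ}X\in\Gen(\Lambda\otimes_{kQ}U)$ and (2$'$) $\Ext^1_\Lambda(\Lambda\otimes_{kQ}X,\Gen(\Lambda\otimes_{kQ}U))=0$. I would show (1) $\Leftrightarrow$ (1$'$) and (2) $\Leftrightarrow$ (2$'$) independently, and then obtain the lemma by taking the conjunction.

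For the membership equivalence, the implication (1) $\Rightarrow$ (1$'$) is immediate from $\Lambda\otimes_{kQ}\Gen U\subseteq\Gen(\Lambda\otimes_{kQ}U)$, i.e. from exactness of the induction functor applied to a surjection $U^r\twoheadrightarrow X$. For the converse (1$'$) $\Rightarrow$ (1), I would apply restriction of scalars to a surjection $(\Lambda\otimes_{kQ}U)^r\twoheadrightarrow\Lambda\otimes_{kQ}X$; since $\res$ is exact and $\res(\Lambda\otimes_{kQ}M)\cong M^d$ with $d=\dim R$ (Remark \ref{res(ind)}), this yields a surjection $U^{dr}\twoheadrightarrow X^d$ in $\modd kQ$, so $X^d\in\Gen U$, and hence $X\in\Gen U$ because $\Gen U$ is closed under direct summands.

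For the Ext-projectivity equivalence, I would first invoke the Auslander–Smal{\o} characterization (Lemma \ref{Smalo's_result}(i)) to replace each vanishing-$\Ext$ condition by a vanishing-$\Hom$ condition into the AR translate: condition (2) is equivalent to $\Hom_{kQ}(U,\tau_{kQ}X)=0$, and condition (2$'$) is equivalent to $\Hom_\Lambda(\Lambda\otimes_{kQ}U,\tau_\Lambda(\Lambda\otimes_{kQ}X))=0$. The key computation is then to identify the latter $\Hom$-space: using $\tau_\Lambda(\Lambda\otimes_{kQ}X)\cong\Lambda\otimes_{kQ}\tau_{kQ}X$ (Proposition \ref{tauCommutesWithInduction}(iii)) together with the Hom–tensor formula (Proposition \ref{HomExt_tensor_product}), one obtains
\[
\Hom_\Lambda(\Lambda\otimes_{kQ}U,\tau_\Lambda(\Lambda\otimes_{kQ}X))\cong \Hom_R(R,R)\otimes\Hom_{kQ}(U,\tau_{kQ}X),
\]
which vanishes if and only if $\Hom_{kQ}(U,\tau_{kQ}X)=0$, since $\Hom_R(R,R)=R\neq 0$. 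Chaining these equivalences gives (2) $\Leftrightarrow$ (2$'$).

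I expect no serious obstacle, as all the necessary tools are already in place. The one point requiring care is the reflection of membership (1$'$) $\Rightarrow$ (1): here one must pass through restriction of scalars and then use closure of $\Gen U$ under direct summands to discard the extra copies introduced by $\res$. The remainder is routine bookkeeping of the two defining conditions combined with the AR-duality reformulation.
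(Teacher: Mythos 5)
Your proof is correct, and it takes a genuinely different route from the paper's. Both arguments split the statement into the membership condition ($X\in\Gen U$ versus $\Lambda\otimes_{kQ}X\in\Gen(\Lambda\otimes_{kQ}U)$) and the $\Ext^1$-vanishing condition, and your treatment of membership---induction applied to a surjection in one direction, restriction of scalars plus Remark \ref{res(ind)} and closure of $\Gen U$ under direct summands in the other---is exactly what is needed (the paper merely asserts this equivalence without proof). The difference is in the $\Ext$-vanishing part: the paper argues by contradiction at the level of short exact sequences, restricting a non-split extension $0\to Y\to E\to\Lambda\otimes_{kQ}X\to 0$ to obtain a non-split extension of $X^d$ by $\res Y\in\Gen U$, and conversely applying the exact induction functor to a non-split extension in $\modd kQ$; you instead convert both $\Ext$-vanishing conditions into Hom-vanishing conditions via Auslander--Smal{\o} (Lemma \ref{Smalo's_result}(i)) and transfer the latter across the induction functor using Proposition \ref{tauCommutesWithInduction}(iii) and the Hom--tensor formula of Proposition \ref{HomExt_tensor_product}---the same computation that proves Corollary \ref{inductionPreservesTauRigidModules}. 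What each approach buys: yours is purely formal and sidesteps the one delicate point in the paper's argument, namely that restriction (and induction) carries non-split extensions to non-split extensions; this is true but requires a small implicit justification (a $kQ$-splitting of the restricted sequence composed with the unit $X\to\res(\Lambda\otimes_{kQ}X)$ yields, by adjunction, a $\Lambda$-splitting of the original sequence, and $\res$ applied to an induced extension gives $d$ copies of the original one), whereas your Hom-vanishing conditions transfer cleanly. The paper's route is in turn more elementary, relying only on exactness of the two functors rather than on Auslander--Reiten theory.
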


\begin{proof}
    Observe that $\Lambda\otimes_{kQ}X\in \Gen(\Lambda\otimes_{kQ}U)$ if and only if $X\in \Gen U$. 
    
    Let $X\in \P(\Gen U)$, i.e. $\Ext_{kQ}^1(X,\Gen U) = 0$. Suppose $\Lambda\otimes_{kQ}X$ was not Ext-projective in $\P(\Gen(\Lambda\otimes_{kQ}U))$, that is there exists $Y\in\Gen(\Lambda\otimes_{kQ}U)$ such that $\Ext_{\Lambda}^1(\Lambda\otimes_{kQ}X,Y)\neq 0$. Then there exists a non-split short exact sequence in $\modd\Lambda$ of the form $0\to Y\to E\to \Lambda\otimes_{kQ}X\to 0$. Applying the restriction of scalars we get a non-split short exact sequence 
    $$0\to \res Y\to \res E\to \res(\Lambda\otimes_{kQ}X) = X^d\to 0$$
    in $\modd kQ$ with $\res Y\in \Gen U$ (this follows from the fact that $Y\in \Gen(\Lambda\otimes_{kQ}U)$ and $\res$ is exact). Hence, $\Ext^1_{kQ}(X, \res Y) \neq 0$, a contradiction. 

    Conversely, assume $\Lambda\otimes_{kQ}X\in\P(\Gen(\Lambda\otimes_{kQ}U))$ and suppose there exists $V\in\Gen U$ such that $\Ext_{kQ}^1(X,V)\neq 0$. It follows there exists a non-split short exact sequence in $\modd kQ$ of the form $0\to V\to E'\to X\to 0$. Applying the induction functor we obtain a non-split short exact sequence $$0\to \Lambda\otimes_{kQ}V\to \Lambda\otimes_{kQ}E'\to \Lambda\otimes_{kQ}X\to 0$$ with $\Lambda\otimes_{kQ}V\in \Gen(\Lambda\otimes_{kQ}U)$. Hence, $\Ext_\Lambda^1(\Lambda\otimes_{kQ}X,\Lambda\otimes_{kQ}V)\neq 0$, a contradiction. This concludes the proof.   
\end{proof}

\begin{proposition}\label{co-Bongartz_complement}
    Let $M$ be a $\tau$-rigid $kQ$-module and let $\Lambda\otimes_{kQ}M$ be the corresponding $\tau$-rigid $\Lambda$-module. Let $C_{\Lambda\otimes_{kQ}M}\oplus Q_{\Lambda\otimes_{kQ}M}[1]$ be the co-Bongartz complement of $\Lambda\otimes_{kQ}M$. Then, 
    $$C_{\Lambda\otimes_{kQ}M} \oplus Q_{\Lambda\otimes_{kQ}M}[1] \cong \Lambda\otimes_{kQ}(C_M\oplus Q_M[1])$$
    where $C_M\oplus Q_M[1]$ is the co-Bongartz complement of $M$.
\end{proposition}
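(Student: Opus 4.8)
The plan is to verify that $\Lambda\otimes_{kQ}(C_M\oplus Q_M[1])$ satisfies the three defining properties of the co-Bongartz complement of $\Lambda\otimes_{kQ}M$ recalled before the statement, and then to invoke the uniqueness (up to isomorphism) of the co-Bongartz complement supplied by \cite[Section 2.3]{tau-tiling-theory}. Write $T := C_M\oplus M\oplus Q_M[1]$, so that $\add(C_M\oplus M) = \mathcal{P}(\Gen M)$ and $\add Q_M = \proj kQ\cap{}^\perp M$.

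First I would check the support $\tau$-tilting condition. Since $T$ is a support $\tau$-tilting object in $\C(kQ)$, Corollary \ref{stautitl(kQ)1:1stautilt(Lambda)}(i) shows that
$$\Lambda\otimes_{kQ}T = \bigl(\Lambda\otimes_{kQ}(C_M\oplus M)\bigr)\oplus(\Lambda\otimes_{kQ}Q_M)[1]$$
is a support $\tau$-tilting object in $\C(\Lambda)$, where $\Lambda\otimes_{kQ}M$ is $\tau$-rigid by Corollary \ref{inductionPreservesTauRigidModules}. Next I would identify the $\Ext$-projective part. By \cite[Section 2.3]{tau-tiling-theory} every indecomposable in $\mathcal{P}(\Gen(\Lambda\otimes_{kQ}M))$ is $\tau$-rigid, hence of the form $\Lambda\otimes_{kQ}X$ for an indecomposable $\tau$-rigid $kQ$-module $X$ by Proposition \ref{tau-rigid-Lambda1:1tau-rigid-kQ}; and by Lemma \ref{Ext-proj_in_Gen(LambdaxU)} such a module lies in $\mathcal{P}(\Gen(\Lambda\otimes_{kQ}M))$ if and only if $X\in\mathcal{P}(\Gen M)=\add(C_M\oplus M)$. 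Since the induction functor is additive, preserves indecomposability, and is injective on indecomposable $\tau$-rigid modules (Proposition \ref{tau-rigid-Lambda1:1tau-rigid-kQ}), this yields
$$\add\bigl(\Lambda\otimes_{kQ}(C_M\oplus M)\bigr) = \mathcal{P}(\Gen(\Lambda\otimes_{kQ}M)).$$

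It remains to identify the projective part. Every indecomposable projective $\Lambda$-module is of the form $\Lambda\otimes_{kQ}P(i)$ for an indecomposable projective $kQ$-module $P(i)$, and by the Hom-tensor isomorphism \eqref{Hom_Tensor} one has $\Hom_\Lambda(\Lambda\otimes_{kQ}P(i),\Lambda\otimes_{kQ}M)=0$ if and only if $\Hom_{kQ}(P(i),M)=0$, as already used in Corollary \ref{stautitl(kQ)1:1stautilt(Lambda)}. Thus $\Lambda\otimes_{kQ}P(i)\in\proj\Lambda\cap{}^\perp(\Lambda\otimes_{kQ}M)$ if and only if $P(i)\in\proj kQ\cap{}^\perp M=\add Q_M$, whence $\add(\Lambda\otimes_{kQ}Q_M)=\proj\Lambda\cap{}^\perp(\Lambda\otimes_{kQ}M)$. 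As $\Lambda\otimes_{kQ}C_M$ is basic and $\Lambda\otimes_{kQ}Q_M$ is basic projective (again because induction is injective on indecomposables and sends indecomposable projectives to indecomposable projectives), the object $\Lambda\otimes_{kQ}(C_M\oplus Q_M[1])$ meets all the defining conditions of the co-Bongartz complement of $\Lambda\otimes_{kQ}M$, and uniqueness gives $C_{\Lambda\otimes_{kQ}M}\oplus Q_{\Lambda\otimes_{kQ}M}[1]\cong\Lambda\otimes_{kQ}(C_M\oplus Q_M[1])$.

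The main obstacle is the $\Ext$-projective step: it is essential that every indecomposable $\Ext$-projective in $\Gen(\Lambda\otimes_{kQ}M)$ actually be an induced module, which is what permits Lemma \ref{Ext-proj_in_Gen(LambdaxU)} to be applied in both directions. This rests on combining the $\tau$-rigidity of such modules (so that Proposition \ref{tau-rigid-Lambda1:1tau-rigid-kQ} applies) with the characterization of $\mathcal{P}(\Gen(-))$ furnished by Lemma \ref{Ext-proj_in_Gen(LambdaxU)}; by comparison, the projective part and the support $\tau$-tilting condition are routine.
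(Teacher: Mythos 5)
Your proof is correct, but it runs in the opposite direction to the paper's. You push the co-Bongartz complement of $M$ forward along the induction functor, verify the three defining properties in $\C(\Lambda)$, and conclude by uniqueness of the co-Bongartz complement of $\Lambda\otimes_{kQ}M$. The paper instead starts from the co-Bongartz complement of $\Lambda\otimes_{kQ}M$, uses Corollary \ref{stautitl(kQ)1:1stautilt(Lambda)} to write it as $(\Lambda\otimes_{kQ}C)\oplus(\Lambda\otimes_{kQ}Q)[1]$ for some $\tau$-rigid $kQ$-module $C$ and projective $kQ$-module $Q$, and then identifies $C\cong C_M$ and $Q\cong Q_M$ by uniqueness of the co-Bongartz complement of $M$ in $\C(kQ)$. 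The ingredients are shared (Corollary \ref{stautitl(kQ)1:1stautilt(Lambda)}, Lemma \ref{Ext-proj_in_Gen(LambdaxU)}, and the exhaustion of indecomposable projectives by induced modules), but the change of direction forces you to supply one observation the paper gets for free: that every indecomposable object of $\P(\Gen(\Lambda\otimes_{kQ}M))$ is induced. You obtain this by noting that such objects are $\tau$-rigid (they are summands of the support $\tau$-tilting module $\P(\Gen(\Lambda\otimes_{kQ}M))$) and then invoking surjectivity in Proposition \ref{tau-rigid-Lambda1:1tau-rigid-kQ} — which is exactly the step you correctly flag as the crux, and it is sound. In the paper's direction, inducedness of $C_{\Lambda\otimes_{kQ}M}$ and $Q_{\Lambda\otimes_{kQ}M}$ falls straight out of the support $\tau$-tilting bijection, and Lemma \ref{Ext-proj_in_Gen(LambdaxU)} is then used to descend $\P(\Gen(\Lambda\otimes_{kQ}M)) = \Lambda\otimes_{kQ}\P(\Gen M)$ to $\modd kQ$. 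Both routes are equally rigorous; yours never needs to descend a support $\tau$-tilting object to $\C(kQ)$, while the paper's avoids the $\tau$-rigidity-of-Ext-projectives observation.
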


\begin{proof}
    Let $C_{\Lambda\otimes_{kQ}M}\oplus Q_{\Lambda\otimes_{kQ}M}[1]$ be the co-Bongartz complement of $\Lambda\otimes_{kQ}M$. Then $C_{\Lambda\otimes_{kQ}M}\oplus \Lambda\otimes_{kQ}M\oplus Q_{\Lambda\otimes_{kQ}M}[1]$ is the unique (up to isomorphism) support $\tau$-tilting object in $\C(\Lambda)$ such that $\add(C_{\Lambda\otimes_{kQ}M}\oplus \Lambda\otimes_{kQ}M) = \P(\Gen(\Lambda\otimes_{kQ}M))$ and $\add(Q_{\Lambda\otimes_{kQ}M}) = \proj\Lambda \cap {^\perp(\Lambda\otimes_{kQ}M)}$. In particular, by Corollary \ref{stautitl(kQ)1:1stautilt(Lambda)} $C_{\Lambda\otimes_{kQ}M}\cong \Lambda\otimes_{kQ}C$ for a $\tau$-rigid $kQ$-module $C$ and $Q_{\Lambda\otimes_{kQ}M}\cong \Lambda\otimes_{kQ}Q$ for a projective $kQ$-module $Q$. Let $C_M\oplus Q_M[1]$ be the co-Bongartz complement of $M$. We want to show that $C\cong C_M$ and $Q\cong Q_M$. We have that $\add Q_M = \proj kQ \cap {^\perp M}$. Moreover, $\add(\Lambda\otimes_{kQ}Q) = \Lambda\otimes_{kQ}(\add Q)$. Since $\proj\Lambda = \Lambda\otimes_{kQ}(\proj kQ)$ and $\Lambda\otimes_{kQ}X\in {^\perp(\Lambda\otimes_{kQ}M)}$ if and only if $X\in {^\perp M}$ we get that 
    \begin{align*}
        \Lambda\otimes_{kQ}\add Q &= \add(\Lambda\otimes_{kQ}Q)\\ 
                                    &= \proj\Lambda \cap {^\perp(\Lambda\otimes_{kQ}M)}\\
                                    &= (\Lambda\otimes_{kQ}\proj kQ) \cap (\Lambda\otimes_{kQ}{^\perp M}).  
    \end{align*}
    Hence,
    $$\Lambda\otimes_{kQ}\add Q = (\Lambda\otimes_{kQ}\proj kQ) \cap (\Lambda\otimes_{kQ}{^\perp M}) = \Lambda\otimes_{kQ}(\proj kQ \cap {^\perp M}) = \Lambda\otimes_{kQ}\add Q_M.$$
    Thus $\add Q = \add Q_M$, and therefore $Q\cong Q_M$.

    It remains to show that $C\cong C_M$. By the definition of the co-Bongartz complement of $\Lambda\otimes_{kQ}M$, we have 
    \begin{align*}
        \P(\Gen(\Lambda\otimes_{kQ}M)) &= \add(C_{\Lambda\otimes_{kQ}M}\oplus(\Lambda\otimes_{kQ}M))\\
                                       &= \add(\Lambda\otimes_{kQ}(C\oplus M))\\
                                       &= \Lambda\otimes_{kQ}\add(C\oplus M).
    \end{align*} 
    Thus, every module in $\P(\Gen(\Lambda\otimes_{kQ}M))$ is an induced module. Hence, by Lemma \ref{Ext-proj_in_Gen(LambdaxU)} $\P(\Gen(\Lambda\otimes_{kQ}M)) = \Lambda\otimes_{kQ}\P(\Gen M)$, and therefore $\P(\Gen M) = \add(C\oplus M)$. Observe that, since $\Lambda\otimes_{kQ}(C\oplus M)\oplus (\Lambda\otimes_{kQ}Q_M)[1]$ is support $\tau$-tilting in $\C(\Lambda)$, it follows that $C\oplus M \oplus Q_M[1]$ is support $\tau$-tilting in $\C(kQ)$ by Corollary \ref{stautitl(kQ)1:1stautilt(Lambda)}. But $C_M$ is the unique (up to isomorphism) basic $\tau$-rigid $kQ$-module such that $C_M\oplus M \oplus Q_M[1]$ is support $\tau$-tilting in $\C(kQ)$ and $\P(\Gen M) = \add(C_M\oplus M)$. We conclude that $C\cong C_M$ and the claim follows. 
\end{proof}

Let $\Lambda$ be a finite dimensional algebra. Let $M\in \modd\Lambda$ be $\tau$-rigid and let $V\in \ind\C(\Lambda)$ be such that $M\oplus V$ is support $\tau$-rigid in $\C(\Lambda)$. If $V\in \Gen M$ or $V\in(\proj\Lambda)[1]$, Buan and Marsh \cite[Section 3]{tauExcSeq_BM} constructed an indecomposable direct summand $B_M^V$ of the Bongartz complement of $M$ as follows. 
\begin{enumerate}
    \item[(a)] If $V\in \Gen M$, take a minimal right $\add \PP_M$-approximation $\alpha: \PP_{M'}\to \PP_V$ in $K^b(\proj \Lambda)$ and complete to a triangle $\PP_{B_M^V}\to \PP_{M'}\xrightarrow{\alpha} \PP_V\to .$ Then, $B_M^V = H^0\Big(\PP_{B_M^V}\Big)$.
    \item[(b)] If $V = Q[1]$, let $a_Q: Q\to B_Q$ be a minimal left $\P({^\perp\tau}M)$-approximation. Then $B_M^V = B_Q$. 
\end{enumerate}

In particular, this assignment defines a bijection between the indecomposable direct summands of the co-Bongartz complement $C_M\oplus Q[1]$ of $M$ and the indecomposable direct summands of the Bongartz complement $B_M$ of $M$ \cite[Prop. 3.7, Def. 4.10, Rmk. 4.11]{tauExcSeq_BM}.  

\begin{lemma}\label{direct_summand_of_co-Bongartz_commutes_with_induction}
    Let $M$ be a $\tau$-rigid $kQ$-module. Let $C_M\oplus Q_M[1]$ and $C_{\Lambda\otimes_{kQ}M}\oplus Q_{\Lambda\otimes_{kQ}M}[1]$ be the co-Bongartz complements of $M$ and $\Lambda\otimes_{kQ}M$, respectively. Let $V$ be an indecomposable direct summand of $C_M\oplus Q_M[1]$. Then, with the notation above, we have $\Lambda\otimes_{kQ}B_M^V\cong B_{\Lambda\otimes_{kQ}M}^{\Lambda\otimes_{kQ}V}$.
\end{lemma}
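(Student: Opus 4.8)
The plan is to verify the isomorphism separately for the two cases (a) and (b) in the construction of $B_M^V$ recalled just before the statement, exploiting that the induction functor commutes with every ingredient of each construction. The common starting point is the identity $\P({^\perp\tau}(\Lambda\otimes_{kQ}M)) = \Lambda\otimes_{kQ}\P({^\perp\tau}M)$, which follows from Corollary \ref{Bongartz_commutes_with_induction} together with $\P({^\perp\tau}N) = \add B_N$ and the fact that induction commutes with $\add$. I would also record at the outset that $\Lambda\otimes_{kQ}V$ is an indecomposable summand of the co-Bongartz complement $C_{\Lambda\otimes_{kQ}M}\oplus Q_{\Lambda\otimes_{kQ}M}[1]$ by Proposition \ref{co-Bongartz_complement}, so that $B_{\Lambda\otimes_{kQ}M}^{\Lambda\otimes_{kQ}V}$ is defined, and that $V\in\Gen M$ (resp. $V\in(\proj kQ)[1]$) if and only if $\Lambda\otimes_{kQ}V\in\Gen(\Lambda\otimes_{kQ}M)$ (resp. $\Lambda\otimes_{kQ}V\in(\proj\Lambda)[1]$), so the two constructions match up case by case.

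For case (a), where $V\in\Gen M$, I would run the construction through the triangulated functor $\Lambda\otimes_{kQ}-\colon K^b(\proj kQ)\to K^b(\proj\Lambda)$. Starting from the minimal right $\add\PP_M$-approximation $\alpha\colon\PP_{M'}\to\PP_V$ and the triangle $\PP_{B_M^V}\to\PP_{M'}\xrightarrow{\alpha}\PP_V\to$, I would apply $\Lambda\otimes_{kQ}-$ and use Lemma \ref{2-term_and_iduction}(i) to identify $\Lambda\otimes_{kQ}\PP_{M'}\cong\PP_{\Lambda\otimes_{kQ}M'}\in\add\PP_{\Lambda\otimes_{kQ}M}$ and $\Lambda\otimes_{kQ}\PP_V\cong\PP_{\Lambda\otimes_{kQ}V}$. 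By Lemma \ref{induction_preserves_approximations}(ii), $\Lambda\otimes_{kQ}\alpha$ is again a minimal right $\add\PP_{\Lambda\otimes_{kQ}M}$-approximation of $\PP_{\Lambda\otimes_{kQ}V}$; since such approximations and their cones are unique up to isomorphism, the induced triangle is isomorphic to the defining triangle of $B_{\Lambda\otimes_{kQ}M}^{\Lambda\otimes_{kQ}V}$, whence $\Lambda\otimes_{kQ}\PP_{B_M^V}\cong\PP_{B_{\Lambda\otimes_{kQ}M}^{\Lambda\otimes_{kQ}V}}$. Applying $H^0$ and using that the exact induction functor commutes with $H^0$ then gives $\Lambda\otimes_{kQ}B_M^V\cong B_{\Lambda\otimes_{kQ}M}^{\Lambda\otimes_{kQ}V}$.

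For case (b), where $V = Q[1]$, I must show that applying induction to the minimal left $\P({^\perp\tau}M)$-approximation $a_Q\colon Q\to B_Q$ yields a minimal left $\P({^\perp\tau}(\Lambda\otimes_{kQ}M))$-approximation $\Lambda\otimes_{kQ}a_Q\colon\Lambda\otimes_{kQ}Q\to\Lambda\otimes_{kQ}B_Q$, whose target is then $B_{\Lambda\otimes_{kQ}M}^{\Lambda\otimes_{kQ}V}$ by construction (b). The paper only supplies the right-approximation statement (Lemma \ref{induction_preserves_approximations}), so here I would argue directly: using that every object of $\P({^\perp\tau}(\Lambda\otimes_{kQ}M))$ is induced together with the Hom-tensor decomposition of Proposition \ref{HomExt_tensor_product}, every morphism $\Lambda\otimes_{kQ}Q\to\Lambda\otimes_{kQ}B'$ is a sum of pure tensors $r\otimes g$ with $g\in\Hom_{kQ}(Q,B')$, and factoring each $g$ through $a_Q$ shows that $\Lambda\otimes_{kQ}a_Q$ is a left approximation.

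The step I expect to be the main obstacle is the minimality in case (b), which I would settle using the dual of Lemma \ref{right-min}. Since $B_Q$ is $\tau$-rigid, $\Lambda\otimes_{kQ}B_Q$ is $\tau$-rigid with all indecomposable summands induced (Proposition \ref{tau-rigid-Lambda1:1tau-rigid-kQ}), so by Krull--Schmidt every direct summand of $\Lambda\otimes_{kQ}B_Q$ is of the form $\Lambda\otimes_{kQ}B''$ for a summand $B''\mid B_Q$, and its projection is $\id_R\otimes p$ for the corresponding projection $p\colon B_Q\to B''$. A nonzero summand receiving the zero map would force $\id_R\otimes(p\,a_Q) = 0$, hence $p\,a_Q = 0$ by the faithfulness of induction (Remark \ref{res(ind)}), contradicting the left-minimality of $a_Q$ via the dual of Lemma \ref{right-min}. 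This establishes minimality and completes case (b), finishing the proof.
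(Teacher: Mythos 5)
Your case (a) is exactly the paper's argument (apply the triangulated induction functor to the defining triangle, use Lemma \ref{2-term_and_iduction}(i) and Lemma \ref{induction_preserves_approximations}(ii), then take $H^0$), and your reduction of case (b) to the claim that $\Lambda\otimes_{kQ}a_Q$ is a minimal left $\P({^\perp\tau}(\Lambda\otimes_{kQ}M))$-approximation, via $\P({^\perp\tau}(\Lambda\otimes_{kQ}M)) = \Lambda\otimes_{kQ}\P({^\perp\tau}M)$ from Corollary \ref{Bongartz_commutes_with_induction}, is also the paper's. The difference is that the paper disposes of this claim by citing the dual of Lemma \ref{induction_preserves_approximations}(i), whereas you prove it from scratch; your verification of the approximation property (decompose any map $\Lambda\otimes_{kQ}Q\to\Lambda\otimes_{kQ}B'$ as a sum of tensors $r\otimes g$ and factor each $g$ through $a_Q$) is correct.

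Your minimality argument in case (b), however, has a genuine gap. You assert that every direct summand of $\Lambda\otimes_{kQ}B_Q$ is of the form $\Lambda\otimes_{kQ}B''$ \emph{with projection} $\id_R\otimes p$. Krull--Schmidt identifies the isomorphism type of the summand, but not the shape of the projection: the idempotent cutting out an arbitrary summand lives in $\End_\Lambda(\Lambda\otimes_{kQ}B_Q)\cong R\otimes\End_{kQ}(B_Q)$ and is in general a sum $\sum_i r_i\otimes \varphi_i$, not a pure tensor. Since left minimality (dual of Lemma \ref{right-min}) quantifies over \emph{all} direct sum decompositions of the target, ruling out only the induced decomposition proves nothing; for instance, the diagonal embedding $X\to X\oplus X$ is killed by neither projection of the standard decomposition, yet it is not left minimal (the anti-diagonal summand receives zero). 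So the faithfulness step, which applies only to composites of the form $\id_R\otimes(p\,a_Q)$, never reaches the idempotent you actually need to exclude. A correct repair uses locality of $R$ rather than faithfulness: suppose $e\in\End_\Lambda(\Lambda\otimes_{kQ}B_Q)$ is a nonzero idempotent with $e\circ(\id_R\otimes a_Q)=0$, and write $e = 1\otimes e_0 + e'$ with $e'\in\mathfrak{m}\otimes\End_{kQ}(B_Q)$, where $\mathfrak{m}$ is the maximal ideal of $R$. This ideal is nilpotent, so $e_0$, the image of $e$ in $(R/\mathfrak{m})\otimes\End_{kQ}(B_Q)=\End_{kQ}(B_Q)$, is an idempotent which is nonzero (an idempotent lying in a nilpotent ideal vanishes); comparing coefficients along a $k$-basis of $R$ adapted to $k\oplus\mathfrak{m}$ in the equation $e\circ(\id_R\otimes a_Q)=0$ gives $e_0\circ a_Q=0$, contradicting the left minimality of $a_Q$ by the dual of Lemma \ref{right-min}. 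Alternatively, you can simply invoke the dual of Lemma \ref{induction_preserves_approximations}(i), as the paper does.
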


\begin{proof}
    By Proposition \ref{co-Bongartz_complement}, $V$ is an indecomposable direct summand of $C_M$ (respectively, $Q_M[1]$) if and only if $\Lambda\otimes_{kQ}V$ is an indecomposable direct summand of $C_{\Lambda\otimes_{kQ}M}$ (respectively, $Q_{\Lambda\otimes_{kQ}M}[1]$). We distinguish two cases. 

    First assume $\Lambda\otimes_{kQ}V\in \Gen(\Lambda\otimes_{kQ}M)$ and let $\alpha': \PP_{M'}\to \PP_V$ be a right minimal $\add\PP_M$-approximation in $K^b(\proj kQ)$. Then, by Lemma \ref{induction_preserves_approximations}(ii), $\alpha := \Lambda\otimes_{kQ}\alpha': \Lambda\otimes_{kQ}\PP_{M'}\to \Lambda\otimes_{kQ}\PP_V$ is a right minimal $(\Lambda\otimes_{kQ}\add\PP_M) = \add(\Lambda\otimes_{kQ}\PP_M)$-approximation in $K^b(\proj\Lambda)$. By Lemma \ref{2-term_and_iduction}(i) this is equivalent to $\alpha: \PP_{\Lambda\otimes_{kQ}M'}\to \PP_{\Lambda\otimes_{kQ}V}$ being a right minimal $\add\PP_{\Lambda\otimes_{kQ}M}$-approximation in $K^b(\proj\Lambda)$. Complete $\alpha$ to a triangle 
    $$\PP_{B_{\Lambda\otimes_{kQ}M}^{\Lambda\otimes_{kQ}V}}\to \PP_{\Lambda\otimes_{kQ}M'}\xrightarrow{\alpha}\PP_{\Lambda\otimes_{kQ}V}\to .$$
    Then, $$\PP_{B_{\Lambda\otimes_{kQ}M}^{\Lambda\otimes_{kQ}V}} = \cone(\alpha)[-1] = \cone(\Lambda\otimes_{kQ}\alpha')[-1] = (\Lambda\otimes_{kQ}\cone(\alpha'))[-1].$$
    Hence, we obtain
    \begin{align*}
        B_{\Lambda\otimes_{kQ}M}^{\Lambda\otimes_{kQ}V} &= H^0(\cone(\alpha)[-1]) &&\text{by \cite[Prop. 3.7(a)]{tauExcSeq_BM}}\\
        &= H^0(\Lambda\otimes_{kQ}\cone(\alpha')[-1])\\
        &= \Lambda\otimes_{kQ}H^0(\cone(\alpha')[-1]) && \text{$H^0$ commutes with exact functors}\\ 
        &= \Lambda\otimes_{kQ}H^0\Big(\PP_{B_{M}^{V}}\Big)\\
        &= \Lambda\otimes_{kQ}B_M^V &&\text{by \cite[Prop. 3.7(a)]{tauExcSeq_BM}}.
    \end{align*}   
    This proves the first case. 

    Now suppose $\Lambda\otimes_{kQ}V = (\Lambda\otimes_{kQ}Q)[1]$, where $Q$ is an indecomposable direct summand of $Q_M$. Let $a_{\Lambda\otimes_{kQ}Q}: \Lambda\otimes_{kQ}Q\to B_{\Lambda\otimes_{kQ}Q}$ be a minimal left $\P({^\perp\tau}(\Lambda\otimes_{kQ}M))$-approximation. Then,  $B_{\Lambda\otimes_{kQ}M}^{\Lambda\otimes_{kQ}V}\cong B_{\Lambda\otimes_{kQ}Q}$ by \cite[Def. 4.10(ii), Rmk. 4.11]{tauExcSeq_BM}. Notice that Corollary \ref{Bongartz_commutes_with_induction} implies $\P({^\perp\tau}(\Lambda\otimes_{kQ}M)) = \Lambda\otimes_{kQ}\P({^\perp\tau}M)$. Let $a_Q : Q\to B_Q$ be a minimal left $\P({^\perp\tau M})$-approximation. Using \cite[Def. 4.10(ii), Rmk 4.11]{tauExcSeq_BM} again, we have that $B_M^V\cong B_Q$. Because $a_Q : Q\to B_Q$ is a minimal left $\P({^\perp\tau M})$-approximation, it follows that $\Lambda\otimes_{kQ}a_Q: \Lambda\otimes_{kQ}Q\to\Lambda\otimes_{kQ}B_M^V$ is a minimal left $\Lambda\otimes_{kQ}\P({^\perp\tau M})=\P({^\perp\tau}(\Lambda\otimes_{kQ}M))$-approximation by the dual of Lemma \ref{induction_preserves_approximations}(i). By the minimality of $a_{\Lambda\otimes_{kQ}Q}$ we conclude $\Lambda\otimes_{kQ}B_M^Q\cong B_{\Lambda\otimes_{kQ}M}^{\Lambda\otimes_{kQ}Q}$. This proves the second case and finishes the proof. 
    \end{proof}

We are now ready to state and prove an "$\Eps$-version" of Proposition \ref{tau_rigid_in_J(M)_1:1_tau_rigid_in_J(Lambda_otimes_kQM)}. 

\begin{proposition}\label{E-version3.7}
    Let $U = M\oplus P[1]\in \C(kQ)$ be support $\tau$-rigid and let $V\in \ind \C(kQ)$ be such that $U\oplus V$ is also support $\tau$-rigid in $\C(kQ)$. Then, $\Lambda\otimes_{kQ}(U\oplus V)$ is support $\tau$-rigid in $\C(\Lambda)$ and there is an isomorphism
    \begin{equation}\label{E-map}
         \Eps_{\Lambda\otimes_{kQ}U}(\Lambda\otimes_{kQ}V)\cong \Lambda\otimes_{kQ}\Eps_U(V)
    \end{equation}
    in $\C(\Lambda)$. In other words, there is a commutative diagram of bijections 
    \[
\begin{tikzcd}[ampersand replacement=\&, cramped, row sep=large, column sep=scriptsize, 
every matrix/.append style={font=\small}]
{\left\{ V \in\ind\C(kQ)\;\middle|\; \begin{tabular}{@{}l@{}} $V\oplus U$ is support \\ $\tau_{kQ}$-rigid \end{tabular} \right\}} \&\& {\left\{W\in \ind \C(J(U))\;\middle|\; \begin{tabular}{@{}l@{}} $W$ is support \\$\tau_{J(U)}$-rigid \end{tabular} \right\}} \\
{\left\{ Y \in \ind\C(\Lambda) \;\middle|\; \begin{tabular}{@{}l@{}} $Y\oplus(\Lambda\otimes_{kQ}U)$ is \\ support $\tau_{\Lambda}$-rigid \end{tabular} \right\}} \&\& {\left\{Z\in \ind \C(J(\Lambda\otimes_{kQ}U))\;\middle|\; \begin{tabular}{@{}l@{}} $Z$ is support \\$\tau_{J(\Lambda\otimes_{kQ}U)}$-rigid \end{tabular} \right\}}.
\arrow["{\mathcal{E}_U}", from=1-1, to=1-3]
\arrow["{\Lambda\otimes_{kQ}-}"', from=1-1, to=2-1]
\arrow["{\Lambda\otimes_{kQ}-}", from=1-3, to=2-3]
\arrow["{\mathcal{E}_{\Lambda\otimes_{kQ}U}}", from=2-1, to=2-3]
\end{tikzcd}
\]
\end{proposition}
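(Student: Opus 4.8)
The plan is to dispose of the support $\tau$-rigidity claim first and then to establish the isomorphism \eqref{E-map} by running through the trichotomy of Theorem \ref{BM_bijection}; commutativity of the square will be a formal consequence. For the first claim, I would write the module/projective decomposition $U\oplus V = M'\oplus P'[1]$ with $M'$ a $\tau$-rigid $kQ$-module and $P'$ projective satisfying $\Hom_{kQ}(P',M')=0$. Then $(\Lambda\otimes_{kQ}M',\Lambda\otimes_{kQ}P')$ is a $\tau$-rigid pair in $\modd\Lambda$ by Corollary \ref{stautitl(kQ)1:1stautilt(Lambda)}(i) (equivalently via Corollary \ref{inductionPreservesTauRigidModules}), so $\Lambda\otimes_{kQ}(U\oplus V)$ is support $\tau$-rigid in $\C(\Lambda)$; in particular the horizontal maps $\Eps_U$ and $\Eps_{\Lambda\otimes_{kQ}U}$ are well defined and bijective by Theorem \ref{BM_bijection}.

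The core is \eqref{E-map}, which I would verify case by case. Suppose first that $V$ is a module with $V\notin\Gen M$, so that $\Eps_U(V)=\Eps_M(V)=f_MV$ by Theorem \ref{BM_bijection}(c). On the $\Lambda$-side I first check $\Lambda\otimes_{kQ}V\notin\Gen(\Lambda\otimes_{kQ}M)$: an epimorphism $(\Lambda\otimes_{kQ}M)^r\twoheadrightarrow\Lambda\otimes_{kQ}V$ would, after restriction of scalars, give an epimorphism $M^{dr}\twoheadrightarrow V^d$ (Remark \ref{res(ind)}), contradicting $V\notin\Gen M$. Hence Theorem \ref{BM_bijection}(c) applies again to give $\Eps_{\Lambda\otimes_{kQ}U}(\Lambda\otimes_{kQ}V)=f_{\Lambda\otimes_{kQ}M}(\Lambda\otimes_{kQ}V)$, which is isomorphic to $\Lambda\otimes_{kQ}f_MV=\Lambda\otimes_{kQ}\Eps_U(V)$ by the functorial isomorphism of Proposition \ref{tau_rigid_in_J(M)_1:1_tau_rigid_in_J(Lambda_otimes_kQM)}. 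This settles the case in which $\Eps_U(V)$ is a module.

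The remaining cases are those in which $\Eps_U(V)\in(\proj J(U))[1]$, namely $V\in\Gen M$ or $V\in(\proj kQ)[1]$ (Theorem \ref{BM_bijection}(a)). Here $V$ is an indecomposable summand of the co-Bongartz complement $C_M\oplus Q_M[1]$ of $M$ (since $M\oplus V$ is $\tau$-rigid with $\Gen(M\oplus V)=\Gen M$, resp.\ $Q\in\proj kQ\cap{}^\perp M$), and $\Eps_U(V)$ is the shift of the indecomposable projective object of $J(U)$ determined by the summand $B_M^V$ of the Bongartz complement $B_M$. The plan is to transport each ingredient along $\Lambda\otimes_{kQ}-$: Proposition \ref{co-Bongartz_complement} matches the co-Bongartz summands, Lemma \ref{direct_summand_of_co-Bongartz_commutes_with_induction} gives $\Lambda\otimes_{kQ}B_M^V\cong B_{\Lambda\otimes_{kQ}M}^{\Lambda\otimes_{kQ}V}$, and Proposition \ref{indprojJ(M)1-1indprojJ(Lambda_otimes_M)} identifies the indecomposable projective objects of $J(U)$ with those of $J(\Lambda\otimes_{kQ}U)$ under the induction functor. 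To invoke the last proposition, which is phrased for a $\tau$-rigid module, I would first replace $U$ by the genuine module $\overline U=P\oplus\Eps_P^{-1}(M)$, using $J(U)=J(\overline U)$ and $J(\Lambda\otimes_{kQ}U)=J(\Lambda\otimes_{kQ}\overline U)$ from Lemmas \ref{two_def_of_J} and \ref{J(LambdaxU)=J(LambdaxUbar)}. Chaining these identifications yields $\Lambda\otimes_{kQ}\Eps_U(V)\cong\Eps_{\Lambda\otimes_{kQ}U}(\Lambda\otimes_{kQ}V)$, proving \eqref{E-map} in these cases as well.

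Finally, \eqref{E-map} is exactly the statement that the square commutes. Since the top and bottom maps are bijections (Theorem \ref{BM_bijection}), the left vertical map $\Lambda\otimes_{kQ}-$ is a bijection on indecomposable support $\tau$-rigid summands (Proposition \ref{tau-rigid-Lambda1:1tau-rigid-kQ} on modules and the correspondence of indecomposable projectives, together with Corollary \ref{stautitl(kQ)1:1stautilt(Lambda)}(i) to transfer the ``$\oplus U$ support $\tau$-rigid'' condition), and the right vertical map is the bijection of Proposition \ref{supp_tau_J(U)-rigid1:1supp_tau_J(Lambda_x_U)-rigid}, the diagram is a commutative square of bijections, as claimed. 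I expect the main obstacle to be the shifted-projective cases of the third paragraph: one must pin down the precise description of $\Eps_U(V)$ as the projective object of $J(U)$ (rather than of $J(M)$) attached to $B_M^V$, and verify that the reduction to the module $\overline U$ is compatible with the $B_M^V$-construction so that Proposition \ref{indprojJ(M)1-1indprojJ(Lambda_otimes_M)} applies verbatim; by contrast, the module case (c) and the support $\tau$-rigidity claim are comparatively routine.
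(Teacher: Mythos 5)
Your overall route coincides with the paper's: support $\tau$-rigidity of $\Lambda\otimes_{kQ}(U\oplus V)$ via Corollary \ref{stautitl(kQ)1:1stautilt(Lambda)}, a case analysis along the trichotomy of Theorem \ref{BM_bijection}, and a formal conclusion for the square of bijections. Your first two paragraphs (support $\tau$-rigidity, and the module case $V\notin\Gen M$ via $f_{\Lambda\otimes_{kQ}M}(\Lambda\otimes_{kQ}V)\cong\Lambda\otimes_{kQ}f_MV$) are correct and are essentially identical to the paper's proof, as is the closing argument deducing bijectivity of the left vertical map from the other three.

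However, in the shifted-projective cases ($V\in\Gen M$ or $V\in(\proj kQ)[1]$) there is a genuine gap, and it sits exactly where you flagged it. Two ingredients are missing. First, since $U=M\oplus P[1]$ has a projective part, the description of $\Eps_U(V)$ is not in terms of the global Bongartz complement $B_M$: the paper first applies Theorem \ref{Eps_{U+V}} together with Theorem \ref{BM_bijection}(b) to reduce $\Eps_U$ to $\Eps^{J(P)}_M$, and then uses the formula $\Eps^{J(P)}_M(V)=f^{J(P)}_M\bigl(B_M^V\bigr)[1]$ from \cite[Prop.~5.6]{tauExcSeq_BM}, where $B_M^V$ is a summand of the Bongartz complement of $M$ computed \emph{inside} $J(P)$ and $f^{J(P)}_M$ is the torsion-free functor of $J(P)$; neither the reduction nor this relative formula appears in your outline. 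Second, and more substantively, matching the \emph{sets} $\mathrm{ind.proj}\,J(U)$ and $\mathrm{ind.proj}\,J(\Lambda\otimes_{kQ}U)$ via Proposition \ref{indprojJ(M)1-1indprojJ(Lambda_otimes_M)} (applied to $\overline{U}$) cannot close the argument: a bijection between the two sets of projectives does not tell you that the particular projective $f^{J(P)}_M(B_M^V)$ attached to $V$ is sent to the particular projective $f^{J(\Lambda\otimes_{kQ}P)}_{\Lambda\otimes_{kQ}M}\bigl(B^{\Lambda\otimes_{kQ}V}_{\Lambda\otimes_{kQ}M}\bigr)$ attached to $\Lambda\otimes_{kQ}V$. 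What is needed is that induction commutes with the \emph{relative} torsion-free functor, i.e. $f^{J(\Lambda\otimes_{kQ}P)}_{\Lambda\otimes_{kQ}M}(\Lambda\otimes_{kQ}-)\cong\Lambda\otimes_{kQ}f^{J(P)}_M(-)$; the paper obtains this from Proposition \ref{tau_rigid_in_J(M)_1:1_tau_rigid_in_J(Lambda_otimes_kQM)} together with Corollary \ref{transport_of_structure}, and only then can Lemma \ref{direct_summand_of_co-Bongartz_commutes_with_induction} (applied inside $J(P)$, which is legitimate because $J(P)\simeq\modd kQ'$ and $J(\Lambda\otimes_{kQ}P)\simeq\modd(R\otimes kQ')$ by Theorem \ref{J(Lambda_otimes_kQ)=mod(R_otimes_kQ')}) finish the chain of isomorphisms. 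Without these two steps your ``chaining'' stops one isomorphism short of \eqref{E-map}.
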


\begin{proof}
    Let $U = M\oplus P[1]\in \C(kQ)$ be support $\tau$-rigid and let $V\in \ind \C(kQ)$ be such that $U\oplus V$ is also support $\tau$-rigid in $\C(kQ)$. Then, $\Lambda\otimes_{kQ}(U\oplus V)$ is support $\tau$-rigid in $\C(\Lambda)$ by Corollary \ref{stautitl(kQ)1:1stautilt(Lambda)}. We distinguish two cases.
    
   First, suppose $V\in \ind\modd kQ$ and $V\notin \Gen M$. Then $\Lambda\otimes_{kQ}V\in \ind\modd kQ$ and $\Lambda\otimes_{kQ}V\notin \Gen(\Lambda\otimes_{kQ}M)$. Hence, combining Proposition \ref{tau_rigid_in_J(M)_1:1_tau_rigid_in_J(Lambda_otimes_kQM)} and Theorem \ref{BM_bijection}(c), we obtain 
        \begin{align*}
            \Eps_{\Lambda\otimes_{kQ}U}(\Lambda\otimes_{kQ}V) &= \Eps_{\Lambda\otimes_{kQ}M}(\Lambda\otimes_{kQ}V)\\
            &= f_{\Lambda\otimes_{kQ}M}(\Lambda\otimes_{kQ}V)\\
            &\cong \Lambda\otimes_{kQ}f_M V\\
            &= \Lambda\otimes_{kQ}\Eps_M(V)\\
            &= \Lambda\otimes_{kQ}\Eps_U(V). 
        \end{align*}
    Now, consider the case where $V\in \Gen M$ or $V\in (\proj kQ)[1]$. Then $\Lambda\otimes_{kQ}V\in \Gen(\Lambda\otimes_{kQ}M)$ or $\Lambda\otimes_{kQ}V\in (\proj\Lambda)[1]$. 
    
    By \cite[Prop. 5.6]{tauExcSeq_BM} (see also \cite[Thm. 5.1(1)(b)-(c)]{tau-perpendicular_wide_subategories}) there exists an indecomposable direct summand 
    $B_{M}^{V}$ of the Bongartz complement of $M$ in $J(P)$ such that
    \begin{equation}\label{EpsEquation1}
        \Eps_{M}^{J(P)}(V) = f_{M}^{J(P)}\Big(B_{M}^{V}\Big)[1],
    \end{equation}
    and similarly, there exists an indecomposable direct summand
    $B_{\Lambda\otimes_{kQ}M}^{\Lambda\otimes_{kQ}V}$ of the Bongartz complement of $\Lambda\otimes_{kQ}M$ in $J(\Lambda\otimes_{kQ}P)$ such that
    \begin{equation}\label{EpsEquation2}
        \Eps_{\Lambda\otimes_{kQ}M}^{J(\Lambda\otimes_{kQ}P)}(\Lambda\otimes_{kQ}V) = f_{\Lambda\otimes_{kQ}M}^{J(\Lambda\otimes_{kQ}P)}\Big(B_{\Lambda\otimes_{kQ}M}^{\Lambda\otimes_{kQ}V}\Big)[1],
    \end{equation}
    where $f_M^{J(P)}$ and $f_{\Lambda\otimes_{kQ}M}^{J(\Lambda\otimes_{kQ}P)}$ denote the torsion-free functor in $J(P)$ and $J(\Lambda\otimes_{kQ}P)$, respectively.
    
    Consider the case where $V\in\Gen M$ and therefore $\Lambda\otimes_{kQ}V\in\Gen(\Lambda\otimes_{kQ}M)$. Then, 
    \vfill
    \begin{align*}
        \Eps_{\Lambda\otimes_{kQ}U}(\Lambda\otimes_{kQ}V) &= \Eps_{(\Lambda\otimes_{kQ}P)[1]\oplus(\Lambda\otimes_{kQ}M)}(\Lambda\otimes_{kQ}V)\\[8pt]
        &= \Eps_{\Eps_{(\Lambda\otimes_{kQ}P)[1]}(\Lambda\otimes_{kQ}M)}^{J((\Lambda\otimes_{kQ}P)[1])}\Big (\Eps_{(\Lambda\otimes_{kQ}P)[1]}(\Lambda\otimes_{kQ}V)\Big ) &&\text{by Theorem \ref{Eps_{U+V}}}\\[8pt]
        &= \Eps_{\Lambda\otimes_{kQ}M}^{J(\Lambda\otimes_{kQ}P)}(\Lambda\otimes_{kQ}V) &&\text{by Theorem \ref{BM_bijection}(b)}\\[8pt]
        &= f_{\Lambda\otimes_{kQ}M}^{J(\Lambda\otimes_{kQ}P)}\Big(B_{\Lambda\otimes_{kQ}M}^{\Lambda\otimes_{kQ}V}\Big)[1] &&\text{by Equation \eqref{EpsEquation2}}\\[8pt]
        &\cong f_{\Lambda\otimes_{kQ}M}^{J(\Lambda\otimes_{kQ}P)}\Big(\Lambda\otimes_{kQ}B_M^{V}\Big)[1] &&\text{by Lemma \ref{direct_summand_of_co-Bongartz_commutes_with_induction}}\\[8pt]
        &\cong \Lambda\otimes_{kQ} f_M^{J(P)}(B_M^{V})[1] &&\text{by Prop. \ref{tau_rigid_in_J(M)_1:1_tau_rigid_in_J(Lambda_otimes_kQM)} and Cor. \ref{transport_of_structure}}\\[8pt]
        &= \Lambda\otimes_{kQ}\Eps_M^{J(P[1])}(V) &&\text{by Equation \eqref{EpsEquation1}}\\[8pt]
        &= \Lambda\otimes_{kQ}\Eps_{\Eps_{P[1]}(M)}^{J(P[1])}\Big(\Eps_{P[1]}(V)\Big) && \text{by Theorem \ref{BM_bijection}(b)}\\[8pt]
        &= \Lambda\otimes_{kQ}\Eps_{P[1]\oplus M}(V) &&\text{by Theorem \ref{Eps_{U+V}}}\\[8pt]
        &= \Lambda\otimes_{kQ}\Eps_{U}(V).
    \end{align*}
    \vspace{\fill}  
    
    Now consider the case where $V=Q[1]\in (\proj kQ)[1]$ and thus $\Lambda\otimes_{kQ}V = (\Lambda\otimes_{kQ}Q)[1]\in (\proj\Lambda)[1]$. 
    Then, 
    \begin{align*}
        \Eps_{\Lambda\otimes_{kQ}U}(\Lambda\otimes_{kQ}V) &= \Eps_{(\Lambda\otimes_{kQ}P)[1]\oplus(\Lambda\otimes_{kQ}M)}(\Lambda\otimes_{kQ}V)\\[8pt]
        &= \Eps_{\Eps_{(\Lambda\otimes_{kQ}P)[1]}(\Lambda\otimes_{kQ}M)}^{J((\Lambda\otimes_{kQ}P)[1])}\Big (\Eps_{(\Lambda\otimes_{kQ}P)[1]}\big((\Lambda\otimes_{kQ}Q)[1]\big)\Big ) &&\text{by Theorem \ref{Eps_{U+V}}}\\[8pt]
        &=  \Eps_{\Lambda\otimes_{kQ}M}^{J(\Lambda\otimes_{kQ}P)}\Big(f_{\Lambda\otimes_{kQ}P}(\Lambda\otimes_{kQ}Q)[1]\Big) &&\text{by \cite[Thm. 5.10a]{tauExcSeq_BM} and Thm. \ref{BM_bijection}(b)}\\[8pt]
        &\cong \Eps_{\Lambda\otimes_{kQ}M}^{J(\Lambda\otimes_{kQ}P)}\Big(\Lambda\otimes_{kQ}f_P(Q)[1]\Big) &&\text{ by Proposition \ref{tau_rigid_in_J(M)_1:1_tau_rigid_in_J(Lambda_otimes_kQM)}}\\[8pt]
        &= f_{\Lambda\otimes_{kQ}M}^{J(\Lambda\otimes_{kQ}P)}\Big(B_{\Lambda\otimes_{kQ}M}^{\Lambda\otimes_{kQ}f_PQ}\Big)[1] && \text{ by Equation \eqref{EpsEquation2}}\\[8pt]
        &\cong f_{\Lambda\otimes_{kQ}M}^{J(\Lambda\otimes_{kQ}P)}\Big(\Lambda\otimes_{kQ}B_M^{f_PQ}\Big)[1] &&\text{ by Lemma \ref{direct_summand_of_co-Bongartz_commutes_with_induction}}\\[8pt]
        &\cong \Lambda\otimes_{kQ} f_M^{J(P)}\Big(B_M^{f_PQ}\Big)[1] &&\text{ by Prop. \ref{tau_rigid_in_J(M)_1:1_tau_rigid_in_J(Lambda_otimes_kQM)} and Cor. \ref{transport_of_structure}}\\[8pt]
        &= \Lambda\otimes_{kQ}\Eps_{M}^{J(P[1])}\Big(f_P(Q)[1]\Big) &&\text{ by Equation \eqref{EpsEquation1}}\\[8pt]
        &= \Lambda\otimes_{kQ}\Eps_M^{J(P[1])}\Big(\Eps_{P[1]}(Q[1])\Big) &&\text{ by \cite[Prop. 5.10a]{tauExcSeq_BM}}\\[8pt]
        &= \Lambda\otimes_{kQ}\Eps_{\Eps_{P[1]}(M)}^{J(P[1])}\Big(\Eps_{P[1]}(V)\Big) &&\text{ by Theorem \ref{BM_bijection}(b)}\\[8pt]
        &= \Lambda\otimes_{kQ}\Eps_{P[1]\oplus M}(V) &&\text{ by Theorem \ref{Eps_{U+V}}}\\[8pt]
        &= \Lambda\otimes_{kQ}\Eps_U(V).
    \end{align*}
    Note that the map
    $${\left\{ V \in\ind\C(kQ)\;\middle|\; \begin{tabular}{@{}l@{}} $V\oplus U$ is support \\ $\tau_{kQ}$-rigid \end{tabular} \right\}}\xrightarrow{\Lambda\otimes_{kQ}-} {\left\{ Y \in \ind\C(\Lambda) \;\middle|\; \begin{tabular}{@{}l@{}} $Y\oplus(\Lambda\otimes_{kQ}U)$ is \\ support $\tau_{\Lambda}$-rigid \end{tabular} \right\}}$$
    is well-defined by Corollary \ref{stautitl(kQ)1:1stautilt(Lambda)}. Moreover, the isomorphism \eqref{E-map} ensures that the diagram above commutes. Hence, since both the horizontal maps and the right vertical map are bijections by Theorem \ref{BM_bijection} and Proposition \ref{supp_tau_J(U)-rigid1:1supp_tau_J(Lambda_x_U)-rigid}, respectively, then so is the above map. This concludes the proof. 
\end{proof}

The induction functor is compatible with the bijection $\varphi_t$ in the following way. 

\begin{proposition}\label{induction_commutes_with_varphi}
    Let $t\in \{1,\cdots,n\}$. Then, there is a commutative diagram of bijections
\[
\begin{tikzcd}[ampersand replacement=\&, cramped, row sep=large, column sep=3em, 
every matrix/.append style={font=\small}]
{\left\{ \begin{tabular}{@{}c@{}} 
    signed $\tau$-exceptional sequences \\ 
    in $\C(kQ)$ of length $t$  
\end{tabular} \right\}} 
\&\& 
{\left\{ \begin{tabular}{@{}c@{}} 
    support $\tau$-rigid objects in \\ 
    $\C(kQ)$ of length $t$ 
\end{tabular} \right\}} 
\\
{\left\{ \begin{tabular}{@{}c@{}} 
    signed $\tau$-exceptional sequences \\ 
    in $\C(\Lambda)$ of length $t$  
\end{tabular} \right\}} 
\&\& 
{\left\{ \begin{tabular}{@{}c@{}} 
    support $\tau$-rigid objects in \\ 
    $\C(\Lambda)$ of length $t$ 
\end{tabular} \right\}}. 
\arrow["{\varphi_t}", shift left, from=1-1, to=1-3]
	\arrow["{\Lambda\otimes_{kQ}-}"', from=1-1, to=2-1]
	\arrow["{\psi_t}", shift left, from=1-3, to=1-1]
	\arrow["{\Lambda\otimes_{kQ}-}", from=1-3, to=2-3]
	\arrow["{\varphi_t}", shift left, from=2-1, to=2-3]
	\arrow["{\psi_t}", shift left, from=2-3, to=2-1]
\end{tikzcd}
\]

\end{proposition}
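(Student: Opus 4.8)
The plan is to prove that the square formed by the two $\psi_t$-arrows commutes, i.e.\ that $\Lambda\otimes_{kQ}\psi_t(\mathbf{T})\cong\psi_t(\Lambda\otimes_{kQ}\mathbf{T})$ for every ordered support $\tau$-rigid object $\mathbf{T}=(\T_1,\dots,\T_t)$ in $\C(kQ)$; since $\varphi_t$ and $\psi_t$ are mutually inverse on both levels (Theorem \ref{main_BM} and Remark \ref{BM_inverse_bijection}), commutativity of the $\psi_t$-square is formally equivalent to commutativity of the $\varphi_t$-square, so it suffices to treat one of them. First I would record that both vertical maps are bijections: the left one by Corollary \ref{bijection_signed_tau_exceptional_sequences}, and the right one by applying Corollary \ref{stautitl(kQ)1:1stautilt(Lambda)}(i) componentwise, since the induction functor preserves indecomposability, commutes with direct sums, and sends ordered support $\tau$-rigid objects to ordered support $\tau$-rigid objects of the same length.

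The heart of the argument is an unwinding of the explicit recursion for $\psi_t$ from Remark \ref{BM_inverse_bijection}. Write $\psi_t(\mathbf{T})=(\U_1,\dots,\U_t)$ with the associated chain $\W_t=\modd kQ\supseteq\W_{t-1}\supseteq\cdots$ of $\tau$-perpendicular subcategories and $\U_i=\Eps_{\U_{i+1}}^{\W_{i+1}}\cdots\Eps_{\U_t}^{\W_t}(\T_i)$, and write $\psi_t(\Lambda\otimes_{kQ}\mathbf{T})=(\U_1',\dots,\U_t')$ with the analogous chain $\W_t'=\modd\Lambda\supseteq\W_{t-1}'\supseteq\cdots$ computed over $\Lambda$. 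I would prove by downward induction on $i$ the two statements that $\U_i'\cong\Lambda\otimes_{kQ}\U_i$ and that $\W_i'$ is the $\tau$-perpendicular subcategory of $\modd\Lambda$ obtained from $\W_i$ by induction. The base case $i=t$ is immediate from $\W_t'=\modd\Lambda$ and $\U_t'=\Lambda\otimes_{kQ}\T_t$. For the inductive step, the subcategory statement $\W_i'=J_{\W_{i+1}'}(\Lambda\otimes_{kQ}\U_{i+1})$ being the induction of $\W_i=J_{\W_{i+1}}(\U_{i+1})$ follows from the way induction commutes with the formation of $\tau$-perpendicular subcategories (Theorem \ref{J(Lambda_otimes_kQ)=mod(R_otimes_kQ')} together with Lemma \ref{tau-rigid_in_J(X)_iff_taurigid_in_J(Lambda_otimes_X)}(i)), while the object statement follows by pushing the induction functor through each factor $\Eps_{\U_j}^{\W_j}$ using the $\Eps$-version Proposition \ref{E-version3.7}: repeatedly applying the isomorphism $\Eps_{\Lambda\otimes_{kQ}\U_j}^{\W_j'}(\Lambda\otimes_{kQ}-)\cong\Lambda\otimes_{kQ}\Eps_{\U_j}^{\W_j}(-)$ collapses the induced composite to $\Lambda\otimes_{kQ}\U_i$.

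The main obstacle is that Proposition \ref{E-version3.7} and Proposition \ref{supp_tau_J(U)-rigid1:1supp_tau_J(Lambda_x_U)-rigid} are stated with ambient category $\modd kQ$, whereas in the recursion the maps $\Eps_{\U_j}^{\W_j}$ are computed inside the proper $\tau$-perpendicular subcategories $\W_j$. To legitimately invoke them relatively, I would first show that the entire setup is stable under passage to $\tau$-perpendicular subcategories: by Theorem \ref{J(Lambda_otimes_kQ)=mod(R_otimes_kQ')} (applied iteratively) each $\W_j\subseteq\modd kQ$ is equivalent to $\modd\Gamma$ for a hereditary algebra $\Gamma$, hence --- $k$ being algebraically closed --- to $\modd kQ'$ for an acyclic quiver $Q'$; the corresponding $\W_j'$ is then equivalent to $\modd(R\otimes kQ')$, and Corollary \ref{transport_of_structure} shows that $\Lambda\otimes_{kQ}-\colon\W_j\to\W_j'$ is intertwined with the induction functor $\modd kQ'\to\modd(R\otimes kQ')$. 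Consequently every result of Sections \ref{Section1}--\ref{section4} that was proved for the pair $(kQ,\,R\otimes kQ)$ applies verbatim to the pair $(kQ',\,R\otimes kQ')$, which is exactly what is needed to run Proposition \ref{E-version3.7} inside each $\W_j$. Once this stability is in place the induction closes, the $\psi_t$-square commutes, and taking inverses gives the commuting $\varphi_t$-square; since all four maps are bijections, the displayed diagram is a commutative diagram of bijections.
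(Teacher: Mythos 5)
Your proposal is correct and follows essentially the same route as the paper: both commute the $\psi_t$-square by unwinding the recursion of Remark \ref{BM_inverse_bijection} and pushing the induction functor through the successive $\Eps$-maps via Proposition \ref{E-version3.7} and Corollary \ref{transport_of_structure}. The extra care you take in justifying the \emph{relative} use of Proposition \ref{E-version3.7} inside the subcategories $\W_j$ (via Theorem \ref{J(Lambda_otimes_kQ)=mod(R_otimes_kQ')} and the hereditary structure of each $\Gamma$) is exactly the role the paper's citation of Corollary \ref{transport_of_structure} plays, so this is a welcome elaboration rather than a different argument.
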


\begin{proof}
    The horizontal and vertical maps are bijections by Theorem \ref{main_BM}, Remark \ref{BM_inverse_bijection}, and Corollary \ref{bijection_signed_tau_exceptional_sequences}, respectively. Next, we show that the diagram above commutes using the bijection $\psi_t$. So let $(\T_1,\cdots, \T_t)$ be an ordered support $\tau$-rigid object in $\C(kQ)$. We want to compute $\Lambda\otimes_{kQ}(\psi_t(\T_1,\cdots,\T_t))$. By definition, $\psi_t(\T_1,\cdots,\T_t) = (\U_1,\cdots,\U_t)$, where $(\U_1,\cdots,\U_t)$ is as in Remark \ref{BM_inverse_bijection}. Let
    \begin{alignat*}{2}
    \W_t      &= \modd kQ              &\qquad \widetilde{\W_t}      &= \modd \Lambda\\
    \W_{t-1}  &= J_{W_t}(\U_t)   &\qquad  \widetilde{W_{t-1}}  &= J_{\widetilde{W_t}}(\Lambda\otimes_{kQ}\U_t) \\
    \W_{t-2}  &= J_{W_{t-1}}(\U_{t-1})   &\qquad  \widetilde{W_{t-2}}  &= J_{\widetilde{W_{t-1}}}(\Lambda\otimes_{kQ}\U_{t-1}) \\
    &\vdots &\vdots \\
    \W_{1}  &= J_{W_{2}}(\U_{2})   &\qquad  \widetilde{W_{1}}  &= J_{\widetilde{W_{2}}}(\Lambda\otimes_{kQ}\U_{2}) \\
\end{alignat*}

Then, using Proposition \ref{E-version3.7} and Corollary \ref{transport_of_structure}
\begin{alignat*}{2}
    \Lambda\otimes_{kQ}\U_t      &= \Lambda\otimes_{kQ} \T_t \\[8pt]
    \Lambda\otimes_{kQ}\U_{t-1}  &= \Lambda\otimes_{kQ}\Eps_{\U_t}^{\W_t}(\T_{t-1})\\ &= \Eps_{\Lambda\otimes_{kQ}\U_t}^{\widetilde{\W_t}}(\Lambda\otimes_{kQ}\T_{t-1})  \\[8pt]
    \Lambda\otimes_{kQ}\U_{t-2}  &= \Lambda\otimes_{kQ}\Eps_{\U_{t-1}}^{\W_{t-1}}\Eps_{\U_{t}}^{\W_t}(\T_{t-2})\\ &= \Eps_{\Lambda\otimes_{kQ}\U_{t-1}}^{\widetilde{\W_{t-1}}}\left(\Lambda\otimes_{kQ}\Eps_{\U_{t}}^{\W_t}(\T_{t-2})\right)  \\ &=\Eps_{\Lambda\otimes_{kQ}\U_{t-1}}^{\widetilde{\W_{t-1}}}\left( \Eps_{\Lambda\otimes_{kQ}\U_{t}}^{\widetilde{\W_{t}}}(\Lambda\otimes_{kQ}\T_{t-2})\right) \\
    &\vdots\\
    \Lambda\otimes_{kQ}\U_1 &=  \Eps_{\Lambda\otimes_{kQ}\U_{2}}^{\widetilde{\W_{2}}}\cdots\Eps_{\Lambda\otimes_{kQ}\U_{t-1}}^{\widetilde{\W_{t-1}}}\Eps_{\Lambda\otimes_{kQ}\U_t}^{\widetilde{\W_t}}(\Lambda\otimes_{kQ}\T_1).
\end{alignat*}
In other words, $\Lambda\otimes_{kQ}(\psi_t(\T_1,\cdots,\T_t)) = \psi_t(\Lambda\otimes_{kQ}(\T_1,\cdots,\T_t))$. The claim follows. 
\end{proof}

\begin{proposition}\label{functor_between_tau_cluster_morphism_categories}
    There is a functor $F: \Mcluster(kQ) \to \Mcluster(\Lambda)$ given by $$J(U)\mapsto F(J(U)):=J(\Lambda\otimes_{kQ}U)$$ on objects, and $$g_V^{J(U)}\mapsto F(g_V^{J(U)}) := g_{\Lambda\otimes_{kQ}V}^{J(\Lambda\otimes_{kQ}U)}: J(\Lambda\otimes_{kQ}U)\to J_{J(\Lambda\otimes_{kQ}U)}(\Lambda\otimes_{kQ}V)$$ 
    for a morphism $g_V^{J(U)}: J(U)\to J_{J(U)}(V)$. 
\end{proposition}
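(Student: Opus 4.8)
The plan is to verify the four conditions that make $F$ a functor, in the following order: well-definedness on objects, then well-definedness on morphisms together with the requirement that $F$ lands in the correct hom-set, then preservation of identities, and finally preservation of composition. Throughout I abbreviate $\W_1' := J(\Lambda\otimes_{kQ}U)$ for the prospective image of an object $\W_1 = J(U)$. The delicate point, and the main obstacle, is the first one: the presentation $U$ of a $\tau$-perpendicular subcategory is genuinely non-unique (already $J(P) = J(P[1])$ for projective $P$, cf.\ Lemma \ref{two_def_of_J}), so I must show that $J(U) = J(U')$ forces $J(\Lambda\otimes_{kQ}U) = J(\Lambda\otimes_{kQ}U')$. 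Since the morphism formula also depends on the object formula, resolving this first is essential.

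To prove object well-definedness I would characterize $\W_1'$ intrinsically from $\W := J(U)$. First, passing to the reduced module form via Lemmas \ref{two_def_of_J} and \ref{J(LambdaxU)=J(LambdaxUbar)} and then applying Lemma \ref{tau-rigid_in_J(X)_iff_taurigid_in_J(Lambda_otimes_X)}(i), one sees that the induced modules lying in $J(\Lambda\otimes_{kQ}U)$ are exactly the set $\mathcal{I} := \{\Lambda\otimes_{kQ}Y \mid Y\in\W\}$, which depends only on $\W$. Second, I would show that $J(\Lambda\otimes_{kQ}U)$ is the smallest wide subcategory of $\modd\Lambda$ containing $\mathcal{I}$. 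Indeed, by Theorem \ref{J(Lambda_otimes_kQ)=mod(R_otimes_kQ')} we have $J(\Lambda\otimes_{kQ}U)\simeq\modd(R\otimes\Gamma_U)$, a length category, and by Proposition \ref{propertiesOfJ}(ii) it is exact abelian in $\modd\Lambda$, so cokernels computed inside it agree with those in $\modd\Lambda$. Under this equivalence and Corollary \ref{transport_of_structure}, $\mathcal{I}$ corresponds to all induced modules $R\otimes N$ (in particular the free modules $R\otimes\Gamma_U$), and by Proposition \ref{HomExt_tensor_product} every morphism between induced modules is an $R$-linear combination of induced maps, hence a composite of induction-images $\Lambda\otimes_{kQ}\varphi$ with the central multiplications by $x_i\otimes 1$, both of which are morphisms living in $J(\Lambda\otimes_{kQ}U)$. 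Since every $(R\otimes\Gamma_U)$-module is a cokernel of such a map between free modules, the whole subcategory is generated from $\mathcal{I}$ under cokernels. Thus $J(\Lambda\otimes_{kQ}U)$ depends only on $\W$, giving well-definedness on objects.

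For morphisms I must check that $F(g_V^{J(U)}) = g_{\Lambda\otimes_{kQ}V}^{J(\Lambda\otimes_{kQ}U)}$ is legitimate. That $\Lambda\otimes_{kQ}V$ is support $\tau$-rigid in $\C(J(\Lambda\otimes_{kQ}U))$ is precisely Proposition \ref{supp_tau_J(U)-rigid1:1supp_tau_J(Lambda_x_U)-rigid}, and that the target is correct is the key compatibility. Writing $V = \Eps_U(\widehat V)$ with $U\oplus\widehat V$ support $\tau$-rigid in $\C(kQ)$ (possible since $\Eps_U$ is bijective by Theorem \ref{BM_bijection}), Theorem \ref{Buan-Hanson-Theorem-General} gives $J_{J(U)}(V) = J(U\oplus\widehat V)$, while Proposition \ref{E-version3.7} gives $\Lambda\otimes_{kQ}V\cong\Eps_{\Lambda\otimes_{kQ}U}(\Lambda\otimes_{kQ}\widehat V)$, so a second application of Theorem \ref{Buan-Hanson-Theorem-General} yields $J_{J(\Lambda\otimes_{kQ}U)}(\Lambda\otimes_{kQ}V) = J(\Lambda\otimes_{kQ}(U\oplus\widehat V)) = F\bigl(J(U\oplus\widehat V)\bigr)$, as needed; object well-definedness guarantees this is independent of the chosen presentation. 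Preservation of identities is immediate from $\Lambda\otimes_{kQ}0 = 0$.

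Finally, for composition, Definition \ref{def_tau-cluster_morph_cat} gives $g_W^{\W_2}\circ g_V^{\W_1} = g_{V\oplus\widetilde W}^{\W_1}$ with $\widetilde W = (\Eps_V^{\W_1})^{-1}(W)$, and comparing $F(g_{V\oplus\widetilde W}^{\W_1})$ with $F(g_W^{\W_2})\circ F(g_V^{\W_1})$ reduces to the identity $\Lambda\otimes_{kQ}(\Eps_V^{\W_1})^{-1}(W) = (\Eps_{\Lambda\otimes_{kQ}V}^{J(\Lambda\otimes_{kQ}U)})^{-1}(\Lambda\otimes_{kQ}W)$. This is a \emph{relative} version of Proposition \ref{E-version3.7}, taken inside $\W_1 = J(U)$ rather than inside $\modd kQ$. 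I would obtain it by transport of structure: by Theorem \ref{J(Lambda_otimes_kQ)=mod(R_otimes_kQ')} and Corollary \ref{transport_of_structure}, $\W_1\simeq\modd\Gamma_U$ with $\Gamma_U$ hereditary, $J(\Lambda\otimes_{kQ}U)\simeq\modd(R\otimes\Gamma_U)$, and the restriction of $\Lambda\otimes_{kQ}-$ corresponds to the induction functor $(R\otimes\Gamma_U)\otimes_{\Gamma_U}-$. Since $\Gamma_U$ is again hereditary, Proposition \ref{E-version3.7} applies verbatim with $(kQ,\Lambda)$ replaced by $(\Gamma_U, R\otimes\Gamma_U)$, which delivers the required commuting square and completes the verification that $F$ respects composition.
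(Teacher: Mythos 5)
Your proof is correct, and its skeleton matches the paper's: both establish well-definedness on objects, then compute $F(J_{J(U)}(V)) = J_{J(\Lambda\otimes_{kQ}U)}(\Lambda\otimes_{kQ}V)$ by combining Theorem \ref{Buan-Hanson-Theorem-General} with Proposition \ref{E-version3.7}, and then verify identities and composition by the same formula-chasing through Definition \ref{def_tau-cluster_morph_cat}(d). The genuine differences lie at the two points you flag as delicate, and there your treatment is more complete than the paper's. For object well-definedness the paper simply cites Corollary \ref{stautitl(kQ)1:1stautilt(Lambda)}, which gives the bijection of support $\tau$-rigid objects but does not by itself address independence of the presentation $U$ (the issue you raise via $J(P)=J(P[1])$); your characterization of $J(\Lambda\otimes_{kQ}U)$ as the smallest wide subcategory of $\modd\Lambda$ containing $\{\Lambda\otimes_{kQ}Y \mid Y\in J(U)\}$ — using Lemmas \ref{two_def_of_J}, \ref{J(LambdaxU)=J(LambdaxUbar)}, \ref{tau-rigid_in_J(X)_iff_taurigid_in_J(Lambda_otimes_X)}(i), the fact that the projectives of $J(\Lambda\otimes_{kQ}U)$ are induced (Proposition \ref{indprojJ(M)1-1indprojJ(Lambda_otimes_M)}), and closure of wide subcategories under cokernels — genuinely fills this gap. (Your detour through $R$-linear combinations of induced maps is superfluous: a wide subcategory is closed under cokernels of arbitrary morphisms between its objects, so no description of those morphisms is needed.) For composition, the paper silently uses the identity $\Lambda\otimes_{kQ}(\Eps_V^{J(U)})^{-1}(W) = \bigl(\Eps_{\Lambda\otimes_{kQ}V}^{J(\Lambda\otimes_{kQ}U)}\bigr)^{-1}(\Lambda\otimes_{kQ}W)$, i.e.\ exactly the relative form of Proposition \ref{E-version3.7} that you isolate; your transport-of-structure proof through $\modd\Gamma_U$ and $\modd(R\otimes\Gamma_U)$ is valid, since $\Gamma_U$ is hereditary (Theorem \ref{J(Lambda_otimes_kQ)=mod(R_otimes_kQ')}) and Corollary \ref{transport_of_structure} identifies the restricted induction functor with $(R\otimes\Gamma_U)\otimes_{\Gamma_U}-$. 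An alternative staying inside the paper's toolkit: write $\Eps_V^{J(U)} = \Eps_{U\oplus\widehat{V}}\circ\Eps_U^{-1}$ with $\widehat{V}=\Eps_U^{-1}(V)$ using Theorem \ref{Eps_{U+V}}, and apply the absolute Proposition \ref{E-version3.7} to $U$ and to $U\oplus\widehat{V}$; this derives the relative version without invoking heredity of $\Gamma_U$. In short, your route buys rigor precisely where the paper's proof is terse, at the cost of some redundancy.
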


\begin{proof}
     By Corollary \ref{stautitl(kQ)1:1stautilt(Lambda)}, $F$ is well-defined on objects. Let $g_V^{J(U)}: J(U)\to J_{J(U)}(V)$ be a morphism in $\Mcluster(kQ)$. By Proposition \ref{E-version3.7} and Theorem \ref{Buan-Hanson-Theorem-General}, we get that 
    \begin{align*}
        F(J_{J(U)}(V)) &= F(J(U\oplus \Eps_U^{-1}(V))\\
        &= J(\Lambda\otimes_{kQ}(U\oplus \Eps_U^{-1}(V)))\\
        &= J((\Lambda\otimes_{kQ}U)\oplus (\Lambda\otimes_{kQ}\Eps_U^{-1}(V))\\
        &= J((\Lambda\otimes_{kQ}U)\oplus \Eps^{-1}_{\Lambda\otimes_{kQ}U}(\Lambda\otimes_{kQ}V))\\
        &= J_{J(\Lambda\otimes_{kQ}U)}(\Lambda\otimes_{kQ}V).
    \end{align*}
    Hence, 
    $$F(g_V^{J(U)}) = g_{\Lambda\otimes_{kQ}V}^{J(\Lambda\otimes_{kQ}U)}: J(\Lambda\otimes_{kQ}U)\to J_{J(\Lambda\otimes_{kQ}U)}(\Lambda\otimes_{kQ}V),$$
    and therefore $F$ sends morphisms in $\Mcluster(kQ)$ to morphisms in $\Mcluster(\Lambda)$. Clearly, $$\Hom_{\Mcluster(\Lambda)}(F(J(U)),F(J(U))) = \Hom_{\Mcluster(\Lambda)}(J(\Lambda\otimes_{kQ}U),J(\Lambda\otimes_{kQ}U)) = g_0^{J(\Lambda\otimes_{kQ}U)}.$$
    Now, let $g_{V}^{J(U)}: J(U)\to J_{J(U)}(V)$ and $g_{W}^{J_{J(U)}(V)}: J_{J(U)}(V)\to J_{J(V,U)}(W)$ be two composable morphisms in $\Mcluster(kQ)$. By definition, 
    $$g_W^{J_{J(U)}(V)}\circ g_V^{J(U)} = g_{V\oplus \Eps^{-1}_V(W)}^{J(U)}$$ in $\Mcluster(kQ)$. Hence, 
    \begin{align*}
        F\left( g_W^{J_{J(U)}(V)}\circ g_V^{J(U)}\right ) &= F\left(g_{V\oplus \Eps^{-1}_V(W)}^{J(U)}\right)\\[6pt]
        &= g_{\Lambda\otimes_{kQ}(V\oplus \Eps_V^{-1}(W))}^{J(\Lambda\otimes_{kQ}U)}\\[6pt]
        &= g_{(\Lambda\otimes_{kQ}V)\oplus(\Lambda\otimes_{kQ}\Eps^{-1}_V(W))}^{J(\Lambda\otimes_{kQ}U)}\\[6pt]
        &= g_{(\Lambda\otimes_{kQ}V)\oplus \Eps^{-1}_{\Lambda\otimes_{kQ}V}(\Lambda\otimes_{kQ}W)}^{J(\Lambda\otimes_{kQ}U)}\\[6pt]
        &= g_{\Lambda\otimes_{kQ}W}^{J_{J(\Lambda\otimes_{kQ}U)}(\Lambda\otimes_{kQ}V)}\circ g_{\Lambda\otimes_{kQ}V}^{J(\Lambda\otimes_{kQ}U)}\\[6pt]
        &= F\left( g_{W}^{J_{J(U)}(V)}\right)\circ F\left(g _{V}^{J(U)}\right)
    \end{align*}
    in $\Mcluster(\Lambda)$. This shows that $F$ preserves composition. The claim follows.
\end{proof}

We are prepared to prove the main result of this section. 

\begin{theorem}\label{Mcluster(kQ)_equvalent_to_Mcluster(Lambda)}
    The functor $F: \Mcluster(kQ)\to \Mcluster(\Lambda)$ is an equivalence of categories.
\end{theorem}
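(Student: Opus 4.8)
The plan is to show that $F$ is bijective on objects and fully faithful; since a functor that is bijective on objects and fully faithful is an equivalence (indeed an isomorphism of categories), this suffices to conclude.

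First I would establish that $F$ is a bijection on objects. Surjectivity is immediate: any object of $\Mcluster(\Lambda)$ is a $\tau$-perpendicular subcategory $J(U')$ for some basic support $\tau$-rigid $U' = M'\oplus P'[1]\in\C(\Lambda)$, and by Corollary \ref{stautitl(kQ)1:1stautilt(Lambda)}(i) the induction functor is a bijection on support $\tau$-rigid pairs, so $U'\cong\Lambda\otimes_{kQ}U$ for some support $\tau$-rigid $U\in\C(kQ)$ and hence $J(U') = J(\Lambda\otimes_{kQ}U) = F(J(U))$. For injectivity I would use the characterization that, for $N\in\modd kQ$, one has $N\in J(U)$ if and only if $\Lambda\otimes_{kQ}N\in J(\Lambda\otimes_{kQ}U)$; this is the support $\tau$-rigid analogue of Lemma \ref{tau-rigid_in_J(X)_iff_taurigid_in_J(Lambda_otimes_X)}(i), obtained by decomposing the defining conditions ${}^{\perp}\tau M\cap(M\oplus P)^{\perp}$ and applying the Hom--tensor isomorphism \eqref{Hom_Tensor} termwise. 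Consequently, if $J(\Lambda\otimes_{kQ}U_1) = J(\Lambda\otimes_{kQ}U_2)$, then $J(U_1)$ and $J(U_2)$ contain exactly the same $kQ$-modules and are therefore equal as subcategories of $\modd kQ$.

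Next I would prove full faithfulness. Fixing objects $J(U)$ and $J(U')$, the morphisms from $J(U)$ to $J(U')$ in $\Mcluster(kQ)$ are the symbols $g_V^{J(U)}$ with $V$ a basic support $\tau$-rigid object in $\C(J(U))$ satisfying $J_{J(U)}(V) = J(U')$, and $F$ sends $g_V^{J(U)}$ to $g_{\Lambda\otimes_{kQ}V}^{J(\Lambda\otimes_{kQ}U)}$. The induction functor induces a bijection on indecomposable support $\tau$-rigid objects by Proposition \ref{supp_tau_J(U)-rigid1:1supp_tau_J(Lambda_x_U)-rigid}; since support $\tau$-rigidity of a basic object is a pairwise condition on its indecomposable summands and each such Hom/Ext-vanishing condition is preserved in both directions by \eqref{Hom_Tensor}--\eqref{Ext_Tensor} (cf. Lemma \ref{tau-rigid_in_J(X)_iff_taurigid_in_J(Lambda_otimes_X)}), this bijection extends to basic support $\tau$-rigid objects. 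Faithfulness then follows because two formal symbols coincide precisely when their indexing objects are isomorphic and induction is injective on these objects. For fullness, given $g_W^{J(\Lambda\otimes_{kQ}U)}\in\Hom(F(J(U)),F(J(U')))$, I would write $W\cong\Lambda\otimes_{kQ}V$ for a unique basic support $\tau$-rigid $V$ in $\C(J(U))$; the computation in Proposition \ref{functor_between_tau_cluster_morphism_categories} gives $J_{J(\Lambda\otimes_{kQ}U)}(\Lambda\otimes_{kQ}V) = F(J_{J(U)}(V))$, so the hypothesis $J_{J(\Lambda\otimes_{kQ}U)}(W) = J(\Lambda\otimes_{kQ}U')$ becomes $F(J_{J(U)}(V)) = F(J(U'))$, and object injectivity forces $J_{J(U)}(V) = J(U')$. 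Thus $g_V^{J(U)}$ is a genuine morphism $J(U)\to J(U')$ with $F(g_V^{J(U)}) = g_W^{J(\Lambda\otimes_{kQ}U)}$.

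The main obstacle is the interplay between the two targets $J_{J(U)}(V)$ and $J_{J(\Lambda\otimes_{kQ}U)}(\Lambda\otimes_{kQ}V)$: fullness requires not merely that induction carries the former to the latter (already recorded in Proposition \ref{functor_between_tau_cluster_morphism_categories}) but also the converse implication that equality downstairs forces equality upstairs. This is exactly what object-injectivity of $F$ supplies, so the crux of the argument is the clean characterization $N\in J(U)\Leftrightarrow\Lambda\otimes_{kQ}N\in J(\Lambda\otimes_{kQ}U)$ and its consequence that distinct $\tau$-perpendicular subcategories of $\modd kQ$ have distinct images. Once this is in place, bijectivity on objects together with full faithfulness yields the desired equivalence of categories.
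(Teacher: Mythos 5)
Your proposal is correct, but it proves full faithfulness by a genuinely different and more elementary route than the paper. The paper establishes density exactly as you do, but then handles the Hom-set bijection through the heavy combinatorial machinery of the category: it factors each morphism $g_V^{\W}$ uniquely into irreducible morphisms, invokes the correspondence between such factorizations and signed $\tau$-exceptional sequences (Proposition \ref{irr_morph_in_Mcluster}), and then transports these sequences across the induction functor using Corollary \ref{bijection_signed_tau_exceptional_sequences} together with the compatibility with the bijections $\varphi_t,\psi_t$ (Proposition \ref{induction_commutes_with_varphi}); notably, in the paper injectivity of $F$ on objects is \emph{deduced afterwards} from full faithfulness (Proposition \ref{bijection_objects_M(kQ)_and_M(Lambda)}). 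You reverse this logical order: you prove object injectivity first, directly from the membership criterion $N\in J(U)\Leftrightarrow \Lambda\otimes_{kQ}N\in J(\Lambda\otimes_{kQ}U)$ (a legitimate extension of Lemma \ref{tau-rigid_in_J(X)_iff_taurigid_in_J(Lambda_otimes_X)}(i) using Proposition \ref{tauCommutesWithInduction} and the Hom--tensor isomorphism), and then exploit the fact that morphisms in $\Mcluster$ are formal symbols indexed by basic support $\tau$-rigid objects, so that bijectivity on Hom-sets reduces to bijectivity of induction on these indexing objects together with the target compatibility $F(J_{J(U)}(V)) = J_{J(\Lambda\otimes_{kQ}U)}(\Lambda\otimes_{kQ}V)$ already recorded in Proposition \ref{functor_between_tau_cluster_morphism_categories}. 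Your approach avoids Theorem \ref{main_BM} and the signed-exceptional-sequence apparatus entirely and yields the object bijection as an input rather than a corollary, which is a cleaner logical structure; what the paper's route buys is the explicit dictionary between factorizations of morphisms and signed $\tau$-exceptional sequences, which is the conceptual point the paper wants to highlight alongside Corollary \ref{bijection_signed_tau_exceptional_sequences}. One step you should make precise: the extension of Proposition \ref{supp_tau_J(U)-rigid1:1supp_tau_J(Lambda_x_U)-rigid} from indecomposable to basic support $\tau$-rigid objects requires transferring the pairwise conditions inside $J(U)$ when $U$ has a shifted projective summand, and Lemma \ref{tau-rigid_in_J(X)_iff_taurigid_in_J(Lambda_otimes_X)} is stated only for $\tau$-rigid modules $X$; the fix is the reduction $J(U)=J(\overline{U})$ with $\overline{U}=P\oplus\Eps_P^{-1}(M)$ (Lemmas \ref{two_def_of_J} and \ref{J(LambdaxU)=J(LambdaxUbar)}), exactly as in the paper's own proof of that proposition, so this is a presentational gap rather than a mathematical one.
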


\begin{proof}
    Let $J(X)\subseteq \modd \Lambda$ be a $\tau$-perpendicular subcategory of $\modd \Lambda$. By Corollary \ref{stautitl(kQ)1:1stautilt(Lambda)},  $X\cong \Lambda\otimes_{kQ}U$ for a unique support $\tau$-rigid object $U$ in $\C(kQ)$, and therefore $J(X) = J(\Lambda\otimes_{kQ}U)$. Hence, using the definition of $F$ (see Proposition \ref{functor_between_tau_cluster_morphism_categories}), we have that $J(X) = J(\Lambda\otimes_{kQ}U) = F(J(U))$. This shows that the functor $F$ is dense. 
    
    Now we show that $F$ induces a bijection on the $\Hom$-spaces. So let $$ \widetilde{g} = g_{\Lambda\otimes_{kQ}V}^{J(\Lambda\otimes_{kQ} U)}: J(\Lambda\otimes_{kQ} U)\to J_{J(\Lambda\otimes_{kQ}U)}(\Lambda\otimes_{kQ}V)$$ be a morphism in $\Hom_{\Mcluster(\Lambda)}(F(J(U)),F(J_{J(U)}(V)))$. Write $\widetilde{g} = g_{\widetilde{\U_1}}^{\widetilde{\W_1}}\cdots g_{\widetilde{\U_t}}^{\widetilde{\W_t}}$ as composition of irreducible morphisms in $\Mcluster(\Lambda)$, where $\widetilde{\W_t} = J(\Lambda\otimes_{kQ}U)$ and $\widetilde{\W_1} = J_{J(\Lambda\otimes_{kQ}U)}(\Lambda\otimes_{kQ}V)$. By Proposition \ref{irr_morph_in_Mcluster}, this decomposition corresponds to the signed $\tau$-exceptional sequence $\widetilde{\U} = (\widetilde{\U_1},\cdots, \widetilde{\U_t})$ in $J(\Lambda\otimes_{kQ}U)$ such that $\overline{\varphi_t}(\widetilde{\U}) = \Lambda\otimes_{kQ} V$. By Corollary \ref{bijection_signed_tau_exceptional_sequences} there exists a unique signed $\tau$-exceptional sequence $\V = (\V_1,\cdots, \V_t)$ such that $\Lambda\otimes_{kQ}\V = \widetilde{\U}$. In particular, $\overline{\varphi_t}(\V) = V$ by Proposition \ref{induction_commutes_with_varphi}. Applying Proposition \ref{irr_morph_in_Mcluster} again, $\V = (\V_1,\cdots, \V_t)$ yields a unique decomposition of $\tilde{g} = g_{V}^{J(U)}: J(U)\to J_{J(U)}(V)$ in $\Hom_{\Mcluster{(kQ)}}(J(U),J_{J(U)}(V))$ into irreducible morphisms $${g}  = g_{\V_1}^{{\W_1}}\cdots g_{\V_t}^{{\W_t}},$$  with ${\W_t} = J(U)$ and ${\W_1} = J_{J(U)}(V)$. Thus, for $\widetilde{g}\in \Hom_{\Mcluster(\Lambda)}(F(J(U)),F(J_{J(U)}(V)))$ there exists a unique ${g}\in \Hom_{\Mcluster(kQ)}(J(U),J_{J(U)}(V))$ such that $F({g}) = \widetilde{g}$. Indeed, suppose that ${g'}: J(U)\to J_{J(U)}(V)$ was another morphism in $\Mcluster(kQ)$ such that $F({g'}) = \widetilde{g}$. As before, write $${g'} = g_{{\V_1'}}^{{\W_1'}}\cdots g_{{\V_t'}}^{{\W_t'}}$$ as a composition of irreducible morphisms, where ${\W_t'} = J(U)$ and ${\W_1'} = J_{J(U)}(V)$. 
    By Proposition \ref{irr_morph_in_Mcluster}, this decomposition corresponds to the signed $\tau$-exceptional sequence ${\V'} = ({\V_1'}, \ldots, {\V_t'})$ such that $\overline{\varphi_t}({\V'}) = V$. Since $F$ preserves irreducible morphisms, we have 
    $$
    F({g'}) = F\left(g_{{\V_1'}}^{{\W_1'}}\right) \cdots F\left(g_{{\V_t'}}^{{\W_t'}}\right) = g_{\Lambda\otimes_{kQ}{\V_1'}}^{\widetilde{\W_1}} \cdots g_{\Lambda\otimes_{kQ}{\V_t'}}^{\widetilde{\W_t}}.
    $$
    By the assumption $F(g) = \tilde{g} = F(g')$, we obtain 
    $$ g_{\Lambda\otimes_{kQ}{\V_1}}^{\widetilde{\W_1}} \cdots g_{\Lambda\otimes_{kQ}{\V_t}}^{\widetilde{\W_t}} 
        = g_{\widetilde{\U_1}}^{\widetilde{\W_1}} \cdots g_{\widetilde{\U_t}}^{\widetilde{\W_t}} 
        = g_{\Lambda\otimes_{kQ}{\V_1'}}^{\widetilde{\W_1}} \cdots g_{\Lambda\otimes_{kQ}{\V_t'}}^{\widetilde{\W_t}}.$$

    and therefore, since 
    \begin{align*}
        \overline{\varphi_t}(\Lambda\otimes_{kQ}\V) = \overline{\varphi_t}(\U) = \Lambda\otimes_{kQ} V
        = \Lambda\otimes_{kQ}\overline{\varphi_t}(\V')
        = \overline{\varphi_t}(\Lambda\otimes_{kQ}\V')
    \end{align*}
    where the last equality follows from Proposition \ref{induction_commutes_with_varphi},  Proposition \ref{irr_morph_in_Mcluster} implies $\Lambda\otimes_{kQ}\V = \U = \Lambda\otimes_{kQ}\V'$. Hence, $\V = \V'$ by Corollary \ref{bijection_signed_tau_exceptional_sequences}. Since $\overline{\varphi_t}(\V) = V = \overline{\varphi_t}(\V')$, Proposition \ref{irr_morph_in_Mcluster} implies that 
    $$g_{{\V_1}}^{{\W_1}}\cdots g_{{\V_t}}^{{\W_t}} = g_{{\V_1'}}^{{\W_1'}}\cdots g_{{\V_t'}}^{{\W_t'}},$$
    and therefore $g = g'$.

    Hence, $F$ induces a bijection on the $\Hom$-spaces. In other words, $F$ is also fully faithful and therefore an equivalence. This concludes the proof. 
\end{proof}

\begin{remark}
    In recent work, Kaipel defined the $\tau$-cluster morphism category using a lattice theoretic description and showing this new notion is equivalent to the classical one \cite{kaipel2024tau}. Furthermore, he proved that two finite dimensional algebras $\Lambda$ and $\Gamma$ with isomorphic finite lattices of torsion classes have equivalent $\tau$-cluster morphism categories and there is a bijection between signed $\tau$-exceptional sequences in $\modd{\Lambda}$ and $\modd{\Gamma}$; see \cite[Cor. 4.9]{kaipel2024tau}. As pointed out in Remark \ref{central_element_rmk}, when $kQ$ is $\tau$-tilting finite (equivalently $\Lambda$ is $\tau$-tilting finite; see Corollary \ref{Lambda_tau-tilt_finite_iff_Q_Dynkin}), $\Lambda$ and $kQ$ have isomorphic lattices of torsion classes; see also \cite[Example 4.2(1)]{kaipel2024tau}. In this way, one can recover Theorem \ref{Mcluster(kQ)_equvalent_to_Mcluster(Lambda)} and Corollary \ref{bijection_signed_tau_exceptional_sequences} using a different approach from the theory developed in this paper. However, we emphasize that Theorem \ref{Mcluster(kQ)_equvalent_to_Mcluster(Lambda)} and Corollary \ref{bijection_signed_tau_exceptional_sequences} also hold in the more general case where $\Lambda$ is $\tau$-tilting infinite. 
\end{remark}

We want to conclude this section by proving that the following conjecture by Buan and Hanson holds for $\Lambda = R\otimes kQ$. 

\begin{conjecture}[{\cite[Conjecture 6.8]{tau-perpendicular_wide_subategories}}]\label{Buan-Hanson_Conjecture}
    Let $\Lambda$ be a finite dimensional algebra. Let $\W\subseteq\modd\Lambda$ be a $\tau$-perpendicular subcategory of $\modd\Lambda$ and let $\V\subseteq \W$ be a wide subcategory of $\W$. Then $\V$ is a $\tau$-perpendicular subcategory of $\modd\Lambda$ if and only if $\V$ is a $\tau$-perpendicular subcategory of $\W$. 
\end{conjecture}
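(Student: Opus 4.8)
The plan is to establish the two implications separately and, for the nontrivial one, to transport the whole question to the hereditary algebra $kQ$ through the equivalence $F\colon\Mcluster(kQ)\to\Mcluster(\Lambda)$ of Theorem \ref{Mcluster(kQ)_equvalent_to_Mcluster(Lambda)}. The implication ``$\V$ is $\tau$-perpendicular in $\W$ $\Rightarrow$ $\V$ is $\tau$-perpendicular in $\modd\Lambda$'' requires no special structure and I would prove it for an arbitrary finite dimensional algebra. Writing $\W=J(\mathsf U)$ for a support $\tau$-rigid $\mathsf U\in\C(\Lambda)$ and $\V=J_\W(\mathsf V)$ for a support $\tau_\W$-rigid $\mathsf V\in\C(\W)$, set $\widetilde{\mathsf V}:=\Eps_{\mathsf U}^{-1}(\mathsf V)$, so that $\mathsf U\oplus\widetilde{\mathsf V}$ is support $\tau$-rigid in $\C(\Lambda)$ by Theorem \ref{BM_bijection}. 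Then Theorem \ref{Buan-Hanson-Theorem-General} (with ambient $\modd\Lambda$) gives $J(\mathsf U\oplus\widetilde{\mathsf V})=J_{J(\mathsf U)}(\Eps_{\mathsf U}(\widetilde{\mathsf V}))=J_\W(\mathsf V)=\V$, so $\V$ is $\tau$-perpendicular in $\modd\Lambda$.

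For the converse, assume $\V$ is $\tau$-perpendicular in $\modd\Lambda$; then $\V\subseteq\W$ are both $\tau$-perpendicular subcategories of $\modd\Lambda$. By the density of $F$ shown in the proof of Theorem \ref{Mcluster(kQ)_equvalent_to_Mcluster(Lambda)}, I would write $\W=J(\Lambda\otimes_{kQ}U)$ and $\V=J(\Lambda\otimes_{kQ}U')$ for support $\tau$-rigid objects $U,U'\in\C(kQ)$ with $J(U),J(U')$ $\tau$-perpendicular in $\modd kQ$. The crucial point is that the assignment $J(U)\mapsto J(\Lambda\otimes_{kQ}U)$ reflects inclusions: the Hom--tensor computation behind Lemma \ref{tau-rigid_in_J(X)_iff_taurigid_in_J(Lambda_otimes_X)}(i) shows that for every $Y\in\modd kQ$ one has $Y\in J(U)$ if and only if $\Lambda\otimes_{kQ}Y\in J(\Lambda\otimes_{kQ}U)$, and similarly for $U'$. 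Hence, for every $Y\in J(U')$, the module $\Lambda\otimes_{kQ}Y$ lies in $\V\subseteq\W$, forcing $Y\in J(U)$; this yields $J(U')\subseteq J(U)$ in $\modd kQ$.

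It then remains to solve the problem over $kQ$ and transport the answer back. Since $J(U')$ is $\tau$-perpendicular in $\modd kQ$ it is functorially finite wide (Remark \ref{Lambda_hereditary_J_hereditary}), and functorial finiteness passes to any intermediate subcategory, so $J(U')$ is functorially finite wide in $J(U)$. By Theorem \ref{J(Lambda_otimes_kQ)=mod(R_otimes_kQ')} applied with $R=k$ (that is, Jasso's theorem), $J(U)\simeq\modd\Gamma_U$ with $\Gamma_U$ hereditary, and over a hereditary algebra every functorially finite wide subcategory is $\tau$-perpendicular; thus $J(U')$ is $\tau$-perpendicular in $J(U)$ (equivalently, one may invoke the known hereditary case of Conjecture \ref{Buan-Hanson_Conjecture} for $kQ$). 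Finally, full faithfulness of $F$ gives
\[
\Hom_{\Mcluster(\Lambda)}(\W,\V)=\Hom_{\Mcluster(\Lambda)}\bigl(F(J(U)),F(J(U'))\bigr)\cong\Hom_{\Mcluster(kQ)}\bigl(J(U),J(U')\bigr),
\]
and the right-hand set is nonempty exactly because $J(U')$ is $\tau$-perpendicular in $J(U)$; nonemptiness of the left-hand set is precisely the statement that $\V$ is $\tau$-perpendicular in $\W$.

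I expect the main obstacle to be the inclusion-reflection step, namely passing from $\V\subseteq\W$ in $\modd\Lambda$ to $J(U')\subseteq J(U)$ in $\modd kQ$. Containment of subcategories is not encoded categorically in $\Mcluster$: a containment $\W_2\subseteq\W_1$ does not by itself produce a morphism $\W_1\to\W_2$, which is the very content of the conjecture. Thus the equivalence $F$ transports morphisms but not a priori containments, and one must supply the genuinely module-theoretic input that the induction functor reflects membership in $\tau$-perpendicular subcategories. A secondary delicate point is the hereditary input, where one should either cite the hereditary case of Conjecture \ref{Buan-Hanson_Conjecture} or combine the transfer of functorial finiteness with the fact that, over a hereditary algebra, functorially finite wide subcategories are $\tau$-perpendicular.
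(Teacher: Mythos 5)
Your proposal is correct, but it takes a genuinely different route from the paper. The paper's proof of the nontrivial direction ($\V$ $\tau$-perpendicular in $\modd\Lambda$ $\Rightarrow$ $\V$ $\tau$-perpendicular in $\W$) goes through the standalone statement that $\tau$-perpendicular, left finite wide, and right finite wide subcategories of $\modd\Lambda$ coincide (part (i) of Corollary \ref{Buan-Hanson_conjecture_holds_for_Lambda}); this rests on the bijectivity of $F$ on objects (Proposition \ref{bijection_objects_M(kQ)_and_M(Lambda)}), the fact that $F$ preserves left finite wide subcategories (Lemma \ref{induction_preserves_left_finite}, which in turn requires the split-projective analysis of Lemma \ref{induction_preserves_split_and_non-split}), and the hereditary coincidence from \cite{Noncrossing_partitions_and_representations_of_quivers}; the conjecture then follows from the Buan--Hanson criterion recorded as Lemma \ref{left_finite=tau_perp}. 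You instead transport the particular pair $\V\subseteq\W$ down to $kQ$: density of $F$ gives $\W=J(\Lambda\otimes_{kQ}U)$ and $\V=J(\Lambda\otimes_{kQ}U')$; membership reflection of the induction functor yields $J(U')\subseteq J(U)$; the hereditary case of the conjecture settles the question over $kQ$; and full faithfulness of $F$ transports the answer back. What your route buys: it bypasses the entire left-finite-wide machinery (Lemmas \ref{induction_preserves_split_and_non-split} and \ref{induction_preserves_left_finite} and part (i) of the corollary are never needed). What it costs: you must extend Lemma \ref{tau-rigid_in_J(X)_iff_taurigid_in_J(Lambda_otimes_X)}(i) to support $\tau$-rigid objects with shifted projective summands (routine --- either redo the Hom--tensor computation on $J(M\oplus P[1])={}^{\perp}\tau M\cap(M\oplus P)^{\perp}$, or reduce to the module case via Lemma \ref{two_def_of_J}), and you must invoke the hereditary case of the conjecture as known; the latter is legitimate, since it follows from exactly the two external inputs the paper itself uses (\cite{Noncrossing_partitions_and_representations_of_quivers} and Lemma \ref{left_finite=tau_perp}), applied over $kQ$ rather than over $\Lambda$. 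Two smaller remarks: your forward direction reproves \cite[Cor.~6.7]{tau-perpendicular_wide_subategories} from Theorems \ref{BM_bijection} and \ref{Buan-Hanson-Theorem-General}, where the paper simply cites it; and your closing Hom-nonemptiness argument can be shortened by applying $F$ to the morphism $g_X^{J(U)}$ directly, since Proposition \ref{functor_between_tau_cluster_morphism_categories} gives $F(J_{J(U)}(X))=J_{J(\Lambda\otimes_{kQ}U)}(\Lambda\otimes_{kQ}X)$, exhibiting $\V$ as a $\tau$-perpendicular subcategory of $\W$ without passing through Hom-sets. The paper's approach retains the advantage of producing the coincidence statement (i), which is of independent interest.
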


We recall the following construction from \cite{Noncrossing_partitions_and_representations_of_quivers, Torsion-classes-wide-subcategories-and-localisations}. 

\begin{definition}
Let $\Lambda$ be a finite dimensional algebra.
    \begin{enumerate}
        \item[(a)] Let $\T\subseteq \modd\Lambda$ be a functorially finite torsion class. The \emph{left finite wide subcategory} of $\modd\Lambda$ corresponding to $\T$ is 
    $$\W_L(\T):= \{X\in \T \mid \forall (Y\in \T, g: Y\to X), \ker(g)\in \T\};$$
    \item[(b)]  Let $\F\subseteq \modd\Lambda$ be a functorially finite torsion-free class. The \emph{right finite wide subcategory} of $\modd\Lambda$ corresponding to $\F$ is 
    $$\W_R(\F):= \{X\in \T \mid \forall (Y\in \F, g:  X\to Y), \coker(g)\in \F\}.$$
    \end{enumerate}
\end{definition}

Let $\Lambda$ be a finite dimensional algebra and let $\mathcal{X}$ be a full subcategory of $\modd\Lambda$. We say that $M$ in $\mathcal{X}$ is \textit{split projective} in $\mathcal{X}$ if every epimorphism $N\to M$ in $\modd \Lambda$ with $N\in\mathcal{X}$ is split.

Recall there are mutually inverse bijections \cite[Thm. 2.7]{tau-tiling-theory} between $\text{s$\tau$-tilt }\Lambda$ and $\text{f-tors }\Lambda$ given by $\text{s$\tau$-tilt }\Lambda\ni T\mapsto \Gen T\in \text{f-tors }\Lambda$ and $\text{f-tors }\Lambda\ni\T\mapsto \P(\T)\in\text{s$\tau$-tilt }\Lambda$. Hence, given a functorially finite torsion class $\T$, there exists a unique support $\tau$-tilting object $M\oplus P[1]$ such that $\P(\T) = \add M$ and $\T = \Gen M$. 

\begin{lemma}[{\cite[Lemma 2.6, Prop. 2.7]{tau-perpendicular_wide_subategories}}]\label{left_finite_wide_subcat}
    Let $\Lambda$ be a finite dimensional algebra. Let $\T\subseteq \modd\Lambda$ be a functorially finite torsion class and let $M\oplus P[1]$ be the support $\tau$-tilting object in $\C(\Lambda)$ such that $\add M = \P(\T)$. Then, there is a decomposition $M = M_s\oplus M_{ns}$, where $M_s$ is split projective in $\T$ and no direct summand of $M_{ns}$ is split projective in $\T$. In particular, $\T=\Gen M_s$ and 
    $$\Gen M_s = \Gen M = {^\perp}\tau M\cap P^{\perp} = {^\perp}\tau M_{ns}\cap P^\perp.$$
\end{lemma}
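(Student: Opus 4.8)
The plan is to reduce the statement to two independent claims and to dispatch the formal parts first. Throughout I write $(\T,\F)$ for the torsion pair with $\F=\T^{\perp}$, and recall that by the Adachi--Iyama--Reiten bijection \cite[Thm. 2.7]{tau-tiling-theory} we have $\T=\Gen M$, where $\add M=\P(\T)$ and $M\oplus P[1]$ is the associated support $\tau$-tilting object. The decomposition $M=M_s\oplus M_{ns}$ is purely definitional: let $M_s$ be the direct sum of the indecomposable summands of $M$ that are split projective in $\T$ and $M_{ns}$ the sum of the remaining ones; a finite direct sum of split projective objects is again split projective, so $M_s$ is split projective and, by construction, no summand of $M_{ns}$ is. For the chain of equalities I would first record the easy inclusion $\Gen M\subseteq{}^{\perp}\tau M\cap P^{\perp}$: one has $\Gen M\subseteq P^{\perp}$ because $\Hom_\Lambda(P,M)=0$, and $\Gen M\subseteq{}^{\perp}\tau M$ because $M$ is $\Ext$-projective in $\Gen M$, so $\Ext^1_\Lambda(M,\Gen M)=0$ and Lemma \ref{Smalo's_result}(i) applies. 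The reverse inclusion ${}^{\perp}\tau M\cap P^{\perp}\subseteq\Gen M$ is part of \cite[Thm. 2.7]{tau-tiling-theory}. It then remains to prove (A) $\Gen M_s=\Gen M$ and (B) ${}^{\perp}\tau M_{ns}\cap P^{\perp}=\Gen M$, since $M_{ns}$ being a summand of $M$ gives the trivial inclusion ${}^{\perp}\tau M\subseteq{}^{\perp}\tau M_{ns}$.

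For (A) the key is the characterization that an indecomposable summand $M_i$ of $M$ is split projective in $\T$ if and only if $M_i\notin\Gen(M/M_i)$. One direction is immediate, since an epimorphism $(M/M_i)^{a}\twoheadrightarrow M_i$ cannot split ($M$ being basic, $M_i$ is not a summand of $(M/M_i)^{a}$). For the converse, given an epimorphism $g\colon N\twoheadrightarrow M_i$ with $N\in\T$, I would write $N$ as the sum of the images of the maps from $M_i$ and from $M/M_i$ (possible since $N\in\Gen M$); because $M_i\notin\Gen(M/M_i)$ the contribution coming from $M/M_i$ lands in a proper submodule of $M_i$, which forces some map $M_i\to N$ whose composite with $g$ is an automorphism of $M_i$, hence a splitting. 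Granting the characterization, every summand of $M_{ns}$ lies in $\Gen$ of the remaining summands, and a Noetherian induction (successively removing non-split-projective summands) yields $M_{ns}\subseteq\Gen M_s$, i.e. $\Gen M_s=\Gen M$.

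For (B) set $\T'={}^{\perp}\tau M_{ns}\cap P^{\perp}$, a torsion class containing $\T$. Taking the canonical sequence $0\to tZ\to Z\to fZ\to 0$ of $Z\in\T'$ for $(\T,\F)$, the quotient $fZ$ again lies in $\T'$ and in $\F$, so it suffices to prove $\T'\cap\F=0$. Since $\F=M^{\perp}=M_s^{\perp}\cap M_{ns}^{\perp}$, a regrouping identifies $\T'\cap\F=J(M_{ns})\cap M_s^{\perp}\cap P^{\perp}$. I would show this vanishes by reducing into $J(M_{ns})\simeq\modd\Gamma_{M_{ns}}$: split projectivity of $M_s$ forces $M_{s,i}\notin\Gen M_{ns}$ (by the characterization in (A)), so under $\mathcal{E}_{M_{ns}}$ each summand of $M_s$ stays a module $f_{M_{ns}}M_{s,i}$ rather than a shifted projective (Theorem \ref{BM_bijection}), and for $Z\in J(M_{ns})\subseteq M_{ns}^{\perp}$ one computes $\Hom_\Lambda(M_{s,i},Z)\cong\Hom_{J(M_{ns})}(f_{M_{ns}}M_{s,i},Z)$. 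Combining this with \cite[Lemma 4.9]{tauExcSeq_BM}, the images of $M_s$ and of $P$ assemble into a support $\tau$-tilting object of $J(M_{ns})$ whose perpendicular conditions cut out only the zero object, giving $J(M_{ns})\cap M_s^{\perp}\cap P^{\perp}=0$. An equivalent and perhaps cleaner reformulation of (B) is that $M_s\subseteq\add B_{M_{ns}}$, i.e. that $M_s$ is $\Ext$-projective in the enlarged torsion class ${}^{\perp}\tau M_{ns}$.

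The main obstacle is exactly this last point in (B): upgrading the split projectivity of $M_s$ in $\T=\Gen M$ to $\Ext$-projectivity in the strictly larger torsion class ${}^{\perp}\tau M_{ns}$, equivalently the vanishing $\Hom_\Lambda({}^{\perp}\tau M_{ns}\cap P^{\perp},\tau M_s)=0$. This does not follow from formal torsion-theoretic manipulation: the naive attempt to split an extension $0\to Z\to E\to M_{s,i}\to 0$ by passing to the $\T$-torsion part of $E$ fails, because $\Hom_\Lambda(\F,\T)$ need not vanish and so the torsion part of $E$ need not surject onto $M_{s,i}$. Hence one genuinely needs the defining property of the split projective summands, routed through the characterization in (A) and the reduction to $J(M_{ns})$. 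Once (A) and (B) are in place, the displayed chain $\Gen M_s=\Gen M={}^{\perp}\tau M\cap P^{\perp}={}^{\perp}\tau M_{ns}\cap P^{\perp}$ together with the identity $\T=\Gen M_s$ follows immediately.
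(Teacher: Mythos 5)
The paper contains no proof of this statement at all: it is quoted verbatim from Buan--Hanson \cite[Lemma 2.6, Prop. 2.7]{tau-perpendicular_wide_subategories}, so your attempt has to stand entirely on its own. Most of your formal reductions are sound: the definition of $M_s\oplus M_{ns}$, the equality $\Gen M={}^{\perp}\tau M\cap P^{\perp}$, the reduction of (B) to $\T'\cap\F=0$, and the regrouping $\T'\cap\F=J(M_{ns})\cap M_s^{\perp}\cap P^{\perp}$ are all correct. In (A), the characterization ``$M_i$ is split projective iff $M_i\notin\Gen(M/M_i)$'' is true, but two steps are thinner than they look. First, the ``which forces'' step is where the real content sits: if every composite $M_i\to N\to M_i$ is a non-isomorphism, you only learn that the sum of images of radical endomorphisms together with the trace of $M/M_i$ equals $M_i$, and two \emph{proper} submodules can very well sum to the whole module; one needs to iterate, using nilpotency of $\rad\End_\Lambda(M_i)$, to push everything into the trace of $M/M_i$ and get the contradiction. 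Second, the ``Noetherian induction'' is circular as written: the characterization gives $M_i\in\Gen(M/M_i)$, and $M/M_i$ still contains the \emph{other} non-split-projective summands, so mutual generation blocks successive removal. This is repairable, because the characterization (with the same proof) holds for any basic $M'$ with $\Gen M'=\T$, not only for the full $\Ext$-projective generator, and re-applying it after each removal fixes the induction --- but that is an extra argument you must supply.

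The genuine gap is the final step of (B). You assert that since $\mathcal{E}_{M_{ns}}(M_s\oplus P[1])$ is a support $\tau$-tilting object of $J(M_{ns})$, ``its perpendicular conditions cut out only the zero object.'' That is false as a general principle: for a support $\tau$-tilting pair $(N,Q)$ the class $N^{\perp}\cap Q^{\perp}$ need not vanish. Over $k(1\to 2)$ the pair $(P_1\oplus S_1,0)$ is $\tau$-tilting, yet $\Hom(P_1\oplus S_1,S_2)=0$, so $S_2$ lies in its Hom-perpendicular. What your argument actually needs is that $f_{M_{ns}}M_s$, together with the image of $P[1]$, accounts for \emph{all} the indecomposable projectives of the reduced category; and by the very result you invoke, \cite[Lemma 4.9]{tauExcSeq_BM}, this is equivalent to $M_s$ being $\Ext$-projective in ${}^{\perp}\tau M_{ns}$, i.e. $M_s\in\add B_{M_{ns}}$ --- which is exactly the ``main obstacle'' you name in your closing paragraph and never resolve. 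The input you do get from (A), namely $M_{s,i}\notin\Gen M_{ns}$, only guarantees via Theorem \ref{BM_bijection}(a),(c) that $\mathcal{E}_{M_{ns}}(M_{s,i})=f_{M_{ns}}M_{s,i}$ is a module rather than a shifted projective; it does not make that module projective in $J(M_{ns})$. So the proof is circular at its crux: the ``cut out zero'' claim is equivalent to statement (B) itself. Closing the gap needs genuinely new input --- for instance, showing that split projectivity descends through the reduction and that a $\tau$-tilting module all of whose indecomposable summands are split projective must be projective (itself a nontrivial argument, using the exact sequence $\Lambda\to T^0\to T^1\to 0$ from \cite{tau-tiling-theory} together with a projective-cover and socle argument), or simply following Buan--Hanson's approximation-theoretic proof.
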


It was shown in \cite[Lemma 4.3]{tau-perpendicular_wide_subategories} that every left finite wide subcategory of $\modd\Lambda$ is also a $\tau$-perpendicular subcategory. More precisely, every left finite wide subcategory is of the form $\W_L(\T) = J(M_{ns}\oplus P[1])$, for a functorially finite torsion class $\T$.
In particular, if $M$ is an indecomposable non-projective $\tau$-rigid $\Lambda$-module, $\W_L(\T) = \W_L(\Gen B_M) = J(M)$ by \cite[Prop. 6.15]{tau-perpendicular_wide_subategories}. We have the following observations. 

\begin{lemma}\label{induction_preserves_split_and_non-split}
    Let $\Lambda = R\otimes kQ$. Let $\Gen M \subseteq \modd kQ$ be a functorially finite torsion class and let $M\oplus P[1]$ be the support $\tau$-tilting object in $\C(kQ)$ such that $\P(\Gen M) = \add M$. Let $\Gen(\Lambda\otimes_{kQ}M)\subseteq \modd \Lambda$ be the corresponding functorially finite torsion class and let $(\Lambda\otimes_{kQ}M)\oplus (\Lambda\otimes_{kQ}P)[1]$ be the support $\tau$-tilting object in $\C(\Lambda)$ such that $\P(\Gen(\Lambda\otimes_{kQ}M)) = \add(\Lambda\otimes_{kQ}M)$. Write $M = M_s \oplus M_{ns}$ as in Lemma \ref{left_finite_wide_subcat}. Then $\Lambda\otimes_{kQ}M = (\Lambda\otimes_{kQ}M)_s\oplus (\Lambda\otimes_{kQ}M)_{ns}$, with $(\Lambda\otimes_{kQ}M)_s = \Lambda\otimes_{kQ}M_s$ and $(\Lambda\otimes_{kQ}M)_{ns} = \Lambda\otimes_{kQ}M_{ns}$. 
\end{lemma}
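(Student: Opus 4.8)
Let $\Lambda = R\otimes kQ$. Given a functorially finite torsion class $\Gen M \subseteq \modd kQ$ with support $\tau$-tilting object $M\oplus P[1]$ where $\P(\Gen M) = \add M$, and writing $M = M_s\oplus M_{ns}$ (split-projective part plus non-split-projective part as in Lemma \ref{left_finite_wide_subcat}), we want to show $\Lambda\otimes_{kQ}M = (\Lambda\otimes_{kQ}M)_s\oplus(\Lambda\otimes_{kQ}M)_{ns}$ with $(\Lambda\otimes_{kQ}M)_s = \Lambda\otimes_{kQ}M_s$ and $(\Lambda\otimes_{kQ}M)_{ns}=\Lambda\otimes_{kQ}M_{ns}$.

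**The key notion to handle:** split projectivity. $N\in\mathcal X$ is split projective in $\mathcal X$ if every epimorphism $E\to N$ with $E\in\mathcal X$ splits. So the whole statement reduces to: an indecomposable summand $X$ of $M$ is split projective in $\Gen M$ if and only if $\Lambda\otimes_{kQ}X$ is split projective in $\Gen(\Lambda\otimes_{kQ}M)$. Because $\add M = \P(\Gen M)$ corresponds to $\add(\Lambda\otimes_{kQ}M) = \P(\Gen(\Lambda\otimes_{kQ}M))$ under induction (Corollary \ref{stautitl(kQ)1:1stautilt(Lambda)} and Lemma \ref{Ext-proj_in_Gen(LambdaxU)}), establishing this equivalence summand-by-summand immediately gives the decomposition.

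**Here is my plan.** The strategy is a "both directions" argument using the two adjoint functors. For the forward direction, suppose $X$ (an indecomposable summand of $M$) is split projective in $\Gen M$, and let $g: E\to \Lambda\otimes_{kQ}X$ be an epimorphism with $E\in\Gen(\Lambda\otimes_{kQ}M)$. Apply the restriction of scalars $\res$: since $\res$ is exact we get an epimorphism $\res E\to \res(\Lambda\otimes_{kQ}X)=X^d$ with $\res E\in\Gen M$ (because $E\in\Gen(\Lambda\otimes_{kQ}M)$ and $\res(\Lambda\otimes_{kQ}M)=M^d$, using Remark \ref{res(ind)}). Split projectivity of $X$ in $\Gen M$ makes $\res E\to X^d$ split; I would then need to lift this splitting back to $\modd\Lambda$ to conclude $g$ splits. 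For the converse, suppose $\Lambda\otimes_{kQ}X$ is split projective in $\Gen(\Lambda\otimes_{kQ}M)$, and let $h: E'\to X$ be an epimorphism with $E'\in\Gen M$; apply the induction functor (exact by Proposition \ref{tauCommutesWithInduction}(i)) to get an epimorphism $\Lambda\otimes_{kQ}E'\to\Lambda\otimes_{kQ}X$ with $\Lambda\otimes_{kQ}E'\in\Gen(\Lambda\otimes_{kQ}M)$, which splits; then recover the splitting of $h$ by restricting and using the indecomposability/Krull--Schmidt bookkeeping.

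**The main obstacle.** The delicate point is transferring a split epimorphism across the functors without losing control — specifically, in the forward direction, that a section of $\res E\to X^d$ over $kQ$ need not a priori be $\Lambda$-linear, so I cannot naively pull it back. The cleaner route, which I expect to be the actual argument, is to characterize split projectivity via $\Ext$-vanishing against the torsion class rather than via sections directly: a summand of $\P(\T)$ is split projective in $\T$ precisely when it has no $\T$-approximation obstruction, and this condition is controlled by the $\Ext^1$-behaviour that Lemma \ref{Ext-proj_in_Gen(LambdaxU)} already handles. Alternatively, since Lemma \ref{left_finite_wide_subcat} identifies $M_{ns}$ through the equality $\Gen M = {}^\perp\tau M_{ns}\cap P^\perp$, I would prove the claim by showing the induction functor is compatible with this perpendicular description — using Lemma \ref{tau-rigid_in_J(X)_iff_taurigid_in_J(Lambda_otimes_X)}(i) to transport the $P^\perp$ and ${}^\perp\tau$ conditions — so that $\Lambda\otimes_{kQ}M_{ns}$ satisfies the defining property of $(\Lambda\otimes_{kQ}M)_{ns}$ and $\Lambda\otimes_{kQ}M_s$ generates $\Gen(\Lambda\otimes_{kQ}M)$, forcing the asserted matching of split and non-split parts by the uniqueness in Lemma \ref{left_finite_wide_subcat}.
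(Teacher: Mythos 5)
Your reduction to the summand-wise statement (an indecomposable summand $X$ of $M$ is split projective in $\Gen M$ if and only if $\Lambda\otimes_{kQ}X$ is split projective in $\Gen(\Lambda\otimes_{kQ}M)$) is exactly right, and your "converse" direction is essentially the paper's own argument for the non-split part: induce a non-split epimorphism $h\colon E'\to X$, split $\Lambda\otimes_{kQ}h$ over $\Lambda$ by hypothesis, and restrict the $\Lambda$-linear section; since $\res(\Lambda\otimes_{kQ}h)$ is identified with $h^d$, a section $s$ of $h^d$ gives the section $\mathrm{pr}_1\circ s\circ\iota_1$ of $h$, a contradiction. You have also correctly isolated the crux of the whole lemma: in the other direction, a $kQ$-linear section of $\res E\to X^d$ need not be $\Lambda$-linear, so split projectivity does not naively transfer upward. (This is a genuine subtlety: the paper's own treatment of that direction, which restricts $g\colon V\to\Lambda\otimes_{kQ}M_s$ and invokes faithfulness of $\res$, only shows that no section of $\res g$ has the form $\res g'$, whereas split projectivity of $M_s^d$ produces \emph{some} $kQ$-linear section, not one of that form.)

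The problem is that neither of your proposed escapes closes this gap, so the hard direction remains unproven. Route A ($\Ext$-vanishing) cannot work even in principle: split projectivity is strictly stronger than $\Ext$-projectivity, and \emph{both} $M_s$ and $M_{ns}$ lie in $\P(\Gen M)$, so no $\Ext^1$-condition against the torsion class --- in particular not the one handled by Lemma \ref{Ext-proj_in_Gen(LambdaxU)} --- can separate them (for $Q\colon 1\to 2$ and $M=P_1\oplus S_1$, both summands are $\Ext$-projective in $\Gen M$, but only $P_1$ is split projective). Route B misreads Lemma \ref{left_finite_wide_subcat}: the equalities $\Gen M=\Gen M_s={}^{\perp}\tau M_{ns}\cap P^{\perp}$ are stated there as \emph{consequences} of the split/non-split decomposition, not as a characterization of it; the uniqueness in that lemma is the Krull--Schmidt uniqueness of the decomposition by split projectivity, so after transporting the displayed equalities through induction you would still have to prove that any decomposition $\Lambda\otimes_{kQ}M=A\oplus B$ satisfying $\Gen A=\T'$ and ${}^{\perp}\tau B\cap(\Lambda\otimes_{kQ}P)^{\perp}=\T'$, where $\T'=\Gen(\Lambda\otimes_{kQ}M)$, must have $B=(\Lambda\otimes_{kQ}M)_{ns}$ --- a uniqueness statement established nowhere. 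A correct way to finish, using only tools you already invoke, is the criterion: an indecomposable $X\in\add M=\P(\Gen M)$ is split projective if and only if $X\notin\Gen M'$, where $M=X\oplus M'$ (if $X$ is split projective and $X\in\Gen M'$, the epimorphism $(M')^r\to X$ splits, contradicting basicness; conversely, if $X$ is a summand of $M_{ns}$ then $X\in\Gen M_s\subseteq\Gen M'$ by Lemma \ref{left_finite_wide_subcat}). This criterion involves only $\Gen$-membership, which induction preserves and reflects by the same induce/restrict argument as in your converse direction, and applying it over both $kQ$ and $\Lambda$ yields both directions at once.
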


\begin{proof}
    Let $\Gen M \subseteq \modd kQ$ be a functorially finite torsion class and let $M\oplus P[1]$ be the support $\tau$-tilting object in $\C(kQ)$ such that $\P(\Gen M) = \add M$. Let $\Gen(\Lambda\otimes_{kQ}M)\subseteq \modd \Lambda$ be the functorially finite torsion class corresponding to $\Gen M$ under the bijection between $\ftors kQ$ and $\ftors \Lambda$, and let $(\Lambda\otimes_{kQ}M)\oplus (\Lambda\otimes_{kQ}P)[1]$ be the support $\tau$-tilting object in $\C(\Lambda)$ corresponding to $M\oplus P[1]$ under the bijection between $\stautilt kQ$ and $\stautilt \Lambda$ such that $\P(\Gen(\Lambda\otimes_{kQ}M)) = \add (\Lambda\otimes_{kQ}M)$; see Corollary \ref{stautitl(kQ)1:1stautilt(Lambda)}. Write $M = M_s\oplus M_{ns}$ as in Lemma \ref{left_finite_wide_subcat}, and similarly $\Lambda\otimes_{kQ}M = (\Lambda\otimes_{kQ}M)_s\oplus (\Lambda\otimes_{kQ}M)_{ns}$. 
    
    Let $X$ be an indecomposable direct summand of $M_{ns}$. Then, $\Lambda\otimes_{kQ}X$ is an indecomposable direct summand of $(\Lambda\otimes_{kQ}M)_{ns}$. Indeed, suppose that was not the case. Then, $\Lambda\otimes_{kQ}X$ must be a summand of $(\Lambda\otimes_{kQ}M)_s$. So let $f:N\to \Lambda\otimes_{kQ}X$ be a split epimorphism, with $N\in \Gen(\Lambda\otimes_{kQ}M)$, and let $f': \Lambda\otimes_{kQ}X\to N$ be such that $ff' = \id$. Applying the restriction of scalars, we get an epimorphism $\res f : \res N \to X^d$ with $\res(f)\res(f') = \res(ff')= \res(\id) = \id$. Hence, $\res f: \res N \to X^d$ is a split epimorphism with $\res N \in \Gen M$, a contradiction. 

    It remains to show that $\Lambda\otimes_{kQ}M_s$ is split projective in $\Gen(\Lambda\otimes_{kQ}M)$. Suppose there was a non-split projective epimorphism $g: V\to \Lambda\otimes_{kQ}M_s$, with $V\in \Gen(\Lambda\otimes_{kQ}M)$. Then for every map $g': \Lambda\otimes_{kQ}M_s\to V$, we have $gg' \neq \id$. Applying the restriction of scalar we get an epimorphism $\res g: \res V \to M_s^d$ with $\res V \in \Gen M$. Since the restriction of scalars is faithful, we have $\res(g)\res(g') = \res(gg') \neq \id$, which contradicts the fact that $M_s$ is split projective in $\Gen M$. The claim follows. 
\end{proof}

\begin{lemma}\label{induction_preserves_left_finite}
     The functor $F: \Mcluster(kQ)\to \Mcluster(\Lambda)$ preserves left finite wide subcategories. 
\end{lemma}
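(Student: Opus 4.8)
The plan is to reduce the statement to the two preceding lemmas together with the classification of left finite wide subcategories recalled above. First I would take a left finite wide subcategory $\W\subseteq\modd kQ$ and write it, via \cite[Lemma 4.3]{tau-perpendicular_wide_subategories}, as $\W = \W_L(\T)$ for a functorially finite torsion class $\T = \Gen M$, where $M\oplus P[1]$ is the support $\tau$-tilting object in $\C(kQ)$ with $\add M = \P(\T)$. Decomposing $M = M_s\oplus M_{ns}$ as in Lemma \ref{left_finite_wide_subcat}, the same result gives the $\tau$-perpendicular description $\W = J(M_{ns}\oplus P[1])$.

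Next I would apply the functor $F$. By definition (Proposition \ref{functor_between_tau_cluster_morphism_categories}),
$$F(\W) = F(J(M_{ns}\oplus P[1])) = J\big((\Lambda\otimes_{kQ}M_{ns})\oplus (\Lambda\otimes_{kQ}P)[1]\big).$$
On the other side, Corollary \ref{stautitl(kQ)1:1stautilt(Lambda)} tells me that $\Gen(\Lambda\otimes_{kQ}M)$ is the functorially finite torsion class corresponding to $\Gen M$, and that $(\Lambda\otimes_{kQ}M)\oplus(\Lambda\otimes_{kQ}P)[1]$ is the associated support $\tau$-tilting object in $\C(\Lambda)$ with $\P(\Gen(\Lambda\otimes_{kQ}M)) = \add(\Lambda\otimes_{kQ}M)$.

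The crucial identification is then supplied by Lemma \ref{induction_preserves_split_and_non-split}, which gives $(\Lambda\otimes_{kQ}M)_{ns} = \Lambda\otimes_{kQ}M_{ns}$. Applying the classification of left finite wide subcategories of $\modd\Lambda$ (again \cite[Lemma 4.3]{tau-perpendicular_wide_subategories}) to the torsion class $\Gen(\Lambda\otimes_{kQ}M)$ yields
$$\W_L(\Gen(\Lambda\otimes_{kQ}M)) = J\big((\Lambda\otimes_{kQ}M)_{ns}\oplus(\Lambda\otimes_{kQ}P)[1]\big) = J\big((\Lambda\otimes_{kQ}M_{ns})\oplus(\Lambda\otimes_{kQ}P)[1]\big),$$
and the right-hand side is exactly $F(\W)$. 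Hence $F(\W)$ is the left finite wide subcategory of $\modd\Lambda$ attached to $\Gen(\Lambda\otimes_{kQ}M)$, which proves the claim.

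The computations are all bookkeeping once the preparatory results are in place; the only point requiring genuine care — and the main obstacle — is verifying that the split/non-split decomposition of the $\Ext$-projectives is preserved under induction, namely that $(\Lambda\otimes_{kQ}M)_{ns} = \Lambda\otimes_{kQ}M_{ns}$. This is precisely the content of Lemma \ref{induction_preserves_split_and_non-split}, whose proof exploits the exactness of the induction functor and the faithfulness of the restriction of scalars; granting that, the matching of the two $J(-)$ descriptions is immediate and the statement follows.
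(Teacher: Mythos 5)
Your proposal is correct and follows essentially the same route as the paper's proof: both express $\W = \W_L(\Gen M) = J(M_{ns}\oplus P[1])$ via \cite[Lemma 4.3]{tau-perpendicular_wide_subategories}, apply $F$ using its definition, invoke Corollary \ref{stautitl(kQ)1:1stautilt(Lambda)} for the corresponding torsion class and support $\tau$-tilting object over $\Lambda$, and then use Lemma \ref{induction_preserves_split_and_non-split} to identify $(\Lambda\otimes_{kQ}M)_{ns}$ with $\Lambda\otimes_{kQ}M_{ns}$, concluding $F(\W) = \W_L(\Gen(\Lambda\otimes_{kQ}M))$. You have also correctly identified the split/non-split compatibility as the only step requiring genuine care, exactly as in the paper.
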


\begin{proof}
    Let $\W$ in $\Mcluster(kQ)$ be a left finite wide subcategory. Then, $\W = \W_L(\Gen M )$ for some functorially finite torsion class $\Gen M \subseteq \modd kQ$ where $M\oplus P[1]$ is the support $\tau$-tilting object  in $\C(kQ)$ such that $\P(\Gen M ) = \add M$. Let $\Gen(\Lambda\otimes_{kQ}M)\subseteq \modd\Lambda$ be the functorially finite torsion class corresponding to $\Gen M $ under the bijection between $\ftors kQ$ and $\ftors\Lambda$, and let $(\Lambda\otimes_{kQ}M)\oplus (\Lambda\otimes_{kQ}P)[1]$ be the support $\tau$-tilting module corresponding to $M\oplus P[1]$ under the bijection between $\stautilt kQ$ and $\stautilt \Lambda$ such that $\P(\Gen(\Lambda\otimes_{kQ}M)) = \add(\Lambda\otimes_{kQ}M)$; see Corollary \ref{stautitl(kQ)1:1stautilt(Lambda)}.  Write $M = M_s\oplus M_{ns}$ and $\Lambda\otimes_{kQ} M = (\Lambda\otimes_{kQ}M)_s\oplus (\Lambda\otimes_{kQ}M)_{ns}$ as in Lemma \ref{left_finite_wide_subcat}. Then
    \begin{align*}
        F(\W) = F(\W_L(\Gen M )) &= F(J(M_{ns}\oplus P[1])) &&\text{by \cite[Lemma 4.3]{tau-perpendicular_wide_subategories}}\\
        &= J(\Lambda\otimes_{kQ}(M_{ns}\oplus P[1])) &&\text{by the definition of $F$}\\
        &= J((\Lambda\otimes_{kQ}M_{ns})\oplus (\Lambda\otimes_{kQ}P)[1]) \\
        &= J((\Lambda\otimes_{kQ}M)_{ns}\oplus (\Lambda\otimes_{kQ}P)[1]) &&\text{by Lemma \ref{induction_preserves_split_and_non-split}}\\
        &= \W_L(\Gen(\Lambda\otimes_{kQ}M)) &&\text{by \cite[Lemma 4.3]{tau-perpendicular_wide_subategories}}.
    \end{align*}
    This finishes the proof. 
\end{proof}

\begin{lemma}{\cite[Page 977]{tau-perpendicular_wide_subategories}}\label{left_finite=tau_perp}
    Let $\Lambda$ be a finite dimensional algebra. Let $\V,\W\subseteq \modd\Lambda$ be $\tau$-perpendicular subcategories of $\modd\Lambda$ where $\V\subseteq \W$ is a wide subcategory of $\W$. Suppose that left finite wide and $\tau$-perpendicular subcategories of $\modd\Lambda$ coincide. Then, $\V$ is a $\tau$-perpendicular subcategory of $\W$. 
\end{lemma}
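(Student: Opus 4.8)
The plan is to reduce the statement to showing that $\V$ is a \emph{left finite} wide subcategory of $\W$, and then invoke the general implication ``left finite $\Rightarrow$ $\tau$-perpendicular''. Concretely, since $\W$ is $\tau$-perpendicular in $\modd\Lambda$ it is a functorially finite wide subcategory, hence equivalent to $\modd\Gamma_\W$ for a finite dimensional algebra $\Gamma_\W$ (by \cite[Prop. 4.12]{Rigid_ICE-subcategories}; see Remark \ref{Lambda_hereditary_J_hereditary}). Applying \cite[Lemma 4.3]{tau-perpendicular_wide_subategories} to $\Gamma_\W$ shows that any left finite wide subcategory of $\W$ is automatically $\tau$-perpendicular in $\W$. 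Thus it suffices to exhibit $\V$ as $\W_L^{\W}(\T_\W)$ for some functorially finite torsion class $\T_\W$ of $\W$. Note that this reduction uses the coincidence hypothesis only for $\modd\Lambda$, not for $\W$.

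First I would use the standing hypothesis: since $\V$ is $\tau$-perpendicular in $\modd\Lambda$, and left finite wide and $\tau$-perpendicular subcategories of $\modd\Lambda$ coincide, $\V$ is left finite wide in $\modd\Lambda$. Hence $\V = \W_L(\T)$ for a functorially finite torsion class $\T\subseteq\modd\Lambda$, which I may take to be the smallest torsion class $\Filt(\Gen\V)$ containing $\V$; by Lemma \ref{left_finite_wide_subcat} this comes with a decomposition $\V = J(M_{ns}\oplus P[1])$ and the identity $\T = \Gen M_s$.

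The natural candidate for the torsion class of $\W$ is $\T_\W := \T\cap\W$. I would first dispatch the formal points. That $\T\cap\W$ is a torsion class of $\W$ follows because the embedding $\W\hookrightarrow\modd\Lambda$ is exact and $\T$ is closed under extensions and quotients, so $\T\cap\W$ is closed under $\W$-extensions and $\W$-quotients. The inclusion $\V\subseteq\W_L^{\W}(\T\cap\W)$ is also immediate: for $X\in\V$ and any $\W$-morphism $g\colon Y\to X$ with $Y\in\T\cap\W$, the kernel computed in $\W$ coincides with the kernel in $\modd\Lambda$, which lies in $\T$ because $X\in\W_L(\T)$ and lies in $\W$ because $\W$ is wide; hence it lies in $\T\cap\W$. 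The genuine content is the reverse inclusion $\W_L^{\W}(\T\cap\W)\subseteq\V$ together with functorial finiteness of $\T\cap\W$ inside $\W$, and I would extract both from the reduction of torsion lattices attached to the wide subcategory $\W$ (in the spirit of the Demonet--Iyama--Reiten--Reiten--Thomas and Jasso reductions), which yields a functorial-finiteness-preserving isomorphism between the lattice $\mathrm{tors}(\W)$ and an interval of $\mathrm{tors}(\Lambda)$ and is compatible with the $\W_L$-construction.

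The hard part will be exactly this compatibility, in particular the reverse inclusion. The difficulty is that the defining condition for membership in $\W_L(\T)$ quantifies over \emph{all} morphisms $g'\colon Y'\to X$ with $Y'\in\T$, including those with $Y'\notin\W$, and knowing only that $Y'$ and $\im g'$ lie in $\T$ does not force $\ker g'\in\T$, since $\T$ is not closed under submodules. The task is therefore to transport left-finiteness of $X$ measured internally to $\W$ back to left-finiteness measured in $\modd\Lambda$. I expect to handle this precisely through the torsion-reduction isomorphism above, which identifies $\W_L(\T)$ (for $\T$ restricting into the interval carved out by $\W$) with $\W_L^{\W}(\T\cap\W)$; the verification that $\T = \Filt(\Gen\V)$ lands in the correct interval uses $\V\subseteq\W$. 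Everything else in the argument is formal, so this reduction-compatibility is where essentially all of the work is concentrated.
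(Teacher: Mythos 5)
The paper itself contains no proof of this lemma: it is imported from Buan--Hanson \cite[p.~977]{tau-perpendicular_wide_subategories} as a citation, so there is no internal argument to compare yours against; what follows judges your sketch on its own terms. The sound parts are the opening reduction and the easy inclusion: since $\W\simeq\modd\Gamma_\W$, \cite[Lemma 4.3]{tau-perpendicular_wide_subategories} (which indeed needs no coincidence hypothesis) reduces the problem to exhibiting $\V$ as $\W_L^{\W}(\T')$ for a functorially finite torsion class $\T'$ of $\W$, and your verification that $\V\subseteq \W_L^{\W}(\T\cap\W)$ is correct.

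However, the decisive step is missing, and the specific route you propose for it fails at its first move. The Jasso/DIRRT reduction identifies $\mathrm{tors}(\W)$, for $\W=J(M_\W\oplus P_\W[1])$, with the interval $[\Gen M_\W,\, {}^{\perp}(\tau M_\W)\cap P_\W^{\perp}]$ of $\mathrm{tors}(\Lambda)$; your torsion class $\T=\Filt(\Gen\V)$ satisfies only the upper bound (this is what $\V\subseteq\W$ gives) and in general violates the lower bound $\Gen M_\W\subseteq\T$ --- already $\V=0$ gives $\T=0$. So ``the verification that $\T$ lands in the correct interval,'' which you claim ``uses $\V\subseteq\W$,'' cannot be carried out, and the interval isomorphism cannot be applied to $\T$ at all. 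The natural repair is to replace $\T$ by the join $\T\vee\Gen M_\W$, which \emph{is} the interval element corresponding to the smallest torsion class of $\W$ containing $\V$; once that identification is made, the reverse inclusion $\W_L^{\W}(\T\cap\W)\subseteq\V$ actually comes for free from the Marks--\v{S}\v{t}ov\'i\v{c}ek identity $\W_L(\Filt\Gen\V)=\V$ applied inside $\W$, so the irreducible difficulty is the remaining one: functorial finiteness of $\T\vee\Gen M_\W$ in $\modd\Lambda$ (equivalently, of $\T\cap\W$ in $\W$). This does not follow formally from the cited reductions, since joins of functorially finite torsion classes need not be functorially finite; it is exactly the non-formal content of the lemma, and your proposal leaves it as an expectation. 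Note finally that your strategy targets a conclusion a priori stronger than the lemma's: ``left finite in $\W$'' implies ``$\tau$-perpendicular in $\W$,'' but not conversely without a coincidence hypothesis for $\Gamma_\W$, which the stated hypotheses do not provide --- so it is not even clear that the stronger statement you aim for is forced by the assumptions. As it stands, the attempt is a plan whose essential step is unproven and whose stated justification rests on a false interval-membership claim.
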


We need the following version of Theorem \ref{Mcluster(kQ)_equvalent_to_Mcluster(Lambda)}. 

\begin{proposition}\label{bijection_objects_M(kQ)_and_M(Lambda)}
    The functor $F: \Mcluster(kQ)\to \Mcluster(\Lambda)$ induces a bijection between the objects of $\Mcluster(kQ)$ and the objects of $\Mcluster(\Lambda)$.
\end{proposition}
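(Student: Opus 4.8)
The plan is to analyse the effect of $F$ on objects directly. Recall that on objects $F$ sends a $\tau$-perpendicular subcategory $J(U)\subseteq\modd kQ$ to $J(\Lambda\otimes_{kQ}U)\subseteq\modd\Lambda$, and that this is well defined by Proposition \ref{functor_between_tau_cluster_morphism_categories}. I would prove that this assignment is surjective and injective, treating the two halves separately.

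For surjectivity I would simply reuse the density argument from the proof of Theorem \ref{Mcluster(kQ)_equvalent_to_Mcluster(Lambda)}: if $\W=J(X)$ is any $\tau$-perpendicular subcategory of $\modd\Lambda$, then by Corollary \ref{stautitl(kQ)1:1stautilt(Lambda)} the support $\tau$-rigid object $X$ has the form $\Lambda\otimes_{kQ}U$ for a (unique) support $\tau$-rigid object $U\in\C(kQ)$, and hence $\W=J(\Lambda\otimes_{kQ}U)=F(J(U))$. For injectivity the key point I would establish is the set-theoretic characterization
\begin{equation*}
    J(U)=\{\,Y\in\modd kQ \mid \Lambda\otimes_{kQ}Y\in J(\Lambda\otimes_{kQ}U)\,\},
\end{equation*}
valid for every support $\tau$-rigid $U\in\C(kQ)$. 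To prove it I would first pass from the support $\tau$-rigid object $U=M\oplus P[1]$ to the $\tau$-rigid module $\overline{U}=P\oplus\Eps_P^{-1}(M)$, invoking Lemma \ref{two_def_of_J} to identify $J(U)=J(\overline{U})$ and Lemma \ref{J(LambdaxU)=J(LambdaxUbar)} to identify $J(\Lambda\otimes_{kQ}U)=J(\Lambda\otimes_{kQ}\overline{U})$. Since $\overline{U}$ is an honest $\tau$-rigid $kQ$-module and $\Lambda\otimes_{kQ}\overline{U}$ is the corresponding $\tau$-rigid $\Lambda$-module, Lemma \ref{tau-rigid_in_J(X)_iff_taurigid_in_J(Lambda_otimes_X)}(i) then yields $Y\in J(\overline{U})$ if and only if $\Lambda\otimes_{kQ}Y\in J(\Lambda\otimes_{kQ}\overline{U})$ for every $Y\in\modd kQ$, which is exactly the displayed equality.

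Granting the characterization, injectivity is immediate: if $F(J(U))=F(J(U'))$, that is $J(\Lambda\otimes_{kQ}U)=J(\Lambda\otimes_{kQ}U')$, then the right-hand side of the displayed equation is literally the same for $U$ and $U'$, so $J(U)=J(U')$ as sets of objects, hence as full subcategories. Combined with surjectivity this gives the desired bijection. I would also record the alternative, purely formal route: since $F$ is an equivalence by Theorem \ref{Mcluster(kQ)_equvalent_to_Mcluster(Lambda)} it induces a bijection on isomorphism classes of objects, and in both $\Mcluster(kQ)$ and $\Mcluster(\Lambda)$ isomorphic objects coincide, because by Definition \ref{def_tau-cluster_morph_cat}(c) a morphism $\W_1\to\W_2$ can exist only when $\W_2=J_{\W_1}(U)\subseteq\W_1$, so an isomorphism forces $\W_1=\W_2$.

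I expect the only delicate step in the direct approach to be justifying the reduction to the module $\overline{U}$: one must be sure that $\overline{U}$ genuinely is $\tau$-rigid, so that Lemma \ref{tau-rigid_in_J(X)_iff_taurigid_in_J(Lambda_otimes_X)}(i) applies, and that the shifted summand $P[1]$ is correctly absorbed on both the $kQ$- and the $\Lambda$-side. Lemmas \ref{two_def_of_J} and \ref{J(LambdaxU)=J(LambdaxUbar)} are precisely engineered to handle this, so the argument should go through cleanly once they are quoted in the right order.
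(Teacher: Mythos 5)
Your proposal is correct, and its surjectivity half is exactly the paper's (both reuse the density part of the proof of Theorem \ref{Mcluster(kQ)_equvalent_to_Mcluster(Lambda)}), but your primary injectivity argument takes a genuinely different route. The paper proves injectivity by exploiting that $F$ is fully faithful: if $F(J(U)) = \W = F(J(V))$, then $\Hom_{\Mcluster(kQ)}(J(U),J(V))$ contains a unique morphism $g_X^{J(U)}$ mapping to $g_0^{\W}$, and since $F(g_X^{J(U)}) = g_{\Lambda\otimes_{kQ}X}^{J(\Lambda\otimes_{kQ}U)} = g_0^{\W}$ forces $\Lambda\otimes_{kQ}X = 0$, hence $X=0$, one gets $J(V) = J_{J(U)}(0) = J(U)$. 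You instead establish the set-theoretic identity $J(U)=\{Y\in\modd kQ \mid \Lambda\otimes_{kQ}Y\in J(\Lambda\otimes_{kQ}U)\}$, which recovers $J(U)$ from its image and makes injectivity immediate; the reduction via $\overline{U}=P\oplus\Eps_P^{-1}(M)$ through Lemmas \ref{two_def_of_J} and \ref{J(LambdaxU)=J(LambdaxUbar)} is sound (as noted in the proof of Lemma \ref{J(LambdaxU)=J(LambdaxUbar)}, $\Eps_P^{-1}(M)=f_P^{-1}(M)$ is a module not in $\Gen P$, so $\overline{U}$ is an honest $\tau$-rigid $kQ$-module and Lemma \ref{tau-rigid_in_J(X)_iff_taurigid_in_J(Lambda_otimes_X)}(i) applies to all $Y$, not just $\tau$-rigid ones). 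What your route buys is independence from the equivalence theorem: injectivity follows from Section \ref{section4}'s lemmas alone and could even be stated before Theorem \ref{Mcluster(kQ)_equvalent_to_Mcluster(Lambda)}, whereas the paper's argument is shorter given that the equivalence is already in hand. Your secondary ``formal route'' (equivalences biject isomorphism classes, and isomorphic objects in a $\tau$-cluster morphism category coincide since morphisms only exist from a larger to a smaller subcategory) is essentially the paper's argument in categorical disguise, with the paper implementing the ``isomorphic implies equal'' step concretely via the indexing of morphisms by support $\tau$-rigid objects.
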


\begin{proof}
    By the first part of the Proof of Theorem \ref{Mcluster(kQ)_equvalent_to_Mcluster(Lambda)}, $F$ is surjective on objects.
    
    We want to show that $F$ is also injective on objects. So, let $ F(J(U)) = \W = F(J(V))$. Then, $$\Hom_{\Mcluster(\Lambda)}(F(J(U)), F(J(V))) = \Hom_{\Mcluster(\Lambda)}(\W,\W) = g_0^{\W}$$ by Definition \ref{def_tau-cluster_morph_cat} (c)(ii). Since $F$ is fully faithful, $$ \Hom_{\Mcluster(kQ)}(J(U), J(V))\cong \Hom_{\Mcluster(\Lambda)}(F(J(U)), F(J(V))).$$ Let $g_X^{J(U)}: J(U)\to J_{J(U)}(X)$ be the unique morphism in $\Hom_{\Mcluster(kQ)}(J(U),J(V))$, such that $F(g_X^{J(U)}) = g_{0}^\W$. Notice that $J_{J(U)}(X) = J(V)$. By definition of $F$, we obtain that $$g_{0}^\W = F(g_X^{J(U)}) = g_{\Lambda\otimes_{kQ}X}^{J(\Lambda\otimes_{kQ}U)}.$$ It follows that $\Lambda\otimes_{kQ}X = 0$, which implies $X = 0$. Hence, $$J(V) = J_{J(U)}(X) = J_{J(U)}(0) = J(U).$$ We conclude that $F$ is also injective on objects. The claim follows. 
\end{proof}

As a consequence, we have the following result. 

\begin{corollary}\label{Buan-Hanson_conjecture_holds_for_Lambda}
    Let $\Lambda = R\otimes kQ$. The following statements hold.
    \begin{enumerate}
        \item[(i)] $\tau$-perpendicular categories, left finite wide subcategories, and right finite wide subcategories of $\modd \Lambda$ coincide;
        \item[(ii)] Conjecture \ref{Buan-Hanson_Conjecture} holds true for $\Lambda$. 
    \end{enumerate}
\end{corollary}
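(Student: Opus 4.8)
The plan is to prove part (i) first and then obtain part (ii) as a formal consequence. The engine throughout is the equivalence $F\colon\Mcluster(kQ)\to\Mcluster(\Lambda)$ of Theorem \ref{Mcluster(kQ)_equvalent_to_Mcluster(Lambda)}, which by Proposition \ref{bijection_objects_M(kQ)_and_M(Lambda)} restricts to a bijection $J(U)\mapsto J(\Lambda\otimes_{kQ}U)$ between the $\tau$-perpendicular subcategories of $\modd kQ$ and those of $\modd\Lambda$, together with the classical fact that over the hereditary algebra $kQ$ the three classes of subcategories already coincide (see \cite{Jasso_Reduction, tau-perpendicular_wide_subategories}).

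First I would show that left finite wide and $\tau$-perpendicular subcategories of $\modd\Lambda$ coincide. The inclusion of the former in the latter is general, by \cite[Lemma 4.3]{tau-perpendicular_wide_subategories}. For the reverse inclusion, let $\mathcal{W}\subseteq\modd\Lambda$ be $\tau$-perpendicular. By Proposition \ref{bijection_objects_M(kQ)_and_M(Lambda)} we may write $\mathcal{W}=J(\Lambda\otimes_{kQ}U)=F(J(U))$ with $J(U)$ a $\tau$-perpendicular subcategory of $\modd kQ$; since $kQ$ is hereditary, $J(U)$ is left finite in $\modd kQ$, and Lemma \ref{induction_preserves_left_finite} then gives that $\mathcal{W}=F(J(U))$ is left finite in $\modd\Lambda$. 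Hence $\tau$-perpendicular $=$ left finite for $\Lambda$.

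For the right finite case I would pass to the opposite algebra. As $R$ is commutative, $\Lambda^{\mathrm{op}}\cong R\otimes kQ^{\mathrm{op}}$ with $Q^{\mathrm{op}}$ again acyclic, so the previous paragraph applies to $\Lambda^{\mathrm{op}}$ and gives that left finite $=$ $\tau$-perpendicular in $\modd\Lambda^{\mathrm{op}}$. The standard duality $D=\Hom_k(-,k)\colon\modd\Lambda^{\mathrm{op}}\to\modd\Lambda$ interchanges functorially finite torsion and torsion-free classes and the constructions $\W_L$ and $\W_R$, hence identifies the left finite wide subcategories of $\modd\Lambda^{\mathrm{op}}$ with the right finite wide subcategories of $\modd\Lambda$; granting that $D$ likewise matches the $\tau$-perpendicular subcategories of the two algebras, the equality for $\Lambda^{\mathrm{op}}$ transports to right finite $=$ $\tau$-perpendicular for $\Lambda$, completing (i). The delicate point is precisely this compatibility of $D$ with $\tau$-perpendicular subcategories; I would establish it through the self-duality of the class of $\tau$-perpendicular subcategories (via their description as intervals in the lattice of torsion classes, compatible with the order-reversing isomorphism $\mathrm{tors}\,\Lambda^{\mathrm{op}}\cong(\mathrm{tors}\,\Lambda)^{\mathrm{op}}$; see \cite{Torsion-classes-wide-subcategories-and-localisations}), or, avoiding duality altogether, by proving the right finite analogues of Lemmas \ref{left_finite_wide_subcat}, \ref{induction_preserves_split_and_non-split} and \ref{induction_preserves_left_finite} and rerunning the argument of the second paragraph verbatim.

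Finally, part (ii) follows from (i). Let $\mathcal{W}$ be $\tau$-perpendicular in $\modd\Lambda$ and let $\mathcal{V}\subseteq\mathcal{W}$ be a wide subcategory of $\mathcal{W}$. If $\mathcal{V}$ is $\tau$-perpendicular in $\modd\Lambda$, then, since left finite and $\tau$-perpendicular subcategories of $\modd\Lambda$ coincide by (i), Lemma \ref{left_finite=tau_perp} shows that $\mathcal{V}$ is $\tau$-perpendicular in $\mathcal{W}$. Conversely, if $\mathcal{V}$ is $\tau$-perpendicular in $\mathcal{W}$, write $\mathcal{W}=J(U_0)$ and $\mathcal{V}=J_{\mathcal{W}}(U_1)$ for support $\tau$-rigid objects, set $\widetilde{U}_1=\Eps_{U_0}^{-1}(U_1)$ so that $U_0\oplus\widetilde{U}_1$ is support $\tau$-rigid in $\C(\Lambda)$, and apply Theorem \ref{Buan-Hanson-Theorem-General} with ambient category $\modd\Lambda$ to obtain $J(U_0\oplus\widetilde{U}_1)=J_{J(U_0)}(\Eps_{U_0}(\widetilde{U}_1))=J_{\mathcal{W}}(U_1)=\mathcal{V}$, so that $\mathcal{V}$ is $\tau$-perpendicular in $\modd\Lambda$. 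The main obstacle of the whole proof is the right finite half of (i): once the compatibility of $D$ with $\tau$-perpendicular subcategories (equivalently, the right finite versions of the induction lemmas) is secured, the remaining assertions are formal consequences of the equivalence $F$ and of the results of the preceding sections.
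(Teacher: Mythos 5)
Your proposal is correct and follows essentially the same route as the paper: left finite wide subcategories are $\tau$-perpendicular in general, the two notions coincide over the hereditary algebra $kQ$, one transports this through the object bijection of Proposition \ref{bijection_objects_M(kQ)_and_M(Lambda)} using Lemma \ref{induction_preserves_left_finite}, and part (ii) follows from (i) together with Lemma \ref{left_finite=tau_perp}. The only deviations are in presentation rather than substance: where the paper dismisses the right finite case as ``dual'' and cites \cite[Cor.~6.7]{tau-perpendicular_wide_subategories} for the easy direction of (ii), you make the duality precise via the opposite algebra $\Lambda^{\mathrm{op}}\cong R\otimes kQ^{\mathrm{op}}$ (correctly flagging the compatibility of $D$ with $\tau$-perpendicular subcategories as the step needing justification, or alternatively the right finite analogues of Lemmas \ref{left_finite_wide_subcat}--\ref{induction_preserves_left_finite}), and you reprove the cited corollary directly from Theorem \ref{Buan-Hanson-Theorem-General} and the $\Eps$-bijection.
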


\begin{proof}
    (i) We show that left wide subcategories and $\tau$-perpendicular subcategories of  $\modd\Lambda$  coincide. The proof that right wide subcategories and $\tau$-perpendicular subcategories coincide is dual. We know that left finite wide subcategories are $\tau$-perpendicular subcategories; see \cite[Lemma 4.3]{tau-perpendicular_wide_subategories}. Moreover, left finite wide subcategories and $\tau$-perpendicular subcategories of $\modd kQ$ coincide by \cite[Cor. 2.17]{Noncrossing_partitions_and_representations_of_quivers}. By Proposition \ref{bijection_objects_M(kQ)_and_M(Lambda)}, $F$ induces a bijection between $\tau$-perpendicular subcategories of $\modd kQ$ and $\tau$-perpendicular subcategories of $\modd\Lambda$ and, since the functor $F$ preserves left finite wide subcategories by Lemma \ref{induction_preserves_left_finite}, it follows that $\tau$-perpendicular subcategories and left finite wide subcategories of $\modd\Lambda$ must coincide.  

    (ii) Let $\W\subseteq\modd\Lambda$ be a $\tau$-perpendicular subcategory of $\modd\Lambda$ and let $\V\subseteq \W$ be a wide subcategory of $\W$. If $\V$ is a $\tau$-perpendicular subcategory of $\W$, then $\V$ is also a $\tau$-perpendicular subcategory of $\modd\Lambda$ by \cite[Cor. 6.7]{tau-perpendicular_wide_subategories}. Conversely, assume that $\V$ is a $\tau$-perpendicular subcategory of $\modd\Lambda$. Then the result follows from (i) and Lemma \ref{left_finite=tau_perp}.
\end{proof}

\section{An example}\label{section6}

\begin{example}\label{example1}
    Let $R = kQ_R/I_R$ be the local commutative algebra where $Q_R$ is the quiver given by 
    \[\begin{tikzcd}[ampersand replacement=\&,cramped]
	1
	\arrow["x", from=1-1, to=1-1, loop, in=145, out=215, distance=10mm]
	\arrow["y", from=1-1, to=1-1, loop, in=325, out=35, distance=10mm]
    \end{tikzcd}\] 
    and $I_R$ is the ideal generated by the relations  $x^2, y^2$, and $xy-yx$. Let $\Lambda = R\otimes kQ$, where $Q$ is the quiver given by $\begin{tikzcd} 1 \arrow[r, "a"] & 2 \end{tikzcd}$. Then $\Lambda$ is isomorphic to $kQ'/I$, where $Q'$ is the quiver 
\[\begin{tikzcd}[ampersand replacement=\&,cramped]
	1 \& 2
	\arrow["{x_1}", from=1-1, to=1-1, loop, in=55, out=125, distance=10mm]
	\arrow["{y_1}", from=1-1, to=1-1, loop, in=235, out=305, distance=10mm]
	\arrow["a", from=1-1, to=1-2]
	\arrow["{x_2}", from=1-2, to=1-2, loop, in=55, out=125, distance=10mm]
	\arrow["{y_2}", from=1-2, to=1-2, loop, in=235, out=305, distance=10mm]
\end{tikzcd}\]
and $I = (x_i^2, y_i^2, x_i y_i-y_i x_i, ax_1-x_2a, ay_1-y_2a)$ for $i=1,2$. For each vertex $i$, we denote by $P^{kQ}_i, I^{kQ}_i, S^{kQ}_i$, the corresponding indecomposable projective (respectively, indecomposable injective, simple) module in $\modd kQ$, and by $P^{\Lambda}_i, I^{\Lambda}_i, S^{\Lambda}_i$, the corresponding indecomposable projective (respectively, indecomposable injective, simple) module in $\modd\Lambda$. The AR-quiver of the hereditary algebra $kQ$ can be depicted as follows 
\[\begin{tikzcd}[ampersand replacement=\&,cramped]
	\&  \begin{array}{c} \begin{smallmatrix}1\\2\end{smallmatrix} \end{array} \\
	{ \begin{smallmatrix}2\end{smallmatrix}} \&\& {\begin{smallmatrix}1\end{smallmatrix}}.
	\arrow[from=1-2, to=2-3]
	\arrow[from=2-1, to=1-2]
	\arrow[dashed, from=2-3, to=2-1]
\end{tikzcd}\]
Note that every indecomposable $kQ$-module is $\tau$-rigid. Hence, using Proposition \ref{tau-rigid-Lambda1:1tau-rigid-kQ}, all the indecomposable $\tau$-rigid modules in $\modd\Lambda$ are given by 
$$ \Lambda\otimes_{kQ}S^{kQ}_1  = \begin{smallmatrix}&1&\\1&&1\\&1&\\&\end{smallmatrix} = I^\Lambda_1, \quad \Lambda\otimes_{kQ}P^{kQ}_1 = \begin{smallmatrix}&1&\\1&&1\\&1&\\&\\&2&\\2&&2\\&2&\\&\end{smallmatrix}= P^\Lambda_1, \quad \Lambda\otimes_{kQ}P^{kQ}_2  = \begin{smallmatrix}&2&\\2&&2\\&2&\\&\end{smallmatrix} = P^\Lambda_2.$$
Moreover, using Theorem \ref{bijection_of_tau_exc_seq} and Corollary \ref{bijection_signed_tau_exceptional_sequences}, signed $\tau$-exceptional sequences in $\modd \Lambda$ can be computed starting from ($\tau$-)exceptional sequences in $\modd kQ$ by applying the induction functor. We list them in the table below. 

\begin{center}
\setlength{\extrarowheight}{1.2mm}
\begin{tabular}{|c||c|}
\hline 
\begin{tabular}{c} Signed $(\tau)$-exc. \\ sequence in $\modd kQ$ \end{tabular} &
\begin{tabular}{c} Signed $\tau$-exc. \\ sequence in $\modd\Lambda$ \end{tabular}
\\ \hline 
$(P^{kQ}_1,S^{kQ}_1)$ $(P^{kQ}_1[1],S^{kQ}_1)$  & $(P^\Lambda_1,I^\Lambda_1)$ $(P^\Lambda_1[1],I^\Lambda_1)$ \\ \hline
\begin{tabular}{@{}c@{}}$(P^{kQ}_2, P^{kQ}_1)$ $(P^{kQ}_2, P^{kQ}_1[1])$ \\ $(P^{kQ}_2[1], P^{kQ}_1)$ $(P^{kQ}_2[1], P^{kQ}_1[1])$ \end{tabular} & \begin{tabular}{@{}c@{}}$(P^\Lambda_2,P^\Lambda_1)$ $(P^\Lambda_2,P^\Lambda_1[1])$ \\ $(P^\Lambda_2[1],P^\Lambda_1)$ $(P^\Lambda_2[1],P^\Lambda_1[1])$\end{tabular}  \\ \hline
\begin{tabular}{@{}c@{}}$(S^{kQ}_1, P^{kQ}_2)$ $(S^{kQ}_1, P^{kQ}_2[1])$ \\ $(S^{kQ}_1[1], P^{kQ}_2)$ $(S^{kQ}_1[1], P^{kQ}_2[1])$\end{tabular} & \begin{tabular}{@{}c@{}}$(I^\Lambda_1,P^\Lambda_2)$ $(I^\Lambda_1,P^\Lambda_2[1])$\\ $(I^\Lambda_1[1],P^\Lambda_2)$ $(I^\Lambda_1[1],P^\Lambda_2[1])$\end{tabular}  \\ \hline
\end{tabular}
\end{center}
\vspace{10pt}

Finally, combining Theorem \ref{Mcluster(kQ)_equvalent_to_Mcluster(Lambda)}, Proposition \ref{bijection_objects_M(kQ)_and_M(Lambda)},  and \cite[Thm. 6.16]{tau-perpendicular_wide_subategories}, the $\tau$-cluster morphism category of $\Lambda$ can be visualized in Figure \ref{fig1}.

\begin{figure}[h!]
    \centering
    \[\begin{tikzcd}[ampersand replacement=\&,cramped]
	\& {\boxed{\modd\Lambda}} \\
	\\
	{\boxed{J(P^\Lambda_1)\simeq \modd R}} \& {\boxed{J(I^\Lambda_1)\simeq \modd R}} \& {\boxed{J(P^\Lambda_2)\simeq \modd R}} \\
	\\
	\& {\boxed{0}}
	\arrow["{P^\Lambda_1[1]}"{description}, shift left=2, curve={height=-6pt}, shorten >=6pt, from=1-2, to=3-1]
	\arrow["{P^\Lambda_1}"{description}, shift right=3, curve={height=6pt}, from=1-2, to=3-1]
	\arrow["{I^\Lambda_1}"{description}, shorten <=3pt, from=1-2, to=3-2]
	\arrow["{P^\Lambda_2[1]}"{description}, shift right=2, curve={height=6pt}, shorten >=6pt, from=1-2, to=3-3]
	\arrow["{P^\Lambda_2}"{description}, shift left=3, curve={height=-6pt}, from=1-2, to=3-3]
	\arrow["{P^\Lambda_2[1]}"{description}, curve={height=-6pt}, shorten <=6pt, shorten >=6pt, from=3-1, to=5-2]
	\arrow["{P^\Lambda_2}"{description}, shift right=4, curve={height=6pt}, from=3-1, to=5-2]
	\arrow["{P^\Lambda_1[1]}"{description}, shift left=2, curve={height=-6pt}, shorten <=3pt, from=3-2, to=5-2]
	\arrow["{P^\Lambda_1}"{description}, shift right=2, curve={height=6pt}, shorten <=3pt, from=3-2, to=5-2]
	\arrow["{I^\Lambda_1[1]}"{description}, curve={height=6pt}, shorten <=6pt, shorten >=6pt, from=3-3, to=5-2]
	\arrow["{I^\Lambda_1}"{description}, shift left=4, curve={height=-6pt}, from=3-3, to=5-2]
    \end{tikzcd}\]
    \caption{The $\tau$-cluster morphism category of $\Lambda$.}
    \label{fig1}
\end{figure}
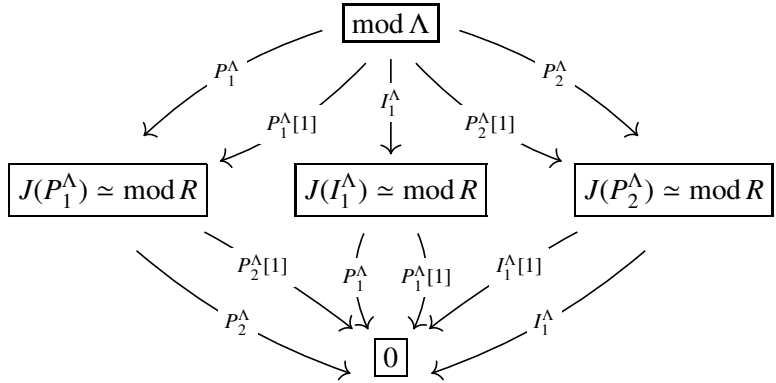

In particular, $J(M)\simeq \modd R$ for every $\tau$-rigid $\Lambda$-module $M$; see Theorem \ref{J(Lambda_otimes_kQ)=mod(R_otimes_kQ')}. 
\end{example}

\paragraph{\textbf{Acknowledgment:}} The author is grateful to Bethany R. Marsh for her supervision, for our countless inspirational discussions, and her meticulous proofreading. The author would also like to thank Aslak B. Buan and Francesca Fedele for helpful conversations. The author is also grateful for financial support from the University of Leeds and the EPSRC Programme Grant EP/W007509/1. \\

\maketitle

\nocite{*}
\bibliographystyle{alpha}
\bibliography{references}

\end{document}